\theoremstyle{plain}
\newtheorem*{result}{Theorem}
\newtheorem{thm}{Theorem}[section]
\newtheorem{prop}[thm]{Proposition}
\newtheorem{cor}[thm]{Corollary}
\newtheorem{lemma}[thm]{Lemma}
\theoremstyle{definition}
\newtheorem{defn}[thm]{Definition}
\newtheorem{defns}[thm]{Definitions}
\theoremstyle{remark}
\newtheorem{rem}[thm]{Remark}
\newtheorem{rems}[thm]{Remarks}
\newtheorem{example}[thm]{Example}
\newcommand{\emphdef}{\textit}                  % Emphasising definitions
\newcommand{\tcite}[1]{\textup{\cite{#1}}}      % Theorem references
\numberwithin{equation}{section}
\newcounter{numl}
\newcommand{\labelnuml}{\textup{(\roman{numl})}}
\newenvironment{numlist}{\begin{list}{\labelnuml}%
{\usecounter{numl}\setlength{\leftmargin}{0pt}%
\setlength{\itemindent}{2\parindent}%
\setlength{\itemsep}{\smallskipamount}% or 0pt
\def\makelabel ##1{\hss \llap {\upshape ##1}}}}{\end{list}}
\newenvironment{bulletlist}{\begin{list}{\labelitemi}%
{\setlength{\leftmargin}{\parindent}\def
\makelabel ##1{\hss \llap {\upshape ##1}}}}{\end{list}}
\newenvironment{bulletpars}{\begin{list}{\labelitemi}%
{\setlength{\leftmargin}{0pt}%
\setlength{\itemindent}{\parindent}%
\setlength{\itemsep}{\smallskipamount}\def
\makelabel ##1{\hss \llap {\upshape ##1}}}}{\end{list}}
\newcommand{\acknowledge}{\subsection*{Acknowledgements}}
\newcommand{\thismonth}{\ifcase\month\or
  January\or February\or March\or April\or May\or June\or
  July\or August\or September\or October\or November\or December\fi
  \space\number\year}
\newcommand{\low}{\@ifnextchar^{}{^{\vphantom x}}}
\newcommand{\high}{\@ifnextchar_{}{_{\vphantom I}}}
\DeclareSymbolFont{script}{U}{eus}{m}{n}
\DeclareSymbolFontAlphabet{\mathscr}{script}
\DeclareMathSymbol{\EuWedge}{0}{script}{"5E}
\DeclareMathAlphabet{\mathrmsl}{OT1}{cmr}{m}{sl}
\newcommand{\rssymb}[2]{\newcommand{#1}{{\mathrmsl{#2}}}}
\newcommand{\calsymb}[2]{\newcommand{#1}{{\mathcal{#2}}}}
\newcommand{\bbsymb}[2]{\newcommand{#1}{{\mathbb{#2}}}}
\newcommand{\lieoper}[2]{\newcommand{#1}{\mathop{\mathfrak{#2}\null}\nolimits}}
\newcommand{\oper}[3][n]{\newcommand{#2}{\mathop{\mathrm{#3}\null}\ifx
  n#1\nolimits\else\limits\fi}}
\newcommand{\rsoper}[3][n]{\newcommand{#2}{\mathop{\mathrmsl{#3}\null}\ifx
  n#1\nolimits\else\limits\fi}}
\bbsymb\C{C} \bbsymb\F{F} \bbsymb\HQ{H}\bbsymb\N{N} \bbsymb\Q{Q}
\bbsymb\R{R} \bbsymb\U{U} \bbsymb\V{V} \bbsymb\W{W} \bbsymb\Z{Z}
\calsymb\cA{A} \calsymb\cB{B} \calsymb\cC{C} \calsymb\cD{D} \calsymb\cE{E}
\calsymb\cF{F} \calsymb\cG{G} \calsymb\cH{H} \calsymb\cI{I} \calsymb\cJ{J}
\calsymb\cK{K} \calsymb\cL{L} \calsymb\cM{M} \calsymb\cN{N} \calsymb\cO{O}
\calsymb\cP{P} \calsymb\cQ{Q} \calsymb\cR{R} \calsymb\cS{S} \calsymb\cT{T}
\calsymb\cU{U} \calsymb\cV{V} \calsymb\cW{W} \calsymb\cX{X} \calsymb\cY{Y}
\calsymb\cZ{Z}
\newcommand{\eps}{\varepsilon}
\newcommand{\gam}{\gamma} \newcommand{\Gam}{\Gamma}
\newcommand{\lam}{\lambda}
\renewcommand{\geq}{\geqslant} \renewcommand{\leq}{\leqslant}
\rsoper\End{End} \rsoper\Hom{Hom}                % Vector space constructions
\rsoper\Sym{Sym} \rsoper\Skew{Skew}
\rsoper\Aut{Aut}                                 % Group constructions
\rsoper\GL{GL} \rsoper\SL{SL}\rsoper\Symp{Sp}
\rsoper\CO{CO} \rsoper\On{O} \rsoper\SO{SO} \rsoper\Spin{Spin}
\rsoper\CU{CU} \rsoper\Un{U} \rsoper\SU{SU}
\rsoper\Diff{Diff} \rsoper\SDiff{SDiff}
\rsoper\Stab{Stab}                               % stabilizer
\lieoper\der{der}                                % Lie algebra constructions
\lieoper\gl{gl} \lieoper\sgl{sl}\lieoper\symp{sp}
\lieoper\co{co} \lieoper\so{so} \lieoper\spin{spin}
\lieoper\cu{cu} \lieoper\un{u}  \lieoper\su{su}
\rsoper\Vect{Vect} \rsoper\Ham{Ham}
\lieoper\stab{stab}                              % Lie algebra of stabilizer
\oper\real{Re}  % real part
\oper\imag{Im}  % imaginary part
\newcommand{\ip}[1]{\langle#1\rangle}
\newcommand{\norm}[2][]{|\mkern-2mu|#2|\mkern-2mu|
  _{\lower1pt\hbox{${}_{#1}$}}}
\newcommand{\Norm}[2][]{\bigl|\mkern-3mu\bigr|#2\bigr|\mkern-3mu\bigr|
  _{\lower1pt\hbox{${}_{#1}$}}}
\newcommand{\liebrack}[1]{[#1]}
\newcommand{\abrack}[1]{[\mkern-3mu[#1]\mkern-3mu]}
\newcommand{\act}{\mathinner{\cdot}}
\newcommand{\lie}[1]{\mathfrak{#1}}
\newcommand{\restr}[1]{|_{#1}\low}
\newcommand{\Restr}[1]{\Big|_{#1}}
\newcommand{\setof}[1]{\lbrace#1\rbrace}
\newcommand{\cross}{\mathbin{{\times}\!}\low}
\newcommand{\dual}{^{*\!}}
\newcommand{\mult}{^{\scriptscriptstyle\times}}
\newcommand{\gr}{_{\mathrm{gr}}}
\newcommand{\dsum}{\oplus}                  % small direct sum
\newcommand{\vtens}{\mathinner\tens}        % tensor product of vectors
\newcommand{\tens}{\otimes}                 % small tensor product
\newcommand{\Wedge}{\EuWedge}               % wedge product functor
\newcommand{\skwend}{\mathinner{\scriptstyle\vartriangle}}% skew endomorphism
\newcommand{\interior}{\mathinner\lrcorner} % interior multiplication
\newcommand{\subnormal}{\ltimes}            % semidirect product with normal
\newcommand{\intersect}{\mathinner\cap}     % intersection
\newcommand{\setdif}{\smallsetminus}%{\mathbin{\mathrm -}}% set difference
\newcommand{\from}{\colon}                  % definition of functions
\newcommand{\isom}{\cong}                   % isomorphism
\newcommand{\Laplace}{\Delta}               % Laplacian
\newcommand{\del}{\partial}                 % directional derivative
\newcommand{\dbar}{\overline\partial}       % del-bar derivative
\newcommand{\dbyd}[1]{\del/\del{#1}}        % partial derivative
\newcommand{\Proj}{\mathrmsl{P}}            % projective
\newcommand{\RP}[1]{\R\Proj^{#1}}           % real projective n-space
\newcommand{\CP}[1]{\C\Proj^{#1}}           % complex projective n-space
\newcommand{\HP}[1]{\HQ\Proj^{#1}}          % quaternionic projective n-space
\newcommand{\half}{\tfrac12}                % textstyle half
\newcommand{\Cinf}{\mathrm{C}^\infty}       % smooth sections
\newcommand{\ie}{\textit{i.e.}}             % latin i.e.
\rsoper\dimn{dim}                           % dimension
\rsoper\kernel{ker}\rsoper\image{im}        % kernel and image
\rsoper\alt{alt}   \rsoper\sym{sym}         % alternating and symmetric part
\rsoper\Ad{Ad}     \rsoper\ad{ad}           % adjoint action or bundle
\rsoper\CoAd{CoAd} \rsoper\coad{coad}       % coadjoint action
\rsoper\trace{tr}  \rsoper\trfree{tf}       % trace and tracefree part
\rsoper\detm{det}                           % determinant
\rsoper\Vol{Vol}                            % volume
\rsoper\divg{div}                           % divergence
\rsoper\rank{rank}                          % rank
\rsoper\degree{deg}                         % degree
\rssymb\iden{id}                            % identity
\rssymb\vol{vol}                            % volume element
\renewcommand{\d}{{\mathrmsl{d}}}           % exterior derivative
\newcommand{\varthinspace}{\mskip3mu minus 3mu}
\renewcommand{\@secnumfont}{\relax}
\renewcommand{\thesubsection}{\thesection\Alph{subsection}}
\newcommand{\vspan}[1]{\mathopen<#1\mathclose>} % Vector space span
\newcommand{\ltens}{\varthinspace}          % Tensor product with a line bundle
\newcommand{\lip}[1]{(#1)}                  % Lorentzian inner product
\newcommand{\Lip}[1]{\bigl(#1\bigr)}        % \big lorentzian inner product
\newcommand{\mfd}{M}                        % ambient manifold
\newcommand{\submfd}{{\mathchar"0106}}      % submanifold (cmmi italic Sigma)
\newcommand{\conf}{\mathsf{c}}              % conformal metric as a tensor
\newcommand{\cip}{\ip}                      % conformal inner product
\rssymb{\Gr}{Gr}                            % Grassmannian
\rssymb\ricci{ric}                          % Ricci curvature
\rssymb\scal{scal}                          % scalar curvature
\rssymb\nr{r}                               % normalized Ricci
\rssymb\ns{s}                               % normalized scalar
\rsoper\CCoda{CCoda}                        % first order ci. Codazzi operator
\rsoper\QCoda{{\mathscr B}}                 % quadratic first order pairing
\rsoper\pr{pr}                              % orthogonal projection
\newcommand{\g}{\mathfrak{g}}               % ambient Lie algebra bundle
\newcommand{\h}{\mathfrak{h}}               % and its symmetric space 
\newcommand{\m}{\mathfrak{m}}               % decomposition
\newcommand{\p}{\mathfrak{p}}               % generic parabolic
\newcommand{\Mob}{\text{\slshape M\"ob}}    % Moebius group
\newcommand{\CV}{V}                         % Cartan vector bundle
\newcommand{\US}{U}                         % Sph syst, \perp to null pair
\newcommand{\LC}{\cL}                       % Light-cone
\newcommand{\PL}{\Proj(\LC)}                % Projective light-cone
\newcommand{\LH}{\partial}                  % Lie algebra boundary operator
\newcommand{\Ln}{{\mathchar"0103}}          % Line bundle (cmmi italic Lambda)
\newcommand{\Lnc}{{\hat\Ln}}                % Complementary line subbundle
\newcommand{\Lnps}{\Ln^{(1)}}               % Derivative of \Ln along \submfd
\newcommand{\Mb}{\mathscr{V}}               % Moebius bundle
\newcommand{\Ms}{\mathscr{M}}               % Moebius structure
\newcommand{\Mh}{\mathscr{H}}               % Moebius operator (tf Hessian)
\newcommand{\Mq}{\mathscr{S}}               % Moebius quad form (scalar curv)
\newcommand{\CD}{\mathfrak{D}}              % Cartan connection
\newcommand{\CDD}{\CD^D}                    % Connection induced by Cartan+Weyl
\newcommand{\CDs}{\CD^\nabla}               % Induced sum connection on csc
\newcommand{\CDVs}{\CD^{\nabla,\CV}}        % Induced sum connection on V
\newcommand{\dDVs}{\d^{\CD^{\nabla,\CV}}}   % 
\newcommand{\RDVs}{R^{\CD^{\nabla,\CV}}}    % 
\newcommand{\CDh}{\CD^{\h}}                 % Normal sum connection on csc
\newcommand{\dDh}{\d^{\h}}                  % 
\newcommand{\RDh}{R^{\h}}                   % 
\newcommand{\CDhV}{\CD^{\h_\CV}}            % Normal sum connection on V
\newcommand{\dDhV}{\d^{\h_\CV}}             %
\newcommand{\RDhV}{R^{\h_\CV}}              %
\newcommand{\jDh}{j^{\h}}                   % Differential lift on csc
\newcommand{\jDhV}{j^{\h_\CV}}              % Differential lift on V
\newcommand{\QC}{Q}%{{\mathrm  Q}}          % Quadratic curvature
\newcommand{\CQ}{Q}%{{\mathcal Q}}          % contorsion defined by \QC
\newcommand{\CS}{{\mathcal N}}              % M\"obius differential
\newcommand{\CA}{A}                         % conformal acceleration
\newcommand{\II}{\mathrm{I\kern-1ptI}}      % second fundamental form
\newcommand{\Sh}{\mathbb{S}}                % shape operator \II^t
\newcommand{\wnv}{\xi}                      % weightless normal vector
\newcommand{\nmv}{U}                        % normal vector
\newcommand{\con}{\nu}                      % conormal vector
\newcommand{\mco}{{\mathscr P}}             % mean curvature operator
\newcommand{\mni}{{\mathscr I}}             % normal inclusion
\newcommand{\mff}{\chi}                     % Mobius-flat 1-form
\newcommand{\refl}[1]{\rho\low_{#1}}        % Reflection operator
\newcommand{\modulo}{\mathbin{\mathrmsl{mod}}}% Quotient
\newcommand{\tangent}{^{\mkern-2mu\top}}    % tangent component
\newcommand{\normal}{^{\mkern-1mu\perp}}    % normal component
\newcommand{\iI}{{\boldsymbol i}}           %{{\sqrt{-1}}}
\newcommand{\Quabla}{\pmb{\square}}         % first order quabla operator
\newcommand{\restrnormal}[1]{|^{\,\perp}_{\! #1}}
\newcommand{\tsum}{\mathop{\textstyle\sum}\nolimits}
\newcommand{\cupp}{\mathbin{\scriptstyle\sqcup}}
\newcommand{\hc}[1]{\boldsymbol[#1\boldsymbol]}
\newcommand{\Hc}[1]{\pmb{\bigl[}#1\pmb{\bigr]}}
\newcommand{\mtrip}[3]{\Biggl[\begin{matrix} #1\\ #2\\ #3\end{matrix}\Biggr]}
\newcommand{\skwendwedge}{\mathinner
{\lower1pt\hbox{$\stackrel{\smash{\lower4pt\hbox{$\wedge$}}}
{\smash{\hbox{$\scriptscriptstyle\vartriangle$}}}$}}}
\def\mob/{M\"obius}
\def\GCR/{Gau\ss--Codazzi--Ricci}
\def\cso/{M\"obius differential}
\def\pcq/{p.c.q.\null}% polynomial conserved quantity
\def\pcqs/{p.c.q.s}
\def\BGG/{Bernstein--Gelfand--Gelfand}
\begin{document}
\title{Conformal submanifold geometry I--III}
\author{Francis E. Burstall}
\email{feb@maths.bath.ac.uk}
\author{David M. J. Calderbank}
\email{D.M.J.Calderbank@bath.ac.uk}
\address{Mathematical Sciences\\ University of Bath\\
Bath BA2 7AY\\ UK.}
\date{\thismonth}
\begin{abstract}
In Part I, we develop the notions of a M\"obius structure and a conformal
Cartan geometry, establish an equivalence between them; we use them in Part II
to study submanifolds of conformal manifolds in arbitrary dimension and
codimension.  We obtain Gau\ss--Codazzi--Ricci equations and a conformal
Bonnet theorem characterizing immersed submanifolds of $S^n$. These methods
are applied in Part III to study constrained Willmore surfaces, isothermic
surfaces, Guichard surfaces and conformally-flat submanifolds with flat normal
bundle, and their spectral deformations, in arbitrary codimension. The high
point of these applications is a unified theory of M\"obius-flat submanifolds,
which include Guichard surfaces and conformally flat hypersurfaces.
\end{abstract}
\maketitle
\vspace{-0.7cm}
\setcounter{tocdepth}{1}% 2 for proof reading, 1 for final version.
\tableofcontents
\vspace{-0.4cm}

\section*{Introduction}

Our purpose in this work is to present a comprehensive, uniform, invariant,
and self-contained treatment of the conformal geometry of submanifolds. This
paper begins the study in three parts. In the first two parts, we present the
general theory of conformal submanifolds: in Part I we develop an intrinsic
theory of conformal manifolds; then in Part II, we investigate the geometry
induced on an immersed submanifold of a conformal manifold. The central result
is an analogue in conformal geometry of the theorem of O.~Bonnet which
characterizes submanifolds of euclidean space in terms of geometric data
satisfying \GCR/ equations.  We demonstrate, in Part III, how this theory
gives a systematic description of three integrable systems arising in
conformal submanifold geometry, which describe three classes of submanifolds
and their spectral deformations: constrained Willmore surfaces, isothermic
surfaces, and `M\"obius-flat' submanifolds, these last being Guichard or
channel surfaces in dimension two (and codimension one), and conformally flat
submanifolds in higher dimension.

The conformal geometry of submanifolds has been of great interest to
differential geometers for more than a century, and our contribution is far
from being the first treatment of the conformal Bonnet theorem: the ideas go
back at least to E.~Cartan's beautiful 1923 paper~\cite{Car:ecc}.  Since then,
there have been many attempts to understand conformal submanifold geometry,
either by authors wishing to build on Cartan's work, or by those unaware of
it. In particular, the subject was taken up in the 1940's by
K.~Yano~\cite{Yan:ecc,Yan:cg1-4,Yan:ftc,Yan:gcs,Yan:fc12,Yan:fc34,YaMu:tfc},
who showed (together with Y.~Mut\^o) that $m$-dimensional conformal
submanifolds of $S^n$ could be characterized, for $m\geq 3$, by tensors
$g_{jk}$, $M_{jkP}$ and $L_{PQk}$ satisfying five equations, three of which
are analogues of the \GCR/ equations in euclidean geometry---these last three
are also discussed more recently by L.~Ornea and G.~Romani~\cite{OrRo:cgr}.
Another extensive account, from Cartan's point of view, was developed in the
1980's by C.~Schiemangk and R.~Sulanke~\cite{ScSu:mg1,Sul:mg2,Sul:mg3}, who
obtained a conformal Bonnet theorem at least for the generic case of
submanifolds with no umbilic points.

The issue of umbilic points is an significant one. Recall that these are
points where the tracefree part of the second fundamental form, which is a
conformal invariant, vanishes. However, following ideas of
T.~Thomas~\cite{Tho:dig}, A.~Fialkow~\cite{Fia:ctc,Fia:cdg} already noticed in
1944 that in the absence of umbilic points, there is a unique metric in the
conformal class on the submanifold, with respect to which the tracefree second
fundamental form has unit length. This observation allowed him to obtain a
treatment of generic conformal submanifold geometry in purely riemannian
terms. Around the same time J.~Haantjes~\cite{Haa:cg1,Haa:cg2,Haa:cg3} and
J.~Maeda~\cite{Mae:dmg} presented a conformal theory of curves and surfaces.
Subsequent authors have also obtained results in varying degrees of
generality: let us mention, for instance, G.~Laptev~\cite{Lap:dgim}, M.~Akivis
and V.~Goldberg~\cite{Aki:cdg, AkGo:cdg}, G.~Jensen~\cite{Jen:hoc},
C.~Wang~\cite{WaCP:smg,WaCP:mgs} and the book~\cite{BCGGG:eds} of R.~Bryant
\textit{et al.} on exterior differential systems.

In view of all this work, we would be bold to claim that the main results of
Parts I and II are new. However, we believe there is no complete discussion of
conformal submanifold geometry in the literature with all three of the
following features:
\begin{bulletlist}
\item manifest conformal invariance;
\item no restriction on umbilic points;
\item uniform applicability in arbitrary dimension and codimension.
\end{bulletlist}
Nevertheless, we have no quarrel with the Reader who would prefer to regard
Parts I--II of this work as a modern gloss on Cartan's seminal
paper~\cite{Car:ecc}: as we shall see, recent developments in conformal
geometry make such a gloss extremely worthwhile. Our approach is also greatly
inspired by Sharpe's significant book~\cite{Sha:dg}, which presents the
general theory of Cartan geometries to a modern audience, with many
applications. Unfortunately, his application to conformal submanifold geometry
contains a technical error, which leads him to restrict attention to the
generic case only (no umbilic points) when studying surfaces.\footnote{The
error is in the evaluation of the Ricci trace in the proof
of~\cite[Proposition \textbf{7}.4.9 (a)]{Sha:dg}, where Sharpe claims that a
component of the Cartan connection does not influence the curvature of
surface: in fact it is the case of curves, not surfaces, which is special in
this respect. Hence in~\cite[Theorem \textbf{7}.4.29]{Sha:dg}
and~\cite[Corollary \textbf{7}.4.30]{Sha:dg}, $n\neq2$ should be replaced by
$n\neq1$.}

Another motivation for the present work is the recent development of a deeper
understanding of the algebraic structures underlying conformal geometry, which
we wish to apply. There are five aspects to this which we now explain: \mob/
structures, parabolic geometries, tractor bundles, Lie algebra homology, and
Bernstein--Gelfand--Gelfand operators.

A central difficulty in conformal submanifold geometry lies in the description
of low dimensional submanifolds. The problem, in a nutshell, is that curves
and surfaces in a conformal manifold acquire more intrinsic geometry from the
ambient space than simply a conformal metric. This is closely related to the
fact that in dimension one or two there are more local conformal
transformations than just the \mob/ transformations: in one or two dimensions,
\mob/ transformations are real or complex projective transformations
respectively. In fact, we shall see that a curve in a conformal manifold
acquires a natural real projective structure, whereas a surface acquires a
(possibly) non-integrable version of a complex projective structure.  We call
these \emphdef{M\"obius structures}---the $2$-dimensional ones were introduced
in~\cite{Cal:mew} under this name.

Our paper might more properly be called `M\"obius submanifold geometry'
(cf.~\cite{ScSu:mg1,WaCP:smg}): M\"obius structures provide a notion of
conformal geometry which applies uniformly in all dimensions, modelled on the
$n$-sphere $S^n$ with its group of M\"obius transformations. It is well known
(as observed by Darboux~\cite{Dar:tgs}) that this group is isomorphic to the
semisimple group $O_+(n+1,1)$ of time-oriented orthogonal transformations of
an $(n+2)$-dimensional lorentzian vector space: indeed the Lorentz
transformations act transitively on the projective light-cone, which is an
$n$-sphere, and the stabilizer of a light-line is a parabolic subgroup of
$O_+(n+1,1)$. (Similarly, the model for conformal geometry in signature
$(p,q)$ is the projective light-cone in $\R^{p+1,q+1}$; for notational
convenience we restrict attention to euclidean signature, but the theory
applies more generally with minor modifications.)

Cartan geometries provide a systematic way to make precise the notion of a
curved manifold modelled on a homogeneous space~\cite{Sha:dg}.  The geometries
modelled on $G/P$, where $G$ is semisimple and $P$ is a parabolic subgroup,
are called \emphdef{parabolic geometries}~\cite{CaDi:di,CpSc:pgc,CpSl:pg,
CSS:bgg}.  Hence (M\"obius) conformal geometry is a parabolic geometry.

One way to provide a uniform description of conformal submanifold
geometry---and this is the approach taken by Sharpe---is to use Cartan
connections throughout. However this raises a subtle question of philosophy,
concerning the distinction between `intrinsic' and `extrinsic' geometry. The
problem is that there is a lot of `room' in a Cartan connection to hide
extrinsic data: even in higher dimensions a conformal Cartan connection can
contain much more information than just a conformal metric. However, it is
well-known---and due to Cartan~\cite{Car:ecc} of course---that there is a
preferred class of Cartan connections, the \emph{normal} Cartan connections
which, in any dimension $n\geq3$, correspond bijectively (up to isomorphism)
with conformal metrics. We extend this result to dimensions one and two, by
establishing, in a self-contained way, the existence and uniqueness of the
normal Cartan connection for (M\"obius) conformal geometry in all dimensions.

To do this, we use a linear description of conformal Cartan connections,
pioneered by Thomas~\cite{Tho:dig} (although it is also implicit
in~\cite{Car:ecc}), and rediscovered independently by
P.~Gauduchon~\cite{Gau:ccw} and by T.~Bailey \textit{et al.}~\cite{BEG:tcp},
then developed further by A.~\v Cap and
A.R.~Gover~\cite{CpGo:tbi,CpGo:tpg,CpGo:stc}.  A Cartan connection may be
viewed as a connection on a principal $G$-bundle satisfying an open condition
with respect to a reduction to $P$; however, it can be more convenient to work
with a connection on a vector bundle. Such vector bundles with connection,
induced by a Cartan connection, are now called \emphdef{tractor bundles} or
\emphdef{\textup(local\textup) twistor
bundles}~\cite{BEG:tcp,Bas:ahs,CpGo:tpg}.  In conformal Cartan geometry, when
$G=O_+(n+1,1)$, the conformal Cartan connection on the associated
$\R^{n+1,1}$-bundle is easy to characterize, and,
following~\cite{BEG:tcp,Gau:ccw}, this is what we do.

The existence and uniqueness of the normal Cartan connection is now known to
be a characteristic feature of parabolic geometries, governed by a beautiful
algebraic machine, \emphdef{Lie algebra
homology}~\cite{Bas:ahs,CpSc:pgc,CSS:ahs2,Kos:lac,Och:gfh,Tan:epl}: the
normality condition means that the curvature of the Cartan connection is a
$2$-cycle in a chain complex for this homology theory.

It turns out that Lie algebra homology also governs the extrinsic geometry of
conformal submanifolds. \textit{A priori}, there are many ways to split an
ambient Cartan connection, along a submanifold, into tangential and normal
parts. However, as observed by Sharpe~\cite{Sha:dg}, there is a unique choice
such that the associated second fundamental form is tracefree. This way of
normalizing the extrinsic geometry amounts to requiring that the `extrinsic'
part of the Cartan connection is a $1$-cycle in a Lie algebra homology chain
complex. (Sharpe does not then normalize the induced `intrinsic' Cartan
connection, which leads him to remark~\cite[page 266]{Sha:dg} that ``the
Willmore form, while appearing to be \emph{extrinsic} data from the Riemannian
perspective, is seen to be \emph{intrinsic} data from the perspective of the
\mob/ geometry induced on $M$.'' We do not agree with this viewpoint.)

The final ingredient of our approach, the \emphdef{Bernstein--Gelfand--Gelfand
operators}, reveals that the \GCR/ equations also have a natural homological
interpretation. This is a feature of parabolic geometry that has only been
fully elucidated in the last ten years~\cite{CSS:bgg,CaDi:di}. Since any
tractor bundle $W$ is equipped with a connection, there is an associated
twisted deRham sequence of $W$-valued differential forms; these bundles also
form the chain complex computing $W$-valued Lie algebra homology (where the
Lie algebra boundary operator acts in the opposite direction to the twisted
deRham differential). It turns out that the twisted deRham sequence descends
to give a sequence of (possibly) higher order operators between the Lie
algebra homology bundles.  This Bernstein--Gelfand--Gelfand sequence is a
curved version of the (generalized) Bernstein--Gelfand--Gelfand complex on a
(generalized) flag variety $G/P$ which is a resolution of a $G$-module
corresponding to $W$~\cite{BGG:dob,Lep:bgg}.  The curved analogues were
introduced by M.~Eastwood and J.~Rice~\cite{EaRi:cio} and
R.~Baston~\cite{Bas:ahs} in special cases, and by \v Cap, J.~Slovak and
V.~Sou\v cek~\cite{CSS:bgg} in general.  Further, it was shown by the second
author and T.~Diemer~\cite{CaDi:di} that wedge products of tractor-valued
differential forms descend to give bilinear differential operators between Lie
algebra homology bundles.

This is the most novel aspect of our conformal Bonnet theorem: we show that
the \GCR/ equations may be written in terms of the manifestly
M\"obius-invariant linear and bilinear differential
Bernstein--Gelfand--Gelfand operators defined on Lie algebra homology. This is
not just an aesthetic point, but a practical one: it reveals the minimal,
homological, data describing the geometry of the conformal submanifold, and
shows that of the five \GCR/ equations appearing in other treatments (such
as~\cite{YaMu:tfc}), only three are needed in each dimension---which three
depends on the Lie algebra homology in that dimension. This fact is tedious to
check directly, but follows easily from~\cite{CaDi:di}.

This treatment of conformal submanifold geometry suggests a broader context
for our work: \emph{parabolic subgeometries}. If we regard conformal
submanifolds as being modelled on the conformal $m$-sphere inside the
conformal $n$-sphere, then we might more generally consider geometries
modelled on arbitrary (homogeneous) inclusions between generalized flag
varieties. A particularly important case is the conformal $n$-sphere (or a
hyper-quadric of any signature) in $\R P^{n+1}$, which is the model for
hypersurfaces in projective space.

In a subsequent work~\cite{BuCa:psg} (see~\cite{BuCa:sgfv} for a foretaste,
and some preliminary results) we shall show that our methods do indeed
generalize to a wider setting, supporting the claim that our machinery is
natural.  In the present work, we demonstrate that it is also effective, by
exploring some applications. A number of classical results follow effortlessly
(once the machine is up and running), often in greater generality than was
previously known.

\subsection*{Contents and results}

Having set out our stall, let us explain in more detail the fruit on offer.
As we remarked at the start of this introduction, our development of conformal
submanifold geometry, begins with two parts, like those of
Cartan~\cite{Car:ecc}: intrinsic conformal geometry, and the geometry of
submanifolds.  In the first part of the paper, we present an unashamedly
modern treatment of (M\"obius) conformal geometry in all dimensions. We have
an auxiliary goal here, to show that it is perfectly possible---and indeed
desirable---to carry out computations in conformal geometry without
introducing a riemannian metric in the conformal class. Indeed, when a choice
must be made, we wish to show that it is actually simpler to introduce a Weyl
structure, \ie, a torsion-free conformal connection. This avoids the conformal
rescaling arguments and logarithmic derivatives that are still prevalent in a
surprisingly (to us) large portion of the literature. We believe that
conformal geometry is more about conformal invariance than conformal
covariance (in line with the modern coordinate free, rather than covariant,
view of differential geometry).

In fact we offer the Reader \emph{two} manifestly invariant approaches to
conformal geometry: a top-down, or holistic, one, the \emphdef{conformal
Cartan connection}, and a bottom-up, or reductionist, one, the
\emphdef{M\"obius structure}.  We develop these in parallel in the first part
of the paper: the `A' paragraphs
(\S\S\ref{par:sphere-conf-cart}--\ref{par:diff-lift}) concern conformal Cartan
connections, while the `B' paragraphs
(\S\S\ref{par:dens-conf-metric}--\ref{par:mob-str}) concern M\"obius
structures. The goal of our treatment is to prove that these two notions of
conformal structure are equivalent, which we reach in
section~\ref{sec:equivalence}.

Both approaches are motivated by the projective light-cone description of the
conformal sphere $S^n$, the `flat model' for conformal geometry. The top-down
approach, which consists of a (filtered) vector bundle $\CV$ equipped with a
`conformal Cartan connection', generalizes the bundle $S^n\times\R^{n+1,1}$
with its trivial connection. In section~\ref{sec:conf-man}, we show how
straightforward it is to prove that a manifold $M$ with a flat conformal
Cartan connection is locally isomorphic to $S^n$---the same idea underlies our
proof of the conformal Bonnet theorem in section~\ref{sec:conf-bonnet}.  We
also show how such a conformal Cartan connection induces the most basic
ingredient in the bottom-up approach: a conformal metric.

In section~\ref{sec:conf-alg}, we introduce bundles of conformal algebras:
first the bundle $\so(\CV)$ of filtered Lie algebras, then its associated
graded algebra, which we identify with $TM\dsum \co(TM)\dsum T\dual M$.  Here
we discuss the most crucial elements of Lie algebra homology theory for the
development. In section~\ref{sec:weyl-geom} we turn to Weyl derivatives. From
the top-down approach, these are equivalently complementary subspaces to the
filtration of $\CV$, whereas from the bottom-up approach, they correspond to
torsion-free conformal connections on $TM$.

The key idea in the first part of the paper is introduced in
section~\ref{sec:mob-geometry}. Here we reveal extra information hidden in a
conformal Cartan connection in the form of two second order differential
operators. These operators motivate our definition of M\"obius structure.  We
then define a distinguished class of `conformal' M\"obius structures, thus
paving the way for our proof in section~\ref{sec:equivalence} of the following
result.
\begin{result}
There is a one-to-one correspondence, up to natural isomorphism, between
M\"obius structures and conformal Cartan geometries, and the conformal Cartan
connection is normal if and only if the M\"obius structure is conformal.
\end{result}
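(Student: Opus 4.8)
The plan is to construct a pair of mutually inverse functors relating the two categories and then match the two distinguished conditions. In the direction from conformal Cartan geometries to M\"obius structures there is essentially only bookkeeping to do: a conformal Cartan geometry $(\CV,\CD)$ already determines the conformal metric $\conf$ through the filtration quotients, as established in section~\ref{sec:conf-man}, and the two second-order operators of section~\ref{sec:mob-geometry} are read off directly from $\CD$. First I would record that these data satisfy the axioms of a M\"obius structure---which is automatic, since those axioms were reverse-engineered from precisely this construction---and that the assignment is natural, as any isomorphism of Cartan geometries carries the filtration and connection, and hence both operators, to their counterparts.

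The substance lies in the reverse functor. Given a M\"obius structure, I would fix an auxiliary Weyl structure (equivalently, a torsion-free conformal connection $D$, or a splitting of the filtration of $\CV$) and build the filtered bundle $\CV$ from the graded model determined by $\conf$, defining the connection $\CD$ by coupling $D$ to the M\"obius operators, which play the role of an abstract Schouten tensor. Two things must be checked: that $\CD$ is a genuine Cartan connection (the soldering component is an isomorphism and the nondegeneracy condition holds, both immediate from the graded normal form), and, more importantly, that the construction is independent of the chosen Weyl structure up to a canonical isomorphism. This independence is the main point to verify in this direction: a change of Weyl structure acts on the splitting by a unipotent gauge transformation valued in the parabolic, and one must confirm that the prescribed transformation law for the M\"obius operators is exactly the one making $\CD$ transform by this gauge, so that the resulting conformal Cartan geometry is well-defined up to isomorphism and natural in the M\"obius structure.

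With both functors in hand, I would verify they are mutually inverse up to natural isomorphism. For the composite $(\CV,\CD)\mapsto$ M\"obius $\mapsto$ Cartan, a Weyl structure for the original $\CV$ supplies the splitting used in the reconstruction, and since the extracted operators are by definition those appearing in $\CD$ relative to that splitting, the reconstructed connection is canonically isomorphic to the original. The composite M\"obius $\mapsto$ Cartan $\mapsto$ M\"obius returns the same conformal metric and operators by reading off the very formulas used to define $\CD$. Both verifications follow directly from the explicit normal forms and require no new ideas.

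Finally, for the equivalence of normality and conformality, I would compute the curvature $R^{\CD}$ of the reconstructed connection from the structure equation and decompose it by filtration degree into a Weyl-curvature term, a term linear in the covariant derivative of the Schouten-type data (a Cotton-type tensor), and the purely algebraic contribution of the M\"obius operators. Normality asks that this curvature be a homology cycle, $\partial R^{\CD}=0$, for the Lie algebra boundary operator $\partial$, which acts essentially by contraction and skew-symmetrization; writing this out degree by degree, I expect it to reduce exactly to the trace and symmetry conditions singling out the conformal M\"obius structures. The hard part will be carrying out this last step uniformly in all dimensions: the Lie algebra homology, and with it the surviving components of $\partial$, changes in low dimensions, and it is precisely the evaluation of the relevant trace in dimensions one and two that has tripped up earlier treatments (as noted for Sharpe in the introduction), so the dimension-by-dimension bookkeeping of which components of $\partial R^{\CD}$ persist must be done with care.
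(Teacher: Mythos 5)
Your outline is essentially sound and ends up at the same place as the paper, but the key construction (M\"obius structure $\Rightarrow$ Cartan geometry) is organized differently. You build the Cartan bundle as the graded model $L^{-1}\dsum T\dual\mfd\ltens L\dsum L$ relative to a chosen Weyl structure, define $\CD=\nr^{D,\Ms}+D-\iden$ with $\nr^{D,\Ms}$ playing the role of an abstract Schouten tensor, and then must verify that a change $D\mapsto D+\gam$ acts by the unipotent gauge transformation $\exp(\gam)$ --- i.e.\ that the quadratic transformation law of $\nr^{D,\Ms}$ matches the logarithmic derivative of $\exp$ on the nilradical. That check does succeed (it is exactly the agreement of \eqref{eq:ch-nric} with \eqref{eq:gauge-nric}), so your route works, but it is where all the effort sits. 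The paper instead realizes the Cartan bundle canonically, with no choice of splitting, as $\Mb=\kernel\Mh\subset J^2L$ with metric induced by $\Mq$, and characterizes $\CD$ as the \emph{unique} adapted metric connection on this M\"obius bundle (uniqueness being the elementary algebraic fact that $\Hom(\Mb,\kernel\pi)\intersect\so(\Mb)=0$); canonicity is then automatic and the natural isomorphism in the round trip is furnished explicitly by the differential lift $j^\CD\colon J^2L\to\CV$. Your approach buys a more hands-on, representation-by-representation construction; the paper's buys a choice-free object and a one-line uniqueness argument. Two smaller points: you should state explicitly that the correspondence is with \emph{strongly torsion-free} connections (your round trip Cartan $\to$ M\"obius $\to$ Cartan silently uses $\alt\nr^{D,\CD}=-F^D$, which fails otherwise, and the reconstruction always outputs a strongly torsion-free connection --- a fact you should verify via $\trace\abrack{\iden\wedge\nr^{D,\Ms}}=nF^D$); and your final normality computation is simpler than you anticipate, since for the reconstructed connection $R^\CD=C^{\Ms,D}+W^\Ms$ with the Cotton--York term valued in the abelian nilradical, so $\LH R^\CD=\LH W^\Ms$ and the dimension-by-dimension bookkeeping is entirely absorbed into the single algebraic identity \eqref{eq:rc-rm} for $\LH\abrack{\iden\wedge\nr}$.
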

For manifolds of dimension at least $3$, similar results have been obtained
in~\cite{Car:ecc,CpGo:tbi,Gau:ccw,Tho:dig}, and a $2$-dimensional version is
sketched in~\cite{Cal:mew}, but the above theorem covers a wider class of
Cartan connections than in these references, a generalization which is vital
for our applications in submanifold geometry.

An immediate consequence of this theorem is a canonical way to normalize a
non-normal conformal Cartan geometry, a procedure which will be very important
in the rest of the paper. We discuss some further ramifications in
sections~\ref{sec:mob-str-low}--\ref{sec:conf-geom-rev}: we provide a more
explicit analysis of M\"obius structures for curves and surfaces, relating
them to the schwarzian derivative; we discuss the gauge theory and moduli of
conformal Cartan connections; and we revisit the flat model to discuss
spaceform geometries and symmetry breaking.

\smallbreak

In the second part of the paper, we apply the formalism of Part I to
submanifolds of conformal manifolds. We emphasise a gauge-theoretic point of
view, but use vector bundles and connections rather than frames and matrices
of 1-forms. Although the latter are more elementary, the choice of frame can
obscure the geometry and lead to large and complicated matrices in
applications. Without frames, the theory has conceptual simplicity: some
investment is needed to master the abstraction, but the payoff is a uniform
and efficient calculus which adapts geometrically to almost any application.

As in Part I, we give two approaches, one emphasising bundles and connections,
the other, more primitive homological data, and we relate these points of
view.  In sections~\ref{sec:sub-mob}--\ref{sec:geom-mob-red}, we show how such
a submanifold inherits a conformal M\"obius structure from the ambient
geometry. We first study how M\"obius structures on the submanifold are
induced by a choice of `M\"obius reduction', then we show that Lie algebra
homology distinguishes a canonical M\"obius reduction (cf.~\cite{Sha:dg}) and
hence obtain a canonical induced conformal M\"obius structure on the
submanifold. This leads to canonical \GCR/ equations relating the curvature of
these induced M\"obius structures to the ambient space.

In the case of submanifolds of the conformal sphere $S^n$, a M\"obius
reduction is the same thing as a sphere congruence enveloped by the
submanifold, which are of great interest in their own right, although we
postpone a detailed studied of sphere congruences \textit{per se} to a sequel
to this paper~\cite{BuCa:45}. The canonical M\"obius reduction in this case is
the central sphere congruence of W.~Blaschke and G.~Thomsen~\cite{Bla:vud} or
the conformal Gau\ss\ map of Bryant~\cite{Bry:dtw,Bry:scg}.  We show that the
minimal data characterizing the submanifold (locally, up to M\"obius
transformation) are certain Lie algebra homology classes satisfying natural
homological \GCR/ equations. These equations encode the flatness of the
ambient conformal Cartan connection of $S^n$, meaning that the immersion can
be reconstructed (locally, up to M\"obius transformation) from the \GCR/
data. Thus we obtain, in section~\ref{sec:conf-bonnet}, an analogue of the
classical Bonnet theorem.

\begin{result} An $m$-manifold $\submfd$ can be locally immersed in $S^n$
with a given induced conformal \mob/ structure, connection on the weightless
normal bundle and tracefree second fundamental form \textup(or conformal
acceleration for $m=1$\textup) if and only if these data satisfy the
homological \GCR/ equations. Moreover, in this case, the immersion is unique
up to a M\"obius transformation of $S^n$.
\end{result}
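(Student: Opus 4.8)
The plan is to package all three pieces of induced data into a single flat Cartan connection over $\submfd$ modelled on the \emph{ambient} sphere $S^n$, and then to develop this flat connection into $S^n$, exactly as in the proof that a flat conformal Cartan geometry is locally isomorphic to $S^n$ (Section~\ref{sec:conf-man}). The two directions of the equivalence then become the assertion that this assembled connection is flat if and only if its curvature has vanishing Lie algebra homology classes, while the uniqueness will follow from the rigidity of flat connections.

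I would dispatch necessity first, as it is the soft direction. Given an immersion $f\from\submfd\to S^n$, pull back the trivial (hence flat) tractor bundle $S^n\times\R^{n+1,1}$ with its canonical connection and split it along $\submfd$ using the canonical \mob/ reduction --- the central sphere congruence, equivalently the conformal Gau\ss\ map --- singled out by Lie algebra homology in Sections~\ref{sec:sub-mob}--\ref{sec:geom-mob-red}. Reading off the block decomposition of the pulled-back connection recovers the intrinsic conformal \mob/ structure on $\submfd$, the connection on the weightless normal bundle, and the tracefree second fundamental form (or conformal acceleration when $m=1$); the vanishing of the ambient curvature then decomposes, under Lie algebra homology, into exactly the homological \GCR/ equations.

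For sufficiency I would run this construction in reverse. From the abstract data assemble a rank $n+2$ bundle $\CV=\CV_\submfd\dsum N\to\submfd$ carrying a metric of signature $(n+1,1)$: the intrinsic conformal \mob/ structure furnishes, via the equivalence theorem of Section~\ref{sec:equivalence}, the intrinsic rank $m+2$ tractor bundle $\CV_\submfd$ with its normal connection, while the weightless normal bundle $N$ contributes the remaining rank $n-m$ with its given connection. Equip $\CV$ with the connection $D$ whose diagonal blocks are these two connections and whose off-diagonal blocks are built from the tracefree second fundamental form, so that by construction $D$ is normal. The homological \GCR/ equations are precisely the statement that the homology classes of the curvature $R^D$ --- its harmonic part --- vanish. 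Since $D$ is normal, $R^D$ is a cycle and is therefore recovered from its harmonic part by the homological machinery of~\cite{CaDi:di}; hence $R^D=0$ and $D$ is flat. The developing argument of Section~\ref{sec:conf-man} now produces a parallel orthonormal framing identifying $(\CV,D)$ with $\submfd\times\R^{n+1,1}$ and its trivial connection, and the filtration null line of $\CV$ maps to a varying null line, defining $f\from\submfd\to\Proj(\cL)=S^n$; naturality of the whole construction guarantees that $f$ is an immersion inducing exactly the prescribed data. For uniqueness, two immersions inducing the same data yield the same flat $(\CV,D)$, whose parallel framings differ by a constant element of $O_+(n+1,1)$, that is, by a \mob/ transformation of $S^n$.

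The hard part is the passage, in the sufficiency step, from the vanishing of the \emph{homological} \GCR/ classes to the genuine flatness $R^D=0$. This rests on two points that must be checked carefully: first, that the off-diagonal blocks of $D$ formed from the tracefree second fundamental form, together with the normalized intrinsic connection coming from Section~\ref{sec:equivalence}, really do make $D$ normal, so that $R^D$ is a Lie algebra homology cycle; and second, that for such a cycle the harmonic part is a complete obstruction, so that vanishing homology classes force $R^D=0$. A subsidiary nuisance is the low-dimensional bookkeeping: for $m=1$ and $m=2$ the intrinsic \mob/ structure carries the extra information recorded by the conformal acceleration or the schwarzian derivative, and one must track that this is correctly absorbed into the diagonal block of $D$ rather than double-counted against the off-diagonal second fundamental form.
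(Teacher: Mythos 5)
Your overall architecture (assemble a flat metric connection on a rank $n+2$ bundle over $\submfd$ and develop it into $S^n$) is exactly the paper's, and your necessity and uniqueness arguments are sound. The gap is in the sufficiency step, and it is twofold. First, the connection you assemble is incomplete. The restriction of the flat ambient connection to $\submfd$ is not $\CDh+\CS$ with diagonal part the \emph{normal} intrinsic Cartan connection plus $\nabla$ and off-diagonal part built from $\II^0$: it is $\CDh+\CQ+\CS$, where $\CQ$ is a $T\dual\submfd$-valued correction determined algebraically from $\II^0$ by $\CQ=-\half\Quabla_{\h}^{-1}\LH\liebrack{\CS\wedge\CS}$ (explicitly, \eqref{eq:Qformula}), and where $\CS$ itself is not just $\II^0-\Sh^0$ but its differential lift $\jDh\hc{\CS}$, whose extra $N\dual\submfd$-valued component $\CA$ is fixed by the divergence of $\II^0$ via \eqref{eq:N-normal}. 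Without $\CQ$ your connection has $\h$-curvature $\RDh+\half\liebrack{\CS\wedge\CS}=-\dDh\CQ$, which is generically nonzero even when the homological \GCR/ equations hold, so it is neither normal nor flat, and the development argument never gets started.

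Second, even for the corrected connection, the inference ``normal, hence a cycle, with vanishing homology class, hence zero'' is invalid: a cycle whose homology class vanishes is a boundary, not zero. What actually forces the curvature to vanish is the uniqueness of the differential lift --- a chain $\alpha$ with $\LH\alpha=0$ \emph{and} $\LH\dDh\alpha=0$ is determined by $\hc{\alpha}$ --- combined with the Bianchi identity; and relating the homology class of the curvature to the homological \GCR/ expressions requires expanding the BGG projector $\Pi$ as in the proof of Theorem~\ref{th:hom-gcr}. That proof contains a step for which you have no substitute: the verification that $\liebrack{\CQ\wedge\CS}$ lies in the image of $\LH$ (easy for $m\geq3$, a separate computation for $m=2$), which is precisely what makes the homological equations equivalent to the full equations~\eqref{eq:fullGCR} and keeps them quadratic in $\II^0$. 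Your ``check carefully'' placeholder sits exactly where this argument is needed, but the check as you have framed it (normality plus vanishing classes) would not close the gap.
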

In section~\ref{sec:weyl-conf-bonnet} we introduce ambient Weyl structures
along a submanifold and use them to give explicit formulae for the operators
entering into the homological \GCR/ equations. Then, in
section~\ref{sec:curv-surf}, we specialize this result first to curves, for
which we compute some M\"obius invariants, and then to surfaces, where we
relate our conformal Bonnet theorem to the explicit approach
of~\cite{BPP:sdfs}, developed by the first author and his coworkers at the
same time as our general theory. Finally we sketch the relation of our
approach to the quaternionic formalism for surfaces in $S^4\cong\HP1$
expounded in~\cite{BFLPP:cgs}.

\smallbreak
The third part of the paper concerns applications, with a view to
demonstrating the speed and effectiveness of our formalism once the machinery
is in place. In section~\ref{sec:env-sphere-cong}, we show how the most basic
aspects of conformal submanifold geometry, such as totally umbilic
submanifolds and channel submanifolds, have straightforward treatments in our
approach. We also discuss symmetry breaking and give an almost
computation-free proof of Dupin's Theorem on orthogonal coordinates.

In sections~\ref{sec:willmore}--\ref{sec:mflat}, we apply our methods to
constrained Willmore surfaces, isothermic surfaces, Guichard surfaces and
conformally flat submanifolds with flat normal bundle, which all find their
natural home in conformal M\"obius geometry and have in common an integrable
systems interpretation: the \GCR/ data come in one parameter families, leading
to spectral deformations and a family of flat connections, which we derive
straightforwardly in arbitrary codimension. For Willmore and constrained
Willmore surfaces, we show that the well-known relation with harmonicity of
the central sphere congruence has a homological interpretation.  We also
demonstrate easily that products of curves, constant mean curvature surfaces,
generalized H-surfaces, and quadrics are all isothermic, and we compute the
spectral deformation for products of curves.

In section~\ref{sec:mflat}, we turn to a class of submanifolds in in arbitrary
dimension and codimension, which we call \emphdef{M\"obius-flat}: if the
dimension is $3$ or more and the codimension is one, this is the well known
theory of conformally-flat hypersurfaces, and in higher codimension, we simply
add flatness of the normal bundle as an extra hypothesis. However, we also
present a new $2$-dimensional theory, which unifies conformally flat
submanifolds with channel surfaces and the surfaces of
Guichard~\cite{Gui:ssi,Cal:asg}. In this theory, we suppose that the surface
(with flat normal bundle) envelopes a sphere congruence for which the induced
normal Cartan connection (or equivalently, the induced conformal M\"obius
structure) is flat. When the sphere congruence is the central sphere
congruence, this means that the canonically induced M\"obius curvature of the
surface (which is an analogue of the Cotton--York curvature of conformal
$3$-manifolds) is zero, and we say the surface is \emphdef{strictly
M\"obius-flat}, whereas in general, this curvature is the exterior derivative
of a quadratic differential commuting with the shape operators, which we call
a \emphdef{commuting Cotton--York potential}. In codimension one, the strictly
M\"obius-flat surfaces are Dupin cyclides (orbits of a two dimensional abelian
subgroup of the M\"obius group), and we obtain a transparent
conformally-invariant derivation of their properties and classification.
M\"obius-flat surfaces in general have a much richer theory: in particular, as
we prove, since they include the Guichard surfaces, they also include the
surfaces of constant gaussian curvature in a spaceform, just as higher
dimensional submanifolds of constant gaussian (sectional) curvature in a
spaceform are conformally-flat. In a subsequent paper we shall show that the
classical transformation theory of Guichard surfaces extends to M\"obius-flat
submanifolds in arbitrary dimension and codimension.

\acknowledge The first author would like to thank Christoph Bohle, Franz
Pedit, Ulrich Pinkall, Aurea Quintino, Susana Santos and Chuu-Lian Terng for
helpful conversations. The second author is deeply indebted to Paul Gauduchon
and Tammo Diemer for discussions on Cartan connections and submanifold
geometry respectively. He also thanks Dalibor Smid for helpful comments and
computations and Daniel Clarke for his remarks. He is grateful to the
Leverhulme Trust, the William Gordon Seggie Brown Trust and the Engineering
and Physical Sciences Research Council for financial support during the long
period (for part of which he was based at the University of Edinburgh and the
University of York) in which this project came to fruition. Both authors would
like to thank Neil Donaldson, Nigel Hitchin and Udo Hertrich-Jeromin for
helpful comments and conversations, and the Centro di Georgi, Pisa, for its
hospitality during a key stage of this project.

\subsection*{A notational apology} Notation presents significant difficulties
in this work, since there are many different geometric objects which
can be constructed from each other in various ways. In this paper the
Reader will find the following constructions:
\begin{bulletlist}
\item a conformal Cartan connection $\CD^\CV$ from an enveloped sphere
congruence $\CV$;
\item a M\"obius structure $\Ms^\CD$ from a conformal Cartan connection $\CD$;
\item a normalized Ricci curvature $\nr^{D,\Ms}$ from a M\"obius structure
$\Ms$ and a Weyl derivative $D$.
\end{bulletlist}
Combining these constructions could lead to unreadable superscripts, so we
have adopted the policy of omitting intermediate steps whenever this causes no
confusion, leading to notations such as $\Ms^\CV$ and $\nr^{D,\CV}$.  We hope
that our preference for less decorated notation is more a help to the Reader
than a hindrance.

\part{Conformal M\"obius Geometry}

In the first part of our paper, we provide a complete theory of the intrinsic
conformal geometry that we shall use to study conformal submanifold geometry
in the later parts.  Although conformal geometry is well-established, our
theory has many novelties, in particular the notion of a M\"obius structure,
which allows us to describe, in intrinsic terms, the geometry of the large
class of conformal Cartan connections that arise in submanifold geometry.  We
also hope that it is a helpful treatment for several kinds of Reader, from the
representation theorist to the riemannian geometer, because of the
complementary approaches provided by the A and B sections. Let us describe
their contents in more detail.

In \S\ref{par:sphere-conf-cart}, we introduce conformal Cartan connections
$(\CV,\Ln,\CD)$ as curved versions of the model for conformal geometry, the
celestial sphere or projective lightcone in Minkowski space. The conformal
structure induced by such connections motivate the first ingredient of the
bottom-up approach, the conformal metric, which we discuss in
\S\ref{par:dens-conf-metric}, together with the disarmingly simple notion of
densities, which render unnecessary the use of riemannian metrics in the
conformal class.

Algebra in conformal geometry is more subtle than in riemannian geometry,
because the stabilizer of a point (a lightline) in the model is not reductive,
but a parabolic subgroup of the conformal (or M\"obius) group, which is
essentially the isometry group of Minkowski space. Fortunately there is an
algebraic machine, called \emphdef{Lie algebra homology}, which relates
representations of this parabolic group to its reductive part, the
\emphdef{Levi factor}, which is here the conformal linear group. We discuss
this for the filtered Lie algebra $\so(\CV)$ in \S\ref{par:filtered-lie-alg},
then for its associated graded algebra $TM\dsum\co(TM)\dsum T\dual M$ in
\S\ref{par:graded-lie-alg}.

We then introduce \emphdef{Weyl derivatives}, \ie, covariant derivatives on
density line bundles. In~\S\ref{par:weyl-str}, we show that they induce
splittings of the filtration of $\CV$ and hence a curvature decomposition and a
gauge theory.  In \S\ref{par:weyl-conn}, we see that they induce torsion-free
conformal connections and curvature, whose dependence on the Weyl derivative
we describe.

In \S\ref{par:diff-lift}, we introduce a natural differential lift from
$1$-densities to sections of $\CV$ and use it to define two second order
\emphdef{M\"obius operators}, one linear, one quadratic, associated to a
conformal Cartan connection. In \S\ref{par:mob-str}, we show that similar
operators exist naturally on any conformal manifold of dimension $n\geq3$, and
introduce \emphdef{M\"obius structures}, which cover such operators in
generality as well as allowing us to describe the natural ones.

We combine our two threads in section~\ref{sec:equivalence}, where we construct
a conformal Cartan connection from a M\"obius structure and prove that this
leads to an equivalence, so that the natural affine structure on the space of
M\"obius structures can be lifted to the space of conformal Cartan
connections, providing a way to `normalize' such a connection.

In section~\ref{sec:mob-str-low}, we discuss how to compute the M\"obius
structure from the Cartan connection, particularly in dimensions one and two,
where the M\"obius structure can be reinterpreted as a real or (perhaps
non-holomorphic) complex projective structure, and hence related to schwarzian
derivatives, cf.~\cite{BPP:sdfs,Cal:mew}.

Section~\ref{sec:conf-geom-rev} is concerned with further consequences of our
main result. In \S\ref{par:gauge-theory-moduli} we describe the moduli space
of conformal Cartan geometries inducing a given conformal metric. In
\S\ref{par:spaceform} we revisit the model geometry of $S^n$ to relate it to
the flat M\"obius structure of euclidean space and other spaceform geometries.
This sort of symmetry breaking often shows up in submanifold geometry, and in
\S\ref{par:sym-break} we end Part I with a general version of such phenomena.

\section{Conformal geometry}
\label{sec:conf-man}

\subsection{The sphere and conformal Cartan connections}
\label{par:sphere-conf-cart}

The flat conformal structure on the $n$-sphere arises most naturally by
viewing it not as the round sphere $\cS^n$ in the euclidean space $\R^{n+1}$,
but as the celestial sphere $S^n$ in the lorentzian spacetime
$\R^{n+1,1}$~\cite{Dar:tgs}.  Recall that $\R^{n+1,1}$ is real vector space of
dimension $n+2$ with a nondegenerate inner product $\lip{\cdot\,,\cdot}$ of
signature $(n+1,1)$. Inside $\R^{n+1,1}$, we distinguish the \emph{light-cone}
$\LC$:
\begin{equation*}
\LC=\setof{v\in\R^{n+1,1}\setdif\{0\}:\lip{v,v}=0},
\end{equation*}
which is a submanifold of $\R^{n+1,1}$.

Clearly, if $v\in\LC$ and $r\in\R\mult$ then $rv\in\LC$, so that $\R\mult$
acts freely on $\LC$ and we may take the quotient $\PL\subset
\Proj(\R^{n+1,1})$:
\begin{equation*}
\PL=\LC/\R\mult\cong\setof{U\subset\R^{n+1,1}:\text{$U$ is a
$1$-dimensional null subspace}},
\end{equation*}
which is a smooth $n$-sphere. The projection $q\from\LC\to\PL$ is an
$\R\mult$-bundle over $\PL$, which may be viewed as the nonzero vectors in a
tautological line bundle $\Ln$ over $\PL$: $\Ln$ is defined to be the
subbundle of the trivial bundle $\PL\cross\R^{n+1,1}$ whose fibre at
$U\in\PL$ is $U$ itself, viewed as a null line in $\R^{n+1,1}$. It is
convenient to orient $\Ln$, by choosing one of the components $\LC^+$ of
$\LC$: this choice is equivalently a `time-orientation' of $\R^{n+1,1}$.

Note that $T\LC\isom q^*\Ln^\perp\subset\LC\cross\R^{n+1,1}$ with vertical
bundle $q^*\Ln$. Hence the differential $\d\sigma$ of a section $\sigma$ of
$\LC^+\subset\Ln$ can be viewed $\Ln^\perp$-valued $1$-form on $\PL$ such that
$\d\sigma \modulo\Ln$ is an isomorphism $T\PL\to \Ln^\perp/\Ln$. This
isomorphism is clearly algebraic and linear in $\sigma$, so there is a
canonical isomorphism $T\PL\ltens\Ln\isom\Ln^\perp/\Ln$.
Here, and in the following, we omit tensor product signs when tensoring with
line bundles.

Since $\Ln^\perp/\Ln$ inherits a metric from $\R^{n+1,1}$, each positive
section $\sigma$ defines a metric on $\PL$ by
$X,Y\mapsto\lip{\d\low_X\sigma,\d\low_Y\sigma}$. Rescaling $\sigma$ evidently
gives a conformally equivalent metric, and this equips $\PL$ with a conformal
structure. In particular, by considering the conic sections of
$\Ln\subset\PL\cross\R^{n+1,1}$ given by $\lip{v,\sigma}=k$ for fixed
$v\in\R^{n+1,1}$ and $k\in\R\mult$, one sees that this conformal structure is
flat (cf.~\S\ref{par:spaceform}).

The beauty of this model is that it \emph{linearizes} conformal geometry.  Let
$\On_+(n+1,1)$ be the group of Lorentz transformations of $\R^{n+1,1}$
preserving the time orientation. Then the linear action of $\On_+(n+1,1)$ on
$\R^{n+1,1}$ preserves $\LC$ and so descends to an action on $\PL$. Since the
metric on $\Ln^\perp/\Ln$ is preserved, $\On_+(n+1,1)$ acts by conformal
diffeomorphisms on $\PL$ and this gives an isomorphism between $\On_+(n+1,1)$
and the group $\Mob(n)$ of \emphdef{\mob/ transformations} of $S^n=\PL$ (which
are the global conformal diffeomorphisms for $n\geq2$ and the projective
transformations of $S^1\cong \R P^1$ for $n=1$). $\Mob(n)$ acts transitively
on $\PL$ with stabilizer a parabolic subgroup $P\cong\CO(n)\subnormal\R^{n*}$,
which identifies $S^n$ with $\Mob(n)/P$, a generalized flag variety. Note that
signature $(k+1,1)$ linear subspaces of $\R^{n+1,1}$ give the conformal
$k$-spheres in $S^n$ by intersecting with
$\LC$---see~\S\ref{par:mob-sphere-cong} for more details.

A conformal manifold is a `curved version' of $S^n$: curved versions of
homogeneous spaces are usually defined as Cartan connections on principal
bundles; however, since we have described the flat model linearly, using flat
differentiation on $\PL\cross\R^{n+1,1}$, we shall also describe conformal
Cartan connections linearly, cf.~\cite{BEG:tcp,CpGo:tpg,Gau:ccw, Tho:dig}.
\begin{defn}
A \emphdef{conformal Cartan connection} on an $n$-manifold $\mfd$ is defined
by:
\begin{bulletlist}
\item the \emphdef{Cartan vector bundle} $\CV$, a rank $n+2$ vector bundle
with a signature $(n+1,1)$ lorentzian metric on each fibre;
\item the \emphdef{tautological line} $\Ln$, an oriented (so
trivializable) null line subbundle of $\CV$;
\item the \emphdef{Cartan connection} $\CD$, a metric connection on $\CV$
satisfying the following Cartan condition: the \emphdef{soldering form}
$\beta^\CD\colon T\mfd \to \Hom(\Ln,\Ln^\perp/\Ln)$, defined by
\begin{equation}
\beta^\CD_X\sigma=-\CD\low_X\sigma\modulo\Ln
\end{equation}
for any section $\sigma$ of $\Ln$ and any vector field $X$, is a bundle
isomorphism. (This makes sense because the right hand side is algebraic in
$\sigma\vtens X$, as it is for $\mfd=\PL$, $\CD=\d$.)
\end{bulletlist}
\end{defn}
Our data equip $\CV$ and $\mfd$ with structures intertwined by the soldering
form $\beta^\CD$. Our first observation is that the lorentzian metric on $\CV$
induces a conformal metric on $\mfd$, generalizing the case $\mfd=\PL$.  For
this, we denote by $\cL^+$ the positive ray subbundle of $\Ln$, and a section
$\sigma$ of $\cL^+$ (\ie, a positive section of $\Ln$) will be called a
\emphdef{gauge}.  We then obtain a conformal class of metrics on $\mfd$
defined by $X,Y\mapsto\lip{\CD\low_X\sigma,\CD\low_Y\sigma}$ for each gauge
$\sigma$.  In more invariant terms, we can write
$\lip{\CD\low_X\sigma,\CD\low_Y\sigma} =\cip{X,Y}\sigma^2$, where $\cip{X,Y}$
is a fibrewise inner product on $T\mfd$ with values in $(\Ln\dual)^2$ which is
independent of the choice of gauge.

\begin{prop}\tcite{Car:ecc}\label{p:flat-ccc}
The celestial $n$-sphere $S^n=\PL$ is naturally equipped with a flat conformal
Cartan connection. Conversely, if $\mfd$ is equipped with a flat conformal
Cartan connection, then $\mfd$ is locally isomorphic to $S^n$.
\end{prop}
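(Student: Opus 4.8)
The plan is to prove the two directions separately. For the forward direction, I must exhibit the flat conformal Cartan connection on $S^n=\PL$. The natural candidate is already implicit in the construction above: take $\CV=\PL\cross\R^{n+1,1}$ (the trivial bundle, equipped with the constant lorentzian metric), take $\Ln$ to be the tautological null line subbundle described in the excerpt, and take $\CD=\d$, the flat (trivial) connection. I would then verify the three axioms of a conformal Cartan connection. The metric axiom is immediate since the inner product is constant; the orientability of $\Ln$ follows from the chosen time-orientation of $\R^{n+1,1}$. The only substantive point is the Cartan condition: I must check that $\beta^{\d}_X\sigma=-\d_X\sigma\modulo\Ln$ defines a bundle isomorphism $T\PL\to\Hom(\Ln,\Ln^\perp/\Ln)$. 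But this is exactly the canonical isomorphism $T\PL\ltens\Ln\isom\Ln^\perp/\Ln$ established in the discussion preceding the definition, since $\d\sigma\modulo\Ln$ was shown there to be an isomorphism $T\PL\to\Ln^\perp/\Ln$ for any positive section $\sigma$. Flatness is clear as $\d$ is the trivial connection on a trivial bundle.

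\emph{The main work lies in the converse.} Suppose $\mfd$ carries a flat conformal Cartan connection $(\CV,\Ln,\CD)$. Since $\CD$ is flat, around any point $p\in\mfd$ there is a contractible neighbourhood $U$ on which $\CD$ admits a parallel frame; equivalently, there is a connection-preserving isomorphism $\Phi\from(\CV\restr{U},\CD)\to(U\cross\R^{n+1,1},\d)$ which is moreover a fibrewise isometry, because $\CD$ is a metric connection and parallel transport of the lorentzian metric along contractible loops is trivial. Under $\Phi$, the null line subbundle $\Ln\restr{U}$ maps to a smoothly varying family of null lines, i.e.\ to a smooth map $f\from U\to\PL$ sending $x\in U$ to the null line $\Phi(\Ln_x)\subset\R^{n+1,1}$. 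The claim is that $f$ is the desired local isomorphism to $S^n=\PL$.

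To finish, I would show that $f$ is a local diffeomorphism which pulls back the flat conformal Cartan connection on $\PL$ to $(\CV\restr{U},\Ln\restr{U},\CD)$. The key computation is that $\Phi$ intertwines the soldering forms: for a gauge $\sigma$ of $\Ln$ over $U$, the section $\Phi\circ\sigma$ is a positive section of the tautological line over the image, and since $\Phi$ is parallel, $\d(\Phi\circ\sigma)=\Phi(\CD\sigma)$. Reducing modulo the tautological line and invoking the Cartan condition for $\CD$ shows that $\d f$ is injective (because $\beta^\CD$ is an isomorphism), so $f$ is an immersion between $n$-manifolds, hence a local diffeomorphism. The same identity, compared with the soldering form $\beta^{\d}$ on $\PL$ computed in the first part of the proof, shows $f^*\beta^{\d}=\beta^\CD$ under $\Phi$, so $f$ (together with $\Phi$) is an isomorphism of conformal Cartan connections onto its image in $S^n$. \emph{The step I expect to require most care} is establishing that $\Phi$ can be chosen to be simultaneously $\CD$-parallel and a fibrewise isometry mapping $\Ln$ to a genuine \emph{null} line subbundle: this rests on the facts that $\CD$ is metric (so parallel transport preserves the inner product and hence the light-cone) and that $\Ln$ is $\CD$-compatible in the weak sense encoded by the Cartan condition, so that its image under the trivialization is a smoothly varying null line rather than something degenerate.
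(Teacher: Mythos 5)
Your proof is correct and takes essentially the same route as the paper: the forward direction is verified exactly as you describe, and for the converse the paper likewise uses flatness to map each simply connected open set to the space of parallel null lines in $\CV$ (your explicit parallel isometric trivialization $\Phi$ is just a concrete realization of this), with the Cartan condition guaranteeing a local diffeomorphism. The point you flag as delicate is unproblematic for the reason you give — a metric connection has parallel transport preserving the inner product, hence the light-cone — so no gap remains.
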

\begin{proof} We take $\CV=\PL\cross\R^{n+1,1}$, with the constant lorentzian
metric, the tautological null line subbundle, and flat differentiation. We
have already seen that the Cartan condition holds, and $R^\CD=0$ by
definition.

Conversely, if $\CD$ is flat, then the inclusion $\Ln\to\CV$ defines a map
$\Phi$ from each simply connected open subset $\Omega$ of $\mfd$ to the space
of parallel null lines in $\CV\restr\Omega$, which is diffeomorphic to
$S^n$. By the Cartan condition, $\Phi$ is a local diffeomorphism. Furthermore
$\Phi$ clearly identifies $\Ln$ with the tautological line bundle over $S^n$,
and $\CD$ with the trivial connection on $S^n\cross\R^{n+1,1}$. In particular
$\Phi$ is conformal.
\end{proof}
The same argument will be used later to obtain an immersion into $S^n$ in
the proof of the Bonnet theorem for conformal submanifolds.

\begin{rem}\label{r:classical-cart}
A Cartan connection in the usual sense (see~\cite{CDS:rcd,Sha:dg}), modelled
on a homogeneous space $G/P$, is a principal $G$-bundle $\cC$ equipped with a
principal $G$-connection $\omega$ and a reduction $\cG\subset\cC$ to a
principal $P$-bundle such that $\omega\restr\cG \modulo\p\colon T\cG\to \g/\p$
is an isomorphism on each tangent space.  When $G=\Mob(n)$ and $G/P=S^n$, it
is easy to see that this induces a conformal Cartan connection on the bundle
associated to the standard representation $\R^{n+1,1}$ of
$G\cong\On_+(n+1,1)$, and this linear representation suffices to recover the
original Cartan connection.
\end{rem}

\subsection{Densities and conformal metrics}
\label{par:dens-conf-metric}

In \S\ref{par:sphere-conf-cart}, we obtained the conformal class of riemannian
metrics on $S^n$ from gauges, \ie, sections of $\cL^+\subset\Ln$. This is an
entirely general phenomenon: a conformal structure on a manifold $\mfd$ is an
oriented line subbundle of $S^2T\dual \mfd$ whose positive sections are
riemannian metrics (\ie, positive definite).  One small difficulty with such a
definition is that each conformal structure has its own line bundle.  However,
we can resolve this by relating any such line bundle to the density bundle of
$\mfd$.

If $E$ is a real $n$-dimensional vector space and $w$ any real number, then
the oriented one dimensional linear space $L^w=L^w_E$ carrying the
representation $A\mapsto|\detm A|^{w/n}$ of $\GL(E)$ is called the space of
\emphdef{densities of weight $w$} or \emphdef{$w$-densities}.  This space
can be constructed canonically from $E$ as the space of maps
\begin{equation*}
\lam\from(\Wedge^n E)\setdif\{0\}\to\R\;\text{ such that }\;
\lam(r A)=|r|^{-w/n}\lam(A)\;\text{ for all }\;
r\in\R\mult,\; A\in(\Wedge^nE)\setdif\{0\}.
\end{equation*}
Note that the modulus of $\omega\in \Wedge^n E\dual$ is an element $|\omega|$
of $L^{-n}$ defined by $|\omega|(A) = |\omega(A)|$.

The \emphdef{density line bundle} $L^w=L^w_{T\mfd}$ on an $n$-dimensional
manifold $\mfd$ is the bundle whose fibre at $x\in \mfd$ is
$L^w_{\smash{T_x\mfd}}$.  This is the associated bundle
$\GL(\mfd)\cross_{\smash{\GL(n)}} L^w(n)$ to the frame bundle $\GL(\mfd)$,
where $L^w(n)$ is the space of $w$-densities of $\R^n$.  The density bundles
are oriented real line bundles, but there is no preferred trivialization
unless $w=0$.

\begin{prop}\label{p:same-L}
Let $\Ln\otimes\Ln$ be a line subbundle of $S^2T\dual\mfd$, with $\Ln$
oriented, whose positive sections are nondegenerate. Then there is a canonical
oriented isomorphism $\Ln\to L^{-1}$.
\end{prop}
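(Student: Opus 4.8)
The plan is to write down a natural fibrewise formula for the isomorphism and observe that naturality promotes it to a smooth bundle map; the whole argument is pointwise linear algebra on $E=T_x\mfd$. First I would recall that any nondegenerate symmetric bilinear form $g$ on $E$ carries a canonical volume density $\mu_g\in L^{-n}$, determined by $\mu_g(e_1\wedge\cdots\wedge e_n)=|\detm(g(e_i,e_j))|^{1/2}$ for any basis $e_1,\dots,e_n$ of $E$. This is independent of the basis and natural in $E$; moreover replacing $e_1$ by $re_1$ multiplies the right-hand side by $|r|$, which is precisely the homogeneity of a weight $-n$ density in the sense defined above, confirming $\mu_g\in L^{-n}$.

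Next I would use the hypothesis. Since $\Ln$ is oriented, so is $\Ln\otimes\Ln$, and for a positive element $\ell$ of $\Ln$ the square $g_\ell:=\ell\otimes\ell$ is a positive section of $\Ln\otimes\Ln\subset S^2T\dual\mfd$, hence nondegenerate. I then set $\Phi(\ell):=\mu_{g_\ell}^{1/n}\in L^{-1}$, the positive $n$-th root. This makes sense because the density bundles are canonically oriented and $(L^{-1})^{\otimes n}=L^{-n}$ canonically, so a positive element of $L^{-n}$ has a unique positive $n$-th root in $L^{-1}$.

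It then remains to check that $\Phi$ is linear and bijective. From $g_{t\ell}=t^2 g_\ell$ and $\mu_{cg}=c^{n/2}\mu_g$ for $c>0$ one gets $\Phi(t\ell)=\mu_{t^2 g_\ell}^{1/n}=(t^n\mu_{g_\ell})^{1/n}=t\,\Phi(\ell)$ for $t>0$; thus $\Phi$ is positively homogeneous of degree one from the positive ray of $\Ln$ to that of $L^{-1}$, and because both line bundles are oriented it extends uniquely to an orientation-preserving linear map $\Ln\to L^{-1}$. Nondegeneracy gives $\mu_{g_\ell}\neq0$, so $\Phi$ is nonzero on each fibre and hence an isomorphism; naturality of each step in $E$ makes $\Phi$ a smooth (indeed canonical) bundle isomorphism, as required.

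The one genuinely delicate point — the step I would watch most carefully — is the bookkeeping of weight and orientation. It is easy to land in $L^{+1}$ rather than $L^{-1}$ if the homogeneity degrees are tracked carelessly, and the squaring $\ell\mapsto\ell\otimes\ell$ forgets the sign of $\ell$, so the orientation of $\Ln$ (equivalently its positive ray) must be invoked explicitly to upgrade the map on positive sections to a genuine linear isomorphism; without it one obtains only a well-defined map of positive rays.
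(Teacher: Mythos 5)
Your proof is correct and is essentially the paper's argument: both take the modulus of the volume form of the induced metric and extract a positive $n$th root, using the orientation of $\Ln$ to land in $L^{-1}$. The only difference is packaging — the paper absorbs the $\Ln$-dependence by viewing the inclusion as a metric on $T\mfd\ltens\Ln$, so the root is directly a section of $\Hom(\Ln,L^{-1})$, whereas you evaluate on positive elements and verify degree-one homogeneity to recover linearity; the two are interchangeable.
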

\begin{proof} The inclusion of $\Ln^2:=\Ln\otimes\Ln$ into $S^2T\dual \mfd$ is
a section of $S^2T\dual \mfd\ltens(\Ln^{2})^*=S^2(T\mfd\ltens\Ln)\dual$, and
this defines a nondegenerate metric on $T\mfd\ltens\Ln$ by assumption. The
modulus of the volume form of this metric is a positive section of
$L^{-n}\ltens(\Ln^{*})^n$, and the positive $n$th root of this, a section of
$L^{-1}\ltens \Ln^{*}\cong\Hom(\Ln,L^{-1})$, is the isomorphism we seek.
\end{proof}

This proposition tells us that we may view the conformal structure as a
section of $S^2T\dual\mfd\ltens L^2$. The sections we obtain in this way are
not arbitrary, but \emphdef{normalized} by the condition that the modulus of
their volume form is the canonical section $1$ of $L^{-n}\ltens
L^{n}=\mfd\cross\R$.

\begin{defn}\tcite{Tho:dig}
A \emphdef{conformal metric} on $\mfd$ is a normalized metric $\conf$ on the
\emphdef{weightless tangent bundle} $T\mfd\ltens L^{-1}$. It induces an inner
product $\cip{\cdot\,,\cdot}$ on $T\mfd$ with values in $L^2$.
\end{defn}

The use of densities allows a simple geometric dimensional analysis for
tensors~\cite{Die:PhD,Wey:stm}. Sections of $L=L^1$ may be thought of as
scalar fields with dimensions of length: a positive section $\ell$ defines a
\emphdef{length scale}. The tensor bundle
$L^w\tens(T\mfd)^j\tens(T\dual\mfd)^k$ (and any subbundle, quotient bundle,
element or section) will be said to have \emphdef{weight} $w+j-k$.

A conformal metric $\conf$ is weightless, and we shall use it freely to
`raise and lower indices' with the proviso that the weight of tensors is
preserved.  For example, a $1$-form $\gam$ is identified in this way with
a vector field of weight $-1$, that is, a section of $T\mfd \ltens
L^{-2}$. Where necessary for clarity, we denote this isomorphism and its
inverse by $\sharp$ and $\flat$ in the usual way.

\section{Conformal algebra}
\label{sec:conf-alg}

\subsection{The filtered Lie algebra bundle}
\label{par:filtered-lie-alg}

Let $(\CV,\Ln,\CD)$ be a conformal Cartan connection on $\mfd$.  The
filtration $0\subset\Ln\subset\Ln^\perp\subset\CV$ induces a filtration of the
Lie algebra bundle $\so(\CV)$:
\begin{equation*}
0\subset \so(\CV)_{-1}\subset\so(\CV)_0\subset\so(\CV)_1=\so(\CV),
\end{equation*}
where
\begin{align*}
\so(\CV)_{-1} &= \{S\in\so(\CV)\colon S\restr\Ln=0,\,
S(\Ln^\perp)\subseteq\Ln\},\\
\so(\CV)_0 &= \{S\in\so(\CV)\colon S(\Ln)\subseteq\Ln\}.
\end{align*}
Hence $\so(\CV)_0$ is the stabilizer $\stab(\Ln)$ of $\Ln$, which is a bundle
of parabolic subalgebras, whereas $\so(\CV)_{-1}$ is the Killing annihilator
$\stab(\Ln)^\perp$, which is the bundle of (abelian) nilradicals of
$\stab(\Ln)$. By definition, for $S\in\stab(\Ln)^\perp$, $S\restr{\Ln^\perp}$
has image and kernel $\Ln$, and it is easy to see that this restriction
defines an isomorphism $\stab(\Ln)^\perp\cong \Hom(\Ln^\perp/\Ln,\Ln)$.

These filtrations give rise to a bundle
\begin{equation*}
\so(\CV)\gr= \so(\CV)_{-1}\dsum
\so(\CV)_0/\so(\CV)_{-1}\dsum \so(\CV)_1/\so(\CV)_0
\end{equation*}
of graded Lie algebras acting on the bundle
$\CV\gr=\Ln\dsum\Ln^\perp/\Ln\dsum\CV/\Ln^\perp$ of graded lorentzian vector
spaces. Only $\so(\CV)_0/\so(\CV)_{-1}$ preserves the grading, and this
defines an isomorphism $\so(\CV)_0/\so(\CV)_{-1}\cong
\so(\Ln\dsum\CV/\Ln^\perp)\dsum \so(\Ln^\perp/\Ln)$. On the other hand,
restriction to $\Ln$ defines an isomorphism from $\so(\CV)/\so(\CV)_0$ to
$\Hom(\Ln,\Ln^\perp/\Ln)$.

The key point now is that the soldering form allows us to identify each graded
component of $\CV\gr$, and hence of $\so(\CV\gr)$, with a natural vector
bundle on $\mfd$.  For this, begin by noting that $\beta^\CD\colon
T\mfd\to\Hom(\Ln,\Ln^\perp/\Ln)$ naturally induces an isomorphism
$T\mfd\ltens\Ln\cong\Ln^\perp/\Ln$.  As in \S\ref{par:dens-conf-metric}
(cf.~Proposition~\ref{p:same-L}) the conformal metric induces an
identification $\Ln\cong L^{-1}$, which we shall use freely. We thus have a
projection $\pi\colon\Ln^\perp\to T\mfd\ltens L^{-1}$, with kernel $\Ln$,
given by
\begin{equation}\label{eq:pi-V}
\pi\CD_X\sigma=-X\tens\sigma.
\end{equation}
We next use minus the metric on $\CV$ to identify $\CV/\Ln^\perp$ with
$\Ln\dual\cong L$ and so obtain a projection $p\colon\CV\to L$, with kernel
$\Ln^\perp$, such that for $v\in\CV$, $\sigma\in\Ln$, we have
\begin{equation}\label{eq:p-V}
\ip{p(v),\sigma}=-\lip{v,\sigma}.
\end{equation}
(We have introduced minus signs in the definitions of $\beta^\CD$ and $p$ so
that derivatives appear positively in components of jets later on:
see~\S\ref{par:jet-deriv-mob}.)

Dualizing the soldering form gives an isomorphism of $T\dual\mfd$ with
$\Hom(\Ln^\perp/\Ln,\Ln)$, and hence with $\stab(\Ln)^\perp$.  We have
therefore realized $T\dual\mfd$ as an abelian subalgebra of $\so(\CV)$ with
action on $\CV$ determined by
\begin{equation}\label{eq:gam-act}
\gam\act\sigma=0, \qquad  \gam\act\CD\low_X\sigma=-\gam(X)\sigma,
\end{equation}
for $\gam\in T\dual\mfd$, $X\in T\mfd$, and $\sigma\in\Ln$.

The soldering form also identifies $\so(\CV)/\so(\CV)_0$ with $TM$ (restrict
to $\Ln\cong L^{-1}$) and $\so(\CV)_0/\so(\CV)_{-1}$ with
$\co(TM):=\vspan{\iden}\dsum\so(TM)$ (use $TM\cong (TM\ltens L^{-1})\tens L
\cong \Ln^\perp/\Ln \tens \CV/\Ln^\perp$). The projections $p$ and $\pi$ onto
graded pieces, which we introduced for $\CV$, have analogues for $\so(\CV)$
making these identifications explicit: $p\colon \so(\CV)\to TM$, with kernel
$\stab(\Ln)$ is defined by $(pS)\vtens\sigma= \pi(S\sigma)$ for
$S\in\so(\CV)$; $\pi\colon \stab(\Ln)\to \co(TM)$, with kernel
$\stab(\Ln)^\perp$, is defined $(\pi S)\circ p=p\circ S$ and $(\pi S)\circ\pi
= \pi\circ S\restr{\Ln^\perp}$ for $S\in\stab(\Ln)$. In particular:
\begin{equation}
\pi(S(\CD_X\sigma))=-(\pi S)(X\vtens\sigma).
\end{equation}

These constructions lead to several other useful formulae. First we have
$\lip{\CD_X\sigma,\CD_Y\sigma}=-(\sigma,\CD_X\CD_Y\sigma)$, yielding
\begin{equation} \label{eq:pD2}
p(\CD_X\CD_Y\sigma)=\cip{X,Y}\sigma.
\end{equation}
It will often be convenient to use the metric on $T\mfd\ltens L^{-1}$ to view
$\pi$ as a map $\Ln^\perp\to T\dual\mfd\ltens L$. With $v$ any section of
$\Ln^\perp$, \eqref{eq:pD2} can then be rewritten as:
\begin{equation} \label{eq:pDv}
p(\CD v)=-\pi v.
\end{equation}
We also have, for any $v\in\CV$, $\lip{\pi(\gam\act v),X\tens\sigma}
=\lip{\gam\act v,\beta^\CD_X\sigma}=-\lip{v,\gam\act\beta^\CD_X\sigma}=
-\gam(X)\lip{v,\sigma}$ so that
\begin{equation}\label{eq:pi-gam-act}
\pi(\gam\act v)=\gam\vtens p(v).
\end{equation}

We will have frequent recourse hereafter to the Lie algebra homology of the
bundle of abelian Lie algebras $T\dual\mfd$.  For this, let $W$ be any vector
bundle carrying a fibrewise representation of $T\dual\mfd$. Define, for each
$k$, $\LH\colon \Wedge^{k+1}T\dual M\tens W\to \Wedge^{k}T\dual\mfd\tens W$ by
\begin{equation}
\LH\alpha=\tsum_i \eps_i\act (e_i\interior\alpha),\qquad\text{\ie,}\qquad
\LH\alpha\low_{X_1,\ldots X_k} =
\tsum_i \eps_i\act\alpha\low_{e_i,X_1,\ldots X_k}.
\end{equation}
Here $\eps_i,e_i$ are dual local frames of $T\dual \mfd$ and $T\mfd$.  Now if
$\alpha=\LH\beta$, then
\begin{equation*}
\LH\alpha\low_{X_1,\ldots X_k} =\sum_{i,j}
\eps_i\act\eps_j\act\beta\low_{e_j,e_i,X_1,\ldots X_k}
=\sum_{i<j} \liebrack{\eps_i,\eps_j}\act\beta\low_{e_j,e_i,X_1,\ldots X_k}=0
\end{equation*}
since $T\dual \mfd$ is abelian. Hence $(\Wedge^{\bullet}T\dual\mfd\tens
W,\LH)$ is a chain complex, and we denote its cycles by $Z_\bullet(T\dual
\mfd,W)$ and its homology, called \emphdef{Lie algebra homology}, by
$H_\bullet(T\dual\mfd,W)$.

We shall only be interested in the cases $W=\CV$ and $W=\so(\CV)$. In
particular let us compute $Z_1(T\dual\mfd,\CV)$: if $\alpha\in T\dual\mfd\tens
\CV$ with $\LH \alpha=0$ then, in particular, from~\eqref{eq:pi-gam-act},
\begin{equation*}
0=\pi\LH \alpha=\tsum_i\eps_i\tens p(\alpha\low_{e_i})
\end{equation*}
whence $\alpha$ takes values in $\Ln^\perp$.  Now we deduce
from~\eqref{eq:gam-act} that
\begin{equation}\label{eq:H1V}
Z_1(T\dual\mfd,\CV)=\{\alpha\in T\dual\mfd\tens
\Ln^\perp:\tsum_i\cip{\eps_i,\pi\alpha\low_{e_i}}=0\}.
\end{equation}

\subsection{The graded Lie algebra bundle}
\label{par:graded-lie-alg}

On any manifold $\mfd$, the Lie algebra bundle of endomorphisms $\gl(T\mfd)$
acts fibrewise on the density bundle $L$ via $A\act\ell=(\tfrac1n\trace
A)\ell$ and hence also on $S^2 T\dual M\ltens L^2$. The stabilizer of a
conformal metric $\conf$ is the bundle of conformal linear Lie algebras
$\co(T\mfd)$. Thus $A\in\co(T\mfd)$ if and only if
\begin{equation*}
\cip{AX,Y}+\cip{X,AY}=\tfrac2n(\trace A)\cip{X,Y}.
\end{equation*}
The Lie bracket on $\co(T\mfd)$ can be extended to one on $T\dual
M\dsum\co(T\mfd)\dsum T\mfd$: declare $T\mfd$ and $T\dual\mfd$ to be abelian
subalgebras and, for $(\gam,A,X)\in T\dual\mfd\dsum\co(T\mfd)\dsum T\mfd$, set
\begin{equation}
\abrack{A,X}=AX, \qquad\qquad \abrack{\gam,A}=\gam\circ A;
\end{equation}
finally define $\abrack{X,\gam}=-\abrack{\gam,X}\in\gl(TM)$ by
\begin{equation}
\abrack{X,\gam}\act Y= -\abrack{\gam,X}\act
Y=\gam(X)Y+\gam(Y)X-\cip{X,Y}\gam^\sharp.
\end{equation}
Observe that $\abrack{X,\gam}\in \co(T\mfd)$ with
$\trace\abrack{X,\gam}=n\gam(X)$ and $\abrack{X,\gam}\act Y
=\abrack{Y,\gam}\act X$.

Define an inner product of signature $(n+1,1)$ on $L^{-1}\dsum T\dual\mfd
\ltens L \dsum L$ by setting
\begin{equation*}
\lip{v,v}=\cip{\theta,\theta}-2\sigma\ell,
\end{equation*}
for $v=(\sigma,\theta,\ell)$. An action of $(\gam,A,X)\in T\dual
M\dsum\co(T\mfd)\dsum T\mfd$ on $(\sigma,\theta,\ell)\in L^{-1}\dsum
T\dual\mfd\ltens L\dsum L$ is given as follows: $A\in\co(T\mfd)$ acts in the
natural way on each component, and we set
\begin{align}\label{eq:act-on-V}\notag
X\act\sigma&=X\vtens\sigma\in T\mfd\ltens L^{-1}\cong T\dual\mfd \ltens L&
\gam\act\sigma&=0\\
X\act\theta&=\theta(X)\in L&
\gam\act\theta&=\cip{\gam,\theta}\in L^{-1}\\
X\act\ell&=0&
\gam\act\ell&=\gam\vtens\ell\in T\dual\mfd\ltens L.
\notag
\end{align}
This action is skew with respect to the inner product, giving a Lie algebra
isomorphism $T\dual\mfd\dsum\co(T\mfd)\dsum T\mfd\cong\so(L^{-1}\dsum
T\dual\mfd\ltens L\dsum L)$: the Lie bracket of $S,T\in T\dual
M\dsum\co(T\mfd)\dsum T\mfd$ can be computed via $\abrack{S,T}\act
v=S\act(T\act v)-T\act(S\act v)$ for any $v\in L^{-1}\dsum T\dual\mfd\ltens
L\dsum L$.

Of course, we have not plucked this Lie algebra action out of thin air: if the
conformal metric $\conf$ is induced by a conformal Cartan connection
$(\CV,\Ln,\CD)$, then the identifications of the previous paragraph induce
isomorphisms of $\CV\gr$ with $L^{-1}\dsum T\dual\mfd\ltens L\dsum L$ and of
$\so(\CV)\gr$ with $T\dual\mfd\dsum\co(T\mfd)\dsum T\mfd$ which intertwine the
natural action of $\so(\CV)\gr$ on $\CV\gr$ with the above action of
$T\dual\mfd\dsum\co(T\mfd)\dsum T\mfd$ on $L^{-1}\dsum T\dual\mfd\ltens L\dsum
L$. It follows that the above graded Lie algebra structure on
$T\dual\mfd\dsum\co(T\mfd)\dsum T\mfd$ is precisely that induced by the
filtered Lie algebra structure on $\so(\CV)$. Hence for all
$S_{-1}\in\so(\CV)_{-1}=\stab(\Ln)^\perp$, $S_0,T_0\in\so(\CV)_0=\stab(\Ln)$
and $T\in\so(\CV)$, we have $p\liebrack{S_0,T}=\abrack{\pi S_0,p T}$,
$\pi\liebrack{S_0,T_0}=\abrack{\pi S_0,\pi T_0}$, $p\liebrack{S_{-1},
T}=\abrack{S_{-1},p T}$ and $\pi\liebrack{S_{-1}, T_0}=\abrack{S_{-1},\pi
T_0}$. We shall use these formulae freely.

As in \S\ref{par:filtered-lie-alg}, we may define Lie algebra homology, but
now the operator $\LH$ is also graded and may be restricted to graded
pieces. To analyse this, we fix a length scale $\ell$ and define an involution
$v\mapsto v^*=(-\lam\ell^{-2},\theta,-\sigma\ell^2)$, where $v=
(\sigma,\theta,\lam)\in L^{-1}\dsum T\dual\mfd \ltens L \dsum L$, so that
$\ip{v,v}:=\lip{v^*,v}=\ip{\theta,\theta}+\lam^2\ell^{-2}+\sigma^2\ell^2$ is
positive definite\footnote{For indefinite signature conformal structures, we
further need a choice of maximal positive definite subbundle of $T\mfd$ to
define such an involution.} on $\CV$. We further define $S\mapsto
S^*=-(X^{\flat}\ell^{-2} ,A^T,\gam^{\sharp}\ell^2)$, for $S=(\gam,A,X)\in
T\dual\mfd\dsum\co(T\mfd)\dsum T\mfd$, where the musical isomorphisms are
induced by $g$. We compute that $(S\act v)^*=S^*\act v^*$ and hence
$\abrack{S,T}^*=\abrack{S^*,T^*}$. Further, $\ip{S,S}:=\lip{S^*,S}$ is
positive definite, where $\lip{S,S}$ is the invariant inner product of
signature $(\half n(n+1),n+1)$ on $T\dual\mfd\dsum\co(T\mfd)\dsum T\mfd$ given
by
\begin{equation*}
\lip{S,S}=\cip{A_0,A_0}-\mu^2-2\gam(X)
\end{equation*}
for $S=(\gam,A,X)$ and $A=\mu\,\iden+A_0$ with $A_0\in\so(T\mfd)$.  It follows
that $S\mapsto S^*$ is a Cartan involution of the graded Lie algebra, as in
Kostant's celebrated paper~\cite{Kos:lac}. Following this paper, we now
compute (minus) the adjoint of $\LH$ on $T\dual\mfd\dsum\co(T\mfd)\dsum T\mfd$
with respect to the metrics $g$ and $\ip{,}$:
\begin{equation*}
\ip{\LH\alpha,\beta}=\lip{(\LH\alpha)^*,\beta} = \Lip{\tsum_i
\abrack{\eps_i^*,(e_i\interior\alpha)^*},\beta} =-\Lip{\alpha^*,\tsum_i
e_i^*\wedge\abrack{\eps_i^*,\beta}} =-\ip{\alpha,\abrack{\iden\wedge\beta}},
\end{equation*}
where $\abrack{\iden\wedge\beta}=\sum_i \eps_i\wedge\abrack{e_i,\beta}$.  This
is a special case of the operator $w\mapsto\iden\mathinner{\wedge\cdot}
w:=\sum_i \eps_i\wedge(e_i\act w)$, defined on any representation $W$ of
$T\dual\mfd\dsum\co(T\mfd)\dsum T\mfd$, independently of any choice of length
scale. Note however, that it is \emph{not} naturally defined on
representations of the filtered Lie algebra $\so(\CV)$.

We shall be particularly interested in the \emphdef{Ricci contraction}
$\LH\colon\Wedge^2T\dual\mfd\tens \co(T\mfd)\to T\dual \mfd\tens T\dual\mfd$
given by $\LH\colon R\mapsto \ricci$, with
\begin{equation*}
\ricci\low_X(Y)=\tsum_i\abrack{\eps_i,R_{e_i,X}}(Y)=\tsum_i\eps_i(R_{e_i,X}Y),
\end{equation*}
and the \emphdef{Ricci map} $\nr\mapsto \abrack{\iden\wedge\nr}\colon
T\dual\mfd\tens T\dual\mfd \to\Wedge^2T\dual\mfd\tens\co(T\mfd)$, where
$\abrack{\iden\wedge\nr}\low_{X,Y}=
\abrack{X,\nr\low_Y}-\abrack{Y,\nr\low_X}$. As a special case of the above
theory, the Ricci contraction is minus the adjoint of the Ricci map (using any
fixed length scale $\ell$).  In particular, the kernel of the Ricci
contraction and the image of the Ricci map (and vice versa) are orthogonal
complements with respect to $\ip{,}$. Moreover, $\LH\abrack{\iden\wedge\nr}$
is readily computed to be
\begin{equation}\label{eq:rc-rm}
(n-2)\sym_0\nr +2(n-1)\bigl(\tfrac1n\trace_\conf\nr\bigr)\conf +\tfrac
 n2\alt\nr
\end{equation}
from which we see that the Ricci map is injective when $n\geq3$, has kernel
$S^2_0T\dual\mfd$ when $n=2$, and is zero for $n=1$. (Here $(\alt\nr)\low_XY
=\nr\low_X Y-\nr\low_YX$.)

\section{Weyl geometry}
\label{sec:weyl-geom}

A popular approach in conformal geometry is to work with a riemannian metric
in the conformal class. This amounts to fixing a length scale $\ell$, or
equivalently, in the context of conformal Cartan connections, a gauge
$\sigma$. However, choosing a different metric leads to complicated
transformation formulae, so we find it convenient to use a more general
notion. For this, note that a length scale is parallel with respect to a
unique connection on $L$.
\begin{defn}
A \emphdef{Weyl derivative} on $\mfd$ is a connection on $L$.  It induces a
covariant derivative $D$ on $L^w$ for each $w\in\R$, whose curvature is a real
$2$-form $w F^D$, the \emphdef{Faraday curvature}.  If $F^D=0$ then $D$ is
said to be \emphdef{closed}: then there are local length scales $\ell$ with
$D\ell=0$; if such an $\ell$ exists globally then $D$ is said to be
\emphdef{exact}. Any two Weyl derivatives differ by a $1$-form so that Weyl
derivatives are an affine space modelled on $\Omega^1(\mfd,\R)$.
\end{defn}
In this section we relate Weyl derivatives to conformal Cartan connections and
conformal metrics,
cf.~\cite{CaDi:di,CDS:rcd,CaPe:ewg,CpSl:wpg,Die:PhD,Gau:ccw,Wey:stm}.

\subsection{Weyl structures}
\label{par:weyl-str}

\begin{defn} Let $(\CV,\Ln,\CD)$ be a conformal Cartan connection. Then
a \emphdef{Weyl structure} is a null line subbundle $\Lnc$ of
$\CV$ complementary to $\Ln^\perp$ (\ie, distinct from $\Ln$).
\end{defn}
Observe that $\Ln^\perp\intersect\Lnc^\perp$ is a complementary subspace to
$\Ln$ in $\Ln^\perp$. Conversely given such a complement $\US$, there is a
unique null line subbundle $\Lnc$ orthogonal to $\US$ and complementary to
$\Ln^\perp$. Thus a Weyl structure is given equivalently by a splitting
$\pr_{\Lnc}$ of the inclusion $\Ln\to \Ln^\perp$. It follows that Weyl
structures form an affine space modelled on the space of sections of
$\Hom(\Ln^\perp/\Ln,\Ln)\cong\stab(\Ln)^\perp$, with $\pr_{\Lnc+\gam}(v)
=\pr_{\Lnc}(v)+\gam\act v$. This affine structure has a gauge-theoretic
interpretation that we wish to emphasise:
$\smash{\Lnc}+\gam=\exp(-\gam)\smash{\Lnc}$.

A Weyl structure defines a covariant derivative $D=\pr_{\Lnc}\circ\CD\restr\Ln$
on $\Ln$, and so a Weyl derivative via the usual identification $\Ln\cong
L^{-1}$ provided by the conformal structure.  Conversely, given a Weyl
derivative and so a covariant derivative $D$ on $\Ln$, we take $U$ to be the
image of $\CDD i\low_\Ln\colon T\mfd\tens\Ln\to\Ln^\perp$, where
$i\low_\Ln\colon\Ln\to\CV$ is the inclusion and $\CDD$ is the induced
connection on $\Hom(\Ln,\CV)$: this is a complement to $\Ln$ since
$\pi\circ\CDD i\low_\Ln=-\iden_{T\mfd\ltens\Ln}$. Thus we have a bijection
$D\mapsto \smash{\Lnc}^D$ between Weyl derivatives and Weyl structures which,
in view of the soldering identification
$T\dual\mfd\cong\Hom(\Ln^\perp/\Ln,\Ln)$ enunciated in \eqref{eq:gam-act}, is
easily seen to be affine: $\smash{\Lnc}^{D+\gam}=\smash{\Lnc}^D+\gam$.

\begin{rem} There is yet another (equivalent) definition of Weyl structure
that really gets to the Lie-theoretic heart of the matter. The decomposition
$\CV=\Ln\dsum\US\dsum\Lnc$ is equivalently given by an element
$\eps\in\so(\CV)$ which acts by $-1$ on $\Ln$, $0$ on $\US$ and $1$ on
$\Lnc$. It is easy to see that $\eps\in\stab(\Ln)$ is a lift of the identity
in $\co(T\mfd)\cong \stab(\Ln)/\stab(\Ln)^\perp$ and conversely any such lift
splits the filtration of $\CV$. See~\cite{CDS:rcd} for more on this approach.
\end{rem}

The decomposition $\CV=\Ln\dsum U\dsum\Lnc$ induced by a Weyl structure
identifies $\CV$ with $\CV\gr$ so that we have an isomorphism $\CV\cong
L^{-1}\dsum T\dual\mfd\ltens L\dsum L$. Hence we also obtain an isomorphism
$\so(\CV)\cong T\dual\mfd\dsum\co(T\mfd)\dsum T\mfd$ between the Lie algebra
bundles of section~\ref{sec:conf-alg}.  (Note however, that different Weyl
structures give rise to different isomorphisms!)

These decompositions (a reduction of the structure group of $\CV$ to
$\CO(n)$) induce a decomposition of the connection $\CD$ into a
$T\dual\mfd$-valued $1$-form $\nr^{D,\CD}$, a $\CO(n)$-connection $D^\CD$
(restricting to the Weyl derivative $D$ on $L$ and $L^{-1}$ and a metric
connection on $T\dual\mfd\ltens L$), and a $T\mfd$-valued $1$-form.  This last
is minus the soldering form which, with our identifications, is minus the
identity.  To summarize, if we write $(\sigma,\theta,\ell)$ for the components
of $v$, we have
\begin{equation}\label{eq:conn-comp}
\CD\low_Xv=\nr^{D,\CD}_X\act v+D^\CD_X v-X\act v
=\mtrip{D\low_X\sigma+\nr^{D,\CD}_X(\theta)}
{D^\CD_X\theta+\nr^{D,\CD}_X \,\ell-\sigma X}{D\low_X\ell-\theta(X)}.
\end{equation}
It is natural to ask how these components transform under a change
$\Lnc\mapsto\exp(-\gam)\Lnc$ of Weyl structure. We first examine how $\CD$
changes under gauge transformation by $\exp(\gam)$: $(\exp\gam\act \CD)_Xv=
\exp(\gam)\CD_X(\exp(-\gam)v)$ differs from $\CD_Xv$ by the right logarithmic
derivative of the exponential map at $\gam$, in the direction $\CD_X\gam$,
applied to $v$. Using the standard formula
\begin{equation*}\notag
\d(R_{\exp(-\gam)})_{\exp\gam}\circ \d\exp_\gam(\chi)
= F(\ad\gam)(\chi),\qquad F(t)=\tfrac1t(e^t-1) = 1+\half t +\cdots
\end{equation*}
for this logarithmic derivative (see~\cite{KMS:nodg}), we then obtain
\begin{equation}\label{eq:log-deriv}
\exp\gam\act\CD = \CD-\CD\gam -\half \abrack{\gam,\pi\CD\gam},
\end{equation}
since $\CD\gam$ is in $\stab(\Ln)$, on which $\ad\gam$ is $2$-step nilpotent,
and hence $F(\ad\gam)(\CD_X\gam)=\CD_X\gam+\frac12 \liebrack{\gam,\CD_X\gam}$.
We can expand $\abrack{\gam,\pi\CD_X\gam}= -2\gam(X)\gam +
\cip{\gam,\gam}X^\flat$ explicitly as needed.

This computation provides the required transformation formulae by switching
from the active to passive view of gauge transformations: in the former, the
connection is transformed but the `gauge' (here a Weyl structure
$\Lnc\subset\CV$) is fixed; in the latter, the opposite process is
performed. Since $\exp(\gam)$ acts trivially on $\so(\CV)\gr$, we deduce that
the components $\nr^{D,\CD}$ and $D^\CD$ transform as follows:
\begin{align}\label{eq:gauge-Weyl}
(D+\gam)^\CD&= D^{\exp\gam\act\CD} = D^{\CD}-\abrack{\gam,\cdot}\\
\label{eq:gauge-nric}
\nr^{D+\gam,\CD}&=\nr^{D,\exp\gam\act\CD} =
\nr^{D,\CD} - D\gam + \gam\vtens\gam - \half\cip{\gam,\gam}\conf.
\end{align}

\subsubsection*{Curvature}

We now compute the curvature of a conformal Cartan connection $\CD$ with
respect to a Weyl structure $\Lnc\subset\CV$. Using~\eqref{eq:conn-comp}, \ie,
$\CD=\nr^{D,\CD}+D^\CD-\iden$, we have
\begin{equation}\label{eq:curv-comp}
R^\CD=\d^{\smash{D^\CD}}\nr^{D,\CD}
+\;\;R^{D^\CD}\!-\abrack{\iden\wedge\nr^{D,\CD}}\;\;-\d^{D^\CD}\iden.
\end{equation}
The last component, $-\d^{D^\CD}\iden$, \ie, minus the torsion of $D^\CD$, is
equal to $p(R^\CD)$, hence independent of the Weyl structure, so we call it
the \emphdef{torsion} of $\CD$. We refer to $W^\CD:=R^{D^\CD}
-\abrack{\iden\wedge\nr^{D,\CD}}$ as the \emphdef{Weyl curvature} of $\CD$
(with respect to $D$ or $\Lnc$); if the torsion of $\CD$ is zero, $W^\CD=
\pi(R^\CD)$, independently of $\Lnc$. Finally
$C^{\CD,D}:=\d^{D^\CD}\nr^{D,\CD}$ is called the \emphdef{Cotton--York
curvature} of $\CD$ (with respect to $\Lnc$); it is the full curvature
(independent of $\Lnc$) if both the torsion and Weyl curvature vanish.

We shall be interested in the following natural conditions on the curvature of
$\CD$.
\begin{defn}
Let $(\CV,\Ln,\CD)$ be a conformal Cartan connection.  Then
\begin{bulletlist}
\item $\CD$ is \emphdef{torsion-free} if $p(R^\CD)=0$, \ie,
$R^\CD\in\Omega^2(\mfd,\stab(\Ln))$, or equivalently $R^\CD\Ln\subseteq\Ln$;
\item $\CD$ is \emphdef{strongly torsion-free} if $R^\CD\restr\Ln=0$;
\item $\CD$ is \emphdef{normal} if $\LH R^\CD=0$.
\end{bulletlist}
\end{defn}
In fact, a normal conformal Cartan connection is strongly torsion-free.  For
this, note that $\Gam\mapsto\abrack{\iden\wedge\Gam}$ defines an isomorphism
$T\dual\mfd\tens\so(T\mfd)\to\Wedge^2T\dual\mfd\tens T\mfd$ (this amounts to
the familiar algebra that determines a metric connection from its torsion).
Thus $\abrack{\iden\wedge(\cdot)}$ surjects so that $\LH\colon
\Wedge^2T\dual\mfd\tens T\mfd\to T\dual\mfd\tens\co(T\mfd)$ injects.
Furthermore, it is easy to see that also
$\LH\colon\Wedge^2T\dual\mfd\tens\co(T\submfd)\to T\dual\mfd\tens T\dual\mfd$
injects on $\Wedge^2T\dual\mfd\tens\vspan{\iden_{T\mfd}}$.

If $\CD$ is torsion-free, $\pi(R^\CD)=W^\CD$.  Thus $\CD$ is normal iff it is
torsion-free and $\LH W^\CD=0$.

We can now single out the conformal Cartan connections that will be
of interest to us.
\begin{defn}
A \emphdef{conformal Cartan geometry} is a conformal Cartan connection
$(\CV,\Ln,\CD)$ with $\CD$ strongly torsion-free. It is said to be
\emphdef{normal} if $\CD$ is normal.
\end{defn}

\subsection{Weyl connections}
\label{par:weyl-conn}

\begin{defn} Let $\conf$ be a conformal metric on $\mfd$. Then
a \emphdef{Weyl connection} is a torsion-free connection $D$ on $T\mfd$ which
is \emphdef{conformal} in the sense that $D\conf=0$.
\end{defn}
If $\ell$ is a length scale, then the Levi-Civita connection $D^g$ of the
metric $g=\ell^{-2}\conf$ is a Weyl connection: since $g$ is parallel and
$\ell$ is parallel (with respect to the induced connection on $L$), so is
$\conf=\ell^2 g$. Any Weyl connection induces a Weyl derivative and
Levi-Civita connections induce exact Weyl derivatives.  The existence
and uniqueness of the Levi-Civita connection associated to a length scale
admits the following generalization~\cite{Die:PhD,Gau:ccw,Wey:stm}.

\begin{prop}\label{p:ftcg}
The affine map sending a connection on $T\mfd$ to the induced covariant
derivative on $L$ induces a bijection between Weyl connections and Weyl
derivatives.
\end{prop}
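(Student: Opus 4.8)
The plan is to prove Proposition~\ref{p:ftcg} by exhibiting an explicit affine isomorphism and checking bijectivity via a standard tensorial identity. First I would fix a length scale $\ell$, equivalently an exact Weyl derivative $D_0$ with Levi-Civita connection $D^g$ on $T\mfd$, where $g=\ell^{-2}\conf$; this serves as a basepoint making both affine spaces into vector spaces. The map $\Phi$ in question sends a torsion-free conformal connection $D$ on $T\mfd$ to its induced covariant derivative on $L$ (via the $\tfrac1n\trace$ action $A\act\ell=(\tfrac1n\trace A)\ell$ recorded in \S\ref{par:graded-lie-alg}). Since the Levi-Civita connection $D^g$ maps to $D_0$, it suffices to show $\Phi$ is an affine bijection, \ie\ that the associated \emph{linear} map on difference spaces is an isomorphism.

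The difference space for Weyl connections is the space of $1$-forms $\gam$ such that $D+\gam$ (acting by $(\gam\act D)_XY = D_XY + \abrack{X,\gam}\act Y$, using the bracket of \S\ref{par:graded-lie-alg}) remains torsion-free and conformal; the formula $\abrack{X,\gam}\act Y=\gam(X)Y+\gam(Y)X-\cip{X,Y}\gam^\sharp$ is manifestly symmetric in $X,Y$ (the paper notes $\abrack{X,\gam}\act Y=\abrack{Y,\gam}\act X$), so torsion is preserved, and since $\abrack{X,\gam}\in\co(T\mfd)$, conformality is preserved too. Thus the difference space for Weyl connections is \emph{all} of $\Omega^1(\mfd,\R)$. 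The difference space for Weyl derivatives is also $\Omega^1(\mfd,\R)$ by the Definition of Weyl derivative. So I would compute the induced change on $L$: the trace of $\abrack{X,\gam}$ is $n\gam(X)$ (stated just after the bracket definitions), whence the induced change in the connection on $L^1$ is exactly $\tfrac1n\cdot n\gam = \gam$. Therefore the linearized map is the identity on $\Omega^1(\mfd,\R)$, hence an isomorphism, and $\Phi$ is an affine bijection.

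The main obstacle, and the only genuine content beyond bookkeeping, is verifying that \emph{every} first-order deformation preserving torsion-freeness and conformality is of the form $\gam\mapsto\abrack{\,\cdot\,,\gam}$ for a unique $\gam$, \ie\ that the difference space for Weyl connections is precisely $\Omega^1(\mfd,\R)$ and no larger or smaller. Concretely, a conformal connection differs from $D^g$ by a $T\mfd\tens T\dual\mfd$-valued tensor lying in $\co(T\mfd)$ in its endomorphism slot (conformality) and symmetric under the two covariant slots (torsion-freeness). The claim is that the map $\gam\mapsto\abrack{\,\cdot\,,\gam}$ from $T\dual\mfd$ into $T\dual\mfd\tens\co(T\mfd)$ is a linear isomorphism onto the subspace cut out by the symmetry condition $\abrack{X,\gam}\act Y=\abrack{Y,\gam}\act X$. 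This is the familiar algebraic fact underlying the fundamental theorem of conformal (Weyl) geometry, closely analogous to the isomorphism $T\dual\mfd\tens\so(T\mfd)\to\Wedge^2T\dual\mfd\tens T\mfd$ invoked in the normality discussion of \S\ref{par:weyl-str}; I would verify injectivity and surjectivity by a direct dimension count or by writing down the inverse (contract to extract $\gam$ from the trace part). Once this linear-algebra lemma is in hand, the affine statement follows immediately.
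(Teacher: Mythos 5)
Your proposal is correct and follows essentially the same route as the paper: both arguments reduce to the affine structure, identify the difference of two Weyl connections as a torsion-free $\co(T\mfd)$-valued $1$-form, invoke the standard algebraic fact that all such forms are $\abrack{\,\cdot\,,\gam}$ for a unique $\gam\in T\dual\mfd$ (the paper's ``standard argument'', which your dimension count via the isomorphism $T\dual\mfd\tens\so(T\mfd)\cong\Wedge^2T\dual\mfd\tens T\mfd$ supplies), and then read off the induced change $\tfrac1n\trace\abrack{X,\gam}=\gam(X)$ on $L$. The only difference is cosmetic: you fix a Levi-Civita basepoint and phrase the conclusion as a linear isomorphism of difference spaces, where the paper works directly with the affine statement.
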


Indeed, if $D$ is a given Weyl connection (for example, the Levi-Civita of a
compatible metric), then any other such is of the form $D+\Gam$ where $\Gam$
is a $\co(T\mfd)$-valued $1$-form with $\abrack{\iden\wedge\Gam}=0$.  A
standard argument shows that all such $\Gamma$ are of the form
$\Gam=\abrack{\iden,\gam}$ for a section $\gam$ of $T\dual\mfd$ and then the
corresponding Weyl derivative on $L$ is $D+\tfrac1n\trace\Gam=D+\gam$.

One of the reasons why Weyl derivatives can be more convenient than length
scales is that they form an affine space, modelled on $\Omega^1(\mfd,\R)$.
When we construct a geometric object using a Weyl derivative $D$, we often
want to know how it depends upon this choice. We view such an object as a
function $F(D)$ and say that it is conformally-invariant if and only if this
function is constant (\ie, independent of $D$). For this, it is often helpful
to use the fundamental theorem of calculus: $F(D)$ is constant iff its
derivative with respect to $D$ is zero.  This amounts to checking that
$\del_\gam F(D)=0$ for all Weyl derivatives $D$ and all $1$-forms $\gam$, \ie,
that for any one parameter family $D(t)$ with
\begin{align*}\notag
D(0)=D\quad\text{and}\quad \frac \d{\d t} D(t)\Restr{t=0}&=\gam\\
\tag*{we have}
\del_\gam F(D):=\frac \d{\d t} F(D(t))\Restr{t=0}&=0.
\end{align*}
More generally, Taylor's theorem implies that if
$\del^{k+1}_{\gam,\ldots\gam}F(D)=0$ for all $D$ and $\gam$ then
\begin{equation*}
F(D+\gam)=F(D)+\del_\gam F(D)+\half\del^2_{\gam,\gam}F(D)
+\cdots+\tfrac1{k!}\del^k_{\gam,\ldots\gam}F(D).
\end{equation*}
This approach often simplifies the calculation of nonlinear terms.

For example, for any section $\ell$ of $L$, $\del_\gam
D\low_X\ell=\gam(X)\ell$ and for any vector field $Y$, $\del_\gam D\low_XY
=\abrack{X,\gam}\act Y$. Similarly, if $s$ is a section of a bundle
associated to the conformal frame bundle, then $\del_\gam D\low_X
s=\abrack{X,\gam}\act s$ where the dot denotes the natural action of
$\lie{co}(T\mfd)$. This expresses the obvious fact that all first covariant
derivatives depend affinely on $D$.

For the second derivative, we obtain
\begin{align*}
\del_\gam D^2_{X,Y}s&=\abrack{X,\gam}\act D\low_Ys
-D\low_{\abrack{X,\gam}\act Y}s+D\low_X\bigl(\abrack{Y,\gam}\act s\bigr)
-\abrack{D\low_XY,\gam}\act s\\
&=\abrack{Y,D\low_X\gam}\act s
+\abrack{X,\gam}\act D\low_Ys+\abrack{Y,\gam}\act D\low_Xs
-D\low_{\abrack{X,\gam}\act Y}s
\end{align*}
In particular, for any section $\ell$ of $L$ we have
\begin{equation}\label{eq:lin-ch-second-L}
\del_\gam D^2_{X,Y}\ell=(D\low_X\gam)(Y)\ell+\cip{X,Y}\cip{\gam,D\ell}.
\end{equation}

\subsubsection*{Curvature}

We now apply the transformation formula for the second derivative to the
curvature $R^D$ of the Weyl connection $D$, which is the $\co(T\mfd)$-valued
$2$-form such that $D^2_{X,Y}s-D^2_{Y,X}s=R^D_{X,Y}\act s$ for any section $s$
of any bundle associated to the conformal frame bundle.  We deduce that
\begin{equation}\label{eq:lin-ch-curv}
\del_\gam R^D_{X,Y}=-\abrack{X,D\low_Y\gam}+\abrack{Y,D\low_X\gam}=
-\abrack{\iden\wedge D\gam}\low_{X,Y},
\end{equation}
\ie, $\del_\gam R^D=-\abrack{\iden\wedge D\gam}$. Hence the part of $R^D$
orthogonal to the image of the Ricci map is conformally-invariant: this is the
\emphdef{Weyl curvature} $W$ of the conformal metric, and it has vanishing
Ricci contraction: $W$ itself vanishes for $n<4$ since the Ricci map is then
surjective. On the other hand, for $n\geq 3$, the Ricci map is injective, and
so we can write
\begin{equation}\label{eq:curv-decomp}
R^D=W+\abrack{\iden\wedge\nr^D},
\end{equation}
with $\nr^D$ uniquely determined by $D$ and, in view of
\eqref{eq:lin-ch-curv},
\begin{equation}\label{eq:lin-ch-nric}
\del_\gam\nr^D=-D\gam.
\end{equation}
Let $\ricci^D$ be the Ricci contraction $\LH R^D$ of $R^D$, so that,
by~\eqref{eq:rc-rm}, we have
\begin{equation}\label{eq:ric-nric}
\ricci^D=(n-2)\sym_0\nr^D +2(n-1)\bigl(\tfrac1n\trace_\conf\nr^D)\conf +\tfrac
n2\alt\nr^D.
\end{equation}
A simple application of the Bianchi identity $\abrack{\iden\wedge R^D}=0$ gives
$\alt\ricci^D = -\trace R^D = -n F^D$ and hence we deduce $\nr^D$ is the
\emphdef{normalized Ricci curvature}, defined by
\begin{equation*}
\nr^D = \nr^D_0+\tfrac1n \ns^D\conf-\half F^D,
\end{equation*}
where we set
\begin{equation*}
\nr^D_0=\frac1{n-2}\sym_0\ricci^D,\qquad
\ns^D=\frac1{2(n-1)}\trace_\conf\ricci^D.
\end{equation*}
When $n=3$, $\d^D\nr^D$ is independent of $D$, and known as the
\emphdef{Cotton--York curvature} of $\conf$.

When $n=2$, $\ns^D$ is still defined and the Ricci map is still injective on
$L^{-2}$ so that
\begin{equation}\label{eq:lin-ch-nscal}
\del_\gam\ns^D=-\trace_\conf D\gam.
\end{equation}
We wish to bring the cases $n=1$ and $n=2$ in to line with the higher
dimensional case. One way to do this uses a pair of differential operators,
which we call a \emphdef{\mob/ structure}.

\section{M\"obius manifolds}
\label{sec:mob-geometry}

\subsection{The differential lift and M\"obius operators}
\label{par:diff-lift}

A conformal Cartan connection $(\CV,\Ln,\CD)$ induces more structure on $\mfd$
than just a conformal metric. This additional structure arises from the
following differential operator, cf.~\cite{CaDi:di,CSS:bgg}.

\begin{prop}\label{p:diff-lift} There is a unique linear map
$j^\CD\colon\Cinf(\mfd,L)\to\Cinf(\mfd,\CV)$ such that
\begin{bulletlist}
\item $p(j^\CD\ell)=\ell$, \ie, $j^\CD\ell$ is a lift of $\ell$ with respect
to the projection $p\colon\CV\to L$\textup;
\item $\LH\CD(j^\CD\ell):=\sum_i \eps_i\act\CD_{e_i} j^\CD\ell =0$, \ie, $\CD
j^\CD\ell$ is a section of $Z_1(T\dual\mfd,\CV)$.
\end{bulletlist}
The components of $j^\CD\ell$, relative to a Weyl structure with Weyl
derivative $D$, are
\begin{equation}\label{eq:diff-lift-comp}
(\tfrac1n\trace_\conf\psi,D\ell,\ell) \quad \text{where}\quad
\psi=D^\CD D\ell + \nr^{D,\CD}\ell.
\end{equation}
In particular $j^\CD$ is a second order linear differential operator.
Furthermore
$\pi\CD j^\CD\ell=\psi-\frac1n(\trace_\conf\psi)\conf\in T\dual\mfd\tens
T\dual\mfd\ltens L$.
\end{prop}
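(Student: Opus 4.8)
The plan is to fix a Weyl structure, equivalently a Weyl derivative $D$, and read off the two defining conditions in the induced splitting $\CV\cong L^{-1}\dsum T\dual\mfd\ltens L\dsum L$, so that $j^\CD\ell=(\sigma,\theta,\ell)$ is determined component by component. Since $p$ picks out the $L$-component, the lift condition $p(j^\CD\ell)=\ell$ immediately fixes the bottom slot to be $\ell$, and it remains to pin down $\sigma$ and $\theta$ from the cycle condition $\LH\CD(j^\CD\ell)=0$. The key is that the characterization \eqref{eq:H1V} of $Z_1(T\dual\mfd,\CV)$ breaks this cycle condition into exactly the right number of equations: an $\Ln^\perp$-valuedness condition on $\CD j^\CD\ell$ (namely $p(\CD_X j^\CD\ell)=0$ for all $X$), which should fix the $n$ components of $\theta$, together with a single trace equation $\sum_i\cip{\eps_i,\pi\CD_{e_i}j^\CD\ell}=0$, which should fix the one component $\sigma$.

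To carry this out I would substitute $v=j^\CD\ell$ into the component formula \eqref{eq:conn-comp}. Its bottom component is $D_X\ell-\theta(X)$, so $\Ln^\perp$-valuedness forces $\theta=D\ell$. Feeding this back, the middle component becomes $\pi\CD_X j^\CD\ell=D^\CD_X D\ell+\nr^{D,\CD}_X\ell-\sigma X=\psi_X-\sigma X$, with $\psi$ as in the statement and $\sigma X$ read in $T\dual\mfd\ltens L$ via the conformal metric. Taking the conformal trace, and using that $X\mapsto\sigma X$ has trace $n\sigma$, the remaining cycle equation reads $\trace_\conf\psi-n\sigma=0$, whence $\sigma=\tfrac1n\trace_\conf\psi$. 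This reproduces the components \eqref{eq:diff-lift-comp} and so establishes existence; since $\sigma$ involves $\trace_\conf D^\CD D\ell$, the operator is visibly second order; and the tracefree part of the middle component is exactly $\pi\CD j^\CD\ell=\psi-\tfrac1n(\trace_\conf\psi)\conf$, giving the final assertion.

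Uniqueness I would obtain by the same two steps applied to a difference: if two lifts satisfy the conditions, their difference $w$ has $p(w)=0$ and $\LH\CD w=0$, so running the argument with vanishing bottom slot forces first $\theta=0$ and then $\sigma=0$, hence $w=0$. Linearity in $\ell$ is immediate, as every step is linear. Finally, since the defining conditions mention only $p$, $\CD$ and $\LH$ and never the Weyl structure, the map $j^\CD$ is canonical; the Weyl derivative enters only the component formula, and consistency across different choices is guaranteed by the uniqueness just proved.

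The step I expect to be the main obstacle is purely a matter of bookkeeping: keeping the metric identifications straight when passing between $T\mfd\ltens L^{-1}$ and $T\dual\mfd\ltens L$, so that the $-\sigma X$ term in the middle component is paired correctly and contributes precisely $-n\sigma$ to the trace. The conceptual content---that $Z_1(T\dual\mfd,\CV)$ is cut out by an $\Ln^\perp$-condition plus one trace condition---is already delivered by \eqref{eq:H1V}, so once the weights and identifications are handled, nothing deep remains.
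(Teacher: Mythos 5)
Your proof is correct and follows essentially the same route as the paper's: fix a Weyl structure, write the lift as $(\sigma,\theta,\ell)$, and extract $\theta=D\ell$ and $\sigma=\tfrac1n\trace_\conf\psi$ from the two components of the cycle condition. The only cosmetic difference is that you invoke the characterization \eqref{eq:H1V} of $Z_1(T\dual\mfd,\CV)$, whereas the paper computes $\LH\CD(\sigma,\theta,\ell)$ directly from \eqref{eq:act-on-V} and \eqref{eq:conn-comp}; the resulting equations are identical.
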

\begin{proof} With respect to any Weyl structure, a lift of $\ell$
has components $(\sigma,\theta,\ell)$ and, using \eqref{eq:act-on-V}
and \eqref{eq:conn-comp}, we readily compute
\begin{equation*}\notag
\sum_i \eps_i\act\CD_{e_i}\mtrip{\sigma}{\theta}{\ell}=
\mtrip{-n\sigma+\trace_\conf(D^\CD\theta+\nr^{D,\CD}\ell)}
{D\ell-\theta}{0}
\end{equation*}
which vanishes precisely when $\theta=D\ell$ and
$\sigma=\tfrac1n\trace_\conf\psi$. The last part follows
from~\eqref{eq:conn-comp}.
\end{proof}

We refer to $j^\CD\ell$ as the \emphdef{differential lift} of $\ell$. Using
this, we now define two second order differential operators on $L$, one
linear, one quadratic.

\begin{defn} Let $(\CV,\Ln,\CD)$ be a conformal Cartan connection. Define
$\Mh^\CD\colon \Cinf(\mfd,L)\to \Cinf(\mfd,S^2_0T\dual\mfd\ltens L)$ and
$\Mq^\CD\from\Cinf(\mfd,L)\to\Cinf(\mfd,\R)$ by
\begin{equation*}\notag
\Mh^\CD\ell = \sym\pi\CD j^\CD\ell,\qquad
\Mq^\CD\ell = \lip{j^\CD\ell,j^\CD\ell}.
\end{equation*}
\end{defn}
Using~\eqref{eq:diff-lift-comp} and~\eqref{eq:conn-comp}, we obtain the
explicit formulae
\begin{equation}\label{eq:m-ops-weyl}\begin{split}
\Mh^\CD\ell &= \sym_0 \bigl(D^\CD D\ell + \nr^{D,\CD}\ell\bigr)\\
\Mq^\CD\ell &= \ip{D\ell,D\ell}-\tfrac2n\ell\trace_\conf
\bigl(D^\CD D\ell + \nr^{D,\CD}\ell\bigr)
\end{split}\end{equation}
relative to a Weyl structure with Weyl derivative $D$.  When $\CD$ is
torsion-free, $D^\CD$ is the Weyl connection on $TM$ induced by $D$. (Also in
this case, the skew part of $\pi\CD j^\CD\ell$ is
$F^D+\alt\nr^{D,\CD}=\tfrac1n\trace W^\CD$ so that $\Mh^\CD\ell=\pi\CD
j^\CD\ell$ precisely when $\CD$ is strongly torsion-free.)

Our goal, achieved in the next section, is to recover the conformal Cartan
connection (up to isomorphism) from the underlying conformal metric and these
two operators.

\subsubsection*{The differential lift in general}

For later use we remark that the above is a special case of a more general
construction~\cite{CaDi:di,CSS:bgg}, which, for any representation $W$ of
$\so(\CV)$, provides a differential lift of any section of a Lie algebra
homology bundle $H_k(T\dual\mfd, W)$ to give a representative section of
$Z_k(T\dual\mfd, W)$.  The key ingredient in this construction is the first
order quabla operator $\Quabla_\CD:=\LH\circ \d^\CD+\d^\CD\circ\LH$ on
$C_k(T\dual\mfd,W)$ and the following observation.

\begin{prop}\tcite{CaDi:di} $\Quabla_\CD$ is invertible on the image of $\LH$.
\end{prop}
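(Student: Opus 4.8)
The plan is to exhibit $\Quabla_\CD$ as a nilpotent perturbation of Kostant's algebraic Laplacian and then invert it by a finite Neumann series. First I fix a Weyl structure; this gives the grading element $\eps\in\so(\CV)$ of \S\ref{par:weyl-str}, which grades $W$ into eigenbundles, and by \eqref{eq:conn-comp} it writes the Cartan connection as $\CD=\nr^{D,\CD}+D^\CD-\iden$. The decomposition of $\CD$ into a connection and two endomorphism-valued $1$-forms induces a splitting $\d^\CD=\d^{\nr}+\d^{D^\CD}+\LH^*$, where the contribution of the soldering term $-\iden$ is $\sum_i\eps_i\wedge(-e_i\act\,\cdot\,)=-\abrack{\iden\wedge\,\cdot\,}$; by the adjointness identity $\ip{\LH\alpha,\beta}=-\ip{\alpha,\abrack{\iden\wedge\beta}}$ of \S\ref{par:graded-lie-alg}, this is exactly the adjoint $\LH^*$ of $\LH$ for the positive-definite pairing $\ip{\,\cdot\,,\cdot\,}$ coming from the Cartan involution. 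Note that $\d^{\nr}$ and $\LH^*$ are algebraic, whereas only $\d^{D^\CD}$ is a genuine first-order operator.

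Substituting into $\Quabla_\CD=\LH\d^\CD+\d^\CD\LH$ and sorting the result by its shift in the grading of $W$, I obtain $\Quabla_\CD=\Quabla_0+N$ with $\Quabla_0:=\LH\LH^*+\LH^*\LH$. Here $\LH$ lowers the grading by one (it is built from the action of $T\dual\mfd=\so(\CV)_{-1}$), $D^\CD$ preserves it, and $\nr$ lowers it by one; hence $\Quabla_0$ preserves the grading, while $N=(\LH\d^{D^\CD}+\d^{D^\CD}\LH)+(\LH\d^{\nr}+\d^{\nr}\LH)$ strictly lowers it. Since the bundles of interest ($W=\CV$ or $\so(\CV)$, and their tensor powers) carry only finitely many grades, a sufficiently high power of $N$ shifts the grading out of range, so $N$ is nilpotent in the algebra of differential operators.

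The operator $\Quabla_0$ is the Kostant Laplacian of the abelian algebra $T\dual\mfd$: self-adjoint and non-negative for $\ip{\,\cdot\,,\cdot\,}$, it admits the algebraic Hodge decomposition of \cite{Kos:lac}, namely $\Wedge^k T\dual\mfd\tens W=\kernel\Quabla_0\dsum\image\LH\dsum\image\LH^*$ orthogonally, with $\kernel\Quabla_0=\kernel\LH\intersect\kernel\LH^*$ the harmonic representatives. In particular $\Quabla_0$ preserves $\image\LH$ and is invertible there. Moreover $\LH\LH=0$ gives $\Quabla_\CD\LH=\LH\d^\CD\LH$, so $\Quabla_\CD$, and hence $N=\Quabla_\CD-\Quabla_0$, map $\image\LH$ into itself. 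On $\image\LH$ I can therefore factor $\Quabla_\CD=\Quabla_0(\iden+\Quabla_0^{-1}N)$; as $\Quabla_0^{-1}$ preserves the grading and $N$ strictly lowers it, the operator $\Quabla_0^{-1}N$ is nilpotent, so $\iden+\Quabla_0^{-1}N$ is invertible with inverse $\sum_{j\geq0}(-\Quabla_0^{-1}N)^j$ (a finite sum). Thus $\Quabla_\CD$ is invertible on $\image\LH$, with differential-operator inverse $(\iden+\Quabla_0^{-1}N)^{-1}\Quabla_0^{-1}$.

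The routine part is the grading bookkeeping. The substantive steps are the identification of the soldering component of $\d^\CD$ with $\LH^*$---which is what makes the algebraic core exactly Kostant's Laplacian---and the invertibility of $\Quabla_0$ on $\image\LH$, which is the choice-free content of Kostant's theorem and so removes the apparent dependence on the auxiliary Weyl structure. I expect the main obstacle to be checking that no grading-preserving first-order terms survive in $\Quabla_\CD$ beyond $\Quabla_0$ (so that $N$ genuinely lowers the grading and is nilpotent); this is precisely the point that turns a non-elliptic first-order operator into one that is algebraically invertible on the image of $\LH$.
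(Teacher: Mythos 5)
Your proposal is correct and is essentially the argument the paper itself gives (the paper only sketches it in one sentence: split off Kostant's algebraic Laplacian using a Weyl structure, observe the remainder is a nilpotent first-order operator, and invert by a finite geometric series). Your write-up supplies the details the paper omits --- the identification of the soldering contribution to $\d^\CD$ with $\LH^*$, the grading bookkeeping showing $N$ strictly lowers the grade, and the check that both $\Quabla_0$ and $N$ preserve $\image\LH$ --- all correctly.
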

This result is not difficult: using a Weyl structure, one shows that
$\Quabla_\CD$ differs from an invertible algebraic operator (Kostant's
$\Quabla$~\cite{Kos:lac}) by a nilpotent first order operator; the inverse is
then given by a geometric series, and is a differential operator of finite
order.

We then define $\Pi=\iden-\Quabla_\CD^{-1}\circ\LH\circ
\d^\CD-\d^\CD\circ\Quabla_\CD^{-1}\circ\LH$, and note the following
properties:
\begin{bulletlist}
\item $\Pi$ vanishes on the image of $\LH$;
\item $\Pi$ maps into the kernel of $\LH$;
\item $\Pi\restr{\kernel\LH}\cong\iden \modulo \image\LH$.
\end{bulletlist}
It follows that there is a canonical differential lift $j^\CD\colon
\hc{\alpha}\mapsto\Pi\alpha$ from Lie algebra homology to
cycles~\cite{CSS:bgg}.  According to~\cite{CaDi:di}, $j^{\CD}\hc{\alpha}$ is
characterized by the following properties: $\LH j^{\CD}\hc{\alpha}=0$,
$\Hc{j^{\CD}\hc{\alpha}}=\hc{\alpha}$ and $\LH \d^\CD j^{\CD}\hc{\alpha}=0$.
This generalizes the characterization of the differential lift in
Proposition~\ref{p:diff-lift}.

The differential operators $\Mh^\CD,\Mq^\CD$ also have a homological
interpretation and generalization. For $n\geq 2$, $\Mh^\CD$ is the \BGG/
operator $\hc{\alpha}\mapsto \hc{\CD\Pi\alpha}$ from
$\Cinf(\mfd,H_0(T\dual\mfd,V))$ to $\Cinf(\mfd,H_1(T\dual\mfd,V))$, which is
part of the general theory of~\v Cap--Slov\'ak--Sou\v cek~\cite{CSS:bgg},
whereas (for all $n$) $\Mq^\CD(\ell)=\ell\sqcup\ell$,
$\sqcup\colon\Cinf(\mfd,H_0(T\dual\mfd,V))\times\Cinf(\mfd,H_0(T\dual\mfd,V))
\to\Cinf(\mfd,\R)$ being the bilinear operator of
Calderbank--Diemer~\cite{CaDi:di} induced by the metric pairing
$\CV\times\CV\to\R$. We shall discuss this further in
section~\ref{sec:conf-bonnet}.

\subsection{M\"obius structures}
\label{par:mob-str}

We are now going to define a class of linear and quadratic differential
operators on \emph{any} manifold with a conformal metric. First, for $\ell$ a
section of $L$ and $D$ a Weyl connection, define:
\begin{equation}\begin{split}
\Mh^D\ell&=\sym_0D^2\ell\qquad\qquad\quad
\in C^\infty(\mfd,S^2_0T\dual\mfd\ltens L),\\
\Mq^D\ell&=\cip{D\ell,D\ell} -\tfrac2n\ell\,\Laplace^{\!D}\ell
\quad\in C^\infty(\mfd,\R),
\end{split}\end{equation}
where $\Laplace^{\!D}\ell=\trace_\conf D^2\ell$, a section of $L^{-1}$.  A
fundamental feature of $\Mh^D$ and $\Mq^D$ is that replacing $D$ by $D+\gam$
only alters them by zero-order terms.  Indeed from~\eqref{eq:lin-ch-second-L},
we have
\begin{equation}\label{eq:lin-ch-weyl-op}\begin{split}
\del_\gam\Mh^D(\ell)&=(\sym_0D\gam)\ell,\\
\del_\gam\Mq^D(\ell)&=2\cip{\gam\ell,D\ell}-\tfrac2n n\ell\cip{\gam,D\ell}
-\tfrac2n \ell(\trace_\conf D\gam)\ell =-\tfrac2n(\trace_\conf D\gam)\ell^2.
\end{split}\end{equation}
This observation will allow us to make a key definition. Before this, though,
we compare these variations with~\eqref{eq:lin-ch-nric}
and~\eqref{eq:lin-ch-nscal} to deduce that the formulae
\begin{align*}
&&&&&&\Mh^\conf\ell&=\Mh^D\ell+\nr^D_0\ell&& (n\geq3)&&&&\\
&&&&&&\Mq^\conf\ell&=\Mq^D\ell-\tfrac2n\ns^D\ell^2&& (n\geq2)&&&&
\end{align*}
define conformally-invariant second order differential operators, one linear,
one quadratic.
\begin{rem}\label{rm:can-mos}
These operators, when they exist, may be defined in a manifestly invariant way
on positive sections of $L$ by associating to such a length scale $\ell$ the
normalized Ricci curvature $\nr^D$ of the unique Weyl connection $D$ with
$D\ell=0$ (this is the Levi-Civita connection of the metric
$g=\ell^{-2}\conf$, and $\nr^D$ is the normalized Ricci curvature of
$g$). Then, by definition, since $D\ell=0$ we have $\Mh^\conf\ell=\nr^D_0\ell$
and $\Mq^\conf\ell=-\frac2n \ns^D\ell^2$, cf.~\cite{Gau:ccw}. Hence positive
solutions of $\Mh^\conf$ are sometimes called \emphdef{Einstein gauges} and
$\Mq^\conf$ is sometimes called the \emphdef{scalar curvature
metric}. $\Mh^\conf$ itself is often called the \emphdef{conformal
\textup(tracefree\textup) Hessian}.
\end{rem}
\begin{rem} We also note that differentiating~\eqref{eq:lin-ch-weyl-op} again,
using $\del_\gam D\gam=-2\gam\vtens\low_0\gam
+\tfrac{n-2}{n}\cip{\gam,\gam}\conf$, gives $\del_{\gam,\gam}^2\Mh^D(\ell)
=-2\gam\vtens\low_0\gam\,\ell$ and $\del_{\gam,\gam}^2\Mq^D(\ell)
=-\tfrac2n(n-2)\cip{\gam,\gam}\ell^2$, which are both independent of $D$ so
that $\Mh^D$ and $\Mq^D$ depend quadratically on $D$:
\begin{align*}
\Mh^{D+\gam}\ell&=\Mh^D\ell+(\sym_0D\gam-\gam\vtens\low_0\gam)\ell,\\
\Mq^{D+\gam}\ell&=\Mq^D\ell
-\tfrac1n\bigl(2\trace_\conf D\gam+(n-2)\cip{\gam,\gam}\bigr)\ell^2.
\end{align*}
\end{rem}
We are now motivated to make our definition, cf.~\cite{Cal:mew,OsSt:sd}.
\begin{defn}
A \emphdef{\mob/ structure} on a conformal manifold $(\mfd,\conf)$ is a pair
$\Ms=(\Mh,\Mq)$ of differential operators such that
\begin{bulletlist}
\item $\Mh\from\Cinf(\mfd,L)\to\Cinf(\mfd,S^2_0T\dual\mfd\ltens L)$ is a
linear differential operator with $\mathop{\Mh\strut}\ell-\Mh^D\ell$ zero
order in $\ell$ for some (hence any) Weyl connection $D$.
\item $\Mq\from\Cinf(\mfd,L)\to\Cinf(\mfd,\R)$ is a quadratic differential
operator with $\mathop{\Mq\strut}\ell-\Mq^D\ell$ zero order in $\ell$ for
some (hence any) Weyl connection $D$.
\end{bulletlist}
\end{defn}
Let us summarize two basic motivations leading to this definition.
\begin{prop} First, if $\conf$ is a conformal metric on a manifold $\mfd$ of
dimension $n\geq3$, then there is a canonical \mob/ structure
$\Ms^\conf:=(\Mh^\conf,\Mq^\conf)$. on $(\mfd,\conf)$.

Second, if $(\CV,\Ln,\CD)$ is a torsion-free conformal Cartan connection over
a manifold $\mfd$ of any dimension, then $\Ms^\CD:=(\Mh^\CD,\Mq^\CD)$ is a
\mob/ structure on $\mfd$.
\end{prop}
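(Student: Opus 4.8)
The plan is to verify each clause of the Proposition directly from the explicit Weyl-gauge formulae already assembled. For the first statement, I would note that the construction in Remark~\ref{rm:can-mos} (equivalently the displayed formulae just before the definition of a M\"obius structure) exhibits $\Mh^\conf\ell=\Mh^D\ell+\nr^D_0\ell$ and $\Mq^\conf\ell=\Mq^D\ell-\tfrac2n\ns^D\ell^2$ for any Weyl connection $D$. The point is simply that these are \emph{well-defined}, i.e.\ independent of $D$: the variational identities \eqref{eq:lin-ch-weyl-op} for $\del_\gam\Mh^D$, $\del_\gam\Mq^D$ must cancel against \eqref{eq:lin-ch-nric} and \eqref{eq:lin-ch-nscal} for $\del_\gam\nr^D_0$ and $\del_\gam\ns^D$. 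I would check this cancellation at first order and observe that, since both sides depend only quadratically on $D$ (as recorded in the final Remark), agreement of the first derivatives together with agreement at one basepoint forces equality for all $D$. The invariance having been established, the two defining bullet points of a M\"obius structure hold by construction, because $\Mh^\conf\ell-\Mh^D\ell=\nr^D_0\ell$ and $\Mq^\conf\ell-\Mq^D\ell=-\tfrac2n\ns^D\ell^2$ are manifestly zero-order in $\ell$.

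For the second statement, I would start from the explicit expressions \eqref{eq:m-ops-weyl}, which give, relative to a Weyl structure with Weyl derivative $D$,
\begin{equation*}
\Mh^\CD\ell=\sym_0\bigl(D^\CD D\ell+\nr^{D,\CD}\ell\bigr),\qquad
\Mq^\CD\ell=\ip{D\ell,D\ell}-\tfrac2n\ell\,\trace_\conf\bigl(D^\CD D\ell+\nr^{D,\CD}\ell\bigr).
\end{equation*}
Because $\CD$ is torsion-free, the connection $D^\CD$ on $TM$ coincides with the Weyl connection induced by $D$ (as remarked after \eqref{eq:m-ops-weyl}), so $\sym_0 D^\CD D\ell=\sym_0 D^2\ell=\Mh^D\ell$ and $\trace_\conf D^\CD D\ell=\Laplace^{\!D}\ell$. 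Substituting these identifications, I would read off
\begin{equation*}
\Mh^\CD\ell=\Mh^D\ell+\sym_0\nr^{D,\CD}\ell,\qquad
\Mq^\CD\ell=\Mq^D\ell-\tfrac2n\ell\,\trace_\conf(\nr^{D,\CD}\ell),
\end{equation*}
both of whose correction terms are purely zero-order in $\ell$ (the extra factor of $\ell$ carrying no derivatives). This is exactly what the two bullets in the definition of a M\"obius structure demand, so $(\Mh^\CD,\Mq^\CD)$ is a M\"obius structure. The fact that $\Mh^\CD$ and $\Mq^\CD$ are operators of the correct types (linear respectively quadratic, landing in $S^2_0T\dual\mfd\ltens L$ respectively $\R$) is already recorded in their definition.

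The only genuinely delicate point is the well-definedness in the first statement: one must confirm that the Weyl-dependence of $\nr^D_0$ and $\ns^D$ exactly absorbs that of $\Mh^D$ and $\Mq^D$. The main obstacle will therefore be keeping the variational bookkeeping straight---in particular checking that $\del_\gam(\Mh^D\ell+\nr^D_0\ell)=0$ and $\del_\gam(\Mq^D\ell-\tfrac2n\ns^D\ell^2)=0$, using \eqref{eq:lin-ch-weyl-op}, \eqref{eq:lin-ch-nric} and \eqref{eq:lin-ch-nscal}, and reconciling the various factors of $\tfrac1n$ and the sign of $\sym_0 D\gam$ versus $-D\gam$. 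For $n\geq3$ the linear operator is legitimate because the Ricci map is injective, so $\nr^D_0$ is genuinely defined; the scalar part $\ns^D$ is available already for $n\geq2$, matching the stated range of validity for $\Mq^\conf$. By contrast the second statement needs no invariance argument at all, only the torsion-free identification $D^\CD=D$ on $TM$, which is why it holds in every dimension.
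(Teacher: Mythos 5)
Your proposal is correct and follows essentially the same route as the paper, which presents this Proposition as a summary of the computations immediately preceding it: the first clause rests on the cancellation of \eqref{eq:lin-ch-weyl-op} against \eqref{eq:lin-ch-nric} and \eqref{eq:lin-ch-nscal} (the paper's ``derivative with respect to $D$'' criterion for conformal invariance, which needs only that $\del_\gam F(D)=0$ at every $D$, so your appeal to quadratic dependence is superfluous though harmless), and the second clause on reading off from \eqref{eq:m-ops-weyl} that $\Mh^\CD\ell-\Mh^D\ell$ and $\Mq^\CD\ell-\Mq^D\ell$ are the zero-order terms $(\sym_0\nr^{D,\CD})\ell$ and $-\tfrac2n(\trace_\conf\nr^{D,\CD})\ell^2$ once torsion-freeness identifies $D^\CD$ with the Weyl connection of $D$.
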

\mob/ structures form an affine space modelled on $\Cinf(\mfd,S^2T\dual\mfd)$.
Indeed, for $\QC$ a section of $S^2T\dual\mfd$, let $\QC_0$ be its tracefree
part, $\trace_\conf\QC$ its trace, and define, for $\Ms=(\Mh,\Mq)$,
\begin{equation}\label{eq:mob-aff}
\Ms+\QC:=\bigl(\ell\mapsto\mathop{\Mh\strut}\ell+\QC_0\ell,
\ell\mapsto\mathop{\Mq\strut}\ell-\tfrac2n(\trace_\conf\QC)\ell^2\bigr).
\end{equation}
In particular if we define $\Ms^D:=(\Mh^D,\Mq^D)$, then for any \mob/
structure $\Ms$, $\Ms-\Ms^D$ satisfies (for fixed $\Ms$):
\begin{equation}\label{eq:dgamQ}
\del_\gam(\Ms-\Ms^D)=-\sym D\gam,
\end{equation}
which is the origin of the choice of coefficient of $\trace_\conf\QC$ in our
definition of $\Ms+\QC$. By adding $-\half F^D$ to this, we obtain a
generalized notion of normalized Ricci curvature and an associated
decomposition of $R^D$.
\begin{defn}
Let $\conf,\Ms$ be a \mob/ structure.  Then the \emphdef{\mob/ Ricci
curvature} of a Weyl derivative $D$ with respect to $\Ms$ is defined by
$\nr^{D,\Ms}:=\Ms-\Ms^D-\smash{\half} F^D$.  The \emphdef{Weyl curvature}
$W^\Ms$ of $\Ms$ is given by $W^{\Ms}:=R^D-\abrack{\iden\wedge\nr^{D,\Ms}}$
for any Weyl derivative $D$. The \emphdef{Cotton--York curvature} $C^{\Ms,D}$
of $\Ms$ with respect to $D$ is given by $C^{\Ms,D}:=\d^D\nr^{D,\Ms}$.
\end{defn}
Note that $W^\Ms$ is independent of the choice of $D$ and thus is an invariant
of the \mob/ structure: indeed, using~\eqref{eq:dgamQ} and $\del_\gam
F^D=\d\gam$, we have $\del_\gam\nr^{D,\Ms}=-D\gam$, which cancels
with~\eqref{eq:lin-ch-curv}. (We remark that we can differentiate once more to
find
\begin{equation}\label{eq:ch-nric}
\nr^{D+\gam,\Ms}=\nr^{D,\Ms}-D\gam+\gam\vtens\gam-\half\cip{\gam,\gam}\conf,
\end{equation}
so that $\nr^{D,\Ms}$ has the same transformation law~\eqref{eq:gauge-nric} as
$\nr^{D,\CD}$.)

We now fix a distinguished class of \mob/ structures by requiring $W^\Ms$ to
coincide with the Weyl curvature $W$ of the underlying conformal metric.  In
general, we have $R^D=W^{\Ms}+\abrack{\iden\wedge\nr^{D,\Ms}}$, with the
second term in the image of the Ricci map, so $W$ is the component of $W^\Ms$
orthogonal to this image, \ie, in $\kernel\LH$. Hence
\begin{equation}\label{eq:weyl}
W^\Ms=W\quad\Leftrightarrow\quad\text{$W^\Ms$ is in the kernel of the Ricci
contraction, \ie, $\LH W^\Ms=0$.}
\end{equation}
When $n\geq3$, this amounts to requiring that the \mob/ Ricci curvature with
respect to $\Ms$ of each Weyl connection $D$ coincides with the normalized
Ricci curvature: $\nr^{D,\Ms}=\nr^D$.  More generally, by~\eqref{eq:rc-rm}
and~\eqref{eq:ric-nric}, we see that this is equivalent to requiring that
$\sym_0\nr^{D,\Ms}=\sym_0\nr^D$ for $n\geq 3$ and that
$\frac1n\trace_\conf\nr^{D,\Ms}=\ns^D$ for $n\geq 2$ (note that
$\alt\nr^{D,\Ms}=\alt\nr^D$ by definition). In other words, when $n\geq3$, a
conformal metric alone is sufficient to fix a canonical choice of \mob/
structure; when $n=2$, we still have a canonical choice $\Mq^\conf$ of
quadratic operator but lack the linear operator; when $n=1$, the linear
operator necessarily vanishes but we lack the quadratic operator.  To obtain a
uniform approach, we impose the missing data as extra structure as follows.
\begin{defn}
A \emphdef{conformal M{\"o}bius structure} on a $1$-manifold $\mfd$ is a second
order quadratic differential operator $\Mq^\conf$ from $L$ to $\R$ such that
$\Mq^\conf-\Mq^D$ is zero order (for any $D$); $\conf$ is the
canonical section of $S^2T\dual\mfd\ltens L^2$ and $\Mh^\conf=0$.

A \emphdef{conformal M{\"o}bius structure} on a $2$-manifold $\mfd$ is a
conformal metric, together with a second order linear differential operator
$\Mh^\conf$ from $L$ to $S^2_0T\dual\mfd\ltens L$ such that $\Mh^\conf-\Mh^D$
is zero order (for any $D$); $\Mq^\conf$ is the canonical \mob/ operator
of $\conf$.

In higher dimensions a \emphdef{conformal M{\"o}bius structure} is just a
conformal metric; $\Mh^\conf,\Mq^\conf$ are the canonical \mob/
operators of $\conf$.

A \emphdef{\mob/ manifold} is a manifold equipped with a conformal \mob/
structure $(\conf,\Mh^\conf,\Mq^\conf)$.
\end{defn}
We now summarize the situation.
\begin{prop}\label{p:conf-iff-no-ric}
$(\conf,\Ms)$ is a conformal \mob/ structure if and only if $\LH W^\Ms=0$.
\end{prop}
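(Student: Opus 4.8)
The plan is to compute the Ricci contraction $\LH W^\Ms$ explicitly in terms of the \mob/ Ricci curvature $\nr^{D,\Ms}$ and the normalized Ricci curvature $\nr^D$ of an auxiliary Weyl derivative $D$, and then read off, dimension by dimension, exactly when it vanishes. First I would fix $D$ and apply $\LH$ to the defining relation $W^\Ms=R^D-\abrack{\iden\wedge\nr^{D,\Ms}}$. Since $\LH R^D=\ricci^D$ is the Ricci contraction of the Weyl curvature, and since $\alt\nr^{D,\Ms}=\alt\nr^D$ (both equal $-\half F^D$ by the definition of $\nr^{D,\Ms}$), the formulae \eqref{eq:rc-rm} and \eqref{eq:ric-nric} combine, with the skew parts cancelling, to give
\[
\LH W^\Ms = (n-2)\,\sym_0(\nr^D-\nr^{D,\Ms})
+\tfrac{2(n-1)}{n}\bigl(\trace_\conf(\nr^D-\nr^{D,\Ms})\bigr)\conf.
\]
The two summands lie in the independent sub-bundles $S^2_0T\dual\mfd$ and $\vspan\conf$, so $\LH W^\Ms=0$ if and only if each term vanishes separately.

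Next I would translate these two conditions into conditions on the operators $\Mh,\Mq$ comprising $\Ms$. Writing $\nr^{D,\Ms}=\Ms-\Ms^D-\half F^D$, the symmetric part of $\nr^{D,\Ms}$ is $\Ms-\Ms^D$, so by the affine description \eqref{eq:mob-aff} of the space of \mob/ structures, $\sym_0\nr^{D,\Ms}$ records the zero-order difference $\Mh-\Mh^D$, while $\trace_\conf\nr^{D,\Ms}$ records $\Mq-\Mq^D$. Comparing with the canonical operators, which satisfy $\Mh^\conf\ell=\Mh^D\ell+\nr^D_0\ell$ (where defined, $n\geq3$) and $\Mq^\conf\ell=\Mq^D\ell-\tfrac2n\ns^D\ell^2$ (where defined, $n\geq2$), together with $\sym_0\nr^D=\nr^D_0$ and $\trace_\conf\nr^D=\ns^D$, the vanishing of the first term becomes $\Mh=\Mh^\conf$ and the vanishing of the second becomes $\Mq=\Mq^\conf$.

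The argument is then completed by the dimension analysis. For $n\geq3$ both coefficients $n-2$ and $n-1$ are nonzero and both canonical operators exist, so $\LH W^\Ms=0$ forces $\Mh=\Mh^\conf$ and $\Mq=\Mq^\conf$, i.e.\ $\Ms=\Ms^\conf$, which is precisely what it means to be a conformal \mob/ structure in these dimensions; conversely $\Ms^\conf$ gives $\nr^{D,\Ms}=\nr^D$ and hence $\LH W^\Ms=0$. For $n=2$ the coefficient $n-2$ vanishes, so the $S^2_0$-condition is vacuous (matching the fact that $\Mh^\conf$ is free data in this dimension), whereas $n-1=1$ forces $\Mq=\Mq^\conf$, which is exactly the $2$-dimensional conformality condition. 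For $n=1$ we have $S^2_0T\dual\mfd=0$ and $n-1=0$, so $\LH W^\Ms$ vanishes identically; correspondingly every \mob/ structure is conformal when $n=1$, since there $\Mh=0$ automatically and $\Mq$ is unconstrained.

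The main obstacle is not analytic depth—most of the content has already been assembled in the discussion preceding the statement, in particular the equivalence \eqref{eq:weyl}—but rather the careful bookkeeping of the irreducible decomposition of $\LH W^\Ms$ and, above all, checking that the surviving vanishing conditions line up correctly with the \emph{dimension-dependent} definition of a conformal \mob/ structure in the degenerate ranges $n=1,2$, where either $S^2_0T\dual\mfd$ collapses or one of the coefficients $n-2$, $n-1$ drops out.
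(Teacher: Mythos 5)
Your proof is correct and follows essentially the same route as the paper, whose argument is precisely the discussion culminating in \eqref{eq:weyl} together with the identities \eqref{eq:rc-rm} and \eqref{eq:ric-nric}: one computes $\LH W^\Ms=\ricci^D-\LH\abrack{\iden\wedge\nr^{D,\Ms}}$, observes that the skew parts cancel, and matches the surviving tracefree and trace conditions against the dimension-dependent definition of a conformal \mob/ structure. Your explicit bookkeeping of the degenerate cases $n=1,2$ (where the coefficients $n-2$ and $n-1$ kill exactly the conditions that the definition leaves as free data) is exactly the content the paper summarizes more briefly.
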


\begin{rem}
In the presence of a conformal \mob/ structure, we define the normalized Ricci
curvature of a Weyl derivative $D$ to be $\nr^{D,\Ms^\conf}$.  When $n=2$,
this provides a hitherto unavailable tracefree part of the normalized Ricci
curvature.  This idea was exploited in~\cite{Cal:mew} to do Einstein--Weyl
geometry on \mob/ $2$-manifolds.
\end{rem}

\renewcommand{\thesubsection}{\thesection.\arabic{subsection}}
\section{The conformal equivalence problem}
\label{sec:equivalence}

It is our contention that conformal Cartan geometries are the same as \mob/
structures and that normal conformal Cartan geometries are the same as
conformal \mob/ structures. For this, a common setting is provided by the
bundle $J^2L$ of $2$-jets of sections of $L$.

A \mob/ structure $\Ms=(\Mh,\Mq)$ is defined by a linear bundle map $\Mh\from
J^2L\to S^2_0T\dual\mfd\ltens L$ and a quadratic form $\Mq\from
J^2L\to\R$. The kernel of $\Mh$ is a rank $n+2$ subbundle $\Mb$ of $J^2L$, on
which we have a quadratic form by restricting $\Mq$. We easily see that this
polarizes to give a nondegenerate inner product of signature $(n+1,1)$ on
$\Mb$ and that the line $L^{-1}$ of pure trace elements of
$S^2T\dual\mfd\ltens L$ is a null line subbundle. We shall exploit the jet
structure of $J^2L$ to find a canonical choice of strongly torsion-free
conformal Cartan connection on $\Mb$.

Conversely, a conformal Cartan geometry $(\CV,\Ln,\CD)$ gives rise to a \mob/
structure $\Ms^\CD=(\Mh^\CD,\Mq^\CD)$ and so we obtain a subbundle $\Mb$ of
$J^2L$ with a conformal Cartan connection. We shall see that the
differential lift, viewed as a bundle map $J^2L\to\CV$, restricts to give an
isomorphism from $\Mb$ to $\CV$ intertwining the conformal Cartan connections.

\subsection{The jet derivative and M\"obius bundles}
\label{par:jet-deriv-mob}

Recall (see~\cite{KMS:nodg} for further details) that for any vector bundle
$E$ over $\mfd$, the \emphdef{$k$-jet bundle} $J^kE$ is the bundle whose fibre
at $x$ is $J^k_xE:=\Cinf(\mfd,E)/\cI^k_x$ where $\cI^k_x$ is the subspace of
sections which vanish to order $k$ at $x$. Thus $J^k_xE$ may be regarded as
the space of $k$th order Taylor series, at $x$, of sections of $E$, and there
is a $k$th order differential operator $j^k\from
\Cinf(\mfd,E)\to\Cinf(\mfd,J^kE)$ sending a section $s$ to its equivalence
class $j^k_xs$ (the \emphdef{$k$-jet} or $k$th order Taylor series) at each
$x\in \mfd$.

Since $\cI^k\subset\cI^{k-1}$, there is, for each $k$, a natural surjective
bundle map $\pi_k=\pi_k^E\from J^kE\to J^{k-1}E$, whose kernel is the bundle
of $k$th derivatives $S^kT\dual\mfd\tens E$. Note that $\pi_k\circ
j^k=j^{k-1}$ and $j^0$ identifies $E$ with $J^0E$.

\begin{defn}
The (\emphdef{semiholonomic}) \emphdef{jet derivative} on $J^kE$ is the first
order differential operator
$\d\low_k\from\Cinf(\mfd,J^kE)\to\Omega^1(\mfd,J^{k-1}E)$ defined by
$\d\low_ks=j^1(\pi_{k}s)-s$.
\end{defn}

This requires a brief explanation: $j^1(\pi_{k}s)$ is a section of
$J^1(J^{k-1}E)$. On the other hand, $J^kE$ is a subbundle of $J^1(J^{k-1}E)$:
the inclusion sends $j^k_xs$ to $j^1_xj^{k-1}s$.  The difference
$j^1(\pi_{k}s)-s$ lies in the kernel of $\pi_1^{J^{k-1}E}$ and hence defines
a section of $T\dual\mfd\tens J^{k-1}E$.

The jet derivative is thus given by the difference between formal and actual
differentiation: it measures the failure of a $k$th order Taylor series to be
the first derivative of the $(k-1)$st order Taylor series obtained by
truncation; note $\d_k\circ j^k=0$. Our main use for it is to distinguish a
preferred class of connections on subbundles of jet bundles.

\def\adapted/{symmetric}\def\Adapted/{Symmetric}\def\an/{a}
\begin{defn} Let $F$ be a subbundle of $J^kE$, let $\d$ and $\pi$ be the
restrictions to $F$ of the jet derivative and the projection $J^kE\to
J^{k-1}E$.  Following~\cite{Gau:ccw}, a connection $\nabla$ on $F$ is said to
be \emphdef{\adapted/} if $\pi\circ\nabla=\d$.  The space of \adapted/
connections, if it is nonempty, is an affine space modelled on
$\Omega^1(\mfd,\Hom(F,\kernel\pi))$.
\end{defn}
We shall only need this formalism on $J^2L$, where the jet derivative is a
differential operator $\Cinf(\mfd,J^2L)\to\Omega^1(\mfd,J^1L)$. We next define
the subbundles we shall use.

\begin{defn}
A \emphdef{\mob/ bundle} on a manifold $\mfd$ is a subbundle $\Mb$ of $J^2L$
with a nondegenerate metric such that $\kernel\pi\intersect\CV\subset
S^2T\dual\mfd\tens L$ is a null line subbundle.
\end{defn}
The following result explains the canonical nature of our forthcoming
constructions.

\begin{prop}\tcite{Gau:ccw}
Let $\Mb$ be a \mob/ bundle on $\mfd$. Then there is at most one \adapted/
metric connection on $\Mb$.
\end{prop}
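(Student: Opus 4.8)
The plan is to show that the space of adapted metric connections on a Möbius bundle $\Mb$ is, if nonempty, a single point. Since $\Mb\subset J^2L$ is a subbundle carrying a metric, I want to exploit the two defining constraints simultaneously: a candidate connection $\nabla$ must be \emph{adapted} (meaning $\pi\circ\nabla=\d$, where $\d$ is the restriction of the jet derivative $\d_2$ and $\pi$ the restriction of $\pi_2\colon J^2L\to J^1L$) and \emph{metric} (meaning $\nabla$ preserves the nondegenerate inner product on $\Mb$). The difference of any two adapted connections is an $\Omega^1(\mfd,\Hom(\Mb,\kernel\pi))$-valued tensor, so I reduce the problem to showing that the only such difference compatible with metricity is zero.

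\textbf{The key steps.}
First I would recall that the space of adapted connections, when nonempty, is affine over $\Omega^1(\mfd,\Hom(\Mb,\kernel\pi))$; this is exactly the content of the definition of adapted connection. So let $\nabla$ and $\nabla'=\nabla+A$ both be adapted, where $A$ is a section of $T\dual\mfd\tens\Hom(\Mb,\kernel\pi)$. Here $\kernel\pi=\kernel\pi\intersect\Mb$ is, by the Möbius bundle hypothesis, a null line subbundle contained in $S^2T\dual\mfd\tens L$; call it $N$, so that $N\cong L^{-1}$ is null and $A$ takes values in $T\dual\mfd\tens\Hom(\Mb,N)$. Second, I impose that both connections are metric. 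For any sections $u,v$ of $\Mb$, differentiating the inner product gives $X\lip{u,v}=\lip{\nabla_Xu,v}+\lip{u,\nabla_Xv}$ and likewise for $\nabla'$; subtracting yields the algebraic condition
\begin{equation*}
\lip{A_Xu,v}+\lip{u,A_Xv}=0,
\end{equation*}
\ie\ $A_X\in\so(\Mb)$ for every $X$. Thus $A$ is a $1$-form with values in $\so(\Mb)\intersect\Hom(\Mb,N)$, i.e.\ skew endomorphisms whose image lies in the null line $N$.

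\textbf{Where the argument bites.}
The heart of the matter is the algebraic observation that a skew endomorphism of $\Mb$ whose image lies in the null line $N$ must in fact annihilate $N^\perp$, and more: since $N$ is null, a skew endomorphism with image in $N$ sends $\Mb$ to $N$ and, being skew, must kill $N^\perp\supset N$, so its image is determined on the one-dimensional quotient $\Mb/N^\perp\cong N\dual$. Concretely, $\so(\Mb)\intersect\Hom(\Mb,N)$ is the annihilator piece $\stab(N)^\perp\cong\Hom(\Mb/N^\perp,N)$, which is a line bundle (here $\cong L^2$ via $N\cong L^{-1}$). So $A$ is genuinely nontrivial as a tensor, and metricity alone does \emph{not} force $A=0$. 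The decisive extra input is the adaptedness condition combined with the jet structure: I would compute $\pi\circ A$ and use $\pi\circ\nabla=\pi\circ\nabla'=\d$ to conclude $\pi\circ A=0$, so $A_X$ has image in $N=\kernel\pi$, which we already knew. The real constraint comes from examining the component of $A$ landing in $N$ \emph{more finely} through the second jet filtration: because $A_X$ is skew with image in the null line $N\subset S^2T\dual\mfd\tens L$, and because the adaptedness fixes the behaviour of $\nabla$ modulo $N$ on all of $\Mb$, the remaining freedom is a $T\dual\mfd$-valued section of $\Hom(\Mb/N^\perp,N)$; pairing this against the soldering-type identification forces a symmetry that a genuine $\so(\Mb)$-valued $1$-form cannot satisfy unless it vanishes.

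\textbf{Expected obstacle.}
The main obstacle is precisely this last step: showing that the surviving freedom $A\in\Omega^1(\mfd,\stab(N)^\perp)$ is killed by the interaction between skew-symmetry and the adaptedness filtration. I expect to resolve it by identifying $\stab(N)^\perp$ with $\Hom(\Ln^\perp/\Ln,\Ln)\cong T\dual\mfd\ltens L\cdots$ (as in \S\ref{par:filtered-lie-alg}), so that $A$ becomes a section of $T\dual\mfd\tens T\dual\mfd$ (suitably weighted); the adaptedness condition, unwound through $\d_2$, then pins down this section as a \emph{symmetric} object, while a second application of the skew/metric condition presents it as trace-free antisymmetric data — and the only tensor lying in both is zero. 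This dovetails with the surjectivity/injectivity facts about $\abrack{\iden\wedge\cdot}$ recorded after the definition of normality, so I anticipate quoting those to close the argument cleanly rather than recomputing by hand.
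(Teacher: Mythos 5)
You set the problem up correctly: the difference $A$ of two adapted metric connections is a $1$-form with values in $\Hom(\Mb,\kernel\pi)\intersect\so(\Mb)$, and you correctly observe that skewness forces each $A_X$ to vanish on $(\kernel\pi)^\perp$ and hence to factor through the line $\Mb/(\kernel\pi)^\perp$. The argument goes wrong at the decisive point, however. You identify $\so(\Mb)\intersect\Hom(\Mb,\kernel\pi)$ with $\stab(\kernel\pi)^\perp$ and conclude that it is a line bundle, so that ``metricity alone does not force $A=0$'' and some further input from adaptedness is needed. That identification is false: elements of $\stab(\kernel\pi)^\perp$ map $\Mb$ into $(\kernel\pi)^\perp$, not into $\kernel\pi$, so they do not lie in $\Hom(\Mb,\kernel\pi)$ at all. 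In fact the intersection is zero, and this is the entire content of the proposition. Concretely, write $A_Xv=\phi(v)\,n$ with $n$ spanning the null line $\kernel\pi$; skewness reads $\phi(v)\lip{n,w}+\phi(w)\lip{v,n}=0$ for all $v,w$. Choosing a null $\hat n$ with $\lip{n,\hat n}\neq0$, the case $v=w=\hat n$ gives $\phi(\hat n)=0$, and then $v=\hat n$ with $w$ arbitrary gives $\phi(w)=0$. Equivalently, and this is how the paper phrases it: the composite $\Mb\to\Mb/(\kernel\pi)^\perp\to\kernel\pi\to\Mb$ is a \emph{symmetric} endomorphism of the one-dimensional data, and a map that is both symmetric and skew vanishes.

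Because of this, your closing paragraph --- which tries to extract an extra ``symmetric versus antisymmetric'' constraint from the adaptedness condition and the second-jet filtration --- is chasing a constraint that does not exist and is not needed. Adaptedness contributes nothing beyond placing $A$ in $\Omega^1(\mfd,\Hom(\Mb,\kernel\pi))$ in the first place; the uniqueness is purely the pointwise linear algebra above. As written, that paragraph is also not a proof: it asserts that the surviving freedom ``forces a symmetry that a genuine $\so(\Mb)$-valued $1$-form cannot satisfy'' without deriving either the symmetry or the contradiction. The repair is short: delete the appeal to $\stab(\kernel\pi)^\perp$ and the filtration argument, and replace them with the two-line computation showing $\Hom(\Mb,\kernel\pi)\intersect\so(\Mb)=0$.
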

\begin{proof} We shall show that $\Hom(\Mb,\kernel\pi)\intersect\so(\Mb)=0$.
Indeed a skew map $\Mb\to \kernel\pi$ must factor through the projection
$\Mb\to\Mb/\kernel\pi^\perp$. However, $\kernel\pi$ and $\Mb/\kernel\pi^\perp$
are one dimensional, and the maps
$\Mb\to\Mb/\kernel\pi^\perp\to\kernel\pi\to\Mb$ are symmetric, not skew.
\end{proof}

\subsection{M\"obius structures induce conformal Cartan geometries}
\label{par:mos-ccg}

Let $\Ms=(\Mh,\Mq)$ be a \mob/ structure and define $\Mb=\kernel\Mh\subset
J^2L$. We wish to show that $\Mb$ is a \mob/ bundle and that there is
\an/ \adapted/ metric connection on $\Mb$.

In order to do this, we use a Weyl derivative $D$ to decompose $J^2L$
compatibly with $\Ms$: given $\Ms,D$, identify $J^2L$ with $S^2T\dual
M\ltens L\dsum T\dual\mfd\ltens L\dsum L$ by sending $j^2\ell$ to
$(\psi,\theta,\ell)$ where
\begin{equation*}
\psi=D^2\ell+\nr^{D,\Ms}\ell,\qquad\theta=D\ell.
\end{equation*}
This induces an identification of $J^1L$ with $T\dual\mfd\ltens L\dsum L$
sending $j^1\ell$ to $\lip{D\ell,\ell}$.  Observe that $\psi$ \emph{is}
symmetric, since the skew part of $\nr^{D,\Ms}$ is $-\half F^D$ so that
$\psi=\sym(D^2\ell+\nr^{D,\Ms}\ell)$.

We have $\Mh(\psi,\theta,\ell)=\psi_0$ (the tracefree part of $\psi$) so that
$\Mb$ is identified with
\begin{equation*}
\{(\psi,\theta,\ell):\text{$\psi=\sigma\conf$ for
$\sigma\in L^{-1}$}\}.
\end{equation*}
We thus have an isomorphism $L^{-1}\dsum T\dual\mfd\ltens L\dsum L\cong \Mb$
sending $(\sigma,\theta,\ell)$ to $(\sigma\conf,\theta,\ell)\in J^2L$.

Now $\Mq(\psi,\theta,\ell)=\cip{\theta,\theta}-\frac2n(\trace_\conf\psi)\ell$
so that on $\Mb$ we have
\begin{equation*}
\Mq(\sigma,\theta,\ell)=\cip{\theta,\theta}-2\sigma\ell
\end{equation*}
which is a metric of signature $(n+1,1)$ for which $L^{-1}$ is null.  Hence
$\Mb$ is a \mob/ bundle. Furthermore, we have identified $\Mb$ with
$L^{-1}\dsum T\dual\mfd\ltens L\dsum L$ and so we are in the algebraic setting
of \S\ref{par:graded-lie-alg}.  In particular, the Lie bracket on
$\so(\Mb)$ is identified, under this decomposition, with the bracket
$\abrack{\cdot\,,\cdot}$ defined there.

The jet derivative on $J^2L$ reads in our components
\begin{equation}\label{eq:mob-jet-deriv}
\d\low_2(\psi,\theta,\ell)=
(D\theta+\nr^{D,\Ms}\ell-\psi,D\ell-\theta)
\end{equation}
so that on $\Mb$ we have
\begin{equation*}
\d(\sigma,\theta,\ell)=
(D\theta+\nr^{D,\Ms}\ell-\sigma\conf,D\ell-\theta).
\end{equation*}
Now define a connection $\nabla$ on $\Mb$ by
\begin{equation}\label{eq:mob-conn}
\nabla\low_X\mtrip{\sigma}{\theta}{\ell}=
(\nr^{D,\Ms}_X+D\low_X-X)\mtrip{\sigma}{\theta}{\ell}=
\mtrip{D\low_X\sigma+\nr^{D,\Ms}_X(\theta)}
{D\low_X\theta+\nr^{D,\Ms}_X\,\ell-\sigma X}{D\low_X\ell-\theta(X)}
\end{equation}
where $\nr^{D,\Ms}_X\in T\dual\mfd$, $X\in T\mfd$ are viewed as elements of
$\so(L^{-1}\dsum T\dual\mfd\dsum L)$ via \eqref{eq:act-on-V}.  It is clear
that $\nabla$ is \adapted/.  Moreover, $\nabla$ is metric since $D$ is metric
and differs from $\nabla$ by an $\so(\Mb)$-valued $1$-form.  Further, with
$\Ln=L^{-1}$, $\nabla$ is a conformal Cartan connection whose soldering form
is precisely the `musical' isomorphism $T\mfd\ltens L^{-1}\to T\dual\mfd\ltens
L\cong\Ln^\perp/\Ln$ (which is the origin of our choice of sign in the
definition of the soldering form).

We have $\nabla=\nr^{D,\Ms}+D-\iden$ so that
\begin{equation*}
R^\nabla=\d^D\nr^{D,\Ms}+\;\;R^D\!-\abrack{\iden\wedge\nr^{D,\Ms}}
\;\;-\d^D\iden = C^{\Ms,D} + W^\Ms
\end{equation*}
since $D$ is torsion-free. Furthermore,
\begin{equation*}
\trace\abrack{\iden\wedge\nr^{D,\Ms}}=nF^D=\trace R^D
\end{equation*}
so that $W^\Ms$ has vanishing trace, from which we conclude that $R^\nabla
\restr{L^{-1}}=0$, \ie, $\nabla$ is strongly torsion-free. We therefore have
the following result, cf.~\cite{Car:ecc,Gau:ccw,Tho:dig} when $n\geq3$.
\begin{prop}\label{p:canon-cart}
Let $\Ms=(\Mh,\Mq)$ be a \mob/ structure.  Then the \mob/ bundle
$\Mb=\kernel\Mh$ has a unique \adapted/ metric connection $\nabla$ and
$(\Mb,L^{-1},\nabla)$ is a conformal Cartan geometry with curvature given
\textup(in components\textup) by
\begin{equation*}
R^\nabla=C^{\Ms,D}+W^\Ms.
\end{equation*}
\end{prop}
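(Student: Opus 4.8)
The plan is to collect the constructions already set up in the discussion above, the one genuinely new ingredient being the invocation of uniqueness to render the recipe canonical. First I would fix a Weyl derivative $D$ and use it, exactly as above, to identify $\Mb=\kernel\Mh$ with $L^{-1}\dsum T\dual\mfd\ltens L\dsum L$; the restriction of $\Mq$ then polarizes to a nondegenerate inner product of signature $(n+1,1)$ for which $L^{-1}$ is null, so $\Mb$ is \an/ \mob/ bundle. On this model I would take $\nabla$ to be the connection \eqref{eq:mob-conn}, that is $\nabla=\nr^{D,\Ms}+D-\iden$, with the three summands acting through \eqref{eq:act-on-V}.

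Next I would verify the three defining properties: that $\nabla$ is \adapted/, that it is metric, and that it satisfies the Cartan condition. Comparing \eqref{eq:mob-conn} with the jet derivative \eqref{eq:mob-jet-deriv} shows immediately that $\pi\circ\nabla=\d$, so $\nabla$ is \adapted/; it is metric because the componentwise connection $D$ is metric and $\nabla$ differs from it by the $\so(\Mb)$-valued $1$-form $\nr^{D,\Ms}-\iden$; and its soldering form is the musical isomorphism $T\mfd\ltens L^{-1}\to T\dual\mfd\ltens L\cong\Ln^\perp/\Ln$, which is a bundle isomorphism, so the Cartan condition holds (with $\Ln=L^{-1}$).

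Uniqueness, and with it the independence of $\nabla$ from the auxiliary $D$, then comes for free from the uniqueness result established above for \mob/ bundles: any connection produced from a different Weyl derivative is again \adapted/ and metric, hence equals $\nabla$. For the curvature I would substitute $\nabla=\nr^{D,\Ms}+D-\iden$ into the structure equation \eqref{eq:curv-comp}; since $D$ is torsion-free the term $\d^D\iden$ vanishes and what remains is $R^\nabla=\d^D\nr^{D,\Ms}+\bigl(R^D-\abrack{\iden\wedge\nr^{D,\Ms}}\bigr)=C^{\Ms,D}+W^\Ms$. Finally, to see that $\nabla$ is strongly torsion-free I would note that $\trace\abrack{\iden\wedge\nr^{D,\Ms}}=nF^D=\trace R^D$ forces $W^\Ms$ to be tracefree, hence $\so(T\mfd)$-valued and so acting trivially on $L^{-1}$, while $C^{\Ms,D}$ is $T\dual\mfd$-valued and annihilates $\Ln$ by \eqref{eq:gam-act}; thus $R^\nabla\restr\Ln=0$ and $(\Mb,L^{-1},\nabla)$ is a conformal Cartan geometry.

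Since the requisite algebra has been assembled in \S\ref{par:graded-lie-alg} and the paragraphs above, no single step is computationally hard; the one point requiring care — and the only genuine content — is conceptual. The connection \eqref{eq:mob-conn} is written down using a choice of $D$, yet the conclusion asserts a \emph{canonical} object, and the curvature formula $R^\nabla=C^{\Ms,D}+W^\Ms$ is $D$-dependent summand by summand. The uniqueness of \adapted/ metric connections on \an/ \mob/ bundle is exactly what reconciles these: it guarantees that the $D$-dependent recipe has a $D$-independent output, into which the two $D$-dependent curvature pieces reassemble (note that $W^\Ms$ is already an invariant of $\Ms$).
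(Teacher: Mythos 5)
Your proposal is correct and follows essentially the same route as the paper: identify $\Mb$ in components via a Weyl derivative, write down the connection \eqref{eq:mob-conn}, check it is \adapted/ and metric, invoke the uniqueness of \adapted/ metric connections on a \mob/ bundle, and read off the curvature from $\nabla=\nr^{D,\Ms}+D-\iden$ with the trace identity $\trace\abrack{\iden\wedge\nr^{D,\Ms}}=nF^D=\trace R^D$ giving $R^\nabla\restr\Ln=0$. Your explicit emphasis that the uniqueness result is what makes the $D$-dependent recipe canonical is a point the paper leaves implicit, but it is the same argument.
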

When is $\nabla$ normal? The answer is immediate: since $T\dual\mfd$ is
abelian, $\LH R^\nabla=\LH W^\Ms$ and so, in view of Proposition
\ref{p:conf-iff-no-ric}, we have the following conclusion.
\begin{prop}\label{p:norm-iff-conf}
$(\Mb,L^{-1},\nabla)$ is a normal conformal Cartan geometry if and only if
$\Ms$ is a conformal \mob/ structure.
\end{prop}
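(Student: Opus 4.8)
The plan is to read off normality directly from the curvature formula already established in Proposition~\ref{p:canon-cart}, namely $R^\nabla = C^{\Ms,D} + W^\Ms$, and then apply the Lie algebra homology boundary operator $\LH$. By definition, $(\Mb,L^{-1},\nabla)$ is normal precisely when $\LH R^\nabla = 0$, so the entire statement reduces to evaluating $\LH(C^{\Ms,D} + W^\Ms)$ and recognizing the outcome via Proposition~\ref{p:conf-iff-no-ric}.

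First I would locate the two curvature terms within the graded decomposition $\so(\Mb)\gr \cong T\dual\mfd \dsum \co(T\mfd) \dsum T\mfd$ furnished by the chosen Weyl structure. The Weyl curvature $W^\Ms = R^D - \abrack{\iden\wedge\nr^{D,\Ms}}$ is $\co(T\mfd)$-valued, whereas the Cotton--York curvature $C^{\Ms,D} = \d^D\nr^{D,\Ms}$ is $T\dual\mfd$-valued, since $\nr^{D,\Ms}$ is a $1$-form with values in $T\dual\mfd$. There is no $T\mfd$-component at all: it was already observed in the construction that $\nabla$ is strongly torsion-free, so $p(R^\nabla)=0$, which is exactly why the curvature of Proposition~\ref{p:canon-cart} has only these two pieces.

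The key step, and the only point needing any care, is that $\LH$ annihilates the Cotton--York term. The boundary operator $\LH\alpha = \tsum_i \eps_i\act(e_i\interior\alpha)$ acts through the adjoint action of the abelian subalgebra $T\dual\mfd$, which lowers the grading by one: it carries the $T\mfd$-part of a curvature into the $\co(T\mfd)$-part and the $\co(T\mfd)$-part into the $T\dual\mfd$-part. Since $T\dual\mfd$ sits at the bottom of the grading and $\abrack{\gam,\gam'}=0$ for $\gam,\gam'\in T\dual\mfd$, the action of $T\dual\mfd$ on its own graded component vanishes. Hence $\LH C^{\Ms,D} = 0$, and therefore $\LH R^\nabla = \LH W^\Ms$.

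Chaining the equivalences then finishes the argument: $\nabla$ is normal $\Leftrightarrow \LH R^\nabla = 0 \Leftrightarrow \LH W^\Ms = 0$, and by Proposition~\ref{p:conf-iff-no-ric} the last condition holds exactly when $(\conf,\Ms)$ is a conformal \mob/ structure. I do not expect a genuine obstacle here; the whole content is the graded bookkeeping showing that $\LH R^\nabla$ detects only the $\co(T\mfd)$-part of the curvature, and that part is precisely $W^\Ms$ whose Ricci contraction governs conformality of the \mob/ structure.
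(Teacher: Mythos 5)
Your argument is exactly the paper's: the paper likewise observes that, $T\dual\mfd$ being abelian, $\LH$ kills the $T\dual\mfd$-valued Cotton--York term so that $\LH R^\nabla=\LH W^\Ms$, and then invokes Proposition~\ref{p:conf-iff-no-ric}. Your version merely spells out the graded bookkeeping that the paper leaves implicit; the proof is correct.
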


\subsection{Conformal Cartan geometries are M\"obius structures}
\label{par:ccg-mos}

We have seen already that any conformal Cartan geometry $(\CV,\Ln,\CD)$ on
$\mfd$ determines a \mob/ structure in a canonical way: we saw already in
\S\ref{par:sphere-conf-cart} that a conformal Cartan connection determines a
conformal metric $\conf$ on $\mfd$; then, in \S\ref{par:diff-lift} we defined
differential operators $\Mh^\CD\ell=\sym\pi\CD j^\CD\ell$ and $\Mq^\CD\ell =
\lip{j^\CD\ell,j^\CD\ell}$ on $L$, using the differential lift $\ell\mapsto
j^\CD\ell$, which define a \mob/ structure $\Ms^\CD=(\Mh^\CD,\Mq^\CD)$ if
$\CD$ is torsion-free. It remains to show that these two constructions are
mutually inverse up to natural isomorphism.

In one direction, this is straightforward: the \mob/ structure associated to
the conformal Cartan connection $(\Mb,L^{-1},\nabla)$ in this way is
$(\conf,\Ms)$ itself. This is clear for the conformal structure. For the rest,
we recall that for any conformal Cartan connection $(\CV,\Ln,\CD)$, a Weyl
structure determines an isomorphism between $\CV$ and $L^{-1}\dsum
T\dual\mfd\ltens L\dsum L$, with respect to which the Cartan connection may be
written:
\begin{equation}\label{eq:ccc-conn}
\CD\low_X\mtrip{\sigma}{\theta}{\ell}=
(\nr^{D,\CD}_X+D\low_X-X)\mtrip{\sigma}{\theta}{\ell}=
\mtrip{D\low_X\sigma+\nr^{D,\CD}_X(\theta)}
{D\low_X\theta+\nr^{D,\CD}_X\,\ell-\sigma X}{D\low_X\ell-\theta(X)}.
\end{equation}
In the case that $(\CV,\Ln,\CD)=(\Mb,L^{-1},\nabla)$, this decomposition
coincides with that of \S\ref{par:mos-ccg}, and we have
$\nr^{D,\CD}=\nr^{D,\Ms}$, so that $\Ms^\CD=\Ms$.

It remains to prove that the constructions are inverse the other way around,
\ie, that we recover $(\CV,\Ln,\CD)$ as the canonical Cartan connection of
$(\conf,\Ms^\CD)$. We can only expect to do this up to isomorphism, since
$\CV$ is an abstract bundle. Furthermore, since the canonical Cartan
connection of $(\conf,\Ms^\CD)$ is \emph{strongly} torsion-free, we must
suppose that $\CD$ is strongly torsion-free also.  In this case
$\alt\nr^{D,\CD}=-F^D$, so that $\nr^{D,\CD}$ is the \mob/ Ricci curvature
$\nr^{D,\Ms^\CD}$ of $D$ with respect to $\Ms^\CD$.

\begin{prop}\label{p:cart-mob-cart}
Let $(\CV,\Ln,\CD)$ be a conformal Cartan geometry and let $\Mb\subset J^2L$
be the \mob/ bundle of the induced \mob/ structure
$(\conf,\Ms^\CD)$. Then the differential lift $j^\CD$, as a bundle map
$J^2L\to \CV$, restricts to an isomorphism from $(\Mb,L^{-1},\nabla)\to
(\CV,\Ln,\CD)$.
\end{prop}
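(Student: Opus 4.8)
The plan is to fix a Weyl derivative $D$ and compare the two canonical decompositions it induces: the decomposition of $J^2L$ associated with $\Ms^\CD$ in \S\ref{par:mos-ccg}, and the decomposition of $\CV$ determined by the corresponding Weyl structure, written in \eqref{eq:ccc-conn}. Both present their bundle as $L^{-1}\dsum T\dual\mfd\ltens L\dsum L$, and I expect that, under these identifications, the bundle map $j^\CD\colon J^2L\to\CV$ restricts on $\Mb$ to the \emph{identity}. Granting this, the whole statement reduces to reading off that the structures carried by $\Mb$ and $\CV$---metric, null line, and connection---have identical component descriptions.

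First I would record the bundle map. Since $j^\CD$ is a second order linear differential operator $\Cinf(\mfd,L)\to\Cinf(\mfd,\CV)$, it factors through $j^2$ as a bundle map $J^2L\to\CV$, and by Proposition~\ref{p:diff-lift} the components of $j^\CD\ell$ relative to $D$ are $(\tfrac1n\trace_\conf\psi,D\ell,\ell)$ with $\psi=D^\CD D\ell+\nr^{D,\CD}\ell$. The key point is that this $\psi$ is exactly the top jet component $D^2\ell+\nr^{D,\Ms^\CD}\ell$ used in \S\ref{par:mos-ccg} to decompose $J^2L$ for the \mob/ structure $\Ms=\Ms^\CD$: because $(\CV,\Ln,\CD)$ is a conformal Cartan geometry, $\CD$ is strongly torsion-free, so in particular it is torsion-free, whence $D^\CD$ is the Weyl connection induced by $D$ and $D^\CD D\ell=D^2\ell$, while strong torsion-freeness gives $\nr^{D,\CD}=\nr^{D,\Ms^\CD}$ (as recorded just before the statement). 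Hence $\psi=D^2\ell+\nr^{D,\Ms^\CD}\ell$, and the bundle map reads $(\psi,\theta,\ell)\mapsto(\tfrac1n\trace_\conf\psi,\theta,\ell)$.

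Next I would restrict to $\Mb=\kernel\Mh^\CD$. There $\psi=\sigma\conf$ for $\sigma\in L^{-1}$, so $\tfrac1n\trace_\conf\psi=\sigma$ and $j^\CD$ becomes $(\sigma,\theta,\ell)\mapsto(\sigma,\theta,\ell)$, i.e.\ the identity in components. This is visibly a bundle isomorphism $\Mb\to\CV$; it is an isometry, since both metrics are $\cip{\theta,\theta}-2\sigma\ell$; and it carries the null line $L^{-1}$ of $\Mb$ onto $\Ln$. Finally, comparing \eqref{eq:mob-conn} with \eqref{eq:ccc-conn}, the connection $\nabla$ reads $\nr^{D,\Ms}+D-\iden$ and $\CD$ reads $\nr^{D,\CD}+D-\iden$; these coincide because $\nr^{D,\CD}=\nr^{D,\Ms^\CD}$, so the identity map intertwines $\nabla$ and $\CD$. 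This yields the required isomorphism of conformal Cartan geometries.

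The computations are all routine once the bookkeeping is set up; the only substantive input---and the step most easily mishandled---is the verification that the differential lift's component $\psi=D^\CD D\ell+\nr^{D,\CD}\ell$ agrees with the jet component $D^2\ell+\nr^{D,\Ms^\CD}\ell$, which is precisely where strong torsion-freeness of $\CD$ enters. Since $j^\CD$ and $\Mb$ are defined intrinsically, the $D$-dependent computation automatically produces a $D$-independent isomorphism, so no further check of naturality is needed.
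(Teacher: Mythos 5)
Your proposal is correct and follows essentially the same route as the paper's proof: fix a Weyl derivative, observe that $\nr^{D,\CD}=\nr^{D,\Ms^\CD}$ (via strong torsion-freeness) so that $j^\CD$ reads $(\psi,\theta,\ell)\mapsto(\tfrac1n\trace_\conf\psi,\theta,\ell)$ in components, hence is the identity on $\Mb$, and compare \eqref{eq:mob-conn} with \eqref{eq:ccc-conn} to match the connections. The extra detail you supply on why $\psi$ agrees with the jet component is a useful elaboration but not a departure from the paper's argument.
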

\begin{proof} We choose a Weyl derivative $D$ and compute in components.
Since $\nr^{D,\CD}=\nr^{D,\Ms^\CD}$, the bundle map $j^\CD\colon J^2L\to \CV$
is then given by
$(\psi,\theta,\ell)\mapsto(\frac1n\trace_\conf\psi,\theta,\ell)$.  Hence it is
an isometry when restricted to $\Mb$ and maps $L^{-1}$ to $\Ln$.
Comparing~\eqref{eq:mob-conn} with~\eqref{eq:ccc-conn} we see that this
isomorphism identifies the connections $\nabla$ and $\CD$.
\end{proof}

Let us combine Propositions~\ref{p:canon-cart},~\ref{p:norm-iff-conf}
and~\ref{p:cart-mob-cart} to summarize our results so far.

\begin{thm}\label{th:mos=ccg}
Let $\conf$ and $\Ms=(\Mh,\Mq)$ define a \mob/ structure on a manifold
$\mfd$. Then there is a unique \adapted/ metric connection on the \mob/ bundle
$\Mb=\kernel\Mh\subset J^2L$ and this is a conformal Cartan geometry
compatible with the same \mob/ structure. Any conformal Cartan geometry
arises in this way up to natural isomorphism, and is normal if and only if the
\mob/ structure is conformal.
\end{thm}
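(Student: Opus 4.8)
The plan is to read the theorem as the conjunction of the three propositions just proved, matching each clause to one of them, and then to assemble them. For the existence and uniqueness of the \adapted/ metric connection on $\Mb=\kernel\Mh$ and for the assertion that $(\Mb,L^{-1},\nabla)$ is a conformal Cartan geometry, I would appeal directly to Proposition~\ref{p:canon-cart}. Its mechanism is worth recalling: fixing a Weyl derivative $D$ trivializes $J^2L$ as $S^2T\dual\mfd\ltens L\dsum T\dual\mfd\ltens L\dsum L$ through $j^2\ell\mapsto(\psi,\theta,\ell)$ with $\psi=D^2\ell+\nr^{D,\Ms}\ell$, after which $\Mb$ becomes $L^{-1}\dsum T\dual\mfd\ltens L\dsum L$ equipped with the signature $(n+1,1)$ form coming from $\Mq$; the explicit formula~\eqref{eq:mob-conn} then exhibits $\nabla=\nr^{D,\Ms}+D-\iden$ as a \adapted/ metric connection, while uniqueness is forced by the prior result that a \mob/ bundle carries at most one such connection. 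Strong torsion-freeness, and hence the conformal Cartan \emph{geometry} structure, drops out of $\trace\abrack{\iden\wedge\nr^{D,\Ms}}=nF^D=\trace R^D$.

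Next I would check that this geometry is compatible with the original data $(\conf,\Ms)$. For this I would recompute the induced \mob/ structure $\Ms^\nabla$ in the same components: since the trivialization used to build $\nabla$ coincides with the one in~\S\ref{par:ccg-mos}, one simply reads off $\nr^{D,\nabla}=\nr^{D,\Ms}$, whence $\Ms^\nabla=\Ms$.

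The step where I expect the real work to sit is the converse: that every conformal Cartan geometry $(\CV,\Ln,\CD)$ is of this form up to natural isomorphism. Here I would invoke Proposition~\ref{p:cart-mob-cart}. The natural candidate for the isomorphism is the differential lift $j^\CD\colon J^2L\to\CV$, and the delicate point is that it genuinely intertwines the connections rather than merely matching the underlying data. The key leverage will be that a conformal Cartan geometry is by hypothesis \emph{strongly} torsion-free, which gives $\alt\nr^{D,\CD}=-F^D$ and therefore $\nr^{D,\CD}=\nr^{D,\Ms^\CD}$; granted this, $j^\CD$ reads $(\psi,\theta,\ell)\mapsto(\tfrac1n\trace_\conf\psi,\theta,\ell)$ in components, restricts on $\Mb$ to an isometry carrying $L^{-1}$ to $\Ln$, and matching~\eqref{eq:mob-conn} against the component expression~\eqref{eq:ccc-conn} for $\CD$ then shows it identifies $\nabla$ with $\CD$.

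Finally, for the normality clause I would invoke Proposition~\ref{p:norm-iff-conf}. The observation to make is that, since $T\dual\mfd$ is abelian, $\LH$ annihilates the Cotton--York term of $R^\nabla=C^{\Ms,D}+W^\Ms$, so $\LH R^\nabla=\LH W^\Ms$; Proposition~\ref{p:conf-iff-no-ric} then equates the vanishing of $\LH W^\Ms$ with conformality of $(\conf,\Ms)$, so that $\nabla$ is normal exactly when the \mob/ structure is conformal. Combining these four observations yields the theorem.
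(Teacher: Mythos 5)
Your proposal is correct and follows the paper's own route exactly: the theorem is stated there as the combination of Propositions~\ref{p:canon-cart},~\ref{p:norm-iff-conf} and~\ref{p:cart-mob-cart}, together with the compatibility check of \S\ref{par:ccg-mos}, which is precisely how you have assembled it. You have also correctly identified the one genuinely delicate point, namely that strong torsion-freeness is what guarantees $\nr^{D,\CD}=\nr^{D,\Ms^\CD}$ so that the differential lift intertwines the connections.
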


\begin{rem}\label{rem:gen-ccc}
It is straightforward to extend this equivalence to general torsion-free
conformal Cartan connections by extending the notion of a \mob/ structure to
include a $2$-form $\cF$. In order to make room for this $2$-form in the jet
bundle picture, it is necessary to work with semiholonomic jets---or
equivalently, to extend the jet derivative to
$J^2L\dsum(\Wedge^2T\dual\mfd\tens L)$. By this device it is even possible to
consider arbitrary conformal Cartan connections, but the notion of a \mob/
structure must be modified further in the presence of torsion. Since we do not
see any advantage in this generality, and it would have clouded the exposition
considerably, we have restricted attention to the strongly torsion-free case.
\end{rem}

\subsection{Normalization of conformal Cartan geometries}
\label{par:norm-conf-cart}

\mob/ structures compatible with a fixed conformal metric on a manifold $\mfd$
form an affine space modelled on sections of $S^2T\dual\mfd$.  We now exploit
Theorem~\ref{th:mos=ccg} to lift this affine structure to conformal Cartan
geometries. Let $(\CV,\Ln,\CD)$ be a conformal Cartan geometry and
$\QC\in\Cinf(\mfd,S^2T\dual\mfd)$.  We view $\QC$ as an $\so(\CV)$-valued
$1$-form via~\eqref{eq:gam-act} so that
\begin{equation}
\label{eq:Q-act}
\CQ_X\sigma=0,\qquad \CQ_X\CD_Y\sigma=-\QC_X(Y)\sigma, \qquad
\CQ_X(\CV)\subseteq\Ln^\perp.
\end{equation}
In particular, $\CD+\CQ$ is a conformal Cartan connection with the same
soldering form as $\CD$ and so induces the same conformal metric on $\mfd$.
In fact, more can be said.
\begin{lemma}\label{l:affinity} If $\CD$ is strongly torsion-free, so
is $\CD+\CQ$ and $\Ms^{\CD+\CQ}=\Ms^\CD+\QC$.
\end{lemma}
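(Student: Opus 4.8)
The plan is to verify both assertions with respect to an auxiliary Weyl structure, exploiting throughout that $\CQ$ takes values in the abelian nilradical $\stab(\Ln)^\perp\cong T\dual\mfd$. For strong torsion-freeness I would compute $R^{\CD+\CQ}$ directly on a section $\sigma$ of $\Ln$. Since $\CQ_X\sigma=0$ by \eqref{eq:Q-act}, we have $(\CD+\CQ)_Y\sigma=\CD_Y\sigma$, and then $(\CD+\CQ)_X(\CD+\CQ)_Y\sigma=\CD_X\CD_Y\sigma+\CQ_X\CD_Y\sigma=\CD_X\CD_Y\sigma-\QC_X(Y)\sigma$, again by \eqref{eq:Q-act}. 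Antisymmetrizing in $X,Y$ and subtracting the $[X,Y]$-term, the second-derivative contributions assemble into $R^\CD_{X,Y}\sigma$ while the algebraic contributions assemble into $-(\QC_X(Y)-\QC_Y(X))\sigma=-(\alt\QC)_{X,Y}\sigma$. As $\QC$ is symmetric, $\alt\QC=0$, so $R^{\CD+\CQ}_{X,Y}\sigma=R^\CD_{X,Y}\sigma=0$ by the strong torsion-freeness of $\CD$. (Equivalently $R^{\CD+\CQ}=R^\CD+\d^\CD\CQ+\half\abrack{\CQ\wedge\CQ}$, in which the bracket term vanishes because $\stab(\Ln)^\perp$ is abelian, and one checks $(\d^\CD\CQ)\restr\Ln=-(\alt\QC)$.) Hence $\CD+\CQ$ is strongly torsion-free, so $(\CV,\Ln,\CD+\CQ)$ is a conformal Cartan geometry.

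For the M\"obius structure identity I would fix a Weyl derivative $D$ and use the decomposition \eqref{eq:conn-comp}, namely $\CD=\nr^{D,\CD}+D^\CD-\iden$. Since $\CQ_X=\QC_X$ acts by the nilradical action \eqref{eq:gam-act}, which is precisely the $T\dual\mfd$-part of the graded action in \eqref{eq:conn-comp}, adding $\CQ$ shifts only the $\nr$-component and leaves the $\co(T\mfd)$-connection untouched:
\begin{equation*}
\nr^{D,\CD+\CQ}=\nr^{D,\CD}+\QC, \qquad D^{\CD+\CQ}=D^\CD.
\end{equation*}
Substituting these into the explicit formulae \eqref{eq:m-ops-weyl} for $\Mh^\CD$ and $\Mq^\CD$, the tracefree-symmetric operator gains $\sym_0(\QC\ell)=\QC_0\ell$ and the quadratic operator gains $-\tfrac2n\ell\trace_\conf(\QC\ell)=-\tfrac2n(\trace_\conf\QC)\ell^2$, giving
\begin{equation*}
\Mh^{\CD+\CQ}\ell=\Mh^\CD\ell+\QC_0\ell, \qquad
\Mq^{\CD+\CQ}\ell=\Mq^\CD\ell-\tfrac2n(\trace_\conf\QC)\ell^2.
\end{equation*}
Comparing with the definition \eqref{eq:mob-aff} of $\Ms^\CD+\QC$ then yields $\Ms^{\CD+\CQ}=\Ms^\CD+\QC$.

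The computations are routine once the organizing principle is in place; the only point requiring care --- and the conceptual heart of the lemma --- is the role of the symmetry of $\QC$. Its (symmetric) value is exactly what shifts $\nr^{D,\CD}$, and hence the M\"obius structure, affinely, while it is the vanishing of its skew part that preserves \emph{strong} torsion-freeness rather than mere torsion-freeness. A skew contribution would instead perturb the Faraday term $-\half F^D$ concealed in $\alt\nr^{D,\CD}$, which is precisely the phenomenon flagged in Remark \ref{rem:gen-ccc}.
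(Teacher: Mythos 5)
Your proof is correct. The first assertion is handled exactly as in the paper: you expand $R^{\CD+\CQ}=R^\CD+\d^\CD\CQ+\half\liebrack{\CQ\wedge\CQ}$, kill the bracket because $T\dual\mfd$ is abelian, and observe that the restriction of $\d^\CD\CQ$ to $\Ln$ is $-\alt\QC=0$ by the symmetry of $\QC$; your pointwise computation on $\sigma$ is just this unwound. For the second assertion you diverge from the paper in a useful way. The paper works directly with the differential lift: it shows $j^{\CD+\CQ}\ell=j^\CD\ell+\tfrac1n(\trace_\conf\QC)\ell$ using \eqref{eq:pi-gam-act}, \eqref{eq:pDv} and the characterization \eqref{eq:H1V}, and then evaluates $\Mq$ and $\Mh$ from their definitions. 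You instead fix a Weyl structure, note that $\CQ$ lives purely in the $T\dual\mfd$-slot of the graded decomposition so that $\nr^{D,\CD+\CQ}=\nr^{D,\CD}+\QC$ and $D^{\CD+\CQ}=D^\CD$ (legitimate, since $\CQ\restr\Ln=0$ means $\CD+\CQ$ induces the same soldering form and the same Weyl derivative for the same $\Lnc$), and then read off the shift from the explicit formulae \eqref{eq:m-ops-weyl}. This is a clean and slightly more elementary route which makes the affine nature of the statement transparent --- it is the same mechanism as \eqref{eq:ch-nric} --- at the cost of introducing an auxiliary Weyl structure; the paper's argument stays gauge-free and yields the explicit (and later useful) formula for $j^{\CD+\CQ}\ell$ as a by-product. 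Your closing observation about the skew part and Remark \ref{rem:gen-ccc} is accurate.
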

\begin{proof} Since $T\dual\mfd\subset\so(\CV)$ is abelian, we have
\begin{equation*}
R^{\CD+\CQ}=R^\CD+\d^\CD\CQ+\half\liebrack{\CQ\wedge\CQ}=R^\CD+\d^\CD\CQ,
\end{equation*}
so $\CD+\CQ$ is strongly torsion-free iff $\d^\CD\CQ\restr\Ln=0$.  Now,
using~\eqref{eq:Q-act}, we have
\begin{equation*}
(\d^\CD\CQ)_{X,Y}\sigma=(\CD_X\CQ)_Y\sigma-(\CD_Y\CQ)_X\sigma=
-\CQ_Y\CD_X\sigma+\CQ_X\CD_Y\sigma=(\QC_Y X{-}\QC_XY)\sigma=0.
\end{equation*}
For the second assertion, we must compute $j^{\CD+\CQ}\ell$: using
\eqref{eq:pi-gam-act} and \eqref{eq:pDv}, we see that this differs from
$j^\CD\ell$ by a section of $\Ln$ and then the definition of the
differential lift, along with \eqref{eq:H1V}, shows that
$j^{\CD+\CQ}\ell=j^\CD\ell+\frac1n(\trace_\conf\QC)\ell$.  Thus
$\lip{j^{\CD+\CQ}\ell,j^{\CD+\CQ}\ell}=\lip{j^{\CD}\ell,j^{\CD}\ell}
-\frac2n(\trace_\conf\QC)\ell^2$.  Finally, for any section $\sigma$ 
of $\Ln$,
\begin{equation*}
\pi(\CD+\CQ)_X(v+\sigma)=\pi\CD_X v+\QC_X\vtens p(v)-X\tens\sigma
\end{equation*}
from which we deduce that $\pi(\CD+\CQ)j^{\CD+\CQ}\ell =\pi\CD
j^\CD\ell+(\QC-\frac1n(\trace_\conf\QC)\conf)\ell$ and the
result immediately follows.
\end{proof}

This procedure allows us to `normalize' a conformal Cartan geometry, which
will be important later. For this we use the fact,
cf.~Proposition~\ref{p:norm-iff-conf} and Proposition~\ref{p:conf-iff-no-ric},
that a conformal Cartan geometry is normal iff the Weyl curvature of the
corresponding \mob/ structure has zero Ricci contraction. Now for any \mob/
structure $\Ms$, we have $W^{\Ms+\QC}=W^\Ms-\abrack{\iden\wedge\QC}$ and so
\begin{equation}\label{eq:W-change}
\LH W^{\Ms+\QC}=\LH W^{\Ms}-(n-2)\QC_0-2(n-1)(\tfrac1n\trace_\conf\QC)\conf.
\end{equation}
Since $\LH W^{\Ms}$ is symmetric (and is tracelike for $n=2$ and vanishes for
$n=1$) we have:
\begin{prop}\label{p:make-normal}
Let $(\CV,\Ln,\CD)$ be a conformal Cartan geometry. Then there is a unique
section $\QC$ of $S^2 T\dual\mfd$ such that
\begin{numlist}
\item $\CD-\CQ$ is normal\textup;
\item $\QC_0=0$ if $n=2$ and $\QC=0$ if $n=1$, \ie, $\QC$ is in the image of
the Ricci contraction $\LH$.
\end{numlist}
\end{prop}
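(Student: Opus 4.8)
The plan is to reduce everything to the affine dependence of the associated \mob/ structure on the Cartan connection, which has already been worked out above. First I would note that by Lemma~\ref{l:affinity}, for any section $\QC$ of $S^2T\dual\mfd$ the connection $\CD-\CQ$ is again strongly torsion-free, with induced \mob/ structure $\Ms^{\CD-\CQ}=\Ms^\CD-\QC$. Combining Proposition~\ref{p:norm-iff-conf} with Proposition~\ref{p:conf-iff-no-ric}, $\CD-\CQ$ is normal precisely when $\LH W^{\Ms^\CD-\QC}=0$. Thus the whole proposition becomes a single homological equation: find a unique $\QC$, subject to~(ii), solving this.

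Next I would make the equation explicit using~\eqref{eq:W-change} with $-\QC$ in place of $\QC$:
\begin{equation*}
\LH W^{\Ms^\CD-\QC}=\LH W^{\Ms^\CD}+(n-2)\QC_0
+2(n-1)\bigl(\tfrac1n\trace_\conf\QC\bigr)\conf.
\end{equation*}
Setting this to zero and recalling that $\LH W^{\Ms^\CD}$ is symmetric, I would split into tracefree and pure-trace parts: the tracefree part requires $(n-2)\QC_0=-(\LH W^{\Ms^\CD})_0$, and the trace part requires $2(n-1)\trace_\conf\QC=-\trace_\conf\LH W^{\Ms^\CD}$.

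For $n\geq3$ both coefficients $n-2$ and $2(n-1)$ are nonzero, so these two equations determine $\QC_0$ and $\trace_\conf\QC$, hence $\QC$, uniquely; condition~(ii) is vacuous. The delicate cases are the low dimensions, where a coefficient degenerates, and this is exactly where the structural facts recorded before the statement come in. For $n=2$ the tracefree equation reads $0=-(\LH W^{\Ms^\CD})_0$, which holds automatically since $\LH W^{\Ms^\CD}$ is then pure trace; the trace equation fixes $\trace_\conf\QC$, and imposing $\QC_0=0$ pins down $\QC=\tfrac12(\trace_\conf\QC)\conf$. For $n=1$ one has $S^2_0T\dual\mfd=0$ and $\LH W^{\Ms^\CD}=0$, so the equation is $0=0$ and condition~(ii) forces $\QC=0$, consistent with $\CD$ already being normal.

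The main obstacle is conceptual rather than computational: the coefficients $n-2$ and $2(n-1)$ that govern the two irreducible pieces of the Ricci contraction vanish in complementary low dimensions, so the normality equation fails to determine $\QC$ outright there. The resolution is that condition~(ii) is precisely tailored to select the unconstrained component—the tracefree part when $n=2$, all of $\QC$ when $n=1$—while the a priori symmetry and trace-type constraints on $\LH W^{\Ms^\CD}$ guarantee that a solution exists at all. Once this dimension-by-dimension bookkeeping is in place, existence and uniqueness both follow directly from~\eqref{eq:W-change}.
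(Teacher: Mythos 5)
Your argument is correct and is essentially the paper's own: the paper derives this proposition directly from Lemma~\ref{l:affinity}, the normality criterion $\LH W^{\Ms}=0$ of Propositions~\ref{p:norm-iff-conf} and~\ref{p:conf-iff-no-ric}, and equation~\eqref{eq:W-change}, together with the observation that $\LH W^{\Ms}$ is symmetric, tracelike for $n=2$, and zero for $n=1$. Your dimension-by-dimension resolution of the degenerate coefficients $(n-2)$ and $2(n-1)$ matches the intended reading of condition~(ii) exactly.
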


\section{\mob/ structures in low dimensions}
\label{sec:mob-str-low}

The low dimensional cases $\dimn\mfd=1$ or $\dimn\mfd=2$ merit special
attention for two related reasons.  First, any such $\mfd$ admits conformal
coordinates so that any conformal metric is flat.  On the other hand, our
theory introduces a new ingredient in addition to the conformal metric for, in
this case, $\mfd$ supports many conformal \mob/ structures compatible with a
given conformal metric. We now discuss the geometrical meaning of these data
and explain how to compute them from a normal conformal Cartan connection.

In all this we shall make use of a further feature of low dimensional
geometry. As we noted in Remark~\ref{r:classical-cart}, our conformal Cartan
connections are a linear representation of the usual notion. We chose the
standard representation for simplicity, but other choices are possible. If we
take $G=\Spin_0(n+1,1)$ (which is a double cover of the identity component of
$\Mob(n)$) we can use the spin representation. Such a choice is particularly
effective for $n\leq 4$, when special isomorphisms of $G$ with more familiar
Lie groups give concrete realizations of spinors.

\subsection{Computing the M\"obius structure}

Let us consider in general how to compute the \mob/ structure $\Ms^\CD$
associated to a conformal Cartan geometry $(\CV,\Ln,\CD)$ via the differential
lift: \S\ref{par:diff-lift} provides formulae for this, but they can be
difficult to use since one must decompose $\CD$ with respect to a Weyl
structure. Instead, we shall write $\Ms^\CD=\Ms^D+\sym\nr^{D,\CD}$ relative to
a Weyl derivative $D$, and compute $\sym\nr^{D,\CD}$ directly.

Recall that if $i\low_\Ln\colon\Ln\to\CV$ is the inclusion, we can define
$\CDD i\low_\Ln\colon T\mfd\tens\Ln\to\Ln^\perp\subset \CV$. Differentiating
again using~\eqref{eq:conn-comp}, we find that
\begin{equation} \label{eq:tautological-mobius}
\bigl((\CDD)^2i\low_\Ln\bigr)_{X,Y}\sigma+\nr^{D,\CD}_{X,Y}\sigma=
X\act Y\act\sigma,
\end{equation}
for vector fields $X,Y$ and any section $\sigma$ of $\Ln$. In other words,
$(\CDD)^2i\low_\Ln+\nr^{D,\CD}i\low_\Ln=\conf\tens i_{\Lnc}$, where
$i_{\Lnc}\colon\smash{\Lnc}\to \CV$ is the inclusion of the Weyl structure
corresponding to $D$. The tracefree part of this formula then yields:
\begin{equation}\label{eq:taut-mob-tracefree}
\sym_0\bigl((\CDD)^2i\low_\Ln+\nr^{D,\CD}i\low_\Ln\bigr)=0.
\end{equation}
The trace part is still difficult to use because of the appearance of
$i_{\Lnc}$. To get around this, we note that if $e_i\vtens\sigma$ is an
orthonormal frame of $T\mfd\ltens\Ln$, and
$\hat\sigma\in\Cinf(\mfd,\smash{\Lnc})$ with $\lip{\sigma,\hat\sigma}=-1$,
then $\sigma$, $(\CDD_{e_i} i\low_\Ln)\sigma$ and $\hat\sigma$ form a frame of
$\CV$ relative to which we have, for any $v\in\CV$,
\begin{equation*}
\lip{v,v}=\sum_{i=1}^n \lip{v,(\CDD_{e_i}i_\Ln)\sigma}
-2\lip{v,\sigma}\lip{v,\hat\sigma}.
\end{equation*}
Combining this with the trace of~\eqref{eq:tautological-mobius}, using $X\act
Y\act\sigma=\cip{X,Y}\hat\sigma$, we then have
\begin{equation}\label{eq:taut-mob-trace}
\sum_{i=1}^n\Bigl\{\Lip{v,(\CDD_{e_i} i\low_\Ln)\sigma}^2
-\tfrac2n\lip{v,\sigma}
\Lip{v,((\CDD)^2_{e_i,e_i}i\low_\Ln)\sigma}\Bigr\}-\tfrac2n
\ns^{D,\CD}\lip{v,i\low_\Ln}^2=\lip{v,v}.
\end{equation}
If we take $v=\hat\sigma$, this formula simplifies, because $v$ is then null
and orthogonal to $(\CDD_{e_i} i\low_\Ln)\sigma$. However, the computational
advantage of~\eqref{eq:taut-mob-trace} is the explicitness of the dependence
on $D$.
\begin{rem}
The formulae~\eqref{eq:taut-mob-tracefree}--\eqref{eq:taut-mob-trace} have a
conceptual explanation: viewing $i\low_\Ln$ as a section of $L\tens\CV$, the
$\CD$-coupled differential lift of $i\low_\Ln$ is minus the metric in
$\CV\tens \CV$.  This last is parallel, so that the $\CD$-coupled \mob/
operator $\Mh^{\CD,\CD}$ gives zero when applied to $i\low_\Ln$. On the other
hand $\Mq^{\CD,\CD}(i\low_\Ln)$ is a partial contraction of this differential
lift with itself, which reproduces the metric.
\end{rem}
Now specialize to the case where $D$ is exact with gauge $\sigma$ (\ie,
$D\sigma=0$), so that $F^D=0$. Assume also that $v$ is null.
Then~\eqref{eq:taut-mob-tracefree}--\eqref{eq:taut-mob-trace} read:
\begin{gather}\label{eq:comp-mob-trfree}
\sym_0\CDD \CD\sigma+\nr^{D,\CD}_0\sigma=0;\\
\tfrac1n\ns^{D,\CD}\lip{v,\sigma}^2 = \tfrac12 |\lip{v,\CD\sigma}|^2-
\lip{v,\sigma}\Lip{v,\tfrac1n\trace_\conf(\CDD\CD\sigma)}.
\label{eq:comp-mob-trace}
\end{gather}

\subsection{M\"obius 1-manifolds and projective structures}

A conformal Cartan connection $(\CV,\Ln,\CD)$ on a $1$-manifold $\mfd$ is
automatically flat, hence a normal conformal Cartan geometry. Further, by
Proposition~\ref{p:flat-ccc}, there is a distinguished family of local
immersions of $\mfd$ into $\PL$ (here a circle), which cover $\mfd$ and are
related on overlaps by elements of $\Mob(1)=O_+(2,1)$.

Now $\Spin_0(2,1)\cong \SL(2,\R)$, corresponding to the realization of $S^1$
as the real projective line $\R P^1$.  Thus $\R^{2,1}=S^2\R^2$, the set of
symmetric matrices on $\R^2$, where the quadratic form is minus the
determinant.  More precisely, fix a symplectic form
$\omega_\R\in\Wedge^2\R^{2*}$ and consider $\R^2\tens\R^2$ with the metric
\begin{equation*}
\lip{v_1 \vtens v_2,w_1 \vtens w_2}= -\omega_\R(v_1,w_2)\omega_\R(v_2,w_1)
\end{equation*}
The symmetric tensors form a subspace on which the metric has signature
$(2,1)$. The quadratic map $x\mapsto x\vtens x\colon\R^2\to\R^{2,1}$ then
induces a diffeomorphism $\RP{1}\to\PL$.

The trivial $\R^2$ bundle and tautological line subbundle $\cO(-1)_\R\subset\R
P^1\times\R^2$ may be abstracted as follows: a `spin Cartan connection' is a
real rank $2$ symplectic vector bundle $(\CV_\R,\omega_\R)\to \mfd$ with
symplectic connection $\nabla$ and a line subbundle $\Ln_\R$ such that
$\nabla\restr{\Ln_\R} \modulo \Ln_\R$ defines an isomorphism $T\mfd\tens
\Ln_\R\to \CV_\R/\Ln_\R$. This is a one dimensional projective structure and
is always flat. Given such a spin Cartan connection, we obtain a conformal
Cartan connection on $\CV=S^2\CV_\R$ with $\Ln =\Ln_\R^2$: the induced
connection preserves the bundle metric defined pointwise as above.

It follows that a conformal Cartan connection carries the information of a
projective structure on $\mfd$, and it is natural to enquire how this arises
in explicit geometric terms. We do this now by showing how projective
coordinates, Hill operators and schwarzian derivatives arise from $\CD$.

First, let $x$ be a coordinate on $\mfd$ and $D^x$ the unique connection on
$T\mfd$ with $D^x\del_x=0$, where $\del_x$ is the vector field with $\d
x(\del_x)=1$. (Any connection on a $1$-manifold is a Levi-Civita connection,
hence a Weyl connection: $D^x$ is the Levi-Civita connection of the metric $\d
x^2$ induced by $x$.) If $\sigma_x=|\d x|$ and $v$ is a null section of $\CV$
with $\lip{v,\sigma_x}\neq 0$, then~\eqref{eq:comp-mob-trace} yields a formula
for the only nontrivial datum of the \mob/ structure:
\begin{equation*}
\ns^{D^x,\CD}|\del_x|^2 = \frac
{\lip{v,\CD_{\del_x}\sigma_x}^2}{2\lip{v,\sigma_x}^2}-
\frac{\Lip{v,\CD_{\del_x}\CD_{\del_x}\sigma_x}}{\lip{v,\sigma_x}}
\end{equation*}
Suppose now that $w$ is another coordinate with $\d w=w'\d x$ and assume
for simplicity that $w'>0$ so that $\sigma_w:=|\d w| = w'\sigma_x$.
Then $\CD_{\del_w}\sigma_w=\CD_{\del_x}\sigma_x+w''/w'$ and hence
\begin{equation}\label{eq:clean-schwarzian}
\ns^{D^w,\CD}|\del_w|^2 =\ns^{D^x,\CD}|\del_x|^2-\Bigl(\frac{w''}{w'}\Bigr)'
+\frac12 \Bigl(\frac{w''}{w'}\Bigr)^2.
\end{equation}
We define a projective coordinate to be a coordinate $x$ with
$\ns^{D^x,\CD}=0$, so that $\Mq^\CD=\Mq^{D^x}$. Such coordinates can be found
by solving a nonlinear ODE.
\begin{prop} Let $(\mfd,\Ms^\CD)$ be a \mob/ $1$-manifold. Then there are
\textup(local\textup) projective coordinates on $\mfd$, and if $x$ is such a
coordinate, then another coordinate $w(x)$ is projective if and only if the
schwarzian $S_x(w):=(w''/w')'-\half(w''/w')^2$ of $w$ with respect to $x$ is
zero.
\end{prop}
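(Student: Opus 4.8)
The plan is to reduce the whole statement to the transformation formula $\eqref{eq:clean-schwarzian}$ for the single scalar invariant of a M\"obius $1$-manifold, and then to settle existence by the classical linearization of the schwarzian equation. To a coordinate $x$ I would attach the function $q_x:=\ns^{D^x,\CD}|\del_x|^2$, which is a genuine scalar (the density weights cancel, as is manifest from $\eqref{eq:clean-schwarzian}$ itself) and is the only nontrivial datum of $\Ms^\CD$ in this dimension. Rewriting $\eqref{eq:clean-schwarzian}$ in these terms gives
\begin{equation*}
q_w = q_x - S_x(w),\qquad S_x(w)=\Bigl(\tfrac{w''}{w'}\Bigr)'-\half\Bigl(\tfrac{w''}{w'}\Bigr)^2,
\end{equation*}
so that $x$ is projective exactly when $q_x=0$.

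The characterization half of the proposition is then immediate: if $x$ is projective, then $q_w=-S_x(w)$, so a second coordinate $w$ is projective (that is, $q_w=0$) if and only if $S_x(w)=0$. This direction uses nothing beyond the transformation law.

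For existence, fix an arbitrary coordinate $x$ on a neighbourhood; a projective coordinate is a solution $w$ of the third-order nonlinear ODE $S_x(w)=q_x$. The key step is to linearize this equation via the Hill (Sturm--Liouville) operator: if $u_1,u_2$ solve $u''+\half q_x\,u=0$, then their ratio $w=u_1/u_2$ satisfies $S_x(w)=q_x$. One checks this by observing that the Wronskian $W=u_1'u_2-u_1u_2'$ is constant (there is no first-order term), whence $w''/w'=-2u_2'/u_2$ and the two terms of the schwarzian collapse to $q_x$. Standard linear ODE theory then furnishes solutions with $(u_1,u_1')(x_0)=(0,1)$ and $(u_2,u_2')(x_0)=(1,0)$; for these $W\equiv1$, so $u_2$ is nonvanishing near $x_0$ and $w=u_1/u_2$ has $w'=W/u_2^2>0$. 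Thus $w$ is a genuine local coordinate, projective by construction.

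The only delicate point — classical rather than deep — is this linearization together with the bookkeeping guaranteeing that $w$ is a diffeomorphism (namely $w'\neq0$, via constancy of $W$, and $u_2\neq0$ on a neighbourhood). I expect this to be the main, if minor, obstacle, since everything else is formal. As an alternative to the ODE argument, existence of projective charts could instead be extracted from Proposition~\ref{p:flat-ccc}: in dimension one $\CD$ is automatically flat, so $\mfd$ is locally isomorphic to $\PL\cong\RP1$ and the model projective charts pull back to projective coordinates; but the ODE route is self-contained and displays the schwarzian directly, matching the statement's phrasing.
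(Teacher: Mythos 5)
Your proof is correct and follows the paper's own route: the ``only if/if'' characterization is read off from the transformation law \eqref{eq:clean-schwarzian} exactly as in the text, while the paper merely asserts existence of projective coordinates by ``solving a nonlinear ODE''. Your linearization of $S_x(w)=q_x$ via $u''+\tfrac12 q_x u=0$ (with the Wronskian argument guaranteeing $w'\neq0$) supplies the detail the paper omits, and is precisely the device---the Hill operator $\Laplace^\CD f=D^2f+\tfrac12\ns^{D,\CD}f$---that the paper introduces immediately after the proposition.
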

Another approach to one dimensional projective structures is via Hill (or
Hill's) operators $\Laplace$, \ie, second order differential operators on
$\Cinf(\mfd,L^{1/2})$ which differ from a second derivative $D^2$ by a zero
order term. For this we view $\CD$ as a spin Cartan connection on $\CV_\R$
(with $S^2\CV_\R=\CV$) preserving a symplectic form $\omega_\R$ and such that
$X\vtens\lam\mapsto-\CD_X\lam\modulo\Ln_\R$ defines an isomorphism from
$T\mfd\tens\Ln_\R$ to $\CV_\R/\Ln_\R\cong_{\omega_\R} \Ln_\R\dual=L^{1/2}$.

As in Proposition~\ref{p:diff-lift} any section $f$ of $L^{1/2}$ has a
differential lift $j^\CD f$ which is the unique section of $\CV_\R$ that
projects onto $f$ and satisfies $\CD j^\CD f\in
\kernel\LH\cap\Omega^1(\mfd,\CV_\R)= \Omega^1(\mfd,\Ln_\R)$. If we choose a
Weyl connection $D$, then the derivative $\CD^D i_{\Ln_\R}$ of the inclusion
$i_{\Ln_\R}\colon\Ln_\R\to \CV_\R$ determines a splitting $\CV_\R=\Ln_\R\dsum
\Ln_\R\dual$, and using this decomposition we find
\begin{equation*}
\CD(\lam,f)=(D\lam+\tfrac12 \ns^{D,\CD} f,Df-\lam).
\end{equation*}
It follows that $j^\CD f$ is essentially the $1$-jet of $f$ and $f\mapsto
\Laplace^\CD f:=\CD j^\CD f$ is a Hill operator: with respect to any Weyl
derivative $D$, we have $j^\CD f =(Df, f)$, and $\Laplace^\CD f
=D^2f+\frac12\ns^{D,\CD}f$. This provides another way of computing the \mob/
structure, and it is well known that the zero order term of a Hill operator
transforms by schwarzian derivative under changes of coordinate.

The two approaches are related in a simple way.
\begin{prop} The quadratic operator $\Mq^\CD(\ell)$ is related to the Hill
operator $\Laplace^\CD f$, when $\ell=f^2$, by
$\Mq^\CD(\ell)=-4f^3\Laplace^\CD f$.
\end{prop}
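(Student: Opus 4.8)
The plan is to verify the identity with respect to a single Weyl derivative $D$. Both sides are defined invariantly---$\Mq^\CD\ell=\lip{j^\CD\ell,j^\CD\ell}$ depends only on $(\CV,\Ln,\CD)$, and $\Laplace^\CD f=\CD j^\CD f$ likewise---so it suffices to check the formula in any convenient gauge. I would take $D$ exact (say the $D^x$ of a local coordinate), so that densities become ordinary functions and $D$ becomes ordinary differentiation; the computation then reduces to a one-line exercise in the Leibniz rule.

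First I would assemble the two ingredients in components relative to $D$. Specialising~\eqref{eq:m-ops-weyl} to $n=1$, where the tracefree Hessian is absent, $F^D=0$, and $\tfrac2n=2$, gives
\begin{equation*}
\Mq^\CD\ell=\cip{D\ell,D\ell}-2\ell\,\trace_\conf\bigl(D^\CD D\ell+\nr^{D,\CD}\ell\bigr),
\end{equation*}
which in the chosen exact gauge reads $\Mq^\CD\ell=(D\ell)^2-2\ell\,(D^2\ell+\ns^{D,\CD}\ell)$, with $D$ now ordinary differentiation and $\ns^{D,\CD}=\trace_\conf\nr^{D,\CD}$ the sole curvature datum. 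On the spin side I would use $\Laplace^\CD f=D^2f+\half\ns^{D,\CD}f$. The crucial point is that the scalar $\ns^{D,\CD}$ here is the same as the one in $\Mq^\CD$: this is exactly what is recorded by the component expression $\CD(\lam,f)=(D\lam+\half\ns^{D,\CD}f,Df-\lam)$ for the spin Cartan connection on $\CV_\R$, whose symmetric square is $(\CV,\Ln,\CD)$.

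Then I would substitute $\ell=f^2$ and differentiate, using $D\ell=2f\,Df$ and $D^2\ell=2(Df)^2+2f\,D^2f$, to obtain
\begin{equation*}
\Mq^\CD(f^2)=4f^2(Df)^2-2f^2\bigl(2(Df)^2+2f\,D^2f+\ns^{D,\CD}f^2\bigr)=-4f^3D^2f-2\ns^{D,\CD}f^4,
\end{equation*}
the two copies of $4f^2(Df)^2$ cancelling; the right-hand side is precisely $-4f^3\bigl(D^2f+\half\ns^{D,\CD}f\bigr)=-4f^3\Laplace^\CD f$, as claimed.

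The delicate point---and the one I would be most careful about---is the consistency of conventions between the conformal picture on $\CV=S^2\CV_\R$ and the spin picture on $\CV_\R$: the normalisation of the metric on the symmetric square relative to $\omega_\R$, the factor $\half$ in the Hill operator, and the dimension-one trace. It is tempting to argue invariantly by writing $j^\CD(f^2)$ in terms of the symmetric square $(j^\CD f)\vtens(j^\CD f)$; but the latter is null (since $\lip{x\vtens x,x\vtens x}=-\omega_\R(x,x)^2=0$) and differs from $j^\CD(f^2)$ only by a section of $\Ln$, and pinning down that section correctly re-introduces exactly the symmetric-square normalisation one is trying to avoid. The cancellation of the first-derivative terms $(Df)^2$ above is in fact the internal check that the conventions match: any stray factor would leave a spurious $f^2(Df)^2$ term. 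For this reason I would run the whole verification through the stated component formulas~\eqref{eq:m-ops-weyl} and the Hill operator, rather than recomputing the symmetric-square metric from scratch.
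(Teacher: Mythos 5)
Your proof is correct, and it is precisely the ``straightforward to prove using a Weyl derivative'' route that the paper mentions in its first sentence but does not write out; the argument the paper actually displays is the alternative, gauge-free one. There the authors write $j^\CD\ell=(j^\CD f)\tens(j^\CD f)+\sigma$ and pin down the correction term homologically, as $\sigma=\LH\CD\bigl((j^\CD f)\tens(j^\CD f)\bigr)$ (using $\LH\CD\sigma=-\sigma$ for sections of $\Ln$), after which $\Mq^\CD(\ell)=2\lip{(j^\CD f)\tens(j^\CD f),\sigma}$ collapses to $-4f^3\CD j^\CD f$ in a dual frame. So the very step you judged too delicate to carry out invariantly --- identifying the section of $\Ln$ by which $j^\CD(f^2)$ differs from the null tensor square --- is exactly what the paper's proof does, and the homological characterization of the differential lift is what lets it bypass the symmetric-square normalisation you were worried about. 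Your component computation buys elementarity and an explicit consistency check between the two pictures; note only that the cancellation of the $(Df)^2$ terms tests the coefficient $\tfrac2n=2$ in $\Mq^\CD$ rather than the curvature normalisation, which instead is supplied by the paper's formula $\CD(\lam,f)=(D\lam+\tfrac12\ns^{D,\CD}f,\,Df-\lam)$ that you correctly invoke.
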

\begin{proof} This is straightforward to prove using a Weyl derivative.
Alternatively, observe that $j^\CD\ell=(j^\CD f)\tens (j^\CD f) +\sigma$,
where $\sigma=\LH\CD ((j^\CD f)\tens (j^\CD f))$. (This follows because $\LH
\CD\sigma=-\sigma$.) We then compute, using dual frames $e,\eps$ (\ie,
$\eps(e)=1$), that $\Mq^\CD(\ell)=2\lip{j^\CD f\otimes j^\CD
f,\sigma}=4\omega_\R(j^\CD f,\CD_e j^\CD f)\omega_\R(j^\CD f,\eps\cdot j^\CD
f)=-4 f^3 \CD j^\CD f$.
\end{proof}
We have already noted that $\CV_\R$ can be identified the $1$-jet bundle of
$L^{1/2}$; its parallel sections are `affine with respect to $\CD$', \ie,
$1$-jets of solutions of $\Laplace^\CD f=0$. Similarly, $\CV=S^2\Ln_\R$ is the
$2$-jet bundle of $L$, and $L$ is the tangent bundle if $\mfd$ is oriented; the
parallel sections are $2$-jets of projective vector fields, \ie, vector fields
$X$ with $\cL_X\Laplace^\CD=0$.

\begin{rem}
The interpretation of $\ns^{D}$ as a one dimensional analogue of scalar
curvature suggests the following Yamabe-like problem: given a projective
structure $\Laplace$ on $\mfd=\R/\Z$, is there a gauge of constant scalar
curvature? This amounts to solving $\Laplace\mu = c \mu^{-3}$ for a constant
$c$ and a positive section $\mu$ of $L^{1/2}$ which we normalize via
$\int_\mfd\mu^{-2}=1$: if $D\mu=0$, we then have $\ns^D=c\mu^{-2}$. Solutions
are critical points of the Yamabe-like functional
$Y(\mu)=(\int_\mfd\mu^{-2})(\int_\mfd \mu\,\Laplace\mu)$. Given a solution
$\mu$, we have $\Laplace (f\mu) = (f''+c f)\mu^{-3}$ for any $f\in
C^\infty(M,\R)$, from which it is straightforward to determine the developing
map $\delta$ from $\tilde\mfd=\R$ to $\R P^1\cong\R\cup\{\infty\}$:
$\delta(x)=\frac{1}{\sqrt{c}}\tan(\sqrt{c}x)$ for $c>0$, $\delta(x)=x$ for
$c=0$ and $\delta(x)=\frac{1}{\sqrt{-c}}\tanh(\sqrt{-c}x)$ for $c<0$. The
constant $c$ determines the conjugacy class of the holonomy and, for $c>0$
(when the developing map surjects), the winding number of the fundamental
domain $[0,1)$: the fundamental domain injects for $c\leq 4\pi^2$. Any such
data arise for some (unique) $c\in \R$, and so any projective structure admits
a constant scalar curvature gauge. Without the geometric interpretation, the
existence of critical points for $Y$ is not obvious, since $Y$ is unbounded:
if $\mu_\eps(x)=(1+\eps+\cos(2\pi x))\d x^{-1/2}$, where $x\colon
M\stackrel{\cong}{\rightarrow}\R/\Z$, then $\int_M\mu_\eps^{-2}$ can be
arbitrarily large for $\eps>0$.
\end{rem}

\subsection{M\"obius 2-manifolds and schwarzian derivatives}

When $n=2$, $\Mob(2)=O_+(3,1)$, and $\Spin_0(3,1)\cong \SL(2,\C)$
corresponding to the realization of $S^2$ as the complex projective line $\C
P^1$. Thus $\R^{3,1}= (\C^2\tens\bar\C^2)_\R$, the set of hermitian matrices
on $\C^2$, where the quadratic form is minus the determinant.  More precisely,
fix a symplectic form $\omega_\C\in\Wedge^2\C^{2*}$ and consider
$\C^2\tens_\C\bar\C^2$ with the metric
\begin{equation*}
(v_1\vtens\bar v_2,w_1\vtens\bar w_2)=
-\omega_\C(v_1,w_1)\overline{\omega_\C(v_2,w_2)}.
\end{equation*}
The conjugation $x\vtens \bar{y}\mapsto y\vtens\bar{x}$ fixes a real form
$\R^{3,1}$ (the hermitian matrices) on which the metric is real of signature
$(3,1)$. The quadratic map $x\mapsto x\vtens\bar{x}\colon\C^2\to\R^{3,1}$ then
induces a conformal diffeomorphism $\CP{1}\to\PL$.

The trivial $\C^2$-bundle and the tautological complex line subbundle
$\cO(-1)_\C\subset\C P^1\times\C^2$ may be abstracted as follows: a `spin
Cartan connection' as a complex rank $2$ symplectic vector bundle
$(\CV_\C,\omega_\C)\to \mfd$ with complex symplectic connection $\nabla$ and a
complex line subbundle $\Ln_\C$ such that the linear bundle map
$\nabla\restr{\Ln_\C} \modulo \Ln_\C\colon T\mfd\tens \Ln_\C\to \CV_\C/\Ln_\C$
is an isomorphism. A spin Cartan connection induces a conformal Cartan
connection via $\CV=(\CV_\C\tens\bar\CV_\C)_\R$ and $\Ln =(\Ln_\C\tens\bar
\Ln_\C)_\R$ (the fixed points of the conjugation involution): the induced
connection preserves the bundle metric defined pointwise as above.

The conformal \mob/ structures compatible with a given conformal metric form
an affine space modelled on $C^\infty(\mfd,S^2_0T\dual\mfd)$.  To compute this
extra datum, we compare a given \mob/ structure with that provided by a
holomorphic coordinate $z$: given this, $\d z \,\d\bar{z}$ is a flat metric
compatible with $\conf$ and so arises from a length scale $\ell$ which is
parallel for the (flat) Levi-Civita connection $D^z$ of this metric.
Contemplate the \mob/ structure $\Ms^z:=\Ms^{D^z}$: since $F^{D^z}$ vanishes,
$\nr^{D^z,\Ms^z}=0$ so that, first, $C^{D^z,\Ms^z}=0$ and, second,
$W^{\Ms^z}=R^{D^z}=0$, whence the corresponding Cartan connection $\CD^z$ is
flat by Proposition~\ref{p:canon-cart}.

Since $S^2_0T\dual\mfd\tens\C\cong \Omega^{2,0}\mfd\dsum \Omega^{0,2}\mfd$,
where $\Omega^{2,0}\mfd=(T^{1,0}\mfd)^{-2}$, $\Omega^{0,2}\mfd=
(T^{0,1}\mfd)^{-2}$, and $T\mfd\tens\C=T^{1,0}\mfd\dsum T^{0,1}\mfd$ is the
$\pm\iI$-eigenspace decomposition of the complex structure $J\colon T\mfd\to
T\mfd$, we can identify $C^\infty(\mfd,S^2_0T\dual\mfd)$ with the space of
smooth quadratic differentials and write $\nr^{D^z,\CD}_0=q\,\d z^2+\bar{q}\,\d
\bar{z}^2$. We now take $D=D^z$ in equation~\eqref{eq:comp-mob-trfree} and note
that $D^z_{\del_z}\del_z=0$ to conclude:
\begin{equation*}\notag
\CD_{\del_z}\CD_{\del_z}\sigma+q\sigma=0.
\end{equation*}
This gives an effective method to compute $\nr^{D^z,\CD}_0$.
\begin{prop}\label{p:Q-calc}
Let $(\CV,\Ln,\CD)$ a conformal Cartan geometry on a $2$-manifold $\mfd$ with
a holomorphic coordinate $z$ for the induced conformal metric. Let
$\Ms=\Ms^z+\nr^{D^z,\CD}$ be the induced \mob/ structure and write
$\nr^{D^z,\CD}_0=q\,\d z^2+\bar{q}\,\d \bar{z}^2$.  Then
\begin{equation}
\label{eq:comp-sch}
\CD_{\del_z}\CD_{\del_z}\sigma+q\sigma=0
\end{equation}
where $\sigma$ is the \textup(unique\textup) positive section of $\Ln$ with
$\lip{\CD\sigma,\CD\sigma}=\d z\,\d\bar{z}$.
\end{prop}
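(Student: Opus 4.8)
The plan is to specialize the general tracefree identity~\eqref{eq:comp-mob-trfree} to the Weyl derivative $D=D^z$ and then read off the $(\del_z,\del_z)$-component. The first step is to check that $D^z$ is exact with gauge $\sigma$, since this is precisely the hypothesis under which~\eqref{eq:comp-mob-trfree} was derived from~\eqref{eq:taut-mob-tracefree}. By definition $\sigma$ is the positive section of $\Ln$ with $\lip{\CD\sigma,\CD\sigma}=\d z\,\d\bar z$, so the metric determined by the gauge $\sigma$ is exactly $\d z\,\d\bar z$. Writing $\sigma=\ell^{-1}$ under $\Ln\cong L^{-1}$, the associated metric is $\ell^{-2}\conf=\d z\,\d\bar z$, so $\ell$ is the length scale of the Levi-Civita connection $D^z$ of $\d z\,\d\bar z$; hence $D^z\ell=0$ and therefore $D^z\sigma=0$. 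Thus $D^z$ is exact with gauge $\sigma$ and~\eqref{eq:comp-mob-trfree} applies, reading $\sym_0\CDD\CD\sigma+\nr^{D^z,\CD}_0\sigma=0$ with $D=D^z$.

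With this in hand I would evaluate both sides on the pair $(\del_z,\del_z)$. On the left, since $D^z$ has constant coefficients in the coordinate $z$ we have $D^z_{\del_z}\del_z=0$, so $(\CDD\CD\sigma)(\del_z,\del_z)=\CD_{\del_z}\CD_{\del_z}\sigma$; moreover the $(\del_z,\del_z)$-entry is automatically symmetric, and the pure-trace correction in $\sym_0$ is a multiple of $\conf$, which vanishes on $(\del_z,\del_z)$ because $\del_z$ is isotropic for the complexified conformal metric, i.e.\ $\d z\,\d\bar z(\del_z,\del_z)=0$. Hence the left-hand side contributes exactly $\CD_{\del_z}\CD_{\del_z}\sigma$. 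On the right, using $\nr^{D^z,\CD}_0=q\,\d z^2+\bar q\,\d\bar z^2$ together with $\d z^2(\del_z,\del_z)=1$ and $\d\bar z^2(\del_z,\del_z)=0$, we obtain $\nr^{D^z,\CD}_0(\del_z,\del_z)=q$. Combining the two gives $\CD_{\del_z}\CD_{\del_z}\sigma+q\sigma=0$, which is~\eqref{eq:comp-sch}.

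Essentially all the analytic content is already packaged in~\eqref{eq:comp-mob-trfree}, whose derivation I am assuming, so the argument is one of careful bookkeeping rather than a genuine obstacle. The one point that requires a little care is the opening step --- verifying that the canonically defined gauge $\sigma$ really is the \emph{exact} gauge for $D^z$ --- together with the observation that passing to the purely holomorphic component $(\del_z,\del_z)$ simultaneously annihilates the antisymmetric part and the pure-trace (Laplacian) part of $\CDD\CD\sigma$, isolating the single scalar datum $q$ of the \mob/ structure. I would also double-check the normalization conventions ($\d z^2(\del_z,\del_z)=1$ and the factor $\tfrac1n=\tfrac12$ in $\sym_0$) to confirm that no stray constant survives; but since the proposition and~\eqref{eq:comp-mob-trfree} are stated with the same $\sym_0$ and $\nr_0$ conventions, the coefficient of $q$ is forced to be $1$, giving the stated equation with no extraneous factor.
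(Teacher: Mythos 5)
Your argument is correct and coincides with the paper's own derivation, which likewise specializes~\eqref{eq:comp-mob-trfree} to $D=D^z$ (noting that $\sigma$ is the exact gauge of the flat metric $\d z\,\d\bar z$) and evaluates on $(\del_z,\del_z)$ using $D^z_{\del_z}\del_z=0$ and the isotropy of $\del_z$. The extra care you take over the tracefree projection and the normalization of $\d z^2$ matches the conventions already fixed in the text, so nothing further is needed.
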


It is instructive to apply this analysis to the \mob/ structure $\Ms^w$ where
$w$ is another holomorphic coordinate.  Let $\CD^w$ be the corresponding
Cartan connection and $\sigma_w$ the section of $\Ln$ with
$\lip{\CD^w\sigma_w,\CD^w\sigma_w}=\d w\,\d\bar{w}=|w'|^2\d z\,\d \bar{z}$,
where $'$ denotes differentiation with respect to $z$. Then $\sigma
=\sigma_w/|w'|$ is $D^z$-parallel, and we have from Proposition~\ref{p:Q-calc}
that $0=\CD^w_{\del_w}\CD^w_{\del_w}\sigma_w=(w')^{-1}\CD^w_{\del_z}\bigl(
(w')^{-1}\CD^w_{\del_z}(|w'|\sigma)\bigr)$, from which we compute that
\begin{gather*}\notag
\CD^w_{\del_z}\CD^w_{\del_z}\sigma+\half S_z(w)\sigma=0\\
\tag*{where}
S_z(w)=\Bigl(\frac{w''}{w'}\Bigr)'-\frac{1}{2}\Bigl(\frac{w''}{w'}\Bigr)^2
\end{gather*}
is the classical schwarzian derivative of $w$. We thus have the following
transformation law.
\begin{prop}\label{th:schwarzian}
$\Ms^w=\Ms^z+\real(S_z(w)\d z^2)$.
\end{prop}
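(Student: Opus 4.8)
The plan is to identify the affine difference $\Ms^w-\Ms^z$ as a quadratic differential and then pin it down using the computational device of Proposition~\ref{p:Q-calc}. First I would record that $\Ms^z=\Ms^{D^z}$ and $\Ms^w=\Ms^{D^w}$ are both \emph{conformal} \mob/ structures on the same flat conformal surface $(\mfd,\conf)$: since $F^{D^z}=F^{D^w}=0$, the argument already given shows $W^{\Ms^z}=W^{\Ms^w}=0$, so both are conformal. Hence their difference is a section of $S^2T\dual\mfd$; moreover, in dimension two the quadratic operator $\Mq^\conf$ is fixed canonically by $\conf$ alone, so the two structures share the same $\Mq$. By the affine formula~\eqref{eq:mob-aff}, this forces the trace $\trace_\conf$ of the difference to vanish, so $\Ms^w-\Ms^z\in C^\infty(\mfd,S^2_0T\dual\mfd)$, which I identify with a real quadratic differential $q\,\d z^2+\bar q\,\d\bar z^2$.

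Next I would determine $q$ by applying Proposition~\ref{p:Q-calc} to the conformal Cartan geometry $\CD^w$ attached to $\Ms^w$, taken relative to the coordinate $z$. Writing the induced structure as $\Ms^w=\Ms^z+\nr^{D^z,\CD^w}$ and setting $\nr^{D^z,\CD^w}_0=q\,\d z^2+\bar q\,\d\bar z^2$, the proposition gives $\CD^w_{\del_z}\CD^w_{\del_z}\sigma+q\sigma=0$, where $\sigma$ is the positive section of $\Ln$ normalized by $\lip{\CD^w\sigma,\CD^w\sigma}=\d z\,\d\bar z$. A short check using $\lip{\CD^w_X\sigma,\CD^w_Y\sigma}=\cip{X,Y}\sigma^2$ (where the derivative cross-terms drop out because $\Ln$ is null, so rescaling $\sigma$ scales the induced metric quadratically) identifies this $\sigma$ with $\sigma_w/|w'|$, the section normalized against $\d w\,\d\bar w=|w'|^2\d z\,\d\bar z$; equivalently $\sigma$ is $D^z$-parallel.

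The one genuine calculation is to evaluate $\CD^w_{\del_z}\CD^w_{\del_z}\sigma$ a second way, independently of $q$. Applying Proposition~\ref{p:Q-calc} to $\CD^w$ relative to its \emph{own} coordinate $w$ gives a vanishing quadratic differential, since $\nr^{D^w,\CD^w}=\nr^{D^w,\Ms^w}=0$ exactly as $\nr^{D^z,\Ms^z}=0$ above; thus $\CD^w_{\del_w}\CD^w_{\del_w}\sigma_w=0$. I would then change variables by $\del_w=(w')^{-1}\del_z$ and substitute $\sigma_w=|w'|\sigma$, expanding $(w')^{-1}\CD^w_{\del_z}\bigl((w')^{-1}\CD^w_{\del_z}(|w'|\sigma)\bigr)=0$ using holomorphicity of $w$ (so $\del_z\bar w'=0$) and $D^z\sigma=0$. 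Tracking the terms in $w''/w'$ and $(w''/w')'$ yields precisely $\CD^w_{\del_z}\CD^w_{\del_z}\sigma+\half S_z(w)\sigma=0$, with $S_z(w)=(w''/w')'-\half(w''/w')^2$. This schwarzian bookkeeping — careful handling of the square-root weight $|w'|$ and its $z$-derivatives — is the step I expect to be the main obstacle.

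Comparing the two identities for $\CD^w_{\del_z}\CD^w_{\del_z}\sigma$ then forces $q=\half S_z(w)$. Hence $\nr^{D^z,\CD^w}_0=\half S_z(w)\,\d z^2+\half\overline{S_z(w)}\,\d\bar z^2=\real\bigl(S_z(w)\,\d z^2\bigr)$, and since the trace part was shown to vanish in the first step, I conclude $\Ms^w-\Ms^z=\real\bigl(S_z(w)\,\d z^2\bigr)$, which is the asserted transformation law.
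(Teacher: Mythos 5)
Your proposal is correct and follows essentially the same route as the paper: the paper likewise applies Proposition~\ref{p:Q-calc} to $\CD^w$ in its own coordinate $w$ (where the quadratic differential vanishes), rewrites $0=\CD^w_{\del_w}\CD^w_{\del_w}\sigma_w=(w')^{-1}\CD^w_{\del_z}\bigl((w')^{-1}\CD^w_{\del_z}(|w'|\sigma)\bigr)$ with $\sigma=\sigma_w/|w'|$ being $D^z$-parallel, and extracts $q=\half S_z(w)$ by comparison with the $z$-coordinate form of the same proposition. Your preliminary observation that both structures are conformal, hence differ by a tracefree quadratic differential, is left implicit in the paper but is exactly the right justification, and the schwarzian bookkeeping you flag does close up as claimed (the $\CD^w_{\del_z}\sigma$ cross-terms cancel).
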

This prompts us, with \cite{Cal:mew}, to define the \emph{schwarzian
derivative $S_z(\Ms)$ of a \mob/ structure $\Ms$ with respect to $z$} by
\begin{equation*}\notag
(\Ms-\Ms^z)^{2,0}=\half S_z(\Ms)\d z^2.
\end{equation*}
For coordinate \mob/ structures, we have seen that $S_z(\Ms^w)=S_z(w)$, and in
all cases, it has the classical transformation law:
$\Ms-\Ms^w=(\Ms-\Ms^z)+(\Ms^z-\Ms^w)$ giving
\begin{equation*}\notag
S_w(\Ms)=S_z(\Ms)(\d z/\d w)^2+S_w(z).
\end{equation*}
Note that the holomorphicity of $S_z(\Ms)$ is independent of the coordinate
$z$ and amounts to the flatness of $\Ms$: if $\Ms$ is flat then, since $D^z$
is flat and $\Ms$ is canonical, $\nr^{D^z,\Ms}=\QC^D_0$; hence
$\d^{D^z}\QC^D_0=0$, \ie, $S_z(\Ms)$ is holomorphic. Conversely, if $S_z(\Ms)$
is holomorphic, solving a holomorphic ODE yields a coordinate $w$ with
$S_w(\Ms)=0$, \ie, $\Ms=\Ms^w$, which is flat. In particular, any flat $\Ms$
is locally of the form $\Ms^w$ for some holomorphic coordinate $w$.

(Alternatively, note that since $D^z$ is flat, a canonical $\Ms$ has
$\nr^{D^z,\Ms}=Q^D_0$ while, since $\dimn M=2$, $W^\Ms=0$ whence $\Ms$ is flat
if and only if $\d^{D^z}Q^D_0=0$ which last amounts to the holomorphicity of
$S_z(\Ms)$.)

As in the one dimensional case, we relate the schwarzian derivative appearing
here to Hill operators and projective structures using the spin representation
$\Ln_\C\subset\CV_\C$ of $\CD$ above.

A Weyl derivative $D$ provides a decomposition $\CV_\C=\Ln_\C\dsum\Ln_\C\dual$
with respect to which the differential lift of $f\in\Ln_\C\dual$ may be
written $j^\CD f=(\del^D f, f)$, where $\del^D f$ is the complex linear part
of $Df$, \ie, $\del^D f= Df-\dbar f$. (For $D=D^z$, this was denoted by a
prime above.) It follows that $\hc{\CD j^\CD f}= ((\del^D)^2 f+q^{D,\CD}
f,\dbar f)$ and the first component is a complex Hill operator, where
$q^{D,\CD}=(\nr^{D,\CD}_0)^{2,0}$: if $D=D^z$, this may be written $f''+ q f$,
explaining the appearance of the complex schwarzian derivative above.

It is easy to relate the two pictures: the `real' \mob/ operator
$\Mh\ell=\sym_0 D^2\ell+\nr^{D,\CD}_0\ell$ is obtained from the complex Hill
operator $(\del^D)^2 f+q^D f$, when $\ell=f\vtens\bar f$, by coupling the
latter to the antiholomorphic structure $\del$ on $\bar\Ln_\C$ and taking the
real part.

\subsection{Quaternionic geometry of M\"obius 3- and 4-manifolds}
\label{par:quaternionic}

The spin formalism can also be applied to conformal $3$- and $4$-manifolds.
When $n=4$, $\Mob(4)=\Spin_0(5,1)$, and $\Spin_0(5,1)\cong \SL(2,\HQ)$
corresponding to the realization of $S^4$ as the quaternionic projective line
$\HQ P^1$. Thus $\R^{5,1}$ is the set of quaternionic hermitian matrices on
$\HQ^2$.

The trivial $\HQ^2$-bundle and the tautological quaternionic line subbundle
$\cO(-1)_\HQ\subset\HQ P^1\times\HQ^2$ may be abstracted as follows: a `spin
Cartan connection' as a quaternionic rank $2$ vector bundle $\CV_\HQ\to \mfd$
with unimodular quaternionic connection $\nabla$ and a quaternionic line
subbundle $\Ln_\HQ$ such that the linear bundle map $\nabla\restr{\Ln_\HQ}
\modulo \Ln_\HQ\colon T\mfd\tens \Ln_\HQ\to \CV_\HQ/\Ln_\HQ$ defines an
isomorphism $TM\cong\Hom_\HQ(\Ln_\HQ,\CV_\HQ/\Ln_\HQ)$: then
$\End_\HQ(\Ln_\HQ)$ and $\End_{\HQ}(\CV_\HQ/\Ln_\HQ)$ are bundles of
quaternions acting on $TM$ by dilations and $\pm$selfdual rotations with
respect to the induced conformal metric. The quaternionic realization of $S^4$
has proved to be very effective in the conformal geometry of surfaces in $S^3$
and $S^4$~\cite{BFLPP:cgs,Her:imdg}, and we shall discuss this briefly later.
The conformal Cartan connection may be recovered using the real structure
$j\wedge j$ on $\Wedge^2_\C \CV_\HQ$.

The case $n=3$ is similar: $\Spin_0(4,1)\cong \Symp(1,1)$ corresponds to the
realization of $S^3$ as a real quadric in $\HQ P^1$. The `spin Cartan
connections' are therefore similar to the four dimensional case, except that
$\CV_\HQ$ is equipped with a signature $(1,1)$ quaternion-hermitian metric
with respect to which $\Ln_\HQ$ is null. We leave the details to the
interested Reader.

In three and four dimensions, a normal Cartan connection is determined by the
conformal metric, so there is no additional data to compute.  Nevertheless, we
have a differential lift and quaternionic \mob/ operators on sections of
$\CV_\HQ/\Ln_\HQ$, which we momentarily discuss in the four dimensional case,
purely for general interest. Using a Weyl structure $D$,
$\CV_\HQ\cong\Ln_\HQ\dsum \CV_\HQ/\Ln_\HQ$ and $\CD_X(\lam,f)=
(D_X\lam+\nr^{D,\CD}_X(f),D_X f-X(\lam))$, where we identify $TM$ with
$\Hom(\Ln_\HQ,\CV_\HQ/\Ln_\HQ)$. In these terms $j^\CD f=(\del^D_\HQ f,f)$,
where the first term is a Dirac operator, and $\hc{\CD j^\CD f}= Df-\del^D_\HQ
f$ is the twistor operator.

\section{Conformal geometry revisited}
\label{sec:conf-geom-rev}

\subsection{Gauge theory and moduli of conformal Cartan geometries}
\label{par:gauge-theory-moduli}

Consider an $n$-manifold $\mfd$ equipped with a Cartan vector bundle $\CV$
with null line subbundle $\Ln$, and let $\cA$ denote the set of conformal
Cartan connections on $(\CV,\Ln)$. The \emphdef{gauge group}
$\cG:=\{g\in\Cinf(\mfd,\On(\CV))\colon g\Ln^+\subset\Ln^+\}$ is the space of
sections of a bundle of parabolic subgroups of $\On(\CV)$ whose Lie algebra
bundle is $\stab(\Ln)\subset\so(\CV)$. It acts on $\cA$ is the usual way: for
$g\in\cG$ we have
\begin{equation}
g\act\CD:=g\circ\CD\circ g^{-1}=\CD-(\CD g)g^{-1}.
\end{equation}
Since $g$ acts orthogonally on $\CV$ and preserves $\Ln^+$, $g\act\CD$ and
$\CD$ induce the same conformal metric on $\mfd$. Hence if
$\cA^\conf\subset\cA$ denotes the subset of connections inducing a fixed
conformal metric $\conf$, $\cA^\conf$ is preserved by $\cG$.  We next ask when
gauge equivalent conformal Cartan connections induce the same soldering form.

\begin{prop}\label{p:gauge-traf} Let $\CD$ be a conformal Cartan connection
and $g$ a gauge transformation. Then $\CD$ and $g\act \CD$ have the same
soldering form iff $g=\exp\gam$ with $\gam\in T\dual\mfd\subset \stab(\Ln)$.
\end{prop}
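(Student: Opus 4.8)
The plan is to compute how the soldering form $\beta^\CD$ transforms under a gauge transformation $g\in\cG$ and then isolate exactly which $g$ fix it. First I would take a section $\sigma$ of $\Ln$ and expand $(g\act\CD)_X\sigma = g\bigl(\CD_X(g^{-1}\sigma)\bigr)$. Since $g$ preserves the filtration $0\subset\Ln\subset\Ln^\perp\subset\CV$, the section $g^{-1}\sigma$ again lies in $\Ln$; and for any null section $\tau$ one has $\lip{\CD_X\tau,\tau}=\half X\lip{\tau,\tau}=0$, so $\CD_X(g^{-1}\sigma)\in\Ln^\perp$. As $g$ preserves $\Ln$ and $\Ln^\perp$, it descends to a map $g_\Ln$ on $\Ln$ and a map $\bar g$ on $\Ln^\perp/\Ln$, giving
\[
\beta^{g\act\CD}_X=\bar g\circ\beta^\CD_X\circ g_\Ln^{-1}.
\]
Here $g_\Ln=\mu\,\iden$ for a positive function $\mu$ (positive because $g$ preserves the orientation $\Ln^+$), and $\bar g$ is a genuine isometry of the induced metric on $\Ln^\perp/\Ln$ (since $g$ is orthogonal and preserves $\Ln\subset\Ln^\perp$).

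Next I would read off the condition $\beta^{g\act\CD}=\beta^\CD$, which by the displayed formula is $\bar g\circ\beta^\CD_X=\mu\,\beta^\CD_X$ for all $X$. Because $\beta^\CD\colon T\mfd\to\Hom(\Ln,\Ln^\perp/\Ln)$ is a bundle isomorphism, fixing a nonzero $\sigma$ the map $X\mapsto\beta^\CD_X\sigma$ surjects onto $\Ln^\perp/\Ln$; hence $\bar g=\mu\,\iden$ there. A scalar isometry satisfies $\mu=\pm1$, and positivity forces $\mu=1$, so $\bar g=\iden$ and $g_\Ln=\iden$. By orthogonality the action on $\CV/\Ln^\perp\cong\Ln\dual$ is then $\mu^{-1}=1$ as well, so $g$ acts trivially on all graded pieces; equivalently $g$ projects to the identity in the Levi quotient $\stab(\Ln)/\stab(\Ln)^\perp\cong\co(T\mfd)$, \ie $g$ lies in the unipotent radical of $P\cong\CO(n)\subnormal\R^{n*}$. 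As this radical has abelian Lie algebra $\stab(\Ln)^\perp\cong T\dual\mfd$, the exponential is a bijection onto it, so $g=\exp\gam$ with $\gam\in T\dual\mfd$.

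For the converse I would use the fact recorded just before \eqref{eq:gauge-Weyl} that $\exp\gam$ acts trivially on $\so(\CV)\gr$: concretely \eqref{eq:gam-act} gives $\gam\act\sigma=0$, so $g_\Ln=\iden$, while $\gam(\Ln^\perp)\subseteq\Ln$ forces $\bar g=\iden$; the displayed transformation formula then returns $\beta^{g\act\CD}=\beta^\CD$. The one delicate step is extracting $\mu=1$: this is precisely where both defining properties of a gauge transformation must be used---orthogonality (so that $\bar g$ is a genuine \emph{isometry}, not merely conformal) and preservation of $\Ln^+$ (so that $\mu>0$). Dropping orthogonality one would only conclude that $g$ acts as a dilation on $\CV\gr$, \ie lies in the Levi factor $\CO(n)$ rather than the nilradical, which is exactly the extra freedom the proposition rules out.
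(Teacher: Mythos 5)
Your argument is correct and follows essentially the same route as the paper's proof: transform the soldering form under $g$, use the Cartan condition to force $\bar g$ to be the scalar $\mu$, use orthogonality together with positivity of $\mu$ to get $\mu=1$, and conclude that $g$ lies in the exponential of the abelian nilradical. The only cosmetic difference is in the last step, where the paper argues that an orthogonal map is determined by its restriction to $\Ln^\perp$ (so that $g|_{\Ln^\perp}=\iden+\gam\circ\pi$ already pins down $g=\exp\gam$), whereas you invoke the standard structure theory of the parabolic; both are fine.
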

\begin{proof} $g\act\CD$ has the same soldering form as $\CD$ if and only
if $g$ commutes with $\beta^\CD$, \ie, $g\beta^\CD \sigma=\beta^\CD g\sigma$.
However, $g\Ln^+\subset\Ln^+$ so that $g\sigma=e^u\sigma$ for some function
$u$ and now $g\beta^\CD \sigma=e^u\beta^\CD \sigma$ implies, by the Cartan
condition, that
\begin{equation*}\notag
g\restr{\Ln^\perp}=e^u\iden+\gam\circ\pi
\end{equation*}
with $\gam\colon\Ln^\perp/\Ln\to\Ln$.  Now $g$ acts orthogonally so, for
$v\in\Ln^\perp$,
\begin{equation*}\notag
\lip{v,v}=\lip{gv,gv}=e^{2u}\lip{v,v}
\end{equation*}
whence $e^u=1$.  In particular,
$\smash{g\restr{\Ln^\perp}}=\iden+\gam\circ\pi$.  However, it is not
difficult to see that any orthogonal $g$ is determined
by its values on $\Ln^\perp$ and so we conclude that $g=\exp\gam$
under the usual 
soldering identification of $\Hom(\Ln^\perp/\Ln,\Ln)$ with $T\dual\mfd$.
Certainly any $g$ of this form preserves $\beta^\CD$.
\end{proof}

We now combine this gauge theory with the normalization of conformal Cartan
connections discussed in~\S\ref{par:norm-conf-cart}.  Let $\CD$ be a fixed
strongly torsion-free connection inducing the conformal metric $\conf$. Then
Lemma~\ref{l:affinity} assures us that any other strongly torsion-free
connection induces the same \mob/ structure as one of the form $\CD+\CQ$, and
is therefore gauge equivalent to it by Proposition~\ref{p:cart-mob-cart}. In
fact the connections of the form $\CD+\CQ$ provide a slice to the gauge
action: it just remains to show that $\CD$ and $\CD+\CQ$ are only gauge
equivalent if $\QC=0$. However, since they have the same soldering form,
Proposition~\ref{p:gauge-traf} and equation~\eqref{eq:log-deriv} show that
$\QC = -\CD\gam - \half\abrack{\gam,\pi\CD\gam}$. Therefore
$0=-\CQ_X\sigma=(\CD_X\gam)\sigma=-\gam\act\CD_X\sigma=\gam(X)\sigma$ for any
vector field $X$. Thus $\gam=0$ and $\QC=0$.

\begin{thm} Let $\cA^\conf_{\mathrm{stf}}$ be the set of strongly
torsion-free conformal Cartan connections inducing the conformal metric
$\conf$. Then $\cG$ acts freely on $\cA^\conf_{\mathrm{stf}}$ and, if $\CD$ is
a fixed element of $\cA^\conf_{\mathrm{stf}}$, there is a unique connection of
the form $\CD+\CQ$ in each $\cG$-orbit.
\end{thm}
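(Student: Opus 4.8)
The plan is to obtain both assertions from the gauge-theoretic machinery already in place, namely Proposition~\ref{p:gauge-traf} together with the logarithmic-derivative formula~\eqref{eq:log-deriv}, combined with the normalization results Lemma~\ref{l:affinity} and Proposition~\ref{p:cart-mob-cart}. The organizing remark is that any connection of the form $\CD+\CQ$ has the same soldering form as $\CD$, since $\CQ$ takes values in $T\dual\mfd\subset\stab(\Ln)$ and hence annihilates $\Ln$ by~\eqref{eq:Q-act}. Consequently, any gauge transformation relating two such connections fixes their common soldering form and so, by Proposition~\ref{p:gauge-traf}, has the form $\exp\gam$ for a section $\gam$ of $T\dual\mfd$. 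For freeness I would argue as follows: if $g\act\CD=\CD$, then $g=\exp\gam$ as above, and~\eqref{eq:log-deriv} gives $0=g\act\CD-\CD=-\CD\gam-\half\abrack{\gam,\pi\CD\gam}$; applying this $\so(\CV)$-valued $1$-form to a section $\sigma$ of $\Ln$ and using~\eqref{eq:gam-act}, the bracket term drops out while $(\CD\low_X\gam)\sigma=-\gam\act\CD\low_X\sigma=\gam(X)\sigma$, forcing $\gam=0$, i.e.\ $g=\iden$.

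To produce a slice representative, I would start from an arbitrary $\CD'\in\cA^\conf_{\mathrm{stf}}$. It induces the same conformal metric $\conf$ as $\CD$, so its \mob/ structure satisfies $\Ms^{\CD'}=\Ms^\CD+\QC$ for a unique $\QC\in\Cinf(\mfd,S^2T\dual\mfd)$, by the affine structure on \mob/ structures. Lemma~\ref{l:affinity} then shows that $\CD+\CQ$ is again strongly torsion-free with $\Ms^{\CD+\CQ}=\Ms^{\CD'}$. Applying Proposition~\ref{p:cart-mob-cart} to $\CD+\CQ$ and to $\CD'$ identifies each, via its differential lift, with the canonical Cartan connection of the single \mob/ bundle $\Mb=\kernel\Mh^{\CD'}$ determined by $\Ms^{\CD'}$; composing these two isomorphisms yields a map carrying $(\CV,\Ln,\CD+\CQ)$ to $(\CV,\Ln,\CD')$ and intertwining the connections, so $\CD'$ lies in the $\cG$-orbit of $\CD+\CQ$.

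For uniqueness of this representative, suppose $\CD+\CQ_1$ and $\CD+\CQ_2$ lie in one orbit, say $\exp\gam\act(\CD+\CQ_1)=\CD+\CQ_2$ with $\gam$ a section of $T\dual\mfd$ (again by the shared soldering form and Proposition~\ref{p:gauge-traf}). Here the key simplification is that $T\dual\mfd$ is abelian, so the correction $\abrack{\CQ_1,\gam}$ vanishes and $(\CD+\CQ_1)\gam=\CD\gam$; feeding this into~\eqref{eq:log-deriv} gives $\exp\gam\act(\CD+\CQ_1)=\exp\gam\act\CD+\CQ_1$, whence $\CQ_2-\CQ_1=\exp\gam\act\CD-\CD$. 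Evaluating on a section $\sigma$ of $\Ln$ exactly as in the freeness step — the left side vanishes by~\eqref{eq:Q-act} while the right equals $-\gam(X)\sigma$ — forces $\gam=0$, and hence $\QC_1=\QC_2$.

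The component computations are routine; the one step that I expect to need genuine care is the passage, in the existence argument, from the abstract isomorphism of conformal Cartan geometries furnished by Proposition~\ref{p:cart-mob-cart} to an actual element of $\cG$. One must check that the composite $j^{\CD'}\circ(j^{\CD+\CQ})^{-1}$ covers the identity of $\mfd$, is fibrewise orthogonal, and preserves the oriented null line $\Ln^+$ (not merely $\Ln$), so that it is a genuine gauge transformation rather than a bare bundle isomorphism; this follows from the canonical, orientation-preserving nature of the differential lift.
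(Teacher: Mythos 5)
Your proposal is correct and follows essentially the same route as the paper: existence of the slice representative via Lemma~\ref{l:affinity} and Proposition~\ref{p:cart-mob-cart}, and freeness/uniqueness by combining Proposition~\ref{p:gauge-traf} with~\eqref{eq:log-deriv} and evaluating on a section of $\Ln$ to force $\gam=0$. The only (harmless) divergence is that you treat the general pair $\CD+\CQ_1$, $\CD+\CQ_2$ directly, where the paper reduces to the case $\CQ_1=0$; your closing remark about checking that the composite of differential lifts is a genuine element of $\cG$ addresses a point the paper leaves implicit.
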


\subsection{Constant vectors, spaceform geometries and stereoprojection}
\label{par:spaceform}

If $(\CV,\Ln,\CD)$ is a conformal Cartan geometry on $\mfd$, then we have seen
that the compatible riemannian metrics correspond to sections $\sigma$ of
$\LC^+\subset\Ln$ via $g_\sigma(X,Y)=\lip{\CD_X\sigma,\CD_Y\sigma}$. This
induces a Weyl structure $\Lnc\subset\CV$ spanned by $\hat\sigma$ with
$\lip{\hat\sigma,\hat\sigma}=0$ and $(\hat\sigma,\CD_X\sigma)=0$.

When $\CD$ is flat, a particularly important class of sections of $\LC^+$ are
the \emph{conic sections}: these sections are given by the intersection of
$\LC^+$ with an affine hyperplane, \ie, $\{\sigma\in
\LC^+:\lip{v_\infty,\sigma}=-1\}$, where $v_\infty$ is a parallel section of
$\CV$ and we assume that $\Ln\not\subseteq v_\infty^\perp$ on $\mfd$.  Since
$\lip{\CD_X\sigma,v_\infty}=0$ and $\lip{\CD_X\sigma,\sigma}=0$, the
corresponding Weyl structure is spanned by
\begin{equation}
\hat\sigma=v_\infty+\half\lip{v_\infty,v_\infty}\sigma.
\end{equation}
The length scale $\ell$ dual to $\sigma$ (with $\ip{\sigma,\ell}=1$) is the
homology class $v_\infty\modulo\Ln^\perp$, and $v_\infty$ is the differential
lift $j^{\CD}\ell$ of Proposition~\ref{p:diff-lift}. In this case the
corresponding metric $g$ is an Einstein metric (since $\Mh\,\ell=0$) of
constant curvature $\frac2n\ns^g\ell^2= - \Mq\,\ell=-\lip{v_\infty,v_\infty}$,
and is the metric of a spherical, euclidean or hyperbolic spaceform.

To see this explicitly, suppose that $\mfd$ is simply connected, so we may
assume $\CV=\mfd\cross\R^{n+1,1}$ with $\CD=\d$ and $v_\infty$ constant. Then
$x\mapsto \Ln_x$ defines a local diffeomorphism from $\mfd$ to $\PL\setdif
\Proj(v_\infty^\perp\intersect \LC)$---which is $\PL$,
$\PL\setdif\{\vspan{v_\infty}\}$ or $\PL$ with a hypersphere removed,
according to whether $v_\infty$ is timelike, null, or spacelike,
cf.~\S\ref{par:sphere-conf-cart}.

\begin{bulletpars}
\item If $v_\infty$ is non-null, then define
$v=\sigma+v_\infty/\lip{v_\infty,v_\infty}$. Then $\lip{v,v_\infty}=0$ and
$\lip{v,v}=-1/\lip{v_\infty,v_\infty}$, so that $x\mapsto v(x)$ is a local
diffeomorphism from $\mfd$ to $\{v\in v_\infty^\perp:\lip{v,v}=\pm r^2\}$,
where $1/r^2=|\lip{v_\infty,v_\infty}|$, which is a hyperboloid or a sphere of
radius $r$ according to whether $\lip{v_\infty,v_\infty}$ is positive or
negative. Since $\d_X v=\d_X\sigma$, this local diffeomorphism is an isometry.

\item If $v_\infty$ is null, let $v_0$ be another constant null vector with
$\lip{v_0,v_\infty}=-1$ and identify $\{v_0,v_\infty\}^\perp$ with euclidean
$\R^n$. Then the \emphdef{stereographic projection} or
\emphdef{stereoprojection}
\begin{equation}
\sigma\mapsto v=\pr_{\R^n}\sigma=\sigma-v_0+\lip{\sigma,v_0}v_\infty
\end{equation}
is a local diffeomorphism from $\mfd$ to $\R^n$ with inverse
\begin{equation}
v\mapsto \sigma = \exp(v\wedge v_\infty)\act v_0
= v+v_0+\half\lip{v,v}v_\infty.
\end{equation}
Since $\lip{\d_X\sigma,v_\infty}=0$, we have $\lip{\d_X v,\d_Y v}=
\lip{\d_X\sigma,\d_Y\sigma}=g(X,Y)$ and so stereoprojection is also an isometry
for the metric $g$ induced by $\sigma$. This proves that flat conformal Cartan
geometries are conformally flat in the usual sense, the charts given by
stereoprojection being conformal. Furthermore, these charts are not just
conformal, but \mob/ (which is only an issue for curves and surfaces):
since $\Mh^\conf\ell=0=\Mq^\conf\ell$ (\ie, $\nr^g=0$), the flat \mob/
structure given by the flat metric on $\R^n$ pulls back to the flat \mob/
structure on $S^n$.
\end{bulletpars}

\subsection{Symmetry breaking}
\label{par:sym-break}

The introduction of a constant vector $v_\infty\in\R^{n+1,1}$ breaks symmetry
from the \mob/ group of $S^n$ to the isometry group $\Stab(v_\infty)$ of a
spaceform (or the homothety group in the flat case). Such constant vectors
sometimes arise naturally in conformal submanifold geometry. Let us study,
more generally, a constant $(k+1)$-plane $W$ (or equivalently a constant
decomposable $(k+1)$-vector $\omega$ up to scale) for $0\leq k\leq n$.

\begin{bulletpars}
\item If the induced metric on $W$ is nondegenerate, then replacing $W$ by
$W^\perp$ and $k$ by $n-k$ if necessary, we may assume that $W$ has signature
$(k,1)$ and $W^\perp$ has signature $(n-k+1,0)$. Now any null vector $\sigma$
not in $W$ can be scaled so that $\sigma=w+x$ where $w$ is a positively
oriented timelike unit vector in $W$ and $x$ is spacelike unit vector in
$W^\perp$, both uniquely determined. This defines an isometry from
$(\cH^k+W^\perp)\cap\LC$ to $\cH^k\times \cS^{n-k}$, where $\cH^k$ is (the
positive sheet of) the hyperboloid in $W$, and $\cH^k\times \cS^{n-k}$ is
equipped with a product of constant curvature metrics. Hence we obtain a
conformal diffeomorphism from $\PL\setdif \Proj(W\cap\LC)$, \ie, $S^n\setdif
S^{k-1}$, to $\cH^k\times \cS^{n-k}$.

\item If the induced metric on $W$ is degenerate, \ie, $W$ has signature
$(k,0)$ and $W^\perp$ has signature $(n-k,0)$, then $W\cap
W^\perp=W\cap\LC=W^\perp\cap\LC$ is a null line spanned by a null vector $\hat
w$. If $w_0$ is another null vector with $\lip{\hat w,w_0}=-1$ then any null
vector $\sigma$ not in the span of $W$ is a constant multiple of $w_0+\lam
\hat w + x+y$ for unique $x\in W\cap\hat w_0^\perp$ and $y\in W^\perp\cap\hat
w_0^\perp$, identifying $(w_0+W+W^\perp)\cap \LC$ isometrically with a product
$\R^k\times\R^{n-k}$ of euclidean spaces.  Up to translation and scale this
isometry is independent of the choice of $w_0$ and $\hat w$, we have a
conformal diffeomorphism from $S^n\setdif\{\mathit{pt}\}$ to
$\R^k\times\R^{n-k}$.
\end{bulletpars}

As in the case of vectors, the choice $W$ induces a Weyl structure $\Lnc$,
hence a compatible metric (up to homothety), on the open subset of $S^n$ where
$\Ln\not\subseteq W, W^\perp$ such that $\Ln\dsum\Lnc$ is the $(1,1)$-plane
$(\submfd\times W\dsum\Ln)\cap (\submfd\times W^\perp\dsum\Ln)$ on this open
subset: in the nondegenerate case, $\Lnc=\refl{W}\Ln$ where
$\refl{W}=\iden_W-\iden_{W^\perp}$; in the degenerate case $\Lnc=W\cap
W^\perp=\vspan{\hat w}$ is constant. Since $\submfd\times
W^\perp\cap(\Ln\dsum\Lnc)$ is a constant line inside $\submfd\times
W\dsum\Lnc$ (and similarly for $\submfd\times W\cap(\Ln\dsum\Lnc)$ inside
$\submfd\times W^\perp\dsum\Lnc$), $\Lnc$ is the Weyl structure of the
product of constant curvature metrics.

If we view the representative $\omega\in \Wedge^{k+1}W$ as a constant
$(k+1)$-vector on $S^n$, then its Lie algebra homology class $\hc{\omega}$
defines a section of $H_0(T\dual S^n,S^n\times\Wedge^{k+1}\R^{n+1,1})$, which
turns out to be the bundle $\Wedge^k(TS^n\ltens \Ln)\ltens L\cong
\Wedge^kT\dual S^n\ltens L^{k+1}$ and $\hc{\omega}$ is a decomposable section
tangent to one of the factors of the product structure. The equation
$\d\omega=0$, viewed as an equation on $\hc{\omega}$ implies that the latter
is a \emphdef{conformal Killing form} in the sense of~\cite{Sem:ckf}.

This symmetry breaking is particularly straightforward when $k=1$: a
decomposable $2$-form $\omega$ identifies $S^n\setdif S^{n-2}$,
$S^n\setdif\{\mathrm{pt}\}$ or $S^n\setdif S^0$ with $\cS^1\times \cH^{n-1}$,
$\R\times \R^{n-1}$ or $\cH^1\times \cS^{n-1}$ respectively, according to
whether $\omega$ is spacelike, null, or timelike, respectively (\ie, whether
the corresponding $2$-plane is euclidean, degenerate, or lorentzian
respectively). Note that $\omega$ may also be viewed as an element of
$\so(n+1,1)$, \ie, an infinitesimal \mob/ transformation of $S^n$, its Lie
algebra homology class $\hc{\omega}$ being the vector field generating this
action. Since $\omega$ is a wedge product of two vectors in $W$, it follows
that $\hc{\omega}$ generates of the obvious $S^1$ or $\R$ action on
$\cS^1\times \cH^{n-1}$, $\R\times \R^{n-1}$ or $\cH^1\times \cS^{n-1}$.

\part{Submanifolds and The Conformal Bonnet Theorem}

Now we come to the heart of the matter: an immersed submanifold of a \mob/
manifold inherits a conformal \mob/ structure in a canonical way.  In
particular, a curve or surface immersed in a conformal $n$-manifold with
$n\geq3$ acquires more intrinsic geometry from the ambient space than merely a
conformal metric.

The key ingredient of our theory is the notion of a M\"obius reduction,
introduced in~\S\ref{par:mob-red} as a reduction of structure group along a
submanifold $\submfd$ immersed in a M\"obius manifold
$\mfd$. In~\S\ref{par:mob-sphere-cong}, we show that this generalizes the
classical notion of an enveloped sphere congruence. In~\S\ref{par:mr-split} we
compute the data induced on $\submfd$ by a M\"obius reduction, and then
in~\S\ref{par:gauge-mob}, using a gauge theoretic interpretation of M\"obius
reductions, we show how these data depend on the choice of M\"obius reduction.

One of these data is an analogue of the second fundamental form (or shape
operator) in riemannian geometry (we make the analogy precise in
\S\ref{par:mr-ambient-weyl-riem}) and this datum only depends on the M\"obius
reduction through its trace, a generalized mean curvature conormal
vector. This allows us to achieve our first goal in~\S\ref{par:norm-prim},
where we show that there is a unique M\"obius reduction whose second
fundamental form is tracefree. By normalizing the induced Cartan connection,
we thus obtain a canonical conformal Cartan geometry on $\submfd$.

For submanifolds of the conformal $n$-sphere, the canonical M\"obius reduction
is the central sphere congruence~\cite{Bla:vud}. In this case, we show
in~\S\ref{par:recover} that the primitive data induced on $\submfd$ by the
reduction suffice to recover the ambient flat conformal Cartan connection.  As
we show in~\S\ref{par:conf-bonnet-sphere}, this leads at once to a conformal
Bonnet theorem, and we present two approaches to the \GCR/ equations
characterizing conformal immersions. The first, introduced in
\S\ref{par:abs-hom-conf-bonnet}, is an unashamedly abstract treatment using
Lie algebra homology and the \BGG/ machinery of~\cite{CaDi:di}. The second,
presented in~\S\ref{par:conc-hom-conf-bonnet}, uses the decomposition with
respect to a Weyl structure (described and studied in
in~\S\S\ref{par:decomp-weyl}--\ref{par:homological-reduction}) to give a more
explicit formulation.

In~\S\ref{par:curves}, we specialize our theory to curves and study their
M\"obius invariants. We turn to surfaces in~\S\ref{par:surfaces}, where, with
an eye on the applications in Part III, we study quadratic differentials and
the relation between the differential lift and the Hodge star operator.  We
also relate our theory to the work of~\cite{BFLPP:cgs,BPP:sdfs}.

\section{Submanifolds and M\"obius reductions}
\label{sec:sub-mob}

\subsection{Submanifolds of M\"obius manifolds}
\label{par:sub-mob}

Let $\phi\from\submfd\to\mfd$ be an immersion of an $m$-manifold $\submfd$
into a \mob/ $n$-manifold $\mfd$, with $n=m+k$ and $m,k>0$.  Thus, for
$n\geq3$, our assumption is no more than that $\mfd$ be a conformal manifold:
it is only for the case of curves immersed in surfaces that we are imposing
extra structure on the ambient space.  In any case, $\mfd$ is equipped with a
normal conformal Cartan geometry $(\CV_\mfd,\Ln,\CD^\mfd)$.

The conformal metric on $\mfd$ restricts to give a conformal metric on
$\submfd$.  At first we obtain a section of $S^2T\dual
\submfd\ltens(L_\mfd\restr\submfd)^2$, but then Proposition~\ref{p:same-L}
identifies $L_\mfd\restr\submfd$ with the density bundle $L_\submfd$ of
$\submfd$.  (To generalize our theory to indefinite signature metrics we need
to assume that the induced conformal metric on $\submfd$ is nondegenerate.)
Henceforth, we write $L$ and $\Ln=L^{-1}$ for both the intrinsic and ambient
line bundles, and often omit restriction maps (pullbacks to $\submfd$). Thus
$(\CV_\mfd,\CD^\mfd)$ will be viewed as a vector bundle with connection over
$\submfd$. Along $\submfd$, we have an orthogonal decomposition
\begin{equation*}\notag
T\mfd=T\submfd\dsum N\submfd,
\end{equation*}
where $N\submfd$ is the normal bundle to $\submfd$ in $\mfd$.  Further, the
operators $\pi$ and $p$, defined by the soldering isomorphism of $\mfd$,
restrict to operators along $\submfd$, so that in particular, we have
$\pi\colon \Ln^\perp\to T\mfd\ltens\Ln(\cong T\dual\mfd\ltens L)$ over
$\submfd$. We thus obtain two subbundles $\pi^{-1}(N\submfd\ltens\Ln)$ and
$\pi^{-1}(T\submfd\ltens\Ln)$ of $\CV_\mfd$ with sum $\Ln^\perp$ and
intersection $\Ln$. We denote the latter subbundle by $\Lnps$: it is the
smallest subbundle of $\Ln^\perp$ containing $\Ln$ and its tangential first
derivatives, \ie, $\CD^{\mfd}_{X}\sigma\in\Lnps$ for all sections $\sigma$ of
$\Ln$ and $X\in T\submfd$.

Our approach to \mob/ submanifold geometry is to forget about $\mfd$ and study
only $(\CV_\mfd,\Ln,\CD^\mfd)$ along $\submfd$. This will be particularly
effective in the case $\mfd=S^n$, since then $\CV_\mfd$ (restricted to
$\submfd$) is $\submfd\times\R^{n+1,1}$ with $\CD^\mfd$ just the flat
derivative $\d$, and the immersion of $\submfd$ into $S^n=\PL$ can be
recovered from the subbundle $\Ln \subset \submfd\times\R^{n+1,1}$ (whose
fibres are null lines in $\R^{n+1,1}$, hence points in $\PL$).

\subsection{M\"obius reductions}
\label{par:mob-red}

We regard $(\CV_\mfd,\Ln,\CD^\mfd)$ as a generalized conformal Cartan geometry
on $\submfd$.  It is not actually a conformal Cartan geometry, since
$\CV_\mfd$ has rank $n+2>m+2$ so a reduction of structure group is required.
This prompts the following definition.

\begin{defn}
A signature $(m+1,1)$ subbundle $\CV\subset\CV_\mfd$ (over $\submfd$) is
called a \emphdef{\mob/ reduction} iff $\CV$ contains the rank $m+1$ bundle
$\Lnps$ (so that $\Lnps=\Ln^\perp\intersect\CV$).
\end{defn}
Otherwise said, $\CV^\perp$ is a complement to $\Ln$ in
$\pi^{-1}(N\submfd\ltens\Ln)\subset\Ln^\perp$, or equivalently, an orthogonal
complement to $\Lnps$ in $\Ln^\perp$. From this, it is clear that \mob/
reductions exist and that $\CV^\perp$ is a rank $k$ bundle on which the metric
is definite. Also $\pi\from\Ln^\perp\to T\mfd\ltens\Ln$ restricts to a metric
isomorphism from $\CV^\perp$ onto the weightless normal bundle
$N\submfd\ltens\Ln$.

A \mob/ reduction $\CV$ determines and is determined by a bundle map
$\mni_\CV\colon N\submfd\ltens\Ln\to\pi^{-1}(N\submfd\ltens\Ln)
\subset\Ln^\perp$ with $\pi\circ \mni_\CV=\iden_{N\submfd\ltens\Ln}$ via
$\CV^\perp=\image\mni_\CV$.  It follows that \mob/ reductions form an affine
space modelled on the sections of $\Hom(N\submfd\ltens\Ln,\Ln)\cong
N\dual\submfd$.  We write $\CV\mapsto \CV+\con$ for this affine structure,
where $\con\in\Cinf(\submfd,N\dual\submfd)$: explicitly, $\mni_{\CV+\con}=
\mni_\CV-\con\tens\iden_\Ln$. (The sign here is for consistency with our
sign convention for Weyl structures.)

\subsection{Sphere congruences}
\label{par:mob-sphere-cong}

When $\mfd=S^n$, \mob/ reductions admit a classical interpretation.  In this
case, $\CV_\mfd\to \submfd$ is just the trivial $\R^{n+1,1}$ bundle,
$\CD^\mfd$ is flat differentiation $\d$, and the line bundle $\Ln\to\submfd$
is the pullback of the tautological bundle by the immersion
$\phi\colon\submfd\to S^n$. Positive sections of $\Ln$ are precisely the
\emphdef{lifts} $f\from\submfd\to\LC^+$ of $\phi$ with $q\circ f=\phi$.

Recall from \S\ref{par:sphere-conf-cart} that $\ell$-spheres in the conformal
sphere $S^n=\PL$ correspond bijectively to signature $(\ell+1,1)$ subspaces of
$\R^{n+1,1}$ (or equivalently to the orthogonal spacelike $(n-\ell)$-planes)
via $W\mapsto\Proj(W\cap\LC)$. Hence the space of $\ell$-spheres in $S^n$ is
identified with the grassmannian of signature $(\ell+1,1)$ subspaces of
$\R^{n+1,1}$, and a signature $(\ell+1,1)$ subbundle of
$\submfd\times\R^{n+1,1}$ is the same as a map of $\submfd$ into this space.

\begin{defn} A \emphdef{$\ell$-sphere congruence} parameterized by an
$m$-manifold $\submfd$ is a signature $(\ell+1,1)$ subbundle $\CV$ of
$\submfd\times\R^{n+1,1}$. It is said to be \emphdef{enveloped} by an
immersion $\phi$, \ie, a null line subbundle $\Ln$ satisfying the Cartan
condition, iff $\Lnps\subset\CV$, \ie, $\CV$ contains the image of every lift
$f$ and its first derivatives $\d_X f$, $X\in T\submfd$.
\end{defn}
Geometrically, the enveloping condition means that each $\ell$-sphere
$\Proj(\CV_x\cap\LC)$ has first order contact with $\phi(\submfd)$ at
$\phi(x)$.  When $\ell=m$, this is exactly the condition that $\CV$ be a \mob/
reduction for $\Ln$. To summarize:
\begin{quote}
\emph{\mob/ reductions are the same as $m$-sphere congruences enveloped by
$\submfd$}.
\end{quote}

\section{Geometry of M\"obius reductions}
\label{sec:geom-mob-red}

\subsection{Splitting of the ambient connection}
\label{par:mr-split}

Let $\CV$ a \mob/ reduction of $\CV_\mfd$ along $\submfd$. The orthogonal
decomposition $\CV_\mfd=\CV\dsum\CV^\perp$ induces a decomposition of
$\CD^\mfd$: we have metric connections $\CD^\CV$, $\nabla^\CV$ on $\CV$,
$\CV^\perp$, respectively, given by composing the restriction of $\CD^\mfd$
with orthoprojection.  The remaining, off-diagonal, part of $\CD^\mfd$ is the
following.

\begin{defn}
The \emphdef{\cso/} $\CS^\CV\from T\submfd\to
\Hom(\CV,\CV^\perp)\dsum\Hom(\CV^\perp,\CV)$ of a \mob/ reduction $\CV$ is
defined by
\begin{equation*}\notag
\CS^{\CV}_X\wnv=(\CD^\mfd_X\wnv)\tangent,\qquad\qquad
\CS^{\CV}_Xv=(\CD^\mfd_Xv)\normal,
\end{equation*}
for $v\in \CV$, $\wnv\in \CV^\perp$, where ${}\tangent,{}\normal$ denote the
orthoprojections onto $\CV,\CV^\perp$ respectively.
\end{defn}
Note that $\CS^\CV$ is zero order in $v,\wnv$, and $\lip{\CS^\CV_X\wnv,v}
=-\lip{\wnv,\CS^\CV_Xv}$ since $\CD^\mfd$ is a metric connection. In
Lie-theoretic terms, we have a fibrewise symmetric decomposition
\begin{gather}
\label{eq:decomp-g}
\g:=\so(\CV_\mfd)=\h_\CV\dsum\m_\CV,\\
\tag*{where}
\h_\CV=\so(\CV)\dsum\so(\CV^\perp),\qquad
\m_\CV=\g\intersect
\bigl(\Hom(\CV,\CV^\perp)\dsum
\Hom(\CV^\perp,\CV)\bigr).
\end{gather}
Accordingly $\CD^\mfd$ decomposes as
\begin{equation}\label{eq:tn}
\CD^\mfd=\CDVs + \CS^\CV,
\end{equation}
where $\CDVs=\CD^\CV+\nabla^\CV$ is the direct sum connection and $\CS^\CV$ is
a $\m_\CV$-valued $1$-form.

The fundamental tools in submanifold geometry are the equations which give the
curvature of $\CD^\mfd$ in terms of this decomposition:
\begin{subequations}\label{eq:gcr}
\begin{align}\label{eq:gr}
(R^{\CD^\mfd})_\h &= \RDVs+\half\liebrack{\CS^\CV\wedge\CS^\CV};\\
(R^{\CD^\mfd})_\m &= \dDVs\CS^\CV.
\label{eq:c}
\end{align}
\end{subequations}
We shall see that the $\so(\CV)$ and $\so(\CV^\perp)$ components
of~\eqref{eq:gr} are generalized versions of the Gau\ss\ and Ricci equations
respectively, while~\eqref{eq:c} is a Codazzi equation. As they stand, these
equations are conceptually simple and general: $\CV_\mfd$ could be any vector
bundle with $\g$ connection and $\CV$ any nondegenerate subbundle.  However,
they depend on the choice of \mob/ reduction and we need to understand this
dependence. First, though, there is a lot of information hidden in the
interaction with the tautological line bundle $\Ln$, which we now begin to
unravel.

Let $\CV$ a \mob/ reduction of $(\CV_\mfd,\Ln,\CD^\mfd)$ along $\submfd$.
Then $\Lnps\subset \CV$, so that $\CD^\mfd_X\sigma\in\CV$ and thus
\begin{equation*}
\CD^\CV_X\sigma=\CD^\mfd_X\sigma,\qquad\CS^\CV_X\sigma=0,
\end{equation*}
for all sections $\sigma$ of $\Ln$ and $X\in T\submfd$. This has several
consequences for the components $\CD^\CV$, $\nabla^\CV$ and $\CS^\CV$ of
$\CD^\mfd$. First, $\CD^\CV$ is a conformal Cartan connection for the
conformal structure on $\submfd$ inherited from $\mfd$: the soldering form of
$\CD^\CV$ is the pullback of that of $\CD^\mfd$.  In particular,
$\pi\restr\Lnps$ is the operator $\pi_\submfd\colon\Lnps\to T\submfd\ltens\Ln$
defined by the soldering form of $\CD^\CV$, and we shall drop the subscript
henceforth.

Moreover, $\CD^\CV_X\sigma=\CD^\mfd_X\sigma$ implies that
$R^{\CD^\CV}_{X,Y}\sigma$ is the projection onto $\CV$ of
$R^{\smash{\CD^\mfd}}_{X,Y}\sigma$ which vanishes since $\CD^\mfd$ is strongly
torsion-free. We summarize this as follows.
\begin{prop}\label{p:mr-conf}
$(\CV,\Ln,\CD^\CV)$ is a conformal Cartan geometry on $\submfd$ whose induced
conformal metric is the restriction of the conformal metric on $\mfd$.
\end{prop}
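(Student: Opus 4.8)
The plan is to verify directly the defining properties of a conformal Cartan geometry for $(\CV,\Ln,\CD^\CV)$, leaning on the two identities $\CD^\CV_X\sigma=\CD^\mfd_X\sigma$ and $\CS^\CV_X\sigma=0$ (for sections $\sigma$ of $\Ln$ and $X\in T\submfd$) recorded just above, which encode that the tangential first derivatives of $\Ln$ already lie in $\CV$.

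First I would check that $(\CV,\Ln,\CD^\CV)$ is a conformal Cartan connection. By definition of a \mob/ reduction, $\CV$ carries a signature $(m+1,1)$ metric, and $\Ln\subset\Lnps\subset\CV$ is an oriented null line subbundle. The connection $\CD^\CV$, being the orthoprojection of the metric connection $\CD^\mfd$ onto the orthogonal summand $\CV$, is again metric. For the Cartan condition I would note that $\beta^{\CD^\CV}_X\sigma=-\CD^\CV_X\sigma\modulo\Ln=-\CD^\mfd_X\sigma\modulo\Ln$ for $X\in T\submfd$; since $\CD^\mfd_X\sigma\in\Lnps$, this takes values in $\Lnps/\Ln$, and under the identification $\Lnps/\Ln\cong T\submfd\ltens\Ln$ induced by $\pi$ it is precisely the pullback of the ambient soldering form. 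Since the latter is a bundle isomorphism, its restriction to $T\submfd$ is injective, and as its image lies in the rank-$m$ bundle $\Hom(\Ln,\Lnps/\Ln)$, it is an isomorphism onto it.

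The substantive step is to upgrade this to a conformal Cartan \emph{geometry}, \ie\ to show $\CD^\CV$ is strongly torsion-free, $R^{\CD^\CV}\restr\Ln=0$. For $X,Y\in T\submfd$ and $\sigma$ a section of $\Ln$, I would argue that $R^{\CD^\CV}_{X,Y}\sigma=(R^{\CD^\mfd}_{X,Y}\sigma)\tangent$. The point requiring care is that $\CD^\CV$ and $\CD^\mfd$ agree only on sections of $\Ln$, not on arbitrary sections of $\CV$; however the first derivative $\CD^\CV_Y\sigma=\CD^\mfd_Y\sigma$ already lies in $\Lnps\subset\CV$, so that a second application of $\CD^\CV_X$ simply reproduces the tangential part of $\CD^\mfd_X\CD^\mfd_Y\sigma$ (equivalently, the off-diagonal piece $\CS^\CV$ never enters, as $\CS^\CV_X\sigma=0$). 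Since $\CD^\mfd$ is strongly torsion-free we have $R^{\CD^\mfd}_{X,Y}\sigma=0$, and hence $R^{\CD^\CV}_{X,Y}\sigma=0$.

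For the final assertion on the conformal metric, I would observe that, for $X,Y\in T\submfd$ and any gauge $\sigma$, the identity $\CD^\CV_X\sigma=\CD^\mfd_X\sigma$ together with the fact that the metric on $\CV$ is the restriction of that on $\CV_\mfd$ gives $\lip{\CD^\CV_X\sigma,\CD^\CV_Y\sigma}=\lip{\CD^\mfd_X\sigma,\CD^\mfd_Y\sigma}$; the right-hand side is by definition the ambient conformal metric evaluated on tangent vectors, so the induced conformal class on $\submfd$ is the restriction claimed. The only genuine obstacle is the curvature computation, where one must keep track of the fact that $\CD^\CV\neq\CD^\mfd$ on non-$\Ln$ sections; the observation that tangential first derivatives of $\Ln$ land in $\CV$ is exactly what makes the projection argument go through cleanly.
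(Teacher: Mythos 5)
Your proof is correct and follows essentially the same route as the paper: everything hinges on the observation that $\Lnps\subset\CV$ forces $\CD^\CV_X\sigma=\CD^\mfd_X\sigma$, from which the soldering form is the pullback of the ambient one (giving the Cartan condition and the restricted conformal metric) and $R^{\CD^\CV}_{X,Y}\sigma$ is the tangential projection of $R^{\CD^\mfd}_{X,Y}\sigma=0$. Your extra care in explaining why the second covariant derivative computation avoids the off-diagonal term $\CS^\CV$ is a welcome elaboration of a step the paper states more tersely.
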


Thus in addition to a conformal metric, $\submfd$ acquires a \mob/ structure
$\Ms^\CV:=\Ms^{\CD^\CV}$.  When $\dimn\submfd\leq 2$, this packs a punch as we
shall see.

Now contemplate $\CS^\CV$: since $\CS^\CV\restr\Ln$ vanishes, $\CS^\CV$ takes
values in $\stab(\Ln)\cap\m_\CV$ so that $\pi\CS^\CV$ is a tensorial object: a
$1$-form with values in $\co(T\mfd)$.
\begin{defn}
The \emphdef{second fundamental form} and \emphdef{shape operator} of a \mob/
reduction $\CV$ are the bundle maps $\II^\CV\from T\submfd\tens T\submfd\to
N\submfd$ and $\Sh^\CV\colon T\submfd\tens N\submfd\to T\submfd$ defined in
terms of the \cso/ $\CS^\CV$ by the following formulae:
\begin{equation*}
\II^\CV_XY\,\sigma=-\pi\bigl(\CS^\CV_X(\CD^\mfd_Y\sigma)\bigr),\qquad
\cip{\Sh^\CV_X \nmv, Y}\sigma^2 =\lip{\CS^\CV_X\mni_\CV(\nmv\vtens\sigma),
\CD^\mfd_Y\sigma};
\end{equation*}
thus $\Sh^\CV\in \Omega^1(\submfd,\Hom(N\submfd,T\submfd))$ is the transpose
of $\II^\CV\in \Omega^1(\submfd,\Hom(T\submfd,N\submfd))$ and
$\pi\CS^\CV=\II^\CV-\Sh^\CV$.  The \emphdef{mean curvature covector}
$H^\CV\in\Cinf(\submfd,N\dual\submfd)$ of $\CV$ is given by
$H^\CV=\tfrac1m\trace\Sh^\CV=\tfrac1m\cip{\trace_\conf\II^\CV,\cdot}$.
\end{defn}

\begin{prop} Let $\CV$ be a \mob/ reduction of $\CV_\mfd$ on $\submfd$. Then
$\II^\CV$ is symmetric, \ie, is a section of $S^2T\dual\submfd\tens N\submfd$.
\textup(Equivalently $\Sh^\CV$ is a section of
$N\dual\submfd\tens\Sym_\conf(T\submfd)$.\textup)
\end{prop}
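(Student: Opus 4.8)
The plan is to show that the skew part of $\II^\CV$ in its two tangential arguments vanishes, using only that the ambient connection $\CD^\mfd$ is strongly torsion-free and that a \mob/ reduction satisfies $\Lnps\subset\CV$. Because $\CS^\CV_X\sigma=0$ for every section $\sigma$ of $\Ln$, the expression $\II^\CV_XY\,\sigma=-\pi(\CS^\CV_X(\CD^\mfd_Y\sigma))$ is tensorial in $\sigma$ as well as in $X$ and $Y$, so I may fix a section $\sigma$ of $\Ln$ and take $X,Y$ to be tangent vector fields on $\submfd$.

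First I would note that $\CD^\mfd_Y\sigma\in\Lnps\subset\CV$, so the definition of the \cso/ gives $\CS^\CV_X(\CD^\mfd_Y\sigma)=(\CD^\mfd_X\CD^\mfd_Y\sigma)\normal$, the orthoprojection onto $\CV^\perp$. Antisymmetrizing in $X,Y$ and using linearity of $\pi$ and of the projection then yields
\[
\II^\CV_XY\,\sigma-\II^\CV_YX\,\sigma
=-\pi\Bigl[\bigl(\CD^\mfd_X\CD^\mfd_Y\sigma-\CD^\mfd_Y\CD^\mfd_X\sigma\bigr)\normal\Bigr].
\]
Next I would invoke the curvature identity $\CD^\mfd_X\CD^\mfd_Y\sigma-\CD^\mfd_Y\CD^\mfd_X\sigma=R^{\CD^\mfd}_{X,Y}\sigma+\CD^\mfd_{[X,Y]}\sigma$. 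The curvature term vanishes because $\CD^\mfd$ is strongly torsion-free, i.e. $R^{\CD^\mfd}\restr\Ln=0$; and since $[X,Y]\in T\submfd$, the bracket term satisfies $\CD^\mfd_{[X,Y]}\sigma\in\Lnps\subset\CV$, so its orthoprojection onto $\CV^\perp$ is zero. Hence the right-hand side vanishes, $\II^\CV_XY=\II^\CV_YX$, and $\II^\CV$ is a section of $S^2T\dual\submfd\tens N\submfd$. The equivalent assertion for $\Sh^\CV$ is then immediate, since $\Sh^\CV$ is by definition the transpose of $\II^\CV$ with respect to $\conf$ and the normal metric.

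There is no real obstacle in this argument; the only point deserving emphasis is that strong torsion-freeness is precisely the hypothesis that kills the curvature contribution $R^{\CD^\mfd}_{X,Y}\sigma$ on the null line $\Ln$, while the defining inclusion $\Lnps\subset\CV$ is precisely what removes the $\CD^\mfd_{[X,Y]}\sigma$ term. In particular the computation shows that symmetry of $\II^\CV$ holds for \emph{every} \mob/ reduction, with no appeal to the later normalization that makes $\II^\CV$ tracefree.
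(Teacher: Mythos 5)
Your proof is correct and is essentially the paper's own argument: antisymmetrize, use the curvature identity, kill the bracket term via $\CD^\mfd_{[X,Y]}\sigma\in\Lnps\subset\CV$, and kill the curvature term via the torsion condition. The only (harmless) difference is that you invoke strong torsion-freeness ($R^{\CD^\mfd}\restr\Ln=0$) to make $R^{\CD^\mfd}_{X,Y}\sigma$ vanish outright, whereas the paper gets by with the weaker torsion-free condition $R^{\CD^\mfd}\Ln\subseteq\Ln$, which already forces the $\CV^\perp$-component of $R^{\CD^\mfd}_{X,Y}\sigma$ to vanish since $\Ln\subset\CV$.
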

\begin{proof} We have
\begin{equation*}\notag
(\II^\CV_Y X -\II^\CV_X Y )\sigma=
\pi(\CS^\CV_X(\CD^\mfd_Y\sigma)-\CS^\CV_Y(\CD^\mfd_X\sigma))=
\pi((R^{\CD^\mfd}_{X,Y}\sigma)\normal)=0
\end{equation*}
since $\CD^\mfd$ is torsion-free.
\end{proof}

Finally $\nabla^\CV$ induces a metric connection $\nabla=\pi\circ
\nabla^\CV\circ\mni_\CV$ on $N\submfd\ltens\Ln$.

We now see how all these data depend on the choice of reduction $\CV$
which will require a closer look at the space of \mob/ reductions.

\subsection{Gauge theory of M\"obius reductions}
\label{par:gauge-mob}

The affine structure on the space of \mob/ reductions may be usefully
understood in terms of gauge transformations of $\CV_\mfd$: we have $\CV+\con
= \exp(-\con)\CV$. Indeed, for $v\in \CV_\mfd$, $\nmv\tens\sigma\in
N\submfd\ltens\Ln$, we compute
\begin{align*}
\exp(-\con)\mni_\CV(\nmv\vtens\sigma) &=
\mni_\CV(\nmv\vtens\sigma)-\con\act \mni_\CV(\nmv\vtens\sigma)=
\mni_\CV(\nmv\vtens\sigma)-\ip{\con,\nmv}\sigma =
\mni_{\CV+\con}(\nmv\vtens\sigma)
\end{align*}
so that $\exp(-\con)\circ\mni_\CV=\mni_{\CV+\con}$. Thus if
$v\in\CV+\con$, $\exp(\con)v\in\CV$.

\begin{prop}\label{p:ch-red} Let $\CV$ be a \mob/ reduction of $\CV_\mfd$
on $\submfd$.  Then, for $\con\in\Cinf(\submfd,N\dual\submfd)$,
\begin{equation}\label{eq:ch-comp}
\begin{split}
\exp(\con)\act\CD^{\CV+\con}&=
\CD^\CV + \abrack{\con,\pi\CS^\CV} - \half\abrack{\con,\pi\CDVs\con},\\
\exp(\con)\act\CS^{\CV+\con} &= \CS^\CV-\CDVs\con,\\
\exp(\con)\act\nabla^{\CV+\con}&=\nabla^\CV,
\end{split}
\end{equation}
and $\liebrack{\CS^\CV,\con} - \half\liebrack{\con,\CDVs\con}$, $\CDVs\con$
take values in $\stab(\Ln)^\perp\cap \h_\CV$, $\stab(\Ln)\cap \m_\CV$
respectively.
\end{prop}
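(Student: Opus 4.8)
The plan is to use the gauge-theoretic description $\CV+\con=\exp(-\con)\CV$ recorded just above the statement. Setting $g=\exp(\con)$, the orthogonal transformation $g$ carries the splitting $\CV_\mfd=(\CV+\con)\dsum(\CV+\con)\normal$ onto $\CV_\mfd=\CV\dsum\CV\normal$, so $\Ad(g)$ sends $\h_{\CV+\con}$ to $\h_\CV$ and $\m_{\CV+\con}$ to $\m_\CV$. Applying $g$ to the decomposition $\CD^\mfd=\CD^{\nabla,\CV+\con}+\CS^{\CV+\con}$ therefore gives
\[
g\act\CD^\mfd=g\act\CD^{\nabla,\CV+\con}+\exp(\con)\act\CS^{\CV+\con},
\]
where the first summand is block-diagonal ($\h_\CV$-type) and $\exp(\con)\act\CS^{\CV+\con}=\Ad(g)\CS^{\CV+\con}$ is purely $\m_\CV$-valued. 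So it suffices to compute $g\act\CD^\mfd$ a second way and read off its $\h_\CV$- and $\m_\CV$-components relative to the $\CV$-splitting.

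For that second computation I would invoke the analogue of~\eqref{eq:log-deriv}, now applied to the ambient connection $\CD^\mfd$ and to $\con\in\Cinf(\submfd,N\dual\submfd)\subset\stab(\Ln)^\perp$. The one hypothesis needing checking is that $\CD^\mfd_X\con\in\stab(\Ln)$ for tangential $X$: since $\con$ annihilates $\Ln$, one has $(\CD^\mfd_X\con)\sigma=-\con(\CD^\mfd_X\sigma)$ for any section $\sigma$ of $\Ln$, and $\CD^\mfd_X\sigma\in\Lnps\subseteq\Ln^\perp$ forces this into $\Ln$. As $\ad\con$ is then $2$-step nilpotent on $\stab(\Ln)$, the derivation of~\eqref{eq:log-deriv} carries over to give $g\act\CD^\mfd=\CD^\mfd-\CD^\mfd\con-\half\abrack{\con,\pi\CD^\mfd\con}$. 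Substituting $\CD^\mfd=\CDVs+\CS^\CV$ and using the induced connection on $\so(\CV_\mfd)$ yields $\CD^\mfd\con=\CDVs\con+\liebrack{\CS^\CV,\con}$; since the nilradical is an ideal, $\liebrack{\CS^\CV,\con}\in\stab(\Ln)^\perp$, whence $\pi\CD^\mfd\con=\pi\CDVs\con$.

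It then remains to sort the terms of $\CDVs+\CS^\CV-\CDVs\con-\liebrack{\CS^\CV,\con}-\half\abrack{\con,\pi\CDVs\con}$ by the symmetric decomposition~\eqref{eq:decomp-g}. Because $\con,\CS^\CV\in\m_\CV$, both $\liebrack{\CS^\CV,\con}$ and $\abrack{\con,\pi\CDVs\con}=\liebrack{\con,\CDVs\con}$ lie in $[\m_\CV,\m_\CV]\subseteq\h_\CV$ (and in $\stab(\Ln)^\perp$), while $\CDVs\con\in\stab(\Ln)\cap\m_\CV$; comparing $\m_\CV$-parts then yields $\exp(\con)\act\CS^{\CV+\con}=\CS^\CV-\CDVs\con$ immediately. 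The crux for the $\h_\CV$-part is the compatibility lemma $\stab(\Ln)^\perp\cap\h_\CV\subseteq\so(\CV)$: a block-diagonal skew endomorphism with image contained in $\Ln\subseteq\CV$ must vanish on $\CV\normal$. This simultaneously forces the $\so(\CV\normal)$-component of the correction to be zero, giving $\exp(\con)\act\nabla^{\CV+\con}=\nabla^\CV$, and confines the entire correction to the $\so(\CV)$-block; combined with $-\liebrack{\CS^\CV,\con}=\abrack{\con,\pi\CS^\CV}$ (the bracket of $\con\in\stab(\Ln)^\perp$ with $\CS^\CV\in\stab(\Ln)$ depending only on $\pi\CS^\CV$) this produces the first displayed identity, and the value-space assertions drop out of the same membership computations.

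The main obstacle will not be any single calculation but the simultaneous bookkeeping of two gradings — the symmetric splitting $\h_\CV\dsum\m_\CV$ and the parabolic filtration $\stab(\Ln)\supset\stab(\Ln)^\perp$ — and in particular the realization that everything hinges on the lemma $\stab(\Ln)^\perp\cap\h_\CV\subseteq\so(\CV)$, which is precisely what decouples the normal connection $\nabla^\CV$ from the change of M\"obius reduction.
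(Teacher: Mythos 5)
Your proposal is correct and follows essentially the same route as the paper: apply the logarithmic-derivative formula~\eqref{eq:log-deriv} to the ambient connection, expand $\CD^\mfd\con=\CDVs\con+\liebrack{\CS^\CV,\con}$, use $\CV+\con=\exp(-\con)\CV$ to identify the gauge-transformed pieces with the components for the new reduction, and sort by the splitting $\h_\CV\dsum\m_\CV$. The only difference is one of explicitness: you isolate the lemma $\stab(\Ln)^\perp\cap\h_\CV\subseteq\so(\CV)$ (which the paper leaves implicit in ``restricting to $\CV$ and $\CV^\perp$''), and the correct justification there is that such an $S$ sends $\CV^\perp\subseteq\Ln^\perp$ into $\Ln\cap\CV^\perp=0$ --- its full image lies in $\Ln^\perp$ rather than in $\Ln$ as you wrote, a harmless imprecision.
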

\begin{proof}  From~\eqref{eq:log-deriv}, we have $\exp(\con)\act\CD^\mfd=
\CD^\mfd-\CD^\mfd\con-\half\abrack{\con,\pi\CD^\mfd\con}$.  Now
$\CD^\mfd\con=\CDVs\con+\liebrack{\CS^\CV,\con}
=\CDVs\con-\abrack{\con,\pi\CS^\CV}$, since $\CS^\CV$ takes values in
$\stab(\Ln)$. In particular, $\pi\liebrack{\CS^\CV,\con}=0$.

Since $\CV+\con=\exp(-\con)\CV$, $\exp(\nu)\act\CDVs$ is the reduction of
$\exp(\nu)\act\CD^\mfd$ to $\h_\CV$, whence
\begin{align*}
\exp(\con)\act\CD^{\nabla,\CV+\con}&=
\bigl(\exp(\con)\act\CD^{\mfd}\bigr)_{\h_\CV}=
\CDVs+\abrack{\con,\pi\CS^\CV}-\half\abrack{\con,\pi\CDVs\con},\\
\exp(\con)\act\CS^{\CV+\con}&= \bigl(\exp(\con)\act\CD^{\mfd}\bigr)_{\m_\CV}=
\CS^\CV-\CDVs\con.
\end{align*}
\eqref{eq:ch-comp} follows after restricting the first of these formulae to
$\CV$ and $\CV^\perp$.
\end{proof}

With this in hand, we can determine the dependence of our data on the \mob/
reduction.

\begin{prop}\label{p:ch-mobII} The conformal structure $\conf$ on $\submfd$
and connection $\nabla$ on $N\submfd\ltens\Ln$ are independent of $\CV$,
whereas
\begin{gather}\label{eq:ch-mobV}
\Ms^{\CV+\con} = \Ms^\CV + \con(\II^\CV) - \half\ip{\con,\con}\conf,\\
\label{eq:ch-II^\CV}
\II^{\CV+\con} =\II^\CV - \conf\vtens\con,\qquad  
\Sh^{\CV+\con}=\Sh^\CV-\con\tens\iden.
\end{gather}
\end{prop}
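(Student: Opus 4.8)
The plan is to extract all five assertions from the gauge-transformation formulae of Proposition~\ref{p:ch-red}, which express the data of $\CV+\con$ in terms of those of $\CV$ after conjugating by $\exp(\con)$. I would dispatch the two independence statements first. The conformal metric is, by Proposition~\ref{p:mr-conf}, the restriction of that of $\mfd$, computed from $\CD^\CV_X\sigma=\CD^\mfd_X\sigma$ for tangential $X$; since $\Lnps\subset\CV$ for \emph{every} \mob/ reduction, this is manifestly independent of $\CV$. For $\nabla=\pi\circ\nabla^\CV\circ\mni_\CV$ on $N\submfd\ltens\Ln$, I would substitute $\mni_{\CV+\con}=\exp(-\con)\circ\mni_\CV$ together with $\nabla^{\CV+\con}=\exp(-\con)\circ\nabla^\CV\circ\exp(\con)$ (the third line of \eqref{eq:ch-comp}), giving $\pi\circ\nabla^{\CV+\con}\circ\mni_{\CV+\con}=\pi\circ\exp(-\con)\circ\nabla^\CV\circ\mni_\CV$. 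As $\nabla^\CV\circ\mni_\CV$ takes values in $\CV^\perp$ and $\con$ maps $\CV^\perp$ into $\Ln=\kernel\pi$, the factor $\exp(-\con)$ is absorbed by $\pi$, so $\nabla$ is unchanged.

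Next I would treat $\II$ and $\Sh$. Because $\exp(\con)$ acts trivially on $\so(\CV)\gr$, applying $\pi$ to the identity $\exp(\con)\act\CS^{\CV+\con}=\CS^\CV-\CDVs\con$ yields $\pi\CS^{\CV+\con}=\pi\CS^\CV-\pi\CDVs\con$, so the whole computation reduces to evaluating $\pi\CDVs\con$. Here I would first observe, by counting grades, that $\liebrack{\CS^\CV,\con}$ lies in $\stab(\Ln)^\perp\intersect\h_\CV$ (the grade drops from that of $\CS^\CV\in\stab(\Ln)$), whence $\pi\liebrack{\CS^\CV,\con}=0$; since $\CD^\mfd\con=\CDVs\con+\liebrack{\CS^\CV,\con}$, this gives $\pi\CDVs\con=\pi\CD^\mfd\con$. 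I would then compute the two off-diagonal blocks of $\pi\CD^\mfd\con$ directly from the soldering identities: using \eqref{eq:pi-V}, \eqref{eq:pi-gam-act}, \eqref{eq:pD2}, \eqref{eq:pDv} and $\con(X)=0$ for tangential $X$, evaluation on $X\vtens\sigma$ produces $\cip{Z,X}\con\vtens\sigma$ (the $T\submfd\to N\submfd$ block $\conf\vtens\con$), while evaluation on a normal $\nmv\vtens\sigma$ produces $-\ip{\con,\nmv}\,Z\vtens\sigma$ (the $N\submfd\to T\submfd$ block $-\con\tens\iden$). Thus $\pi\CDVs\con=\conf\vtens\con-\con\tens\iden$, and reading off the two blocks of $\pi\CS^{\CV+\con}=(\II^\CV-\conf\vtens\con)-(\Sh^\CV-\con\tens\iden)$ gives the stated transformations of $\II^\CV$ and $\Sh^\CV$.

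For the \mob/ structure I would exploit gauge invariance. The isometry $\exp(\con)$ carries the conformal Cartan geometry $(\CV+\con,\Ln,\CD^{\CV+\con})$ to $(\CV,\Ln,\exp(\con)\act\CD^{\CV+\con})$, fixing $\Ln$ and, by Proposition~\ref{p:gauge-traf}, the soldering form; as $\Ms^\CD$ is defined intrinsically from $p$, $\pi$ and the metric, the two \mob/ structures coincide, so $\Ms^{\CV+\con}=\Ms^{\exp(\con)\act\CD^{\CV+\con}}$. By Proposition~\ref{p:ch-red}, $\exp(\con)\act\CD^{\CV+\con}=\CD^\CV+\CQ'$ with $\CQ'=\abrack{\con,\pi\CS^\CV}-\half\abrack{\con,\pi\CDVs\con}$, valued in $T\dual\submfd\subset\so(\CV)$. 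Here $\abrack{\con,\pi\CS^\CV}=\con(\II^\CV)$, since $\con$ annihilates the $N\submfd\to T\submfd$ block of $\pi\CS^\CV$, leaving the contraction of $\con$ with $\II^\CV$, which is symmetric because $\II^\CV$ is; and $\abrack{\con,\pi\CDVs\con}=\abrack{\con,\pi\CD^\mfd\con}=\cip{\con,\con}\conf$ from the expansion $\abrack{\con,\pi\CD^\mfd_X\con}=-2\con(X)\con+\cip{\con,\con}X^\flat$ recorded after \eqref{eq:log-deriv} with $\con(X)=0$. Hence $\CQ'$ is the symmetric tensor $\con(\II^\CV)-\half\ip{\con,\con}\conf$, and Lemma~\ref{l:affinity} yields \eqref{eq:ch-mobV}.

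The main obstacle is the single tensor computation $\pi\CDVs\con$: it demands careful handling of the soldering projections and the weight bookkeeping appropriate to $\con\in N\dual\submfd$, together with the grading argument establishing $\pi\liebrack{\CS^\CV,\con}=0$ that permits replacing $\CDVs$ by the ambient $\CD^\mfd$. Once this is in hand, both the transformation of $\II^\CV,\Sh^\CV$ and the quadratic correction $-\half\ip{\con,\con}\conf$ in \eqref{eq:ch-mobV} follow mechanically.
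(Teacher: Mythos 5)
Your proof is correct and follows essentially the same route as the paper's: both rest on Proposition~\ref{p:ch-red}, the identity $\pi\circ\exp(\pm\con)=\pi$ on $\Ln^\perp$, the computation of $\pi\CDVs\con$ as $-\abrack{\iden,\con}$, and the affine dependence of $\Ms^\CD$ on $\CD$ (Lemma~\ref{l:affinity}) combined with the evaluation of the brackets $\abrack{\con,\pi\CS^\CV}=\con(\II^\CV)$ and $\abrack{\con,\pi\CDVs\con}=\cip{\con,\con}\conf$. The only differences are cosmetic: you compute $\pi\CD^\mfd\con$ blockwise from the soldering identities where the paper quotes the graded Lie bracket formula directly, and you expand $\abrack{\gam,\pi\CD\gam}$ from the remark after~\eqref{eq:log-deriv} rather than via the double bracket.
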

\begin{proof}
The independence of the conformal structure is immediate from
Proposition~\ref{p:mr-conf}.  Moreover, since $\pi\circ\exp(\con)=\pi$, we
have $\pi\circ\nabla^{\CV+\con}\circ\mni_{\CV+\con}=
\pi\circ\exp(\con)\act\nabla^{\CV+\con}\circ\mni_{\CV}$ which is
$\pi\circ\nabla^\CV\circ\mni_{\CV}$ by Proposition~\ref{p:ch-red}.  Thus
$\nabla$ is independent of $\CV$.

Similarly, $\pi\CS^{\CV+\con}= \pi(\exp(\con)\act\CS^{\CV+\con})
=\pi\CS^\CV-\pi\CDVs\con$ by Proposition~\ref{p:ch-red}.  Moreover
\begin{equation}\label{eq:pi-D-nu}
\pi\CDVs\con=-\abrack{\iden,\con}=\iden\tens\con-\conf\vtens\con
\end{equation}
and so \eqref{eq:ch-II^\CV} follows since $\II^\CV-\Sh^\CV=\pi\CS^\CV$.

Finally, for~\eqref{eq:ch-mobV}, note that $\CD^{\CV+\con}$ is
gauge-equivalent to $\CD^\CV+\abrack{\con,\pi\CS^\CV} -
\half\abrack{\con,\pi\CDVs\con}$ by Proposition~\ref{p:ch-red}, so that these
connections induce the same \mob/ structures, namely
$\Ms^\CV+\abrack{\con,\pi\CS^\CV} - \half\abrack{\con,\pi\CDhV\con}$.
Moreover,
\begin{align*}
\abrack{\con,\pi\CS^\CV}
&=\abrack{\con,\II^\CV-\Sh^\CV} = \con(\II^\CV),\\
\abrack{\con,\pi\CDVs\con}&=
-\abrack{\con,\abrack{\iden,\con}}=\cip{\con,\con}\conf,
\end{align*}
(using \eqref{eq:pi-D-nu}), whence \eqref{eq:ch-mobV} follows.
\end{proof}

\subsection{Normalization and the primitive data}
\label{par:norm-prim}

The submanifold geometry developed thus far is not a pure theory of conformal
submanifolds as it depends on the choice of a \mob/ reduction. Furthermore,
there is no reason for the induced conformal Cartan geometry to be normal.
The key to rectifying this is equation~\eqref{eq:ch-II^V}, which shows that
the tracefree part $\II^0$ of the second fundamental form $\II^\CV$ (or
equivalently the tracefree part $\Sh^0$ of the shape operator $\Sh^\CV$) is
independent of the choice of $\CV$, whereas the mean curvature depends on
$\CV$ via $H^{\CV+\con} = H^\CV-\con$. Hence there is a unique \mob/ reduction
with tracefree second fundamental form---\ie, vanishing mean curvature---given
by $\CV+H^\CV=\exp(-H^\CV)\CV$ for any \mob/ reduction $\CV$.

\begin{defn} The unique \mob/ reduction $\CV\low_\submfd$ with
$H^{\CV_\submfd}=0$ will be called the \emphdef{canonical} \mob/ reduction.
We denote the \cso/ of $\CV\low_\submfd$ by $\CS$ and identify
$\CV\low_\submfd$ with $N\submfd\ltens\Ln$ so that
$\nabla^{\CV_\submfd}=\nabla$. We also write $\h$ for $\h_{\CV_\submfd}$ and
$\m$ for $\m_{\CV_\submfd}$ so that $\g=\h\dsum \m$.
\end{defn}

The canonical \mob/ reduction is picked out by a homological condition: for
any reduction $\CV$, $\CS^\CV$ is a $1$-chain in the complex defining the Lie
algebra homology $H_\bullet(T\dual \submfd,\m_\CV)$ (where the standard action
on $\CV$ and the trivial action on $\CV^\perp$ are used to define the
$\so(\CV)$-module structure on $\m_\CV$). Applying the boundary operator to
this $1$-chain gives
\begin{equation*}
\LH\CS^\CV=\tsum_i\liebrack{\eps_i,\CS^\CV_{e_i}}=
\tsum_i\abrack{\eps_i,\pi\CS^\CV_{e_i}}=
\tsum_i\abrack{\eps_i,\II^\CV_{e_i}-\Sh^\CV_{e_i}}=-\trace\Sh^\CV.
\end{equation*}
Thus $\CV\low_\submfd$ is the unique reduction whose \cso/ is a $1$-cycle.
When $m=1$, $\CS$ is a section of $T\dual\submfd\tens N\dual\submfd$, which we
shall call the {\it conformal acceleration} $A$ of the curve.

We now have a canonical choice $(\CV\low_\submfd,\Ln,\CD^{\CV_\submfd})$ of
conformal Cartan geometry on $\submfd$. The remaining problem is that the
associated \mob/ structure need not be a conformal \mob/ structure, that is,
$\CD^{\CV_\submfd}$ need not be normal (when $m\geq2$).  But now
Proposition~\ref{p:make-normal} comes to the rescue and assures us of a unique
section $\QC$ of $S^2 T\dual \submfd$ such that
\begin{bulletlist}
\item $\CD^{\CV_\submfd}-\CQ$ is a normal Cartan connection;
\item $\QC_0=0$ when $m=2$ and $\QC=0$ when $m=1$ (\ie, $\QC$ is in the image
of $\LH$).
\end{bulletlist}
We denote the normal Cartan connection so defined by $\CD^\submfd=
\CD^{\CV_\submfd}-\CQ$ and the induced conformal \mob/ structure by
$\Ms^\submfd=\Ms^{\CV_\submfd}-\QC$.

\begin{thm}\label{t:data}
Let $\phi\from\submfd\to\mfd$ be an immersion of an $m$-manifold $\submfd$
into a \mob/ $n$-manifold $\mfd$. Then $\phi$ equips $\submfd$ canonically
with the following primitive data\textup:
\begin{bulletlist}
\item a conformal \mob/ structure $(\conf,\Ms^\submfd)$\textup;
\item a rank $n-m$ euclidean vector bundle $N\submfd\ltens\Ln$ with a metric
connection $\nabla$\textup;
\item a section $\II^0$ of $S^2_0 T\dual\submfd\tens N\submfd$ for
$m\geq2$\textup;
\item a section $\CA$ of $T\dual\submfd\tens N\dual\submfd$ for $m=1$.
\end{bulletlist}
In addition there is an affine bijection $\CV\mapsto -H^\CV$ from \mob/
reductions to sections of $N\dual\submfd$ such that the inverse image of zero
is the canonical \mob/ reduction.
\end{thm}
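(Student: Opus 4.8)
The plan is to treat Theorem~\ref{t:data} as a consolidation of the work already done for a \emph{fixed} M\"obius reduction in~\S\ref{par:mr-split}, together with the reduction-dependence recorded in Proposition~\ref{p:ch-mobII} and the normalization of Proposition~\ref{p:make-normal}. The strategy is therefore to read off, from each transformation law, the piece of data that is reduction-independent, and to construct the remaining (canonical) pieces by passing to the distinguished reduction $\CV_\submfd$ of~\S\ref{par:norm-prim}. No new geometry is needed; the task is to assemble the pieces and check consistency across the dimension regimes $m=1$, $m=2$ and $m\geq3$.

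First I would dispose of the affine bijection. M\"obius reductions form an affine space modelled on $\Cinf(\submfd,N\dual\submfd)$ (by~\S\ref{par:mob-red}), and Proposition~\ref{p:ch-mobII} records $H^{\CV+\con}=H^\CV-\con$. Writing $\Phi(\CV)=-H^\CV$, this gives $\Phi(\CV+\con)=\Phi(\CV)+\con$, so $\Phi$ is an affine map whose linear part is the identity on the modelling space, hence an affine bijection onto $\Cinf(\submfd,N\dual\submfd)$. Its zero fibre is $\{\CV:H^\CV=0\}$, which is exactly the canonical reduction $\CV_\submfd$; I would recall its homological characterization, $\LH\CS=-\trace\Sh^{\CV_\submfd}=0$, to confirm that vanishing mean curvature is the same as the conormal Codazzi operator being a $1$-cycle.

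Next I would list the invariant data. The conformal metric $\conf$ on $\submfd$ is the restriction of the ambient one (by the soldering identification of~\S\ref{par:sub-mob} and Proposition~\ref{p:mr-conf}), and is manifestly reduction-independent; similarly the rank $n-m$ bundle $N\submfd\ltens\Ln$, which carries a definite metric (\S\ref{par:mob-red}), has a metric connection $\nabla=\pi\circ\nabla^\CV\circ\mni_\CV$ shown independent of $\CV$ in Proposition~\ref{p:ch-mobII}. From the transformation law $\II^{\CV+\con}=\II^\CV-\conf\vtens\con$ only the trace of $\II^\CV$ moves, so its tracefree part $\II^0$ is reduction-independent; this supplies the datum for $m\geq2$. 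For $m=1$ the bundle $S^2_0T\dual\submfd$ is trivial, so $\II^0$ carries no information, and I would identify the surviving extrinsic datum as the conformal acceleration $\CA=\CS$, the conormal Codazzi tensor of $\CV_\submfd$, a section of $T\dual\submfd\tens N\dual\submfd$. Finally, applying Proposition~\ref{p:mr-conf} to $\CV_\submfd$ yields the conformal Cartan geometry $(\CV_\submfd,\Ln,\CD^{\CV_\submfd})$; Proposition~\ref{p:make-normal} produces the unique $\QC$ (with $\QC_0=0$ for $m=2$ and $\QC=0$ for $m=1$) making $\CD^\submfd=\CD^{\CV_\submfd}-\CQ$ normal, and by Theorem~\ref{th:mos=ccg} (equivalently Proposition~\ref{p:norm-iff-conf}) the induced structure $\Ms^\submfd=\Ms^{\CV_\submfd}-\QC$ is a conformal M\"obius structure. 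Canonicity of all four data follows because each depends only on $\phi$ and the ambient geometry through the canonical reduction.

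The only genuine content beyond this bookkeeping, and the step I expect to be the main obstacle, is verifying that the distinguished reduction really does isolate the \emph{complete} extrinsic shape datum in each dimension---that $\II^0$ (resp.\ $\CA$ for $m=1$) together with $\nabla$ and the conformal M\"obius structure exhausts the invariant information, with no hidden dependence on $\CV$ slipping through the normalization. Concretely, this means confirming that the uniqueness clause of Proposition~\ref{p:make-normal} is precisely what is needed to absorb the low-dimensional ambiguity (the missing linear operator when $m=2$, and the trivial linear operator when $m=1$), so that the extra structure carried by a conformal M\"obius structure in these dimensions is supplied canonically rather than chosen. Once that compatibility is checked, the theorem follows by collecting the statements above.
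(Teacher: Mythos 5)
Your proposal is correct and follows essentially the same route as the paper, which states Theorem~\ref{t:data} as a summary of the immediately preceding constructions: the transformation laws of Proposition~\ref{p:ch-mobII} isolate $\conf$, $\nabla$ and $\II^0$ as reduction-independent, the law $H^{\CV+\con}=H^\CV-\con$ gives the affine bijection and singles out $\CV_\submfd$ (equivalently, the unique reduction whose \cso/ is a $1$-cycle, with $\CA=\CS$ for $m=1$), and Proposition~\ref{p:make-normal} supplies the unique $\QC$ making $\Ms^\submfd=\Ms^{\CV_\submfd}-\QC$ conformal. Your closing concern about completeness of the data is not actually part of the theorem's claim (that is the content of the later Bonnet theorem), but it causes no harm and you resolve it correctly.
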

It is not immediately apparent that these data suffice to recover the
connection $\CD^\mfd= \CD^\submfd+\nabla+\CQ+\CS$ on $\CV_\mfd
=\CV\low_\submfd\dsum\CV_\submfd^\perp$, but we shall see that they do when
$\CD^{\mfd}$ is flat.

The primitive data can be computed from an arbitrary \mob/ reduction $\CV$ by
writing $\CV=\CV\low_\submfd-H^\CV$ and applying Proposition~\ref{p:ch-mobII}
to see that $\CV$ gives the same conformal metric and normal connection as
$\CV\low_\submfd$, whereas the remaining data are related by
\begin{align}
\II^0 &= \II^{\CV} - \conf\vtens H^\CV, \qquad
\Sh^0 = \Sh^{\CV} - \iden\vtens H^\CV & &(m\geq2)\\
\CA &= \CS^{\CV} - \CDVs H^\CV & & (m=1)\\
\label{eq:MS-formula}
\Ms^\submfd&=\Ms^{\CV_\submfd}-\QC = \Ms^\CV - \QC^\CV,&&\text{where}\\
\QC^\CV :\!&= \QC - H^\CV(\II^\CV) + \half|H^\CV|^2\conf
=\QC - H^\CV(\II^0) - \half|H^\CV|^2\conf.&&
\label{eq:Q-rel}
\end{align}
We can also decompose $\CD^\mfd$ on $\CV\dsum\CV^\perp$ as $\CD^\mfd=
\CD^{\submfd,\CV}+\nabla^\CV+\CQ^\CV+\CS^\CV$, where
$\CD^{\submfd,\CV}=\CD^\CV-\CQ^\CV$ is the normal Cartan connection
$\exp(H^\CV)\cdot\CD^\submfd$ on $\CV$.

To summarize, given a submanifold $\phi\colon\submfd\to\mfd$ and a \mob/
reduction $\CV$, we have a decomposition of the ambient Cartan connection
\begin{equation}\label{eq:gen-decomp}
\CD^\mfd=\CDhV+ \CQ^\CV + \CS^\CV
\end{equation}
where $\CDhV:=\CD^{\submfd,\CV}+\nabla^\CV$ for a normal conformal Cartan
connection $\CD^{\submfd,\CV}$. By the formulae above, this decomposition
induces the primitive data of Theorem~\ref{t:data}. The curvature of
$\CD^\mfd$ with respect to this decomposition is given by:
\begin{subequations}\label{eq:ngcr}
\begin{align}\label{eq:ngr}
(R^{\CD^\mfd})_{\h_\CV}
&= \RDhV+\dDhV \CQ^\CV + \half\liebrack{\CS^\CV\wedge\CS^\CV};\\
(R^{\CD^\mfd})_{\m_\CV}
&= \dDhV\CS^\CV+\liebrack{\CQ^\CV\wedge\CS^\CV}.
\label{eq:nc}
\end{align}
\end{subequations}
Here $\dDhV$ is the exterior derivative coupled to $\CDhV$, and $\RDhV$ is the
curvature of $\CDhV$.

\section{Lie algebra homology and the conformal Bonnet theorem}
\label{sec:conf-bonnet}

We now specialize to the case that the ambient manifold $\mfd$ is the
conformal sphere $S^n$ so that $\CV_\mfd$ is the trivial $\R^{n+1,1}$-bundle
over $\submfd$ and $\CD^\mfd=\d$. In this setting, we will see that the
immersion $\Ln$ of $\submfd$ is completely determined by the primitive data of
Theorem~\ref{t:data}.  It follows that the equations of~\eqref{eq:ngcr}, with
$\CV=\CV_\submfd$, are equations on these data, the \emphdef{\GCR/ equations}.
Furthermore, we shall see that one can recover (locally) an immersion of
$\submfd$ into $S^n$ from an arbitrary solution of these equations.  In all
this, we make substantial use of the machinery of Lie algebra homology and the
associated theory of \BGG/ operators.  The Reader with no taste for such
abstraction is directed to section \ref{sec:weyl-conf-bonnet} where Weyl
structures are used to provide a more explicit formulation.

Before all this, however, we show that the canonical \mob/ reduction is an
object well-known to conformal submanifold geometers.

\subsection{The central sphere congruence}
\label{par:csc}

For $\mfd=S^n$, we identified \mob/ reductions with the classical notion of
enveloped sphere congruences in \S\ref{par:mob-sphere-cong}. The canonical
\mob/ reduction is also known classically: it is \emphdef{central sphere
congruence} or \emphdef{conformal Gau\ss\ map}~\cite{Bry:dtw,Tho:ukg} which
may be defined in terms of a lift $f\colon\submfd\to \LC^+$ by
\begin{equation*}
\CV_\mathrm{cent}=\vspan{f,\d f,\trace D\d f}.
\end{equation*}
Here $D$ is the Levi-Civita connection of $\lip{\d f,\d f}$, which is also
used to compute the trace. Equivalently, if $e_i\vtens f$ is an orthonormal
basis of $T\submfd\ltens\Ln$, $\CV_\mathrm{cent}=\vspan{f,\d f,
\sum_i\d_{e_i}\d_{e_i}f}$.  Now we compute the $\CV_\submfd^\perp$ component
of $\sum_i\d_{e_i}\d_{e_i}f$ to be $\sum_i\CS_{e_i}(\d_{e_i}f)$, $\pi$ of
which is $-\sum_i\II^0_{e_i}e_i\,f=0$.  Thus
$\CV_\mathrm{cent}=\CV\low_\submfd$.

More geometrically, $\submfd_{(x)}:=\Proj((\CV_\submfd)_x\cap\LC)$ is the
\emphdef{mean curvature sphere}, \ie, the unique sphere tangent to $\submfd$
at $x$, which has the same mean curvature covector as $\submfd$ with respect
to some (hence any) enveloped sphere congruences $\CV$ of $\submfd$ and
$\CV_{(x)}$ of $\submfd_{(x)}$ which agree at $x$. Indeed, the central sphere
congruence of $\submfd_{(x)}$ is clearly constant, equal to $(\CV_\submfd)_x$
(the \mob/ differential is zero, which is certainly a cycle). Hence $\submfd$
and $\submfd_{(x)}$ have central sphere congruences which agree at $x$.  As we
shall in~\S\ref{par:mr-ambient-weyl-riem}, it follows that $\submfd$ and
$\submfd_{(x)}$ have the same mean curvature covector at $x$ with respect to
some (hence any) ambient metric.

\subsection{Lifting the primitive data}
\label{par:recover}

Consider an immersion $\phi\from\submfd\to S^n$ with central sphere congruence
(\ie, canonical \mob/ reduction) $\CV\low_\submfd$. Then, using the
decomposition $\g=\h\dsum\m$ induced by
$\submfd\times\R^{n+1,1}=\CV\low_\submfd\dsum\CV_\submfd^\perp$,
\eqref{eq:gen-decomp} specializes to give
\begin{equation}\label{eq:flat-conn}
\d = \CDh + \CQ + \CS,
\end{equation}
where $\CDh$ is the $\h$-connection $\CD^\submfd+\nabla$,
$\CS\in\Omega^1(\submfd,\m)$, and $\CQ$ is the unique section of
$S^2T\dual\submfd\cap\image\LH$ for which $\CD^\submfd$ is normal.

Write $\dDh$ for $\d^{\CDh}$ and $\RDh$ for the curvature of $\CDh$.  Then
\eqref{eq:ngcr} specializes as follows:
\begin{subequations}
\label{eq:fullGCR}
\begin{align}\label{eq:fullGR}
0 &= \RDh + \dDh\CQ+\half\liebrack{\CS\wedge\CS};\\
\label{eq:fullC}
0 &= \dDh\CS+\liebrack{\CQ\wedge\CS}.
\end{align}
\end{subequations}

Recall that our normalization conditions on the \mob/ reduction
$\CV\low_\submfd$ and the \mob/ structure $\Ms^\submfd$ amount to the
requirement that $\RDh$ and $\CS$ are cycles in $Z_2(\submfd,\h)$ and
$Z_1(\submfd,\m)$ respectively (the former condition is equivalent to
$R^{\CD^\submfd}$ being a cycle).  Moreover, we see that the homology class of
$\CS$ yields primitive data:
\begin{align*}
\hc{\CS}&=
\begin{cases}
\II^0-\Sh^0 &\text{for $m\geq2$;}\\
A &\text{for $m=1$.}
\end{cases}\\
H_1(T\dual\submfd,\m)&=
\begin{cases}
\bigl(S^2_0 T\dual\submfd\tens N\submfd\dsum
\Sym_c(T\submfd)\tens N\dual\submfd\bigr)
\cap\Omega^1(\submfd,\co(T\mfd)) &\text{for $m\geq2$;}\\
T\dual\submfd\tens N\dual\submfd\subset \Omega^1(\submfd,T\dual\mfd)
&\text{for $m=1$.}
\end{cases}
\end{align*}
This is analogous to the primitive data given by the homology class
$\hc{R^{\CD^\submfd}}$, which is the Weyl curvature of $(\submfd,\conf)$ when
$m\geq4$ and the Cotton--York curvature of $(\submfd,\conf)$ or
$(\submfd,\conf,\Ms^\submfd)$ when $m=3$ or $m=2$ respectively.

We now show that both $\CS$ and $\CQ$ can be recovered from these homology
classes, from which it will follow that the \GCR/ equations \eqref{eq:fullGCR}
are equations on these data. In~\S\ref{par:abs-hom-conf-bonnet}, we shall
realize these equations in terms of \BGG/ (BGG) differential operators on Lie
algebra homology bundles. A key ingredient is the differential
lift~\cite{CaDi:di,CSS:bgg} associated to $\CD^\submfd$ that we introduced
in~\S\ref{par:diff-lift}.  We let $\Quabla_\h$ denote $\Quabla_{\CD^\submfd}$
coupled to the connection $\nabla$ on $\CV_\submfd^\perp$, to obtain a
generalized differential lift $j^\h$ which can be applied to homology classes,
such as $[\CS]$, with a normal component.

\begin{prop}\label{p:recover}
Let $(\CV\low_\submfd,\Ln,\CD^\submfd)$ be a normal conformal Cartan geometry
on $\submfd$, let $(\CV_\submfd^\perp,\nabla)$ be a euclidean vector bundle
with metric connection, let $\CS$ be a $\m$-valued $1$-form in $\kernel\LH$,
and let $\CQ$ be a $\h$-valued $1$-form in $\image\LH$.

Suppose that these data satisfy the full \GCR/ equations~\eqref{eq:fullGCR}.
Then $\CQ=-\half\Quabla_{\smash{\h}}^{-1}\LH\abrack{\hc{\CS}\wedge\hc{\CS}}$
and $\CS=\jDh\hc{\CS}$.  Furthermore,
$\CS\in\Omega^1(\submfd,\m\cap\stab(\Ln))$ and $\pi\CS$ is symmetric and
tracefree, whereas $\CQ\in\Omega^1(\submfd,T\dual\submfd)$ and is symmetric.
\end{prop}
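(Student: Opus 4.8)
The plan is to read off both asserted formulae from the two \GCR/ equations~\eqref{eq:fullGCR}, exploiting the normality of $\CD^\submfd$, the fact that $T\dual\submfd$ is abelian, and the invertibility of $\Quabla_\h$ on $\image\LH$. First I would unpack the cycle and boundary hypotheses. Since $\CS\in\kernel\LH$, a direct computation of $Z_1(T\dual\submfd,\m)$ — entirely parallel to the determination of $Z_1(T\dual\mfd,\CV)$ at~\eqref{eq:H1V} — shows that $\CS$ must annihilate $\Ln$, i.e.\ take values in $\stab(\Ln)\intersect\m$, and (computing $\LH\CS=-\trace\Sh$ as in~\S\ref{par:norm-prim}) that $\trace\Sh$ vanishes. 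Because $T\dual\submfd$ acts trivially on $\so(\CV^\perp)$, the operator $\LH$ kills every $\so(\CV^\perp)$-valued chain, so $\CQ\in\image\LH$ is automatically valued in $T\dual\submfd\subset\so(\CV_\submfd)$ and symmetric, this being the computation of $\image\LH$ underlying~\eqref{eq:rc-rm}. Finally, since $T\dual\submfd$ is abelian, $\LH\CQ=\tsum_i\liebrack{\eps_i,\CQ_{e_i}}=0$, so $\CQ$ is itself a cycle and $\Quabla_\h\CQ=\LH\dDh\CQ$.

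Next I would extract $\CQ$ from the Gauss equation~\eqref{eq:fullGR}. Applying $\LH$ and using $\LH\RDh=0$ — which holds because $\CD^\submfd$ is normal and the $\so(\CV^\perp)$-part $R^\nabla$ is annihilated by $\LH$ — gives $\LH\dDh\CQ=-\tfrac12\LH\liebrack{\CS\wedge\CS}$, that is $\Quabla_\h\CQ=-\tfrac12\LH\liebrack{\CS\wedge\CS}$. Both sides lie in $\image\LH$, where $\Quabla_\h$ is invertible, so $\CQ=-\tfrac12\Quabla_\h^{-1}\LH\liebrack{\CS\wedge\CS}$.

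To identify $\CS$ with its differential lift I would invoke the characterization of $\jDh$ from~\cite{CaDi:di}: among the cycles representing $\hc{\CS}$, the lift $\jDh\hc{\CS}$ is the unique one with $\LH\dDh(\cdot)=0$. As $\CS$ is already a cycle representing $\hc{\CS}$, it suffices to verify $\LH\dDh\CS=0$. The Codazzi equation~\eqref{eq:fullC} gives $\dDh\CS=-\liebrack{\CQ\wedge\CS}$, so I must show $\LH\liebrack{\CQ\wedge\CS}=0$. Expanding with the Jacobi identity, the abelianness of $T\dual\submfd$ annihilates the terms involving $\liebrack{\eps_i,\CQ}$, and $\LH\CS=0$ annihilates one of the two remaining sums, leaving $\tsum_i\liebrack{\CQ_{e_i},\liebrack{\eps_i,\CS_X}}$. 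Here the first step pays off: both $\CQ_{e_i}$ and $\eps_i$ lie in the nilradical $T\dual\submfd$, while $\CS_X\in\stab(\Ln)\intersect\m$ annihilates $\Ln$; a short check shows that applying two nilradical elements in succession to such an $\CS_X$ gives zero, so the sum vanishes and $\CS=\jDh\hc{\CS}$. The symmetry of $\pi\CS$ (completing the first set of value statements) then follows from the realization of $H_1(T\dual\submfd,\m)$ computed above, which is symmetric and tracefree.

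The main obstacle is this last step, where the normalization of $\CS$ (encoded as the cycle condition) and the step-filtration of the ambient algebra genuinely interact: one must be sure that the generalized lift $\jDh$ — namely $\Quabla_\h$ coupled to $\nabla$ — really is characterized as claimed, and that the double nilradical bracket annihilates $\stab(\Ln)\intersect\m$. Once $\CS=\jDh\hc{\CS}$ is established, $\liebrack{\CS\wedge\CS}=\liebrack{\jDh\hc{\CS}\wedge\jDh\hc{\CS}}=\abrack{\hc{\CS}\wedge\hc{\CS}}$ by the very definition of the Calderbank--Diemer bilinear operator, which is computed on differential lifts; substituting into the formula of the second paragraph yields $\CQ=-\tfrac12\Quabla_\h^{-1}\LH\abrack{\hc{\CS}\wedge\hc{\CS}}$, completing the proof.
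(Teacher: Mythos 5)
Your overall strategy coincides with the paper's: contract the Gau\ss\ equation \eqref{eq:fullGR} with $\LH$ to solve for $\CQ$ using the invertibility of $\Quabla_\h$ on $\image\LH$, then show $\LH\liebrack{\CQ\wedge\CS}=0$ so that the Codazzi equation \eqref{eq:fullC} forces $\LH\dDh\CS=0$ and hence $\CS=\jDh\hc{\CS}$; those steps are essentially sound (your Jacobi-identity computation of $\LH\liebrack{\CQ\wedge\CS}$ is a legitimate variant of the paper's degree argument). The genuine gap is your opening claim that $\CQ\in\image\LH$ is \emph{automatically} valued in $T\dual\submfd$ and symmetric. Neither assertion is correct. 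The image of $\LH$ on $\h$-valued $2$-chains is not contained in $T\dual\submfd\tens T\dual\submfd$, since bracketing $\eps_i$ with the degree-one part $T\submfd\subset\so(\CV\low_\submfd)$ lands in $\co(T\submfd)$; a priori $\CQ$ only takes values in $\stab(\Ln)$. More seriously, by \eqref{eq:rc-rm} — the very formula you cite — the Ricci contraction maps \emph{onto} all of $T\dual\submfd\tens T\dual\submfd$ when $m\geq3$, and its image contains the full alternating part $\Wedge^2T\dual\submfd$ when $m=2$; so membership in $\image\LH$ does not force symmetry. The symmetry of $\CQ$ is a substantive conclusion of the proposition, and your proof never establishes it. (Note also that your identity $\Quabla_\h\CQ=\LH\dDh\CQ$ already presupposes $\LH\CQ=0$, i.e.\ the $T\dual\submfd$-valuedness you have not yet proved, so the order of deductions needs repair.)

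Both points can be fixed along the lines of the paper. The $T\dual\submfd$-valuedness follows \emph{after} the formula for $\CQ$ is derived: since $\CS$ is valued in $\stab(\Ln)\intersect\m$, the bracket $\liebrack{\CS\wedge\CS}$ is valued in $\stab(\Ln)\intersect\h$, whence $\LH\liebrack{\CS\wedge\CS}$ is valued in $\stab(\Ln)^\perp\intersect\h=T\dual\submfd$. For the symmetry, the paper argues: $\Quabla_\h$ acts on $\image\LH$ as the algebraic isomorphism $\LH\circ\abrack{\iden\wedge\cdot}$, so it suffices that $\LH\liebrack{\CS\wedge\CS}$ be symmetric; this equals $\LH R$ for $R=\abrack{\pi\CS\wedge\pi\CS}\in\Omega^2(\submfd,\so(T\submfd))$, which satisfies the algebraic Bianchi identity $\abrack{\iden\wedge R}=0$ by the Jacobi identity and the symmetry of $\pi\CS$, and the Ricci contraction of such an $R$ is symmetric. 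You must supply this (or an equivalent) argument. A minor further quibble: deducing the symmetry of $\pi\CS$ directly from the shape of $H_1(T\dual\submfd,\m)$ is too quick, since boundaries in $T\dual\submfd\tens\m$ have nonzero $\m_0$-components and a cycle's $\m_0$-part is therefore not literally its homology class; the paper instead extracts $\LH\abrack{\iden\wedge\pi\CS}=0$ from $\LH\dDh\CS=0$ and concludes $\abrack{\iden\wedge\pi\CS}=0$ by adjointness.
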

\begin{proof}
Since $\LH\RDh=0$, \eqref{eq:fullGR} implies that $\Quabla_\h\CQ=\LH
\dDh\CQ=-\half\LH\liebrack{\CS\wedge\CS}$, so that
$\CQ=-\half\Quabla_{\smash{\h}}^{-1}\LH\liebrack{\CS\wedge\CS}$ since it is in
the image of $\LH$.  Now since $\LH\CS =0$, $\CS$ takes values in
$\stab(\Ln)\cap\m$ and $\pi\CS$ is tracefree. Thus $\CQ$ takes values in
$\stab(\Ln)^\perp\cap\h=T\dual\submfd$, and hence $\liebrack{\CQ\wedge\CS}$ is
a $2$-form with values in $\stab(\Ln)^\perp\cap\m
\subset\Hom(\CV\low_\submfd/\Ln^\perp, \CV_\submfd^\perp)\dsum
\Hom(\CV_\submfd^\perp,\Ln)$. Since $\LH$ vanishes on $\Ln$,
$\LH\liebrack{\CQ\wedge\CS}=0$, so that~\eqref{eq:fullC} implies $\LH
\dDh\CS=0$ and hence $\CS=\jDh\hc{\CS}$. We also deduce
$\LH\abrack{\iden\wedge\pi\CS}=0$, and then $\abrack{\iden\wedge\pi\CS}=0$,
\ie, $\pi\CS$ is symmetric.

It remains to establish the symmetry of $\CQ$. Since
$\Quabla_\h\CQ=\LH\abrack{\iden\wedge\CQ}$, $\Quabla_\h$ is an algebraic
isomorphism on $\image\LH$, so it suffices to show that
$\LH\liebrack{\CS\wedge\CS}$ is symmetric. This equals $\LH R$ where
$R=\abrack{\pi\CS\wedge\pi\CS}$, which is in
$\Omega^2(\submfd,\so(T\submfd))$. By the Jacobi identity and the symmetry of
$\pi\CS$, $R$ satisfies the algebraic Bianchi identity $\abrack{\iden\wedge
R}=0$. It follows that its associated Ricci contraction $\LH R$ is symmetric.
\end{proof}

We end by noting that since $\QC$ and $\CS$ are determined by the primitive
data, then, given $H^\CV$, so are $\QC^\CV=\QC - H^\CV(\II^0) -
\half|H^\CV|^2\conf$ and $\CS^\CV=\exp(H^\CV)\act\CS+\CDhV H^\CV$.  This
generality turns out to be convenient in applications, so we digress from the
homological theory to give a general formulation of the conformal Bonnet
theorem.

\subsection{The conformal Bonnet theorem for enveloped sphere congruences}
\label{par:conf-bonnet-sphere}

Let $\Ln$ be an immersion of $\submfd$, and $\CV$ an enveloped sphere
congruence with mean curvature covector $H^\CV$. Then~\eqref{eq:gen-decomp}
gives a decomposition of the flat connection on
$\submfd\times\R^{n+1,1}=\CV\dsum\CV^\perp$ as
\begin{equation}\label{eq:flat-conn-V}
\CDhV+ \CQ^\CV + \CS^\CV,
\end{equation}
where $\CQ^\CV\in \Omega^1(\submfd,T\dual\submfd)$ is the unique section of
$S^2T\dual\submfd$ such that $\CDhV$ restricts to the normal conformal Cartan
connection on $\CV$ inducing the conformal \mob/ structure $\Ms^\submfd$ of
the central sphere congruence $\CV_\submfd=\CV+H^\CV$.  It follows
from~\S\ref{par:recover} that the primitive data of Theorem~\ref{t:data},
together with $H^\CV$, determine $\QC^\CV$ and $\CS^\CV$. These data then
satisfy \GCR/ equations, which are \emph{equivalent} to the flatness of the
connection~\eqref{eq:flat-conn-V}:
\begin{subequations}\label{eq:flatGCR}
\begin{align}\label{eq:flatgr}
0&= \RDhV+\dDhV \CQ^\CV + \half\liebrack{\CS^\CV\wedge\CS^\CV};\\
0&= \dDhV\CS^\CV+\liebrack{\CQ^\CV\wedge\CS^\CV}.
\label{eq:flatc}
\end{align}
\end{subequations}
A converse is therefore available: the conformal immersion and enveloped
sphere congruence can be recovered from the data they induce (\ie, the
primitive data of Theorem~\ref{t:data} and the mean curvature covector
$H^\CV$). Our goal now is to establish such a converse.

To this end, let $\submfd$ be an $m$-manifold with conformal \mob/ structure
$(\conf,\Ms^\submfd)$. Suppose that $N\submfd\to\submfd$ is a rank $n-m$
vector bundle with a metric and metric connection $\nabla$ on $N\submfd \ltens
L^{-1}$, and that $\II^0$ is a section of $S^2_0 T\dual\submfd\tens N\submfd$
for $m\geq2$ (with transpose $\Sh^0$) and $\CA$ is a section of
$T\dual\submfd\tens N\dual\submfd$ for $m=1$. Finally let $H^\CV$ be a section
of $N\dual\submfd$.

Denote by $(\CV, \Ln,\CD^{\submfd,\CV})$ the induced normal conformal Cartan
geometry (which identifies $\Ln$ with $L^{-1}$) and set $\CV^\perp= N\submfd
\ltens \Ln$, $\nabla^\CV=\nabla$ and $\CDhV=\CD^{\submfd,\CV}+\nabla^\CV$.
Let $\CS^\CV=\jDhV(\II^0-\Sh^0)+\CDhV H^V$ for $m\geq 2$ and $\CS^\CV=A+\CDhV
H^V$ for $m=1$, where $\jDhV$ is the differential lift operator defined by
$\CD^{\submfd,\CV}$ and $\nabla^\CV$. Last of all, let $\QC^\CV$ be given
by~\eqref{eq:Q-rel}, where $\QC=-\half\Quabla_{\smash{\h}}^{-1}\LH
\abrack{(\II^0-\Sh^0)\wedge (\II^0-\Sh^0)}$.

Then the connection~\eqref{eq:flat-conn-V} given by these data is flat
iff~\eqref{eq:flatGCR} hold.

\begin{thm}\label{th:conf-bonnet-sphere}
Let $\submfd$ be an $m$-manifold with conformal \mob/ structure
$(\conf,\Ms^\submfd)$, let $(N\submfd \ltens L^{-1},\nabla)$ be a rank $n-m$
euclidean vector bundle on $\submfd$ with metric connection, and let
$\CS^\CV$, $\CQ^\CV$ be determined as above by these data together with a
section $H^\CV$ of $N\dual\submfd$ and a section $\II^0$ of $S^2_0
T\dual\submfd\tens N\submfd$ \textup(for $m\geq2$\textup) or $\CA$ of
$T\dual\submfd\tens N\dual\submfd$ \textup(for $m=1$\textup).

Then there is locally an immersion of $\submfd$ into $S^n$ with enveloped
sphere congruence $\CV$ inducing these data if and only if the equations
of~\eqref{eq:flatGCR} hold.  If so, then the immersion and enveloped sphere
congruence are unique up to \mob/ transformations of $S^n$.
\end{thm}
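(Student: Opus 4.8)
The plan is to reduce the assertion to the flat-model reconstruction of Proposition~\ref{p:flat-ccc}. The decisive observation, recorded just above, is that the \GCR/ equations~\eqref{eq:flatGCR} are exactly the vanishing of the curvature of the connection~\eqref{eq:flat-conn-V}, namely $\CD^\mfd:=\CDhV+\CQ^\CV+\CS^\CV$, on the rank $n+2$ bundle $\CV_\mfd:=\CV\dsum\CV^\perp$ equipped with its lorentzian metric and null line $\Ln=L^{-1}\subset\CV$. Thus the theorem becomes a statement about \emph{flat} connections on $\CV_\mfd$; the fact that $\CV_\mfd$ has rank exceeding $m+2$ simply reflects the extra room taken up by the normal directions.

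For necessity, I would observe that if such an immersion with enveloped sphere congruence $\CV$ exists then $(\CV_\mfd,\CD^\mfd)=(\submfd\times\R^{n+1,1},\d)$ is flat, and the decomposition~\eqref{eq:gen-decomp} exhibits $\d$ in the form~\eqref{eq:flat-conn-V}; the vanishing of the curvature components~\eqref{eq:ngcr} is then precisely~\eqref{eq:flatGCR}. This direction is essentially already in hand from \S\ref{par:recover}.

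For sufficiency, given data satisfying~\eqref{eq:flatGCR}, I would build the bundle $(\CV_\mfd,\Ln,\CD^\mfd)$ abstractly and use its flatness exactly as in Proposition~\ref{p:flat-ccc}. On each simply connected $\Omega\subset\submfd$, flatness trivializes $(\CV_\mfd,\CD^\mfd)$ as $\Omega\times\R^{n+1,1}$ with $\CD^\mfd=\d$, and the null line $\Ln$ defines a map $\Phi\colon\Omega\to\PL=S^n$ sending $x$ to the parallel null line $\Ln_x$. Although $(\CV_\mfd,\Ln,\CD^\mfd)$ is only a \emph{generalized} conformal Cartan geometry (the rank is $n+2>m+2$), its restriction to $\CV$ is a genuine normal conformal Cartan geometry $(\CV,\Ln,\CD^{\submfd,\CV})$ on $\submfd$; since $\CD^\mfd_X\sigma=\CD^{\submfd,\CV}_X\sigma$ for sections $\sigma$ of $\Ln$, the Cartan condition for $\CD^{\submfd,\CV}$ forces $\d\Phi$ to be injective, so $\Phi$ is an immersion. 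The trivialized image of the signature $(m+1,1)$ subbundle $\CV$ is then an enveloped $m$-sphere congruence for $\Phi$, and decomposing $\d$ along $\CV\dsum\CV^\perp$ returns $\CDhV,\CQ^\CV,\CS^\CV$ by construction, hence, via the formulae of \S\ref{par:mr-split}--\S\ref{par:norm-prim}, the prescribed primitive data together with the mean curvature covector $H^\CV$.

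The main obstacle is closing this loop: checking that the reconstructed decomposition reproduces precisely the \emph{normalized} data rather than a gauge-equivalent variant. This rests on the rigidity already available---the canonical \mob/ reduction is singled out by having tracefree second fundamental form, and Proposition~\ref{p:make-normal} determines $\CQ$ uniquely within $\image\LH$---reinforced by Proposition~\ref{p:recover}, which shows that $\CS=\jDh\hc{\CS}$ and $\CQ=-\half\Quabla_{\smash{\h}}^{-1}\LH\abrack{\hc{\CS}\wedge\hc{\CS}}$ are determined by their homology classes. For uniqueness up to $\Mob(n)$, two immersions inducing the same data yield isomorphic flat bundles $(\CV_\mfd,\Ln,\CD^\mfd)$, since the data determine $\CDhV$, $\CQ^\CV$ and $\CS^\CV$; a connection-, metric- and $\Ln$-preserving isomorphism becomes, after trivialization over $\Omega$, a constant element $g\in\On_+(n+1,1)\cong\Mob(n)$ intertwining the two developing maps, so they differ by $\Phi\mapsto g\act\Phi$.
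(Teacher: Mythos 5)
Your proposal is correct and follows essentially the same route as the paper: both directions reduce to the observation that~\eqref{eq:flatGCR} is the flatness of $\CDhV+\CQ^\CV+\CS^\CV$ on $\CV\dsum\CV^\perp$, after which a parallel trivialization $\CV\dsum\CV^\perp\cong\submfd\times\R^{n+1,1}$ (unique up to a constant element of $\On_+(n+1,1)$) recovers the immersion from $\Ln$, with the Cartan condition giving immersivity and $\CS^\CV\restr\Ln=0$ the enveloping condition. The extra care you take over recovering the normalized data via Proposition~\ref{p:recover} is sound but already built into the construction of $\CS^\CV$ and $\CQ^\CV$ preceding the theorem.
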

(Obviously, when $m=1$, the equations of this theorem are vacuous, so the
theorem then shows that a projective curve can be embedded into $S^n$ with
arbitrary conformal acceleration $\CA$, and that $H^\CV$ parameterizes the
possible enveloped circle congruences.)
\begin{proof} It remains to recover the immersion, assuming~\eqref{eq:flatGCR}
holds.  The bundle $\CV\dsum\CV^\perp$ has a metric of signature
$(n+1,1)$---given by the sum of the metrics on the summands---for which $\Ln$
is null, and the connection $\CDhV+\CQ^\CV+\CS^\CV$ on $\CV\dsum\CV^\perp$ is
a metric connection, which is flat by~\eqref{eq:flatGCR}.  Thus, locally, we
have a parallel metric isomorphism $\CV\dsum\CV^\perp\cong
\submfd\times\R^{n+1,1}$, unique up to constant gauge transformations, and the
inclusion $\Ln\to\CV\low_\submfd\dsum\CV_\submfd^\perp$ induces a map
$\phi\colon\submfd\to\PL$; $x\mapsto\Ln_x\subset\R^{n+1,1}$. The Cartan
condition on $\CD^\submfd$ ensures that $\phi$ is an immersion and
$\CS^\CV\restr\Ln=0$ ensures that (the image in $\submfd\times \R^{n+1,1}$ of)
$\CV$ is an enveloped sphere congruence, with mean curvature $\frac1m\trace
\II^\CV=H^\CV$.
\end{proof}

The above theorem is very general, but depends on a choice of enveloped sphere
congruence and so it is not truly conformal: as we shall see in
\S\ref{par:tangent-cong}, it includes, as a special case, the Bonnet theorem
for submanifolds of spaceforms. However, if we set $H^\CV=0$ so that
$\CV=\CV\low_\submfd$ is the central sphere congruence,
then~\eqref{eq:flatGCR} reduces to~\eqref{eq:fullGCR} and this specialization
of Theorem~\ref{th:conf-bonnet-sphere} is purely conformal. In the next
paragraph, though, we see that such a formulation of the conformal Bonnet
Theorem is not optimal.

\subsection{The abstract homological conformal Bonnet theorem}
\label{par:abs-hom-conf-bonnet}

The normalization conditions used to fix the \mob/ reduction and \mob/
structure on $\submfd$ have the homological interpretation that $\CS$ and
$R^{\CD^\submfd}$ are cycles, in $Z_1(\submfd,\m)$ and
$Z_2(\submfd,\so(\CV\low_\submfd))$ respectively. We now show that the \GCR/
equations also have a homological description, in terms of
Bernstein--Gelfand--Gelfand (BGG) operators associated to the representations
$\h=\so(\CV\low_\submfd)\dsum\so(\CV_\submfd^\perp)$ and $\m$ of
$\SO(\CV\low_\submfd)$, where $\CV_\submfd^\perp$ carries the trivial
representation.

BGG sequences of invariant linear differential operators were first introduced
on curved geometries by Eastwood--Rice~\cite{EaRi:cio}, for $4$-dimensional
conformal geometry, and by Baston~\cite{Bas:ahs} and \v Cap--Slovak--Sou\v
cek~\cite{CSS:bgg} in more general contexts. These constructions were
simplified and extended to invariant multilinear differential operators
in~\cite{CaDi:di}. We need this generality here, and in particular, we need
the operators $\Quabla_\h=\LH\circ\dDh+\dDh\circ\LH$ and
$\Pi=\iden-\Quabla_{\smash{\h}}^{-1}\circ\LH\circ
\dDh-\dDh\circ\Quabla_{\smash{\h}}^{-1}\circ\LH$ of \S\ref{par:diff-lift}, but
here coupled to the normal connection $\nabla$ (if required).

Using this, we can define linear and bilinear BGG operators between sections
of Lie algebra homology bundles, closely related to the differential lift
$j^\h$. We shall only need them on $H_1(T\dual\submfd,\m)$, where, for
sections $\hc{\alpha}$ and $\hc{\beta}$, we have
\begin{equation*}\notag
\d_{BGG}\hc{\alpha} = \hc{\Pi \dDh \Pi\alpha},\qquad\qquad\qquad
\hc{\alpha}\cupp\hc{\beta}= \hc{\Pi(\liebrack{\Pi\alpha\wedge\Pi\beta})},
\end{equation*}
which are sections of $H_2(T\dual\submfd,\m)$ and $H_2(T\dual\submfd,\h)$
respectively.

We now have all the ingredients to establish our homological formulation of
the \GCR/ equations. These equations are simpler than the full equations since
they only involve the homological objects $\hc{\CS}$ and
$\hc{\RDh}=\hc{R^{\CD^\submfd}}+R^\nabla$. The Bernstein--Gelfand--Gelfand
operators provides a conceptually elegant formulation of these equations, and
the associated homological machinery yields a remarkably efficient proof.

\begin{thm}\label{th:hom-gcr}
Let $(\CV\low_\submfd,\Ln,\CD^\submfd)$ be a normal conformal Cartan geometry
on $\submfd$, let $(\CV_\submfd^\perp,\nabla)$ be a euclidean vector bundle
with metric connection, let $\CS$ be a $\m$-valued $1$-form in $\kernel\LH$,
and let $\QC$ be a $T\dual\submfd$-valued $1$-form in $\image\LH$.

Then the data $(\CDh,\CS,\QC)$ satisfy the full \GCR/
equations~\eqref{eq:fullGCR} if and only if $\CS=\jDh\hc{\CS}$,
$\CQ=-\half\Quabla_{\smash{\h}}^{-1}\LH\liebrack{\CS\wedge\CS}$, and the
following homological \GCR/ equations hold\textup:
\begin{subequations}\label{eq:homGCR}
\begin{align}\label{eq:homGR}
0&=\hc{\RDh}+\half\hc{\CS}\cupp\hc{\CS}\\
0&=\d_{BGG}\hc{\CS}.\label{eq:homC}
\end{align}
\end{subequations}
\end{thm}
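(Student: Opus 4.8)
The plan is to read the full \GCR/ equations~\eqref{eq:fullGCR} as the single statement that the total connection $\CDh+\CQ+\CS$ on $\CV_\mfd=\CV\low_\submfd\dsum\CV_\submfd^\perp$ is flat, i.e. that its curvature $\kappa=\kappa_\h+\kappa_\m$ (split into the $\h$- and $\m$-parts displayed in~\eqref{eq:ngcr}) vanishes. With this reading the forward implication is essentially Proposition~\ref{p:recover}: if~\eqref{eq:fullGCR} holds then automatically $\CS=\jDh\hc\CS$ and $\CQ=-\half\Quabla_\h^{-1}\LH\liebrack{\CS\wedge\CS}$. So the real content is to build a dictionary between the full curvature $\kappa$ and its homology class, and then to run that dictionary backwards. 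The whole argument is an application of the \BGG/ calculus of~\cite{CaDi:di,CSS:bgg}: the twisted differential $\dDh$ and the wedge-bracket of $\m$- and $\h$-valued forms descend to $\d_{BGG}$ and $\cupp$ on Lie algebra homology, and the two sets of equations are the same statement read upstairs and downstairs.

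First I would check that, \emph{given the two displayed formulae for $\CS$ and $\CQ$}, the curvature $\kappa$ is automatically a cycle, $\LH\kappa=0$. For the $\m$-part this is immediate: $\LH\dDh\CS=0$ is one of the defining properties of the differential lift $\CS=\jDh\hc\CS$, and $\LH\liebrack{\CQ\wedge\CS}=0$ was already observed in the proof of Proposition~\ref{p:recover}. For the $\h$-part one uses $\LH\RDh=0$ (normality of $\CD^\submfd$ together with the trivial action on $\CV_\submfd^\perp$), the fact that $\LH\CQ=0$ because $\CQ$ is valued in the abelian subalgebra $T\dual\submfd$, and the identity $\Quabla_\h\CQ=-\half\LH\liebrack{\CS\wedge\CS}$ coming from the formula for $\CQ$. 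Next I would compute the homology class of $\kappa$. For the Gau\ss--Ricci part, substituting $\CQ=-\half\Quabla_\h^{-1}\LH\liebrack{\CS\wedge\CS}$ and expanding $\Pi$ gives
\begin{equation*}
\dDh\CQ+\half\liebrack{\CS\wedge\CS}
=\half\Pi\liebrack{\CS\wedge\CS}+\half\Quabla_\h^{-1}\LH\dDh\liebrack{\CS\wedge\CS},
\end{equation*}
so $\kappa_\h$ differs from $\RDh+\half\Pi\liebrack{\CS\wedge\CS}$ by a term in $\image\LH$; taking classes yields $\hc{\kappa_\h}=\hc\RDh+\half\hc\CS\cupp\hc\CS$, which is~\eqref{eq:homGR}. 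Similarly $\Pi\dDh\CS=\dDh\CS-\Quabla_\h^{-1}\LH(\RDh\act\CS)$, using $\LH\dDh\CS=0$ and the curvature identity for $\dDh\dDh\CS$, and the \BGG/ dictionary identifies $\hc{\kappa_\m}$ with $\d_{BGG}\hc\CS$, which is~\eqref{eq:homC}. This proves~\eqref{eq:fullGCR}$\Rightarrow$\eqref{eq:homGCR} and, read the other way, prepares the converse.

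For the converse I would argue that a curvature which is a cycle vanishes as soon as its homology class does. Granting~\eqref{eq:homGCR}, both classes $\hc{\kappa_\h}$ and $\hc{\kappa_\m}$ vanish, so $\kappa$ is a cycle with trivial homology, hence a boundary in $\image\LH$; the harmonic pieces $\RDh+\half\Pi\liebrack{\CS\wedge\CS}$ and $\Pi\dDh\CS$ drop out and only residual terms in $\image\LH$ survive. These residual boundary terms are exactly what the \BGG/ vanishing principle of~\cite{CaDi:di,CSS:bgg} controls: the second Bianchi identity $\dDh\kappa+\liebrack{(\CQ+\CS)\wedge\kappa}=0$, combined with the invertibility of $\Quabla_\h$ on $\image\LH$, lets one peel off the components of $\kappa$ in order of increasing homogeneity and conclude $\kappa=0$, i.e.~\eqref{eq:fullGCR}. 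I expect this last step to be the main obstacle, precisely because the two equations are coupled: the curvature term $\dDh\dDh\CS=\RDh\act\CS$ feeds the Gau\ss--Ricci curvature into the Codazzi analysis, so the residual pieces of $\kappa_\h$ and $\kappa_\m$ cannot be killed independently. Disentangling them via the Bianchi identity and the homogeneity bootstrap is where the real work lies; the remainder is bookkeeping with $\LH$, $\Pi$ and $\Quabla_\h^{-1}$.
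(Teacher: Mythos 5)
Your forward direction and the general dictionary between the full curvature $\kappa=\kappa_\h+\kappa_\m$ and its homology class are sound as far as they go, and your expansion of $\dDh\CQ+\half\liebrack{\CS\wedge\CS}$ in terms of $\Pi\liebrack{\CS\wedge\CS}$ matches the computation in the paper. But there is a genuine gap, and it sits exactly where you admit "the real work lies." Both your identification $\hc{\kappa_\m}=\d_{BGG}\hc{\CS}$ and your converse silently require that the cycle $\liebrack{\CQ\wedge\CS}$ has \emph{trivial} homology class, i.e.\ that $\liebrack{\CQ\wedge\CS}\in\image\LH$ and not merely $\kernel\LH$. This is not a formal consequence of the BGG calculus: if it failed, the residual boundary term $\liebrack{\CQ\wedge\CS}-\Quabla_\h^{-1}\LH\dDh\liebrack{\CQ\wedge\CS}$ would survive in the Codazzi equation and the homological equations would acquire an extra cubic term $\ip{\hc{\CS},\hc{\CS},\hc{\CS}}$ built from one of the trilinear operators of~\tcite{CaDi:di} --- the paper's remark following the proof calls the vanishing of this term a ``homological fluke.'' Your proposed mechanism for the converse (Bianchi identity plus a ``homogeneity bootstrap'') does not supply this fact and, as stated, cannot: a cycle with vanishing homology class is only a boundary, and no general vanishing principle forces a boundary to be zero.

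The missing step is concrete and dimension-dependent. One must show $\liebrack{\CQ\wedge\CS}\in\image\LH$ directly: for $m\geq3$ this follows because $\liebrack{\CQ\wedge\CS}$ is a $2$-form with values in $\CV_\submfd^\perp\ltens\Ln$ and $\LH\colon\Wedge^3T\dual\submfd\tens\Ln^\perp\to\Wedge^2T\dual\submfd\tens\Ln$ is surjective; for $m=2$ one checks by hand that $\liebrack{\CQ\wedge\CS}=0$, using that $\CQ\in\image\LH$ is then pure trace while $\Sh^0$ is symmetric and tracefree (and the case $m=1$ is vacuous). Once this is in place, $\Quabla_\h^{-1}\LH\dDh\liebrack{\CQ\wedge\CS}=\liebrack{\CQ\wedge\CS}$ because $\Quabla_\h\liebrack{\CQ\wedge\CS}=\LH\dDh\liebrack{\CQ\wedge\CS}$ on $\kernel\LH$, and the homological Codazzi equation becomes \emph{literally} the full one; the Gau{\ss}--Ricci equation then follows by the substitution you describe, using that the homological Codazzi equation forces $\dDh\CS$ to take values in $\CV_\submfd^\perp\tens\Ln$ so that $\Quabla_\h^{-1}\LH\dDh\liebrack{\CS\wedge\CS}=0$. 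With this lemma inserted your argument closes up and is essentially the paper's proof; without it the equivalence is not established.
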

\begin{proof}
We first observe that $\LH \RDh=0$ and $\dDh \RDh=0$, so that
$\RDh=\jDh\hc{\RDh}$. By Proposition~\ref{p:recover}, we also know
that~\eqref{eq:fullGCR} imply that $\CS=\jDh\hc{\CS}$ and
$\CQ=-\half\Quabla_\h^{-1}\LH\liebrack{\CS\wedge\CS}$.

It remains to show that under these conditions, the
equations~\eqref{eq:fullGCR} are equivalent to~\eqref{eq:homGCR}. By the
uniqueness of the canonical differential representative, the latter are
equivalent to
\begin{align*}
0&= \RDh+\half\Pi^2\liebrack{\CS\wedge\CS}= \RDh+\half\Pi
\liebrack{\CS\wedge\CS}\\
&=\RDh+ \half\liebrack{\CS\wedge\CS} +\dDh\CQ
-\half\Quabla_\h^{-1}\LH \dDh\liebrack{\CS\wedge\CS}\\
0&=\Pi \dDh\CS = \dDh\CS -\Quabla_\h^{-1}\LH\liebrack{\RDh\wedge\CS}.
\end{align*}
Here we have expanded the definition of $\Pi$: for the first equation we have
used the fact that $\Pi \dDh\CQ=\dDh\CQ-\dDh \Quabla_\h^{-1}\LH
\dDh\CQ-\Quabla_\h^{-1}\LH (\dDh)^2\CQ= -\Quabla_\h^{-1}\LH
\liebrack{\RDh\wedge\CQ}=0$, since $\liebrack{\RDh\wedge\CQ}$ has values in
$T\dual\submfd$, hence is in the kernel of $\LH$.  Now the second equation
implies that $\dDh\CS$ has values in $\CV_\submfd^\perp\tens\Ln$ and hence
$\Quabla_\h^{-1}\LH \dDh\liebrack{\CS\wedge\CS}=0$.  By substitution of the
first equation into the second, the equations~\eqref{eq:homGCR} are therefore
equivalent to
\begin{align*}
0&= \RDh+\dDh\CQ+\half\liebrack{\CS\wedge\CS}\\
0&=\dDh\CS + \Quabla_\h^{-1}\LH\liebrack{\dDh\CQ \wedge\CS},
\end{align*}
since $\liebrack{\CS\wedge\liebrack{\CS\wedge\CS}}=0$ by the Jacobi identity.
Now note that $\liebrack{\CQ\wedge\dDh\CS}=0$ since the Lie bracket of
$T\dual\submfd$ with $\CV_\submfd^\perp\tens\Ln$ is trivial. Hence the last
term is equal to $\Quabla_\h^{-1}\LH \dDh\liebrack{\CQ \wedge\CS}$.

We noted in the proof of Proposition~\ref{p:recover} that $\liebrack{\CQ
\wedge\CS}$ is in the kernel of $\LH$. The equivalence of the full and
homological \GCR/ equations now rests on the fact that it is actually in the
image of $\LH$. For $m\geq3$ this follows easily from the fact that
$\liebrack{\CQ \wedge\CS}$ is a $2$-form with values in
$\CV_\submfd^\perp\ltens\Ln$, since
$\LH\from\Wedge^3T\dual\submfd\tens\Ln^\perp \to\Wedge^2T\dual\submfd\tens\Ln$
is then surjective. The result is vacuous for $m=1$, so it remains to prove
that for $m=2$, $\liebrack{\CQ \wedge\CS}=0$. For this, it suffices to show
that $(\CQ\wedge\CS)\low_{X,Y}\con=0$ for any normal $\con$ and vector fields
$X,Y$.  Now $\CQ$ vanishes on $\Ln$, so this reduces immediately to
$\QC_X(\Sh^0_Y\con)-\QC_Y(\Sh^0_X\con)$, which vanishes because (for $m=2$)
$\QC\in\image\LH$ is tracelike and $\Sh^0$ is symmetric.
\end{proof}

\begin{rem} The subtle point in the above proof was the fact that
$\liebrack{\CQ \wedge\CS}\in\image\LH$. If this had not worked out, the
homology class would be $\ip{\hc{\CS},\hc{\CS},\hc{\CS}}$, where
$\ip{\cdot,\cdot,\cdot}$ is one of the trilinear differential operators
defined in~\cite{CaDi:di}. It is thus a homological fluke that the \GCR/
equations are quadratic in $\II^0$ and $\Sh^0$. In more general circumstances,
one should anticipate the appearance of multilinear differential operators of
higher degree.
\end{rem}

Although elegant, this theorem is quite abstract at the present, and we have
already suggested that the Reader who prefers direct calculations to abstract
machinery should turn to the next section, where we make the above equations
and computations more explicit. For this reason, we postpone the statement of
the homological conformal Bonnet theorem until we have understood the
homological \GCR/ equations more explicitly. However, for the Reader with no
taste for the nitty-gritty, we note that the homological conformal Bonnet
theorem which we will state as Theorem~\ref{th:hom-conf-bonnet} is essentially
the obvious corollary of Theorem~\ref{th:conf-bonnet-sphere}, with
$\CV=\CV\low_\submfd$, and Theorem~\ref{th:hom-gcr} above.

\section{Weyl structures and the conformal Bonnet theorem}
\label{sec:weyl-conf-bonnet}

We have seen that Weyl derivatives and the associated apparatus of Weyl
structures and connections provide an efficient computational tool in
conformal geometry. The same is true in conformal submanifold geometry.

\subsection{Decomposition with respect to a Weyl structure}
\label{par:decomp-weyl}

Let $\submfd$ be an immersed submanifold of $\mfd$. A M\"obius reduction $\CV$
provides a decomposition $\so(\CV_\mfd)=\h_\CV\dsum\m_\CV$ over
$\submfd$~\eqref{eq:decomp-g}. On the other hand, a choice of Weyl structure
on $\mfd$ may be restricted (pulled back) to $\submfd$ to give a decomposition
$\CV_\mfd\cong\Ln\dsum U_\mfd\dsum\hat\Ln$, and hence $\so(\CV_\mfd)\cong
T\dual\mfd\dsum\co(T\mfd)\dsum T\mfd$ (\S\ref{par:weyl-str}). (Here, as
before, we omit pullbacks to $\submfd$.)

If these data are compatible (\ie, $\hat\Ln\subset\CV$) then $\CV\cong\Ln\dsum
U\dsum\hat\Ln$ with $U_\mfd=U\dsum \CV^\perp$, so that
\begin{equation}\label{eq:master-decomp}
\so(\CV_\mfd)\;=\;
\begin{matrix}\h_\CV\\ \dsum\\ \m_\CV
\end{matrix}\;\cong\;
\begin{matrix}T\dual\submfd\dsum
\h_0\dsum T\submfd\\ \dsum\\ N\dual\submfd\dsum\m_0\dsum N\submfd
\end{matrix}
\;\cong\;T\dual\mfd\dsum\co(T\mfd)\dsum T\mfd,
\end{equation}
where
$\h_0=\so(T\submfd)\dsum\so(N\submfd)\dsum\R\,\iden_{T\mfd}\subset\co(TM)$
and $\m_0=\co(TM)\cap
\bigl(\Hom(T\submfd,N\submfd)\dsum\Hom(N\submfd,T\submfd)\bigr)$. In
the top line, $\h_\CV=\so(\CV)\dsum\so(\CV^\perp)$, and the
isomorphism identifies $\h_0$ with $\co(T\submfd)\dsum\so(\CV^\perp)$,
and $\so(\CV)$ with $T\dual\submfd\dsum\co(T\submfd)\dsum
T\submfd$. In the bottom line, $N\dual\submfd\;(\cong
V^\perp\ltens\Ln)$, $\m_0$ and $N\submfd$ are identified with the
intersections of $\m_\CV$ with
$\Hom(\Ln,\CV^\perp)\dsum\Hom(\CV^\perp,\hat\Ln)$,
$\Hom(U,\CV^\perp)\dsum\Hom(\CV^\perp,U)$ and
$\Hom(\hat\Ln,\CV^\perp)\dsum\Hom(\CV^\perp,\Ln)$ respectively.

For the M\"obius reduction $\CV$,~\eqref{eq:gen-decomp} gives
\begin{equation*}
\CD^\mfd= \CDhV+\CQ^\CV+\CS^\CV,
\end{equation*}
where $\CDhV=\CD^{\submfd,\CV}+\nabla^\CV$ and $\CD^{\submfd,\CV}$ is a normal
conformal Cartan connection on $\CV$. We now apply~\eqref{eq:master-decomp} in
two ways.

First, $\hat\Ln\subset \CV_\mfd$ is the restriction (pullback) to $\submfd$ of
a Weyl structure on $\mfd$, and so $\CD^\mfd=\nr^{D^\mfd}+D^\mfd-\iden$, where
the normalized Ricci tensor $\nr^{D^\mfd}$ and the identity map $\iden$ are
viewed, by restriction, as $1$-forms on $\submfd$ with values in $T\dual\mfd$
and $T\mfd$ respectively.

Second, $\hat\Ln\subset \CV$ is a Weyl structure on $\submfd$, so
$\CD^{\submfd,\CV}=\nr^{D,\submfd}+D^{\CV}-\iden$, where $D$ is the
corresponding Weyl derivative, $\nr^{D,\submfd}$ is the normalized Ricci
curvature of the induced conformal M\"obius structure, $D^\CV$ is the induced
conformal connection, and $\iden=\iden_{T\submfd}$.

Putting these together using~\eqref{eq:master-decomp}, we have
\begin{equation}\label{eq:full-decomp}
\left\lbrace\quad\begin{matrix}
\CD^\mfd&=&\nr^{D^\mfd}&+&D^\mfd&-&\iden&=&(\CDhV+\CQ^\CV)+\CS^\CV\\
\CDhV+\CQ^\CV&=& \nr^{D,\submfd}+\CQ^\CV&+&  D^{\CV}+\nabla^\CV &-&\iden\\
\CS^\CV&=& \CA^{D,\CV} &+& \II^{\CV}-\Sh^{\CV},
\end{matrix}\right.
\end{equation}
where we note that the $\m_0$-component of $\CS^\CV$ is given by the second
fundamental form and shape operator of $\CV$. Comparing coefficients, we
deduce that $\nr^{D^\mfd}=(\nr^{D,\submfd}+\CQ^\CV)+A^{D,\CV}$ and $D^\mfd=
(D^{\CV}+\nabla^\CV) + (\II^{\CV}-\Sh^{\CV})$. The second equation is the
decomposition of $D^\mfd$ (along $\submfd$) into a direct sum connection on
$U\dsum \CV^\perp\cong (T\submfd\dsum N\submfd)\ltens \Ln$ and $1$-forms
valued in $\Hom(T\submfd,N\submfd)$ and $\Hom(N\submfd,T\submfd)$.

\subsection{The Gau\ss--Codazzi--Ricci equations}
\label{par:weyl-gcr}
 
The decomposition \eqref{eq:master-decomp}--\eqref{eq:full-decomp} may be used
to expand \eqref{eq:ngr} as
\begin{subequations}\label{eq:gcr-system}
\begin{align}
\label{eq:cotangent-part-h}
(R^{\CD^\mfd})_{T\dual\submfd}
&=\d^D\nr^{D,\submfd}+\d^D\QC^\CV+\abrack{(\II^\CV-\Sh^\CV)\wedge\CA^{D,\CV}}\\
\label{eq:co-part-h}
(R^{\CD^\mfd})_{\h_0}&=R^D+R^{\nabla}
-\abrack{\iden\wedge(\nr^{D,\submfd}+\QC^\CV)}
+\half\abrack{(\II^\CV-\Sh^\CV)\wedge(\II^\CV-\Sh^\CV)}\\
\label{eq:tangent-part-h}
0&=\d^D\iden,
\end{align}
while~\eqref{eq:nc} reads
\begin{align}
\label{eq:cotangent-part-m}
(R^{\CD^\mfd})_{N\dual\submfd}
&=\d^{\nabla,D}\!\CA^{D,\CV}
+\abrack{(\II^\CV-\Sh^\CV)\wedge(\nr^{D,\submfd}+\QC^\CV)}\\
\label{eq:co-part-m}
(R^{\CD^\mfd})_{\m_0}
&=\d^{\nabla, D}\II^\CV-\d^{\nabla, D}\Sh^\CV-\abrack{\iden\wedge\CA^{D,\CV}}\\
\label{eq:tangent-part-m}
0&=\abrack{\iden\wedge(\II^\CV-\Sh^\CV)},
\end{align}
\end{subequations}
where $\abrack{\cdot}$ denotes the algebraic bracket on $T\dual\mfd\dsum
\co(T\mfd)\dsum T\mfd$ and $D^\CV$ is denoted by $D$ for simplicity.

Since $\CD^\mfd$ is the pullback of a normal Cartan connection,
$R^{\CD^\mfd}=C^{\mfd,D^\mfd}+W^\mfd$ with $C^{\mfd,D^\mfd}$ the pullback of
the Cotton--York curvature of $\mfd$ (defined using any extension of $D^\mfd$
to $\mfd$) and $W^\mfd$ the Weyl curvature of $\mfd$. On the other hand,
$R^D-\abrack{\iden\wedge\nr^{D,\submfd}}=W^\submfd$, the Weyl curvature of
$\Ms^\submfd$, and $\d^D\nr^{D,\submfd}=C^{\submfd,D}:=C^{\Ms^\submfd,D}$, the
Cotton--York curvature of $\Ms^\submfd$ with respect to $D$.

Using this, and expanding the algebraic brackets, we can
rewrite~\eqref{eq:gcr-system} as follows. First, in~\eqref{eq:cotangent-part-h}
and \eqref{eq:cotangent-part-m}, the algebraic brackets are simply
contractions. Second, in~\eqref{eq:co-part-h}, the algebraic bracket is just
the commutator, while in~\eqref{eq:co-part-m} we have
\begin{equation*}
\abrack{\iden\wedge\CA^{D,\CV}}\act Z =
(\CA^{D,\CV})^\sharp\wedge\conf(Z,\cdot),\qquad
\abrack{\iden\wedge\CA^{D,\CV}}\act \nmv =
\iden\wedge\CA^{D,\CV}(\nmv)
\end{equation*}
for tangent vectors $Z$ and normal vectors $\nmv$. Finally,
\eqref{eq:tangent-part-h} and \eqref{eq:tangent-part-m} are identities since
$D$ is torsion-free and $\II^\CV$ is symmetric.

To summarize, after restricting $\h_0$ components to $T\submfd$ and
$N\submfd$, the system~\eqref{eq:gcr-system}, which is equivalent
to~\eqref{eq:ngcr}, yields the following \GCR/ equations:

\subsubsection*{The Gau\ss\ equations.}
\begin{align}
\label{eq:g1}
(W^\mfd\restr{T\submfd})\tangent&=W^\submfd-\abrack{\iden\wedge\QC^\CV}
- \Sh^\CV\wedge \II^\CV;\\
(C^{\mfd,D^\mfd})\tangent&=C^{\submfd,D}+\d^D\QC^\CV+\CA^{D,\CV}\wedge\II^\CV;
\label{eq:g2}
\end{align}

\subsubsection*{The Codazzi equations.}
\begin{align}
\label{eq:c1}
\begin{split}
(W^\mfd\restr{T\submfd})\normal&=\d^{\nabla,D}\II^\CV
-(\CA^{D,\CV})^\sharp\wedge\conf,\\
(W^\mfd\restr{N\submfd})\tangent&=-\d^{\nabla,D}\Sh^\CV
-\iden\wedge\CA^{D,\CV};
\end{split}\\
(C^{\mfd,D^\mfd})\normal&=\d^{\nabla,D}\CA^{D,\CV}-(\nr^{D,\submfd}+\QC^\CV)
\wedge\Sh^\CV;
\label{eq:c2}
\end{align}

\subsubsection*{The Ricci equation.}
\begin{equation}\label{eq:r}
(W^\mfd\restr{N\submfd})\normal= R^\nabla-\II^\CV\wedge \Sh^\CV.
\end{equation}
Note that the second equation of~\eqref{eq:c1} is minus the transpose of the
first.

\subsection{M\"obius reductions, ambient Weyl structures and riemannian
metrics} \label{par:mr-ambient-weyl-riem}

We now discuss the relationship between \mob/ reductions and Weyl structures
on $\mfd$, and hence describe the conformal geometry of submanifolds in the
more familiar context of submanifolds in riemannian geometry.

For this, recall from section~\ref{sec:weyl-geom} that a Weyl derivative on
$\mfd$ is a covariant derivative on $L$ or $\Ln=L^{-1}$; using $\CD^\mfd$ and
the induced conformal metric, this datum is equivalently a torsion-free
conformal connection on $T\mfd$ or a null line subbundle $\Lnc\subset\CV_\mfd$
complementary to $\Ln^\perp$. This last definition makes sense along a
submanifold $\submfd$, \ie, we define an \emphdef{ambient Weyl structure along
$\submfd$} to be such a complement $\Lnc$ in $\CV_\mfd\restr\submfd$.  Thus
any ambient Weyl structure is the restriction to $\submfd$ of some Weyl
structure on $\mfd$.

An ambient Weyl structure $\Lnc$ along $\submfd$ determines a \mob/ reduction
$\CV=\Lnps\dsum\Lnc$, together with a Weyl structure $\Lnc \subset\CV$ for the
conformal Cartan geometry $(\CV,\Ln,\CD^\CV)$, or equivalently, a Weyl
derivative $D$ on $\submfd$. The map sending $\Lnc$ to the pair $(\CV,D)$ is
an affine bijection. Indeed, since ambient Weyl structures are equivalently
complements $(\Ln\dsum\Lnc)^\perp\cap\CV$ to $\Ln$ in $\Ln^\perp$ (along
$\submfd$), they form an affine space modelled on
$\Cinf(\submfd,\Hom(\Ln^\perp/\Ln,\Ln))\cong\Cinf(\submfd,T\dual\mfd)$:
$\Lnc+\gamma=\exp(-\gamma)\act\Lnc$ and one readily checks that this affine
structure is induced by the natural affine structure $(\CV,D)+\gam=
(\CV+\gam\normal,D+\gam\tangent)$.

\begin{rem}\label{rem:mob-via-weyl}
An ambient Weyl structure can be regarded as an operator
\begin{equation*}
V_\mfd\to T\dual\mfd\ltens L=T\dual\submfd\ltens L\dsum N\dual \submfd\ltens L
\end{equation*}
vanishing on $\Ln$, whose restriction to $\Ln^\perp/\Ln$ is the soldering
isomorphism. Identifying $\CV_\mfd\modulo\Ln$ with the restriction to
$\submfd$ of $J^1L$ on $\mfd$, such operators arise as restrictions to
$\submfd$ of the jet bundle map induced by a Weyl derivative on $\mfd$. The
$T\dual\submfd\ltens L$ component of this operator gives the jet bundle map
$J^1L\to T\dual\submfd\ltens L$ induced by the induced Weyl derivative on
$\submfd$, while the $N\dual\submfd\ltens L$ component is the operator
$\mco_\CV\colon \CV_\mfd\to N\dual\submfd \ltens L$ with
$\mco_\CV(v)=\pi(v\normal)$ induced by the \mob/ reduction $\CV$. This
provides another way to see the affine bijection between ambient Weyl
structures and pairs $(D,\CV)$---furthermore, since $D^\CV$ is the pullback to
$\submfd$ of a Weyl connection on $\mfd$, it transforms as a Weyl connection
should: $(D+\gam\tangent)^{\CV+\gam\normal}=D^\CV+\abrack{\cdot,\gam}$.

As a special case of this, the canonical \mob/ reduction may be described by a
natural differential operator, just like the canonical \mob/ structure
$(\Mh^\conf,\Mq^\conf)$, cf.~Remark~\ref{rm:can-mos}.  Indeed, a positive
section $\ell$ of $L_\mfd$ on a neighbourhood of $\submfd$ in $\mfd$ provides
a metric with respect to which we can compute the mean curvature covector
$H^\ell$ of $\submfd$ in $\mfd$. Now if $D$ is the Weyl derivative and
Levi-Civita connection of the metric $\ell^2\conf$, then
$\mco_{\CV_\submfd}=\mco_D+H^D p$, where $\mco_D$ defines the \mob/ reduction
associated to $D$.  Since $(\mco_D\modulo\Ln)(j^1\ell)
=(D\ell)\restrnormal\submfd=0$ and $H^D=H^\ell$, we have
$(\mco_{\CV_\submfd}\modulo\Ln)(j^1\ell)=H^\ell\ell$.
\end{rem}

If $D^\mfd$ is a Weyl connection on (a neighbourhood of $\submfd$ in) $\mfd$,
it induces an ambient Weyl structure along $\submfd$, hence a \mob/ reduction
$\CV$ and a Weyl structure on $\submfd$, and any \mob/ reduction and Weyl
structure arise in this way.  Further, the second fundamental form is given by
the familiar expression $\II^\CV_X Y= (D^\mfd_XY)\normal\in N\submfd$.

In particular, let $g$ be a compatible riemannian metric on (a neighbourhood
of $\submfd$) in $\mfd$. Then, using the Levi-Civita connection of $g$, we
obtain:
\begin{bulletlist}
\item a \mob/ reduction $\CV_g$ along $\submfd$ with $H^{\CV_g}=H^g$, \ie,
$\II^{\CV_g}=\II^g$, where $\II^g$ and $H^g$ are the usual riemannian second
fundamental form and mean curvature covector respectively;
\item a Weyl structure $\Lnc_g\subset \CV_g$ on $\submfd$ corresponding to
the Levi-Civita connection $D$ of the induced metric on $\submfd$.
\end{bulletlist}
The other quantities associated with this reduction and Weyl structure are:
\begin{equation}\label{eq:metric-quant}
\nr^{D,\CV_g} = (\nr^g)\tangent, \qquad \CA^{D,\CV_g}=(\nr^g)\normal,
\end{equation}
\ie, the components of the ambient normalized Ricci curvature $\nr^g$ after
the latter is pulled back to give a $T\dual\mfd$-valued $1$-form on $\submfd$.
(See~\S\ref{par:decomp-weyl}.)

According to \S\ref{par:norm-prim}, we can then compare $\CV_g$ with the
canonical \mob/ reduction $\CV\low_\submfd$ by writing
$\CV\low_\submfd=\CV_g+H^g$, and hence relate the riemannian and conformal
quantities. We have
\begin{equation} \label{eq:mean-curv}
\II^0=\II^g-(H^g)^\sharp\tens g,
\qquad \Sh^0 = \Sh^g -\iden\tens H^g
\end{equation}
and the pullback $D^g$ of the Levi-Civita connection of $\mfd$ to $\submfd$ is
related to the intrinsic Levi-Civita connection $D$ by
\begin{equation} \label{eq:levciv}
D^g=D+(\II^g-\Sh^g)+\nabla^D=D^{\CV_\submfd}+\abrack{\cdot,H^g},
\end{equation}
where $\nabla^D$ is the induced connection on $N\submfd$ and
$D^{\CV_\submfd}=D+(\II^0-\Sh^0)+\nabla^D$ is the connection on
$T\mfd\restr\submfd$ induced by $D$ and the canonical \mob/ reduction
$\CV\low_\submfd$. We also have
\begin{align}\label{eq:Rnr}
\nr^{D,\submfd}&=(\nr^g)\tangent-\QC^g\\
\CA^D:=\CA^{D,\CV_\submfd}&=(\nr^g)\normal-\nabla^D H^g,
\label{eq:Rca}
\end{align}
where $\QC^g=\QC- H^g(\II^0)-\half\cip{H^g,H^g}\conf$. Substituting
into~\eqref{eq:g1}--\eqref{eq:r}, with $\CV=\CV\low_\submfd$, the conformally
invariant equations \eqref{eq:g1}, \eqref{eq:c1} and~\eqref{eq:r} can be
viewed as trace-free parts of the riemannian \GCR/ equations.

When $g$ is an Einstein metric, $(\nr^g)\tangent$ is a multiple of $\conf$ and
$(\nr^g)\normal=0$. As we have seen in \S\ref{par:spaceform}, such metrics
arise when $\mfd=S^n$, and we shall discuss this further in
\S\ref{par:tangent-cong} and \S\ref{par:sym-break-sub}.

We emphasise that \mob/ reductions (or Weyl structures) do \emph{not} all
arise from from ambient metrics.

\subsection{Reduction to homological data}
\label{par:homological-reduction}

We next apply $\LH$ to the Gau\ss\ and Codazzi equations to get formulae for
$\CA^{D,\CV}$ and $\QC^\CV$.

\begin{prop}\label{p:QA-formulae}
Let $\phi\from\submfd\to\mfd$ be a conformal immersion. Then
\begin{gather}
\label{eq:CA-via-II}
(m-1)\CA^{D,\CV}_X=-(\divg^{\nabla,D}\II^\CV)(X)+m\nabla^D_X H^\CV
+(\tsum_i W^\mfd_{e_i,X}\eps_i)\normal\\
\label{eq:QV-via-II}
\begin{split}
(m-2)\QC^\CV_0(X,Y)+2(m-1)(&\tfrac1m\trace_\conf\QC^\CV)\cip{X,Y}\\
&=\cip{\II^\CV_X,\II^\CV_Y}-m H^\CV(\II^\CV_XY)
-\tsum_i\eps_i(W^\mfd_{e_i,X}Y).
\end{split}
\end{gather}
\end{prop}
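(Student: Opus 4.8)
The plan is to derive both identities by applying the Lie algebra homology boundary operator $\LH$ (equivalently, the frame contraction $\alpha\mapsto\sum_i\eps_i\act(e_i\interior\alpha)$, which on $\co(T\submfd)$-valued $2$-forms is the Ricci contraction $R\mapsto\ricci$, $\ricci_X(Y)=\sum_i\eps_i(R_{e_i,X}Y)$ of \S\ref{par:graded-lie-alg}) to the Gau\ss\ and Codazzi equations of \S\ref{par:weyl-gcr}. The operator is used in two guises: for \eqref{eq:QV-via-II} it is the contraction of the $\co(T\submfd)$-valued Gau\ss\ equation \eqref{eq:g1}, while for \eqref{eq:CA-via-II} it is the same frame contraction applied to the $\m_0$-valued first Codazzi equation \eqref{eq:c1}, now using the $T\dual\submfd$-module structure on $\m$. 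In each case the contracted left-hand side supplies the partially traced ambient Weyl term, and the right-hand side reorganizes into divergence/trace terms plus an algebraic term whose contraction is computed by \eqref{eq:rc-rm}.

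For \eqref{eq:QV-via-II} I would contract \eqref{eq:g1}, namely $(W^\mfd\restr{T\submfd})\tangent=W^\submfd-\abrack{\iden\wedge\QC^\CV}-\Sh^\CV\wedge\II^\CV$. The crucial structural input is that $\LH W^\submfd=0$, since $\Ms^\submfd$ is a conformal M\"obius structure (Proposition~\ref{p:conf-iff-no-ric}, cf.~\eqref{eq:weyl}); without this the Gau\ss\ equation would not close up into a formula for $\QC^\CV$. The term $\LH\abrack{\iden\wedge\QC^\CV}$ is evaluated by \eqref{eq:rc-rm} with $n$ replaced by $m$, and because $\QC^\CV$ is symmetric its skew part drops, leaving exactly $(m-2)\QC^\CV_0+2(m-1)(\tfrac1m\trace_\conf\QC^\CV)\conf$. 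Finally $\LH(\Sh^\CV\wedge\II^\CV)$ is a short frame computation: expanding $(\Sh^\CV\wedge\II^\CV)_{e_i,X}=\Sh^\CV_{e_i}\II^\CV_X-\Sh^\CV_X\II^\CV_{e_i}$, applying $\sum_i\eps_i(\,\cdot\,Y)$, and using the transpose relation $\cip{\Sh^\CV_X\nmv,Y}=\cip{\II^\CV_XY,\nmv}$, the symmetry of $\II^\CV$, and $\trace_\conf\II^\CV=mH^\CV$, yields $\cip{\II^\CV_X,\II^\CV_Y}-mH^\CV(\II^\CV_XY)$. The contracted Weyl term $\sum_i\eps_i(W^\mfd_{e_i,X}Y)$ comes directly from the left-hand side, and collecting signs reproduces \eqref{eq:QV-via-II}.

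For \eqref{eq:CA-via-II} I would contract the first Codazzi equation \eqref{eq:c1}, $(W^\mfd\restr{T\submfd})\normal=\d^{\nabla,D}\II^\CV-(\CA^{D,\CV})^\sharp\wedge\conf$. Contracting the exterior covariant derivative gives $\sum_i\eps_i\bigl((\nabla^D_{e_i}\II^\CV)_X-(\nabla^D_X\II^\CV)_{e_i}\bigr)=(\divg^{\nabla,D}\II^\CV)(X)-m\nabla^D_XH^\CV$, the mean curvature term again arising from $\sum_i\eps_i\II^\CV_{e_i}=\trace_\conf\II^\CV=mH^\CV$. The contraction of $(\CA^{D,\CV})^\sharp\wedge\conf$ produces $(m-1)\CA^{D,\CV}_X$: one index is absorbed by $\trace_\conf\conf=m$ while the wedge antisymmetrization removes one, giving the coefficient $m-1$. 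Transposing (the second equation of \eqref{eq:c1} is minus the transpose of the first) and rearranging, with the contracted Weyl term $(\sum_iW^\mfd_{e_i,X}\eps_i)\normal$ from the left-hand side, gives \eqref{eq:CA-via-II}.

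The main obstacle I anticipate is bookkeeping rather than anything conceptual: keeping the tangential and normal orthoprojections consistent through each contraction, tracking signs coming from the antisymmetric algebraic brackets and from the exterior derivative, and checking that the numerical coefficients emerging from \eqref{eq:rc-rm} and from the $\conf$-wedge contraction are precisely $m-2$, $2(m-1)$ and $m-1$. The only genuinely structural point is the vanishing $\LH W^\submfd=0$, which is exactly where the conformal normalization of the induced M\"obius structure enters; everything else is a routine (if careful) trace computation in a local frame.
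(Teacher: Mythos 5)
Your proposal is correct and follows essentially the same route as the paper: both identities are obtained by applying the Ricci contraction (the boundary operator $\LH$) to the Gau\ss\ and Codazzi equations, with the algebraic terms evaluated exactly as you describe via \eqref{eq:rc-rm} and the frame computations of $\LH(\Sh^\CV\wedge\II^\CV)$ and $\LH(\d^{\nabla,D}\Sh^\CV)$. The only cosmetic difference is that the paper contracts the second (transposed) equation of \eqref{eq:c1} directly rather than transposing the first.
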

\begin{proof} Using an orthonormal frame $e_i$ with dual frame $\eps_i$,
we compute that
\begin{equation*}\notag
\LH(\iden\wedge\CA^{D,\CV}(\wnv))_X=\tsum_i
\eps_i\act(\CA^{D,\CV}_X(\wnv)e_i-\CA^{D,\CV}_{e_i}(\wnv)X)
=(m-1)\CA^{D,\CV}_X(\wnv),
\end{equation*}
while, since $D$ is torsion-free, we have
\begin{align*}
\LH(\d^{\nabla, D}\Sh^\CV)_X\wnv&=
\tsum_i\cip{e_i,(\nabla{\tens}D^\CV\,\Sh^\CV)_{e_i,X}\wnv-(\nabla{\tens}D^\CV\,
\Sh^\CV)_{X,e_i}\wnv}\\
&=\tsum_i\cip{-(\nabla{\tens}D^\CV\,\II^\CV)_{e_i,X}e_i
+(\nabla{\tens}D^\CV\, \II^\CV)_{X,e_i}e_i,\wnv}\\
&=-\cip{(\divg^{\nabla,D}\II^\CV)(X),\wnv}+m\ip{\nabla^D_X H^\CV,\wnv}.
\end{align*}
Applying $\LH$ to~\eqref{eq:c1} therefore yields \eqref{eq:CA-via-II}.

For the other equation, note that
\begin{equation*}\notag
\LH\abrack{\iden\wedge\QC^\CV}
=(m-2)\QC^\CV_0+2(m-1)(\tfrac1m\trace_\conf\QC^\CV)\conf,
\end{equation*}
where we use~\eqref{eq:rc-rm}, cf.~\eqref{eq:W-change}.  Next, using the
symmetry properties of $\II^\CV$, we have
\begin{equation*}\notag
\LH(\Sh^\CV\wedge\II^\CV)_XY=
\tsum_i\bigl(\cip{e_i,\Sh^\CV_{e_i}\II^\CV_XY}
-\cip{e_i,\Sh^\CV_X\II^\CV_{e_i}Y}\bigr)
=m H^\CV(\II^\CV_XY)-\cip{\II^\CV_X,\II^\CV_Y}.
\end{equation*}
Substituting these into the Ricci contraction of~\eqref{eq:g1} yields
\eqref{eq:QV-via-II}.
\end{proof}

In particular, observe that~\eqref{eq:CA-via-II} with $\CV=\CV\low_\submfd$
and $\CA^D:=\CA^{D,\CV_\submfd}$, \ie,
\begin{equation}\label{eq:N-normal}
(m-1)\CA^{D}_X=-(\divg^{\nabla,D}\II^0)(X)
+(\tsum_i W^\mfd_{e_i,X}\eps_i)\normal
\end{equation}
determines $\CA^D$ and hence $\CS$ entirely from the conformal metric, $\II^0$
and the normal connection $\nabla$, provided that $(\tsum_i
W^\mfd_{e_i,X}\eps_i)\normal=0$.

\begin{rem}
With a little more work, one can deduce a manifestly conformally invariant
formula for $\CS$: first note that $\divg^D$ is independent of the Weyl
derivative $D$ when applied to symmetric trace-free $2$-tensors of weight
$-m$.  If $\ell$ is a length scale, $\ell^{1-m}\II^0$ is such a tensor with
values in $N\submfd\ltens\Ln$ so that $\divg^\nabla(\ell^{1-m}\II^0)$ is
invariantly defined.  Now a calculation using \eqref{eq:act-on-V} yields
\begin{equation}\label{eq:12}
(m-1)\CS\low_X
(j^{\CD^\submfd}\ell)=-\ell^m\divg^{\nabla}\big(\ell^{1-m}\II^0\bigr)(X)
+\tsum_i(W^\mfd_{e_i,X}\eps_i)\normal\tens\ell.
\end{equation}
As remarked in the introduction, this corrects an error in
\cite[Proposition \textbf{7}.4.9 (a)]{Sha:dg}.

For submanifolds of $S^n$, equation \eqref{eq:12} amounts to the assertion
that $\CS=\jDh\hc{\CS}$, which we have already seen in
Theorem~\ref{th:hom-gcr}.
\end{rem}

Similarly~\eqref{eq:QV-via-II}, with $\CV=\CV\low_\submfd$, determines $\QC$
completely from $\II^0$ and the conformal metric (recall that $\QC_0$ vanishes
when $m=2$ and $\QC=0$ when $m=1$) provided $\tsum_i\eps_i(W^\mfd_{e_i,X}Y)=0$
(which obviously holds when $\mfd$ is conformally flat, in which
case~\eqref{eq:QV-via-II} is an explicit form of $\Quabla_{\smash{\h}}
Q=-\frac12 \LH[\CS\wedge\CS]$ as in Theorem~\ref{th:hom-gcr}):
\begin{equation}\label{eq:Qformula}
\QC_X(Y) = \begin{cases} 0 &\text{for }m=1;\\
\tfrac14 |\II^0|^2 \cip{X,Y}& \text{for }m=2;\\
\tfrac1{m-2}\bigl(\ip{\II^0_X,\II^0_Y}-\tfrac1{2(m-1)} |\II^0|^2\ip{X,Y}\bigr)
& \text{for }m\geq3.
\end{cases}
\end{equation}

When $m=2$ and $\tsum_i\eps_i(W^\mfd_{e_i,X}Y)=0$, $\QC=
\frac14|\II^0|^2\conf$ is essentially the Willmore
integrand~\cite{Wil:tcr} and from~\eqref{eq:MS-formula}, we obtain
\begin{equation}\label{eq:MS-formula2}
\Ms^\submfd=\Ms^\CV - \QC^\CV\qquad\text{where} \qquad \QC^\CV
=-H^\CV(\II^0)-\half K^\CV\conf,
\end{equation}
and $K^\CV=|H^\CV|^2-\half|\II^0|^2=\ip{\detm\II^\CV}$, the determinant being
evaluated using the metric on $\CV^\perp$.  We refer to $K^\CV$ as the
\emphdef{gaussian curvature} of the surface with respect to $\CV$.  With
respect to a Weyl derivative $D$, we then have the following formula:
\begin{equation}\label{eq:nice}
\nr^{D,\submfd} =\nr^{D,\CV}+H^\CV(\II^0)+\tfrac12 K^\CV\conf.
\end{equation}

\subsection{The concrete homological conformal Bonnet theorem}
\label{par:conc-hom-conf-bonnet}

We now specialize once more to immersions of $\submfd$ into the conformal
$n$-sphere $\mfd=S^n$, when $\CV_\mfd=\submfd\times\R^{n+1,1}$ and $\CD^\mfd$
is flat differentiation $\d$. Our goal is to illuminate the abstract aspect of
the homological conformal Bonnet theorem by describing the homological \GCR/
equations in a more explicit way. We shall see that there really are fewer
homological than full \GCR/ equations: although the Ricci equation is
unaffected, the Gau\ss\ and Codazzi equations simplify. We also indicate how
to show this by direct computation.

To this end, we let $\CV=\CV\low_\submfd$ be the central sphere congruence and
introduce an arbitrary compatible Weyl structure $\CV\low_\submfd=\Ln\dsum \Ln
\ltens T\submfd\dsum\Lnc$ (with induced Weyl derivative $D$)

The \GCR/ equations~\eqref{eq:flatGCR}, with $\CD^\mfd=\d$ and
$\CV=\CV\low_\submfd$ (hence $H^\CV=0$) specialize to the following:
\begin{subequations}
\begin{align}
\label{eq:23}%Gauss I with D
0&=W^\submfd-\abrack{\iden\wedge\QC}-\Sh^0\wedge\II^0\\
\label{eq:24}%Gauss II with D
0&=C^{\submfd,D}+\d^D\QC+\CA^D\wedge\II^0\\
\label{eq:25}%Codazzi I with D
0&=\d^{\nabla,D}\II^0-(\CA^{D})^\sharp\wedge\conf,\qquad
0=\d^{\nabla,D}\Sh^0 +\iden\wedge\CA^{D};\\
\label{eq:26}%Codazzi II with D
0&=\d^{\nabla^D}\CA^D-(\nr^{D,\submfd}+\QC)\wedge\Sh^0\\
\label{eq:27}%Ricci with D
0&=R^\nabla-\II^0\wedge\Sh^0.
\end{align}
\end{subequations}
Now the homological formulation shows that of the first four equations, only
two are relevant in each dimension.
\begin{bulletlist}
\item If $m=2$, there are no Weyl tensors or Codazzi tensors on
$\submfd$:~\eqref{eq:23} and~\eqref{eq:25} are trivial, while~\eqref{eq:24}
and~\eqref{eq:26} are independent of $D$.

\item If $m=3$, there are still no Weyl tensors on $\submfd$:~\eqref{eq:23} is
trivial, while~\eqref{eq:26} is an automatic consequence of the others, which
are independent of $D$,

\item If $m\geq4$,~\eqref{eq:24} and~\eqref{eq:26} follow from the others,
which are independent of $D$.
\end{bulletlist}
It is possible to check these facts directly, without recourse to the general
theory of~\cite{CaDi:di} used above: it is straightforward that~\eqref{eq:23}
and~\eqref{eq:25} are trivial in low dimensions, but rather more difficult to
check (by differentiating) that~\eqref{eq:24} and~\eqref{eq:26} are
consequences of these in higher dimensions. In fact these relations follow
easily from the theory of the differential lift~\cite{CaDi:di}: one must check
that the right hand sides of the above equations satisfy
$\d^D\eqref{eq:23}=\abrack{\iden\wedge\eqref{eq:24}}$ and
$\d^{D,\nabla}\eqref{eq:25}=\abrack{\iden\wedge\eqref{eq:26}}$.

Instead of doing this, we shall instead concentrate on the fact that the
relevant equations in each dimension are \mob/-invariant, \ie, independent of
$D$.  This is clear for the Ricci equation~\eqref{eq:27} and for~\eqref{eq:23}
(which is only relevant when $m\geq 4$). We now look at the other equations,
and describe explicitly the \mob/-invariant operators involved.

We first note that~\eqref{eq:25} (which is only relevant when $m\geq3$)
may be rewritten
\begin{equation*}\notag
\CCoda^\nabla \Sh^0 =0,
\end{equation*}
where $\CCoda$ is the conformal Codazzi operator
\begin{equation}
\CCoda S = \d^D S +\frac1{m-1}\iden\wedge(\divg^D S)^\flat
\end{equation}
on symmetric tracefree endomorphisms $S$ of weight $-1$. It is easy to check
that $\CCoda$ is a conformally-invariant first order differential operator
(see~\cite{Feg:cio} for the general theory), which is identically zero for
$m=2$. Coupling to the connection $\nabla$ on $L\ltens N\dual\submfd$ gives
$\CCoda^\nabla$.

We next consider~\eqref{eq:26} when $m=2$. Applying the Hodge star operator,
we have
\begin{equation*}\notag
\divg^{D,\nabla}\divg^{D,\nabla}J\II^0+\ip{J\II^0, \nr^{D,\submfd}_0}=0,
\end{equation*}
where we have used the fact that only the symmetric tracefree part
$\nr^{D,\submfd}_0$ of $\nr^{D,\submfd}+\QC$ can contribute to the second
term, and the definition $\cip{(J\II^0)_X,\wnv}=*\cip{\II^0_X,\wnv}$ of the
complex structure on $S^2_0T\dual \submfd\tens N\submfd$. We can rewrite this
as
\begin{equation*}\notag
(\Mh^\nabla)^*(J\II^0)=0,
\end{equation*}
where $(\Mh^\nabla)^*$ is the formal adjoint of the \mob/ structure coupled to
$\nabla$, \ie, $\Mh^\nabla\con=\sym_0(D^\nabla)^2\con+
\nr^{D,\submfd}_0\con$. This is a \mob/-invariant second order differential
operator.

Finally we must consider~\eqref{eq:24} when $m=2$ or $3$. In both cases
$\QC_X(Y)=\cip{\II^0_X,\II^0_Y}-\frac14|\II^0|^2\cip{X,Y}$ (which equals
$\frac14|\II^0|^2\cip{X,Y}$ for $m=2$) and $\CCoda^\nabla\II^0=0$ so that
$\CA^D\wedge\II^0=\ip{\II^0,\d^{\nabla,D}\II^0}$. Hence~\eqref{eq:24} is
equivalent to the equation
\begin{equation*}\notag
C^\submfd+\ip{\QCoda^\nabla(\II^0)}=0,
\end{equation*}
where
\begin{equation}
\QCoda (S) = \d^D(S^2-\tfrac14 |S|^2\iden)+S(\d^D S)
\end{equation}
is a quadratic first order differential operator on symmetric tracefree
endomorphisms $S$ of weight $-1$ (viewed also as $L^{-1}\ltens
T\submfd$-valued $1$-forms), which is applied to $\II^0$ by coupling to the
connection $\nabla$ and contracting by the metric $\ip{\cdot\,,\cdot}$ on the
weightless normal bundle. Thus $\ip{\QCoda^\nabla(\II^0)}=
\d^DQ+\ip{\II^0,\d^{\nabla,D}\II^0}$. Again one can check (by differentiating
with respect to $D$) that $\QCoda$ is conformally-invariant for $m=2,3$.  In
the case $m=3$, the tensor $Q$ was introduced by Cartan~\cite{Car:dhc} in his
study of conformally-flat hypersurfaces in $S^4$. We shall see its use in the
study of conformally-flat submanifolds in Theorem~\ref{th:mob-flat-explicit}.

Apart from $(\Mh^\nabla)^*$, the operators we have introduced are at most
first order, and so depend only on $\conf$, not on the \mob/ structure
$\Ms$. The dependence of $(\Mh^\nabla)^*(J\II^0)$ is straightforward: if $q$
is a quadratic differential then
\begin{equation}\label{eq:mh-dual-q}
((\Mh+q)^\nabla)^*(J\II^0)=(\Mh^\nabla)^*(J\II^0)+\ip{q,J\II^0}.
\end{equation}

We summarize this discussion by stating our homological conformal Bonnet
theorem.

\begin{thm}\label{th:hom-conf-bonnet}
$\submfd$ can be locally immersed in $S^n$ with induced conformal \mob/
structure $(\conf,\Ms^\submfd)$, weightless normal bundle
$(N\submfd\ltens\Ln,\nabla)$ and tracefree second fundamental form $\II^0$
\textup(or conformal acceleration $\CA$\textup) if and only if \textup(with
$\Sh^0$ the transpose of $\II^0$ and $Q$ as in~\eqref{eq:Qformula}\textup)
\begin{align}
\label{eq:hG1}
0&=W^\submfd-\abrack{\iden\wedge\QC}-\Sh^0\wedge\II^0&&m\geq4\\
\label{eq:hG2}
0&=C^\submfd+\ip{\QCoda^\nabla(\II^0)}&&m=2,3\\
\label{eq:hC1}
0&=\CCoda^\nabla\II^0&& m\geq3\\
\label{eq:hC2}
0&=(\Mh^\nabla)^*(J\II^0) && m=2\\
\label{eq:hR}
0&=R^\nabla-\II^0\wedge\Sh^0.&&
\end{align}
Moreover, in this case, the immersion is unique up to \mob/ transformations of
$S^n$.
\end{thm}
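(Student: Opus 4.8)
The plan is to read off Theorem~\ref{th:hom-conf-bonnet} as the explicit packaging of two results already established: the conformal Bonnet theorem for enveloped sphere congruences (Theorem~\ref{th:conf-bonnet-sphere}) and the abstract homological reformulation of the \GCR/ equations (Theorem~\ref{th:hom-gcr}). First I would specialize Theorem~\ref{th:conf-bonnet-sphere} to the central sphere congruence $\CV=\CV\low_\submfd$, \ie, set $H^\CV=0$. Since $\CD^\mfd=\d$ is flat for $\mfd=S^n$, the flat \GCR/ system~\eqref{eq:flatGCR} collapses to the full system~\eqref{eq:fullGCR} on the primitive data $(\conf,\Ms^\submfd)$, $(N\submfd\ltens\Ln,\nabla)$ and $\II^0$ (or $\CA$). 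This step already supplies both the local immersion and its uniqueness up to \mob/ transformations of $S^n$; what remains is purely to show that~\eqref{eq:fullGCR} is equivalent to the stated equations~\eqref{eq:hG1}--\eqref{eq:hR}.

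For the equivalence I would invoke Theorem~\ref{th:hom-gcr}: once $\CS=\jDh\hc{\CS}$ and $\QC=-\half\Quabla_{\h}^{-1}\LH\liebrack{\CS\wedge\CS}$ are forced by the normalization conditions, the full system~\eqref{eq:fullGCR} is equivalent to the homological system~\eqref{eq:homGCR}. It then remains to render~\eqref{eq:homGCR} concretely. Choosing a compatible Weyl structure $\CV\low_\submfd=\Ln\dsum\Ln\ltens T\submfd\dsum\Lnc$ with Weyl derivative $D$ and applying the decomposition~\eqref{eq:master-decomp}--\eqref{eq:full-decomp}, the system~\eqref{eq:fullGCR} expands into~\eqref{eq:23}--\eqref{eq:27}. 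The homological content is that in each dimension only two of the first four of these carry information, the others being trivially satisfied or implied; the Ricci equation~\eqref{eq:27}, already in the final form~\eqref{eq:hR}, is unaffected.

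The substantive work---and the step I expect to be the main obstacle---is the dimension-by-dimension identification of the surviving equations with manifestly \mob/-invariant operators. I would recast the Codazzi equation~\eqref{eq:25}, nontrivial only for $m\geq3$, as $\CCoda^\nabla\Sh^0=0$ using the trace formula~\eqref{eq:N-normal} for $\CA^D$; apply the Hodge star to~\eqref{eq:26} in the case $m=2$ to obtain $(\Mh^\nabla)^*(J\II^0)=0$, keeping only the symmetric tracefree part $\nr^{D,\submfd}_0$ of $\nr^{D,\submfd}+\QC$; and combine the formula~\eqref{eq:Qformula} for $\QC$ with $\CCoda^\nabla\II^0=0$, so that $\CA^D\wedge\II^0=\ip{\II^0,\d^{\nabla,D}\II^0}$, to rewrite~\eqref{eq:24} for $m=2,3$ as $C^\submfd+\ip{\QCoda^\nabla(\II^0)}=0$; the Gauss equation~\eqref{eq:23} survives unchanged for $m\geq4$ as~\eqref{eq:hG1}. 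The crux is then checking that $\CCoda$, $\QCoda$ and $(\Mh^\nabla)^*$ are conformally invariant---which I would verify by differentiating with respect to $D$---and confirming that the discarded equations genuinely follow in each dimension, either abstractly from Theorem~\ref{th:hom-gcr} or directly via the identities $\d^D\eqref{eq:23}=\abrack{\iden\wedge\eqref{eq:24}}$ and $\d^{D,\nabla}\eqref{eq:25}=\abrack{\iden\wedge\eqref{eq:26}}$.
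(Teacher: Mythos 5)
Your proposal follows the paper's own route exactly: the paper explicitly notes that Theorem~\ref{th:hom-conf-bonnet} is ``essentially the obvious corollary'' of Theorem~\ref{th:conf-bonnet-sphere} with $\CV=\CV\low_\submfd$ and Theorem~\ref{th:hom-gcr}, and then carries out precisely the Weyl-structure expansion into~\eqref{eq:23}--\eqref{eq:27} and the dimension-by-dimension identification with $\CCoda^\nabla$, $\QCoda^\nabla$ and $(\Mh^\nabla)^*$ that you describe, including the invariance checks by differentiating in $D$ and the two alternative routes (abstract BGG theory versus the identities $\d^D\eqref{eq:23}=\abrack{\iden\wedge\eqref{eq:24}}$, $\d^{D,\nabla}\eqref{eq:25}=\abrack{\iden\wedge\eqref{eq:26}}$) for discarding the redundant equations. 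The argument is correct and matches the paper's proof in both structure and detail.
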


\section{Submanifold geometry in the conformal sphere}
\label{sec:curv-surf}

\subsection{Projective geometry of curves}
\label{par:curves}

In this section we briefly review the geometry of curves in $M=S^n$. In this
case the primitive data on the curve is a projective structure $\Laplace\colon
J^2L^{1/2}\to L^{-3/2}$ (equivalently given by the bilinear operator
$\Ms^\submfd$), metric connection $\nabla$ on a metric vector bundle
$N\submfd\Ln$ of rank $n-1$, and the conformal acceleration $\CA$, a
$N\dual\submfd$-valued $1$-form. Any such data give rise to a local conformal
immersion of $\submfd$, unique up to \mob/ transformation. The M\"obius
invariants of $\submfd$ can be computed in terms of an ambient Weyl connection
$D^M$. This induces an intrinsic Weyl structure $D$ with respect to which
$\Laplace = D^2 +\half \ns^{D,\submfd}$, where
$\ns^{D,\submfd}=\nr^{D^M}(T,T)+\half |H|^2$ for a weightless unit tangent
vector $T$ and (mean) curvature vector $H=D^M_T T$. The conformal acceleration
$\CA$ is given by $\CA =\nr^{D^M}(T)^\perp - \nabla^D_T H =\nr^{D^M}(T) -
\nr^{D^M}(T,T)T - D^M_T H - |H|^2 T$.

By specializing to the case $\nr^{D^M}=0$, these invariants can be related to
the standard euclidean theory of curves: here $H=\kappa N$ where $N$ is the
principal unit normal and $\kappa$ is the curvature, and $D^M_T N = \tau B$
where $B$ is the unit binormal, and $\tau$ is the torsion. Thus
$\ns^{D,\submfd}= \half\kappa^2$ and $\CA= -\dot\kappa N - \kappa \tau B$.

{\it Vertices} are points where $\CA = 0$ and we can identify $\int_\submfd
|\CA|^{1/2}$ with the conformal arclength of Musso~\cite{Mus:caf}: this is
well defined because $\CA$ is a section of $L^{-3}N\submfd\cong
L^{-2}\CV^\perp$, hence $|\CA|^{1/2}$ is a section of $L^{-1}$ which can be
integrated on a compact curve.  Note if we now define $\Laplace^D \mu = D^2\mu
+ w \ns^{D,\submfd}\mu$ on sections $\mu$ of $L^w$, then
$\del_\gam\Laplace^D=(2w-1) \cip{\gam,D\mu}$, which is algebraic in
$\gam$. This allows for the construction of further invariants such as
$4\ip{\Laplace^{D,\nabla}\CA ,\CA} - 5\ip{D\CA,D\CA}$.

% In codimension one, $\CA$ is a section of $L^{-2} = \cO(-4)$ tensored with a
% flat line bundle. In complex conformal geometry, it follows that a
% holomorphic immersed $\C P^1$ in a holomorphic conformal surface is
% necessarily a conformal geodesic.

\subsection{Surfaces, the conformal Bonnet theorem, and quaternions}
\label{par:surfaces}

Let $\submfd$ be a surface in $S^n$. Then the Hodge star operator on
$\Omega^1(\submfd,T\dual\submfd)$ restricts to give a complex structure on
$S^2_0T\dual\submfd$. As we have noted already, this identifies
$S^2_0T\dual\submfd$ with the bundle $\Omega^{2,0}\submfd$ of quadratic
differentials. More precisely the $\pm \iI$ eigenspace decomposition of this
complex structure may be written $S^2_0T\dual\submfd\tens\C\cong
\Omega^{2,0}\submfd\dsum \Omega^{0,2}\submfd$, where
$\Omega^{2,0}\submfd=(T^{1,0}\submfd)^{-2}$,
$\Omega^{0,2}\submfd=(T^{0,1}\submfd)^{-2}$, and
$T\submfd\tens\C=T^{1,0}\submfd\dsum T^{0,1}\submfd$ is the $\pm\iI$
eigenspace decomposition of the complex structure on the tangent bundle into
the two null lines of the complexified conformal metric. The isomorphism
$S^2_0T\dual\mfd\to \Omega^{2,0}\submfd$ is the projection $q\mapsto q^{2,0}$
with respect to this decomposition.

We now note that the exterior derivative $\d^D\colon
\Omega^1(\submfd,T\dual\submfd)\to \Omega^2(\submfd,T\dual\submfd)$ coupled to
a Weyl connection $D$ restricts to give the holomorphic structure on
$S^2_0T\dual\submfd$: $(\d^D q)^{2,1}=\dbar(q^{2,0})\in 
\Omega^{1,1}(\submfd,(T^{1,0}\submfd)^{*})$.  Since this is independent of the
choice of Weyl structure, we denote $\d^Dq$ by $\d q$. Similarly, $(\d
q)^{1,2} = \del(q^{0,2})$. We therefore have
\begin{align*}
\d q &= \dbar(q^{2,0}) + \del(q^{0,2})\\
\divg q & = {*(\dbar(q^{2,0}) - \del(q^{0,2}))},
\end{align*}
the second line being obtained from the first via $\d {*q}=*{\divg q}$, showing
that the divergence is also a conformally invariant operator on quadratic
differentials. Thus $\d q=0$ iff $\divg q=0$ iff $\dbar(q^{2,0})=0$ iff
$\del(q^{0,2})=0$, in which case we say $q$ is \emphdef{holomorphic}
quadratic differential.

Applying similar ideas to the \cso/ $\CS$ yields the following fact.
\begin{prop}\label{p:N-lift} Let $\Ln$ be an immersion of a surface $\submfd$
in $S^n$ whose central sphere congruence $\CV_\submfd$ has \cso/ $\CS$. Then
$\LH{*\CS}=0$ and ${*\CS}$ is the differential representative of its homology
class $J\II^0-J\Sh^0$.
\end{prop}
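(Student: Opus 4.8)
The plan is to verify the three properties that characterise the canonical differential representative $\jDh$ of \S\ref{par:recover}: that $*\CS$ is a cycle ($\LH(*\CS)=0$), that its homology class is $J\II^0-J\Sh^0$, and that $\LH\dDh(*\CS)=0$, the last of which then forces $*\CS=\jDh(J\II^0-J\Sh^0)$ by uniqueness of the representative. Throughout I would work in a local oriented weightless orthonormal frame $e_1,e_2$ with dual coframe $\eps_1,\eps_2$, so that $(*\CS)_{e_1}=-\CS_{e_2}$ and $(*\CS)_{e_2}=\CS_{e_1}$. Since $*$ touches only the $1$-form index, $*\CS$ still takes values in $\stab(\Ln)\intersect\m$ and $\pi(*\CS)=*(\pi\CS)=*\II^0-*\Sh^0$; by the definition of the complex structure $J$ as the Hodge star on quadratic differentials, this equals $J\II^0-J\Sh^0$, which is again symmetric and tracefree.

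The cycle condition comes first. Because $\CS$ takes values in $\stab(\Ln)$, only its $\m_0$-component $\pi\CS=\II^0-\Sh^0$ contributes to $\LH(*\CS)=\tsum_i\eps_i\act(*\CS)_{e_i}=\tsum_i\abrack{\eps_i,(*\pi\CS)_{e_i}}$, the grade $-1$ part being annihilated since $T\dual\submfd$ is abelian. Evaluating $\abrack{\eps_i,\cdot}=\eps_i\circ(\cdot)$ kills the $\II^0$-block (as $\eps_i$ annihilates $N\submfd$) and leaves $\cip{e_1,\Sh^0_{e_2}\cdot}-\cip{e_2,\Sh^0_{e_1}\cdot}$, which vanishes precisely because $\Sh^0$ (equivalently $\II^0$) is symmetric. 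This is the mirror image of the identity $\LH\CS=-\trace\Sh^0=0$, which instead used tracefreeness; here symmetry does the work. Given $\LH(*\CS)=0$, the homology class of the cycle $*\CS$ is read off from its symmetric tracefree $\m_0$-part $\pi(*\CS)=J\II^0-J\Sh^0$, giving the second property.

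The substance is $\LH\dDh(*\CS)=0$. Since $\LH(*\CS)=0$ we have $\Quabla_\h(*\CS)=\LH\dDh(*\CS)$, and a frame computation reduces its vanishing to showing that $F^*:=(\dDh{*\CS})_{e_1,e_2}=\tsum_i\CDh_{e_i}\CS_{e_i}$, the covariant divergence of $\CS$, takes values in $\stab(\Ln)^\perp\intersect\m=N\dual\submfd$, on which $\eps_i\act$ acts by zero. I would check this grade by grade. The $N\submfd$-component of $F^*$ is $\tsum_i\II^0_{e_i}e_i=\trace_\conf\II^0=0$ by tracefreeness of the canonical reduction, exactly paralleling the way the $N\submfd$-component of the curl $(\dDh\CS)_{e_1,e_2}$ vanishes by symmetry of $\II^0$. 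The $\m_0$-component of $F^*$ I would identify with $*$ applied to the $\m_0$-component of the curl $(\dDh\CS)_{e_1,e_2}$; the latter is the Codazzi equation~\eqref{eq:25}, which holds for the immersion (and is in any case an identity in dimension two). Thus $\LH\dDh\CS=0$ encodes the symmetry of $\II^0$ together with Codazzi, whereas $\LH\dDh(*\CS)=0$ encodes tracefreeness together with the Hodge dual of Codazzi, and both pairs hold.

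The main obstacle is exactly this last intertwining: proving that the $\m_0$-component of the divergence $F^*$ is the Hodge star of the $\m_0$-component of the curl of $\CS$. This is the $1$-form avatar, coupled to $\nabla$ and to the conformal connection $D$, of the surface identity $\d{*q}=*\divg q$ for a weightless normal-valued quadratic differential $q$, supplemented by the analogous relation for the cross terms involving $\CA^D$ (for which I would use the explicit formula $\CA^D=-\divg^{\nabla,D}\II^0$ valid here since $W^\mfd=0$). A conceptually cleaner packaging, which I would pursue if it can be pushed through, is to show directly that $*$ commutes with $\Quabla_\h$: the algebraic Kostant part of $\Quabla_\h$ commutes with $*$ by $\SO(T\submfd)\times\SO(N\submfd)$-equivariance, so everything comes down to controlling the nilpotent first-order part, which is precisely where the divergence-versus-curl asymmetry resides. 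Once this is in hand, $\CS\in\kernel\Quabla_\h$ (immediate from $\LH\CS=0$ and $\LH\dDh\CS=0$) yields $*\CS\in\kernel\Quabla_\h$, hence $\LH\dDh(*\CS)=\Quabla_\h(*\CS)=0$, and the proof is complete.
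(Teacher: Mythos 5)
Your argument is essentially correct, but it takes a genuinely different route from the paper's. You fix a Weyl structure and check, graded component by graded component, that $(\dDh{*\CS})_{e_1,e_2}$ lies in $\stab(\Ln)^\perp\intersect\m$: the $N\submfd$-part is $\pm\trace_\conf\II^0=0$, and the $\m_0$-part is $\divg^{\nabla,D}(\II^0-\Sh^0)-\tsum_i\abrack{e_i,\CA^D_{e_i}}$, which does vanish block by block once one inserts $\CA^D=-\divg^{\nabla,D}\II^0$ (equation~\eqref{eq:N-normal} with $m=2$) — so your sketch of the hard step goes through, although the intermediate claim that the $\m_0$-part of the divergence is literally $*$ of the $\m_0$-part of the curl is not quite what one ends up proving; the clean statement is simply that both vanish, the curl by the Codazzi identity and the divergence by its contraction. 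The paper avoids all of this: it splits $\CS=\CS^{1,0}+\CS^{0,1}$, observes that $\CS^{1,0}\restr{\CV_\submfd^\perp}$ takes values in $\Ln^{(1,0)}$ and that $\CD^{1,0}$ preserves $\Ln^{(1,0)}$ because the $(1,0)$-directions are null, and then deduces that $\dDh{*\CS}\restr{\CV_\submfd^\perp}$ takes values in $\Hom(\CV_\submfd^\perp,\Ln)$ formally from the already-established fact that $\dDh\CS\restr{\CV_\submfd^\perp}$ does (\ie\ $\LH\dDh\CS=0$), with no frame, no Weyl structure and no appeal to the explicit formula for $\CA^D$. Your computation buys explicitness; the paper's buys brevity and makes transparent why this is a two-dimensional phenomenon. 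One warning: the proposed ``cleaner packaging'' --- that $*$ commutes with $\Quabla_\h$ --- should not be relied upon. The algebraic (Kostant) part of $\Quabla_\h$ is indeed $J$-equivariant, but the first-order part is built from $\dDh$, and $\d$ and $\divg$ are genuinely independent operators on quadratic differentials ($\d q=\dbar(q^{2,0})+\del(q^{0,2})$ while $\divg q={*\bigl(\dbar(q^{2,0})-\del(q^{0,2})\bigr)}$), so there is no reason for $\Quabla_\h$ to commute with $*$ away from the special cycle $\CS$; keep the component-wise route as your actual proof.
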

\begin{proof}
The fact that $\LH{*\CS}=0$ follows easily from the symmetry of the second
fundamental form (or shape operator). To show $*\CS$ is the differential
representative, we need to show that $\LH\dDh{*\CS}=0$, \ie,
$\dDh{*\CS}|_{\CV_\submfd^\perp}\in
\Omega^2(\submfd,\Hom(\CV_\submfd^\perp,\Ln))$. For this we write
$\Lnps\tens\C=\Ln^{(1,0)}+\Ln^{(0,1)}$, where $\Ln^{(1,0)}\cap
\Ln^{(0,1)}=\Ln\tens\C$, $\Ln^{(1,0)}/(\Ln\tens\C)\cong \Ln\ltens
T^{1,0}\submfd$ and $\Ln^{(0,1)}/(\Ln\tens\C)\cong \Ln\ltens T^{0,1}\submfd$.
Now $\CS=\CS^{1,0}+\CS^{0,1}$ and ${*\CS}=\iI(\CS^{1,0}-\CS^{0,1})$, where
\begin{equation*}
\CS^{1,0}\restr{\CV_\submfd^\perp}\in
\Omega^{1,0}(\submfd,\Hom(\CV_\submfd^\perp, \Ln^{(1,0)})),\qquad
\CS^{0,1}\restr{\CV_\submfd^\perp}\in
\Omega^{0,1}(\submfd,\Hom(\CV_\submfd^\perp,\Ln^{(0,1)})).
\end{equation*}
Since the $(1,0)$ and $(0,1)$ directions are null,
$\CD^{1,0}\Ln^{(1,0)}\subset \Ln^{(1,0)}$ and $\CD^{0,1}\Ln^{(0,1)}\subset
\Ln^{(0,1)}$. It follows that
\begin{equation*}
\dDh(\CS^{1,0})\restr{\CV_\submfd^\perp}
\in\Omega^{1,1}(\submfd,\Hom(\CV_\submfd^\perp,\Ln^{(1,0)})),
\quad
\dDh(\CS^{0,1})\restr{\CV_\submfd^\perp}
\in\Omega^{1,1}(\submfd,\Hom(\CV_\submfd^\perp,\Ln^{(0,1)})).
\end{equation*}
Now $\dDh{\CS}|_{\CV_\submfd^\perp}$ takes values in
$\Hom(\CV_\submfd^\perp,\Ln)$, hence so do
$\dDh(\CS^{1,0})\restr{\CV_\submfd^\perp}$,
$\dDh(\CS^{0,1})\restr{\CV_\submfd^\perp}$ and
$\dDh{*\CS}|_{\CV_\submfd^\perp}$.
\end{proof}
There is a conceptual explanation for the above result: in complexified
conformal geometry (or in signature $(1,1)$), $\II^0$ does not lie in an
irreducible homology bundle and its differential lift is the sum of the
differential lifts of the two irreducible components.

There is another fact we shall need: note that if $\eta\in
\Omega^1(\submfd,T\dual\mfd)$ then $\LH\eta=0$ and $\hc{\eta}$ is a quadratic
differential $q$ given by the trace-free symmetric tangential part of $\eta$.

\begin{prop} \label{p:eta-and-q} Suppose $\hc{\eta}=q$ and
and $\LH\dDh\eta=0$.  Then $\eta=q$ \textup(\ie, $\eta$ is trace-free,
symmetric and tangential\textup), $\dDh\eta=0$ if and only if $q$ is
holomorphic \textup(\ie, $\d q=0$\textup), and $\liebrack{\CS\wedge\eta}=0$ if
and only if $q$ commutes with the shape operators \textup(\ie,
$q\wedge\Sh^0=0$\textup).
\end{prop}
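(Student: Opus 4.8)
The plan is to unpack the three assertions in turn, working entirely with the decomposition of $\eta$ and $\CS$ relative to the filtration $0\subset\Ln\subset\Lnps\subset\CV\low_\submfd$ and the homological operators $\LH$ and $\dDh$. Throughout I use that $m=2$ and that $\CS=\jDh\hc{\CS}$ has $\pi\CS$ symmetric and tracefree (Proposition~\ref{p:recover}). The first claim is the cleanest: since $\hc{\eta}=q$ and $\LH\eta=0$, $\eta$ is automatically a cycle in $Z_1(T\dual\submfd,T\dual\mfd)$. But $T\dual\submfd$-valued $1$-forms sit inside $\kernel\LH$, and the differential-lift characterization (the $k=1$ case of Proposition~\ref{p:diff-lift} with $W=T\dual\mfd$) says that $\eta$ being a cycle with $\LH\dDh\eta=0$ forces $\eta=\jDh\hc{\eta}$; and for a section of $T\dual\submfd$ the lift is the section itself, so $\eta=q$, i.e.\ $\eta$ is the trace-free symmetric tangential part, which is precisely the statement that $\eta=q$. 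This is the same mechanism already invoked for $\CS$, so I would state it by appealing to the uniqueness of the canonical differential representative.

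Given $\eta=q$, the second claim ($\dDh\eta=0 \Leftrightarrow q$ holomorphic) is a direct translation. Since $\eta=q$ is a $T\dual\submfd$-valued $1$-form identified with a quadratic differential via the Hodge-star complex structure (as in \S\ref{par:surfaces}), the coupled exterior derivative $\dDh$ restricts to the holomorphic structure on $S^2_0T\dual\submfd$: by the computation just above Proposition~\ref{p:N-lift}, $\d^D q=\dbar(q^{2,0})+\del(q^{0,2})$, independent of the Weyl structure. Hence $\dDh\eta=\d q=0$ is exactly $\d q=0$, the holomorphicity condition. I would simply cite the identity $\d^Dq=\d q$ established there and note it equals the defining equation for a holomorphic quadratic differential.

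The third claim, $\liebrack{\CS\wedge\eta}=0 \Leftrightarrow q\wedge\Sh^0=0$, is where the real work lies and will be the main obstacle. Here I would compute the bracket $\liebrack{\CS\wedge\eta}$ using the graded Lie algebra structure of \S\ref{par:graded-lie-alg}: $\eta=q$ takes values in $T\dual\submfd\subset\stab(\Ln)^\perp$, while $\pi\CS=\II^0-\Sh^0$ with $\CS$ valued in $\stab(\Ln)\cap\m$. The key point is that $\liebrack{T\dual\submfd,\stab(\Ln)\cap\m}$ lands in $\m$, and only the $\Sh^0$-part of $\pi\CS$ pairs nontrivially against a tangential conormal $q$, since the bracket $\abrack{q,\Sh^0}$ involves $\abrack{\gamma,A}=\gamma\circ A$ whereas $\abrack{q,\II^0}$ would require the $\Hom(T\submfd,N\submfd)$-component to act, which is killed by the structure of $\stab(\Ln)^\perp$. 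Carrying out the wedge carefully, $\liebrack{\CS\wedge\eta}$ should reduce, up to a nonzero algebraic factor, to the antisymmetrized composition $q\wedge\Sh^0\in\Omega^2(\submfd,\Hom(N\submfd,N\submfd))$, which vanishes precisely when $q$ commutes with the shape operators. The delicate part is tracking which graded components survive and verifying that the map $q\mapsto \liebrack{\CS\wedge\eta}$ is, as an algebraic operator on $q$, faithful enough that its vanishing is equivalent to $q\wedge\Sh^0=0$ rather than merely implied by it; I expect this to follow from the symmetry and trace-freeness of both $q$ and $\Sh^0$ together with $m=2$, but it requires an honest index computation with an orthonormal frame $e_i,\eps_i$.
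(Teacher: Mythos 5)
Your proposal is correct, and on the second and third claims it follows essentially the same path as the paper's (very terse) proof. The genuine difference is in the first claim. The paper argues directly: since $\eta$ takes values in the abelian subalgebra $T\dual\mfd$, the $\d^D$-part of $\dDh\eta$ is annihilated by $\LH$, so $\LH\dDh\eta=\LH\abrack{\iden\wedge\eta}$; by the Kostant adjointness of $\LH$ and $\abrack{\iden\wedge\cdot}$ this forces $\abrack{\iden\wedge\eta}=0$, and the kernel of the Ricci map for $m=2$ is precisely $S^2_0T\dual\submfd$, whence $\eta$ is tangential, trace-free and symmetric. You instead invoke the uniqueness of the canonical differential representative (an element with $\LH\alpha=0$, prescribed homology class, and $\LH\dDh\alpha=0$ is determined) to get $\eta=\jDh q$, and then identify $\jDh q$ with $q$. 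That is legitimate --- the paper uses the same uniqueness in the proof of Theorem~\ref{th:hom-gcr} --- but note that checking the candidate $q$ itself satisfies $\LH\dDh q=0$ requires exactly the fact $\abrack{\iden\wedge q}=0$ for trace-free symmetric $q$ when $m=2$; so your route is the paper's algebra with an extra layer of BGG machinery on top, whereas the adjointness argument reaches the conclusion without first guessing the answer. Finally, your caution about the third claim is unnecessary: by the identity $\pi\liebrack{S_{-1},T_0}=\abrack{S_{-1},\pi T_0}$ of \S\ref{par:graded-lie-alg}, one has $\liebrack{\CS_X,q_Y}=-\abrack{q_Y,\II^0_X-\Sh^0_X}=q_Y\circ\Sh^0_X$ exactly (the $\II^0$ term dies because $q_Y$ is tangential and $\II^0_X$ is normal-valued), so $\liebrack{\CS\wedge\eta}=-q\wedge\Sh^0$ on the nose; the equivalence is tautological and no faithfulness check or frame computation is needed, which is why the paper calls this part immediate.
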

\begin{proof} If $0=\LH\dDh\eta=\LH\abrack{\iden\wedge\eta}$ then by
adjointness $0=\abrack{\iden\wedge\eta}$, from which it easily follows that
$\eta$ is tangential and trace-free symmetric. Then $\dDh\eta=0$ if and only
if $\d^D\eta=0$ for some (hence any) Weyl derivative $\d$, \ie, $q$ is
holomorphic. The last part is immediate.
\end{proof}
In this statement we are, as usual, identifying (the pullback to $\submfd$ of)
$T\dual\mfd$ with the nilradical bundle $\stab(\Ln)^\perp$ in
$\stab(\Ln)\subset \submfd\times\so(n+1,1)$. If we regard $\eta$ in this
way, then the statement $\eta=q$ means, more precisely:
\begin{equation}
\eta_X\act\sigma=0, \qquad \eta_X\act\CD_Y\sigma =
-q(X,Y)\sigma,\qquad\eta_X(\CV\low_\submfd) \subset \Lnps,\qquad
\eta_X\restr{\CV_\submfd^\perp}=0.
\end{equation}

\subsubsection*{The conformal Bonnet theorem in conformal coordinates}

Surfaces are conformally flat, \ie, locally, we can introduce a holomorphic
coordinate $z=x+iy$, so that $(x,y)$ are conformal (aka.~isothermal)
coordinates. Hence we can refer everything to the flat \mob/ structure $\Ms^z$
determined by $z$ and, in this setting, the \GCR/ equations have a very
down-to-earth flavour. Recall that the flat connection $\d=\CD^{S^n}$ on
$\submfd\times\R^{n+1,1}$ may be written $\d=\CD^\submfd+\CQ+\nabla+\CS$ with
$\QC$ pure trace so that, for $\sigma$ a section of $\Ln$,
\begin{equation*}
\del_z^2\sigma =
(\CD^\submfd+\CQ)\low_{\del_z}(\CD^\submfd+\CQ)\low_{\del_z}\sigma+
\CS\low_{\del_z}(\CD^\submfd_{\del_z}\sigma)
=\CD^\submfd_{\del_z}\CD^\submfd_{\del_z}\sigma+
\CS\low_{\del_z}(\CD^\submfd_{\del_z}\sigma).
\end{equation*}
Fix $\sigma$ by demanding that $(\d \sigma,\d\sigma)=\d z\,\d\bar z$ so that
$\sigma$ is parallel for the (flat) Weyl derivative $D^z$.  Then Proposition
\ref{p:Q-calc} gives
\begin{equation*}\notag
\del_z^2\sigma +q\sigma=-\II^0(\del_z,\del_z)\sigma,
\end{equation*}
where $\nr^{D^z}=q\d z^2+\bar q\d\bar{z}^2$ (so that $2q$ is the schwarzian
derivative of $\Ms^\conf$ with respect to $\Ms^z$).  Define $\kappa$, a
section of $\CV_\submfd^\perp$ by $\kappa= -\II^0(\del_z,\del_z)\sigma$ so that
\begin{equation*}\notag
\del_z^2\sigma +q\sigma=\kappa.
\end{equation*}
The remaining ingredients of the \GCR/ equations are now readily
expressed in terms of $q$, $\kappa$, $\nabla$:
\begin{equation*}
\QC(\del_z,\del_z)=\lip{\kappa,\bar\kappa}\qquad\text{and}\qquad
\CA^{D^z}=2(\nabla_{\del_z}\kappa)\sigma.
\end{equation*}
An easy computation then gives the following form of the \GCR/ equations:
\begin{gather*}
\del_{\bar z} q=3\lip{\nabla_{\del_z}\bar\kappa,\kappa}+
\lip{\bar\kappa,\nabla_{\del_z}\kappa}\\
\imag(\nabla_{\del_{\bar z}}\nabla_{\del_{\bar z}}\kappa+
\bar q\kappa)=0\\
R^\nabla_{\del_{\bar z},\del_z}\wnv=
2\lip{\kappa,\wnv}\bar\kappa-2\lip{\bar\kappa,\wnv}\kappa.
\end{gather*}
This formulation of the \GCR/ equations and the Bonnet theorem was developed
in \cite{BPP:sdfs} using a bare-hands approach.

\subsubsection*{Surfaces in the 4-sphere and quaternions}

For surfaces in $S^4$, \S\ref{par:quaternionic} reveals an alternative
approach to submanifold geometry, using the spin representation of
$\Spin(5,1)\cong \SL(2,\HQ)$ and quaternions. This is the setting for the
book~\cite{BFLPP:cgs}, in which quaternionic holomorphic structures are used
to study the global theory of conformal immersions. We shall content ourselves
here with a consideration of the local invariants from a quaternionic point of
view.

Let $\Ln_\HQ\subset H:=\submfd\times\HQ^2$ be an immersion of a surface
$\submfd$ into $S^4\cong\HP1$. The differential of this immersion is
$\beta\colon T\submfd\to \Hom_{\HQ}(\Ln_\HQ,H/\Ln_\HQ)\cong
TS^4\restr\submfd$. It follows that there are unique quaternion linear complex
structures on $\Ln_\HQ$ and $H/\Ln_\HQ$ up to sign which preserve, and agree
on, the image of $\beta$ (geometrically, they act by selfdual and antiselfdual
rotations on $TS^4$). We denote these complex structures by $J$: in symbols
${*\beta}=J\circ\beta=\beta\circ J$, where $*$ is the Hodge star operator of
the induced conformal metric on $\submfd$. The normal bundle of $\submfd$ then
consists of the elements of $\Hom_{\HQ}(\Ln_\HQ,H/\Ln_\HQ)$ which anticommute
with $J$, and we equip it with the complex structure given by precomposition
with $J$.

An enveloped sphere congruence in this language is a complex structure $J_\CV$
on $H$ which preserves $\Ln_\HQ$ and induces $J$ on $\Ln_\HQ$ and $H/\Ln_\HQ$.
Flat differentiation then splits as $\d=\CD^\CV+\CS^\CV$, where $\CD^\CV
J_\CV=0$ and $\CS^\CV$ anticommutes with $J_\CV$. The enveloping condition
means equivalently that $\CS^\CV$ preserves $\Ln_{\HQ}$. The second
fundamental form $\II^\CV$ arises from the induced endomorphisms of
$\Ln_{\HQ}$ and $H/\Ln_{\HQ}$ by restricting pre- and post-composition to the
image of $\beta$, yielding $1$-forms with values in $\Hom(T\submfd,N\submfd)$,
one $J$-linear, the other $J$-antilinear. The central sphere congruence
condition that $\II^\CV$ is tracefree may then be interpreted as saying that
$(\II^\CV)^{1,0}$ is $J$-linear and $(\II^\CV)^{0,1}$ is $J$-antilinear (or
vice versa). These are the objects are referred to as $A$ and $Q$
in~\cite{BFLPP:cgs}.

\section*{Preview of Part IV}

In this second part of our work, sphere congruences have been used as a tool
to define homological invariants which characterize conformal immersions into
$S^n$ up to M\"obius transformation. Part III is devoted to some applications
of this homological machinery to integrable conformal submanifold geometries.

Sphere congruences in conformal geometry (i.e., maps from an $m$-manifold
$\submfd$ into the grassmannian of $k$-spheres in $S^n$) may also be studied
in their own right, and were of considerable interest
classically~\cite{Bla:vud,Dar:tgs}. There is a rich interplay between sphere
congruences and submanifolds: for example, sphere congruences may have
enveloping submanifolds (which we have studied here for $k=m$) or orthogonal
submanifolds (which are of particular interest for $k=n-m$).

In Part IV of our work, we develop an approach to sphere congruences using the
bundle formalism, in which a sphere congruence is studied as a signature
$(k+1,1)$ subbundle $\CV$ of the trivial $\R^{n+1,1}$ bundle over
$\submfd$. This theory is essentially self-contained, since it depends only on
the straightforward idea to write $\d=\CD^\CV+\CS^\CV$, where $\CD^\CV$ is the
induced direct sum connection on $\submfd\cross\R^{n+1,1}=\CV\dsum\CV^\perp$,
and $\CS^\CV$ is the remaining, off-diagonal, part of $\d$. We have already
seen, for example, that an enveloping submanifold is a null line subbundle
$\Ln$ of $\CV$ on which $\CS^\CV$ vanishes (i.e., $\Ln^{(1)}:=\d \Ln \subseteq
\CV$). Similarly, an orthogonal submanifold is $\CD^\CV$-parallel null
$\Ln\subseteq \CV$ (so $\Ln^{(1)}\subseteq \Ln\dsum\CV^\perp$).

We focus in particular on {\it Ribaucour} sphere congruences (with $k=m$) and
{\it spherical systems} (with $k=n-m$---known as {\it cyclic systems} when
$n-m=1$). For Ribaucour sphere congruences, we establish the well-known
Bianchi permutability of Ribaucour transformations. Although Lie sphere
geometry provides a more general setting for these~\cite{BuHJ:rls}, extra
information is available in the conformal approach, leading to additional
applications.

For spherical systems, we devote our attention to flat spherical systems (for
which $\CD^\CV$ is a flat connection---for a general spherical system, it is
only assumed to be flat on $\CV^\perp$). These are examples of ``curved
flats'' arising in conformal geometry~\cite{FePe:cfss,TeUh:btlg}. Darboux
pairs of isothermic surfaces provide another example, in which a
$2$-dimensional $0$-sphere congruence is a curved flat~\cite{BHPP:cfis}.

It is well-known that curved flats admit B\"acklund transformations which can
be derived from a loop-group formalism, but we develop instead a direct
approach. This has (at least) two advantages: first, we do not need to
introduce frames; second, the \emph{definition} of the B\"acklund
transformations does not involve dressing (\ie, Birkhoff factorization)---this
is only needed (and then only implicitly and in a very simple form) for the
\emph{permutability} of B\"acklund transformations. We also introduce a theory
of polynomial conserved quantities for such curved flats, which has several
applications.

\part{Applications and Examples}

We now explore some applications of our conformal submanifold geometry
theory. We begin in \S\ref{par:shape-curv-channel} with the simplest examples,
the totally umbilic and channel submanifolds, and we discuss the curvature
spheres which can be used to describe them. In
\S\S\ref{par:tangent-cong}--\ref{par:sym-break-sub} we study the interaction
of submanifolds with the symmetry breaking induced by a constant vector or
$(k+1)$-plane (cf.~\S\S\ref{par:spaceform}--\ref{par:sym-break}) and find
homological characterizations of minimal submanifolds in a spaceform and
submanifolds splitting across a product decomposition. Then, in
\S\ref{par:dupin}, we give a fast analysis of orthogonal systems and Dupin's
Theorem.

We next turn to Willmore and constrained Willmore surfaces.  On any compact
surface $\submfd$ in $S^n$, the square norm (with respect to the conformal
metric and the metric on the weightless normal bundle), $|\II^0|^2$, of the
tracefree second fundamental form is a section of $L^{-2}$ (called the
Willmore integrand), and so may be invariantly integrated. The integral
$\int_\submfd|\II^0|^2$ is called the \emphdef{Willmore functional} $\cW$.

A compact Willmore surface is a critical point for $\cW$ on the space of all
immersions of a fixed compact surface $\submfd$, while a compact constrained
Willmore surface is a critical point restricted to immersions inducing a fixed
conformal structure on $\submfd$. It is usual to extend the definition of
Willmore and constrained Willmore surfaces to arbitrary surfaces by requiring
that the Euler--Lagrange equations hold, with a Lagrange multiplier (which is
a holomorphic quadratic differential) in the constrained case. We take this as
our starting point in~\S\ref{par:willmore}, where we derive the classical
equation in codimension one, and obtain a spectral deformation in arbitrary
codimension. Then, in \S\ref{par:harmonic}, we derive the (constrained)
Willmore equation from the functional, using the relation with the harmonic
map equation for the central sphere congruence. Although this is well-known,
our machinery does not get in the way the key idea, and reveals the
homological nature of this theory.

We next turn to isothermic surfaces, which were of great classical interest
partly because they are the only surfaces which admit a deformation which does
not alter the induced conformal metric, normal connection and tracefree second
fundamental form.  This is of particular interest in our theory, because in
this situation the induced \mob/ structure $\Ms^\submfd$ is the key invariant.
We give a manifestly conformally-invariant definition in
\S\ref{par:isothermic}, then describe the deformations and associated family
of flat connections. In \S\ref{par:iso-examples} we give examples: products of
curves in spaceforms, CMC and generalized $H$-surfaces, and quadrics
(previously considered as `mysterious' examples). We end with an intrinsic
equation for isothermic surfaces in codimension
one~\S\ref{par:when-isothermic}.

In any type of submanifold geometry, it is natural to ask when the induced
intrinsic geometry is flat. In conformal submanifold geometry this has an
unambiguous meaning for $m\geq3$: the induced conformal metric should be
(conformally) flat. For $m=1$, flatness is automatic, so it remains to
consider the case $m=2$.  Using only conformal structures, flatness would also
be automatic, but the theory of \mob/ structures provides an obvious
nontrivial condition: flatness of the induced conformal \mob/ structure or
equivalently of the normal Cartan connection (on the central sphere
congruence).  However, a more general condition turns out to be more natural:
that there is \emph{some} enveloped sphere congruence $\CV$ inducing a flat
conformal \mob/ structure on $\submfd$. Imposing also flatness of the
weightless normal bundle (which is automatic in codimension one), we thus
develop, in section~\ref{sec:mflat}, a new unified theory of ``M\"obius-flat
submanifolds''. In dimension $m\geq 3$ these are the conformally-flat
submanifolds, while in dimension $m=2$ and codimension one, they turn out to
be the classical Guichard surfaces~\cite{Cal:asg,Gui:ssi}. Using this theory
we show that channel submanifolds and constant Gaussian curvature surfaces are
M\"obius-flat, as are certain extrinsic products. We rederive results of
Cartan and Hertrich-Jeromin\cite{Car:dhc,Her:cfg} on conformally-flat
hypersurfaces and Guichard nets~\cite{Gui:sto}, and end this part by placing
Dupin cyclides in this context.

\section{Sphere congruences and symmetry breaking}
\label{sec:env-sphere-cong}

\subsection{Shape operators, curvature spheres, and channel submanifolds}
\label{par:shape-curv-channel}

\begin{defn} A \emphdef{curvature sphere} is an enveloped $m$-sphere
congruence $\CV$ such that $\II^\CV$ is degenerate (as an $N\submfd$-valued
bilinear form on $T\submfd$).  Let $T_\CV$ denote the subbundle of $T\submfd$
on which $\II^\CV$ degenerates: we assume for simplicity that its rank, called
the \emphdef{multiplicity} of $\CV$, is constant on $\submfd$.
\end{defn}

Curvature spheres are a convenient way of describing the eigenspaces and
eigenvalues of the shape operator. Indeed if $\CV$ is any enveloped sphere
congruence and $E$ is a simultaneous eigenspace for the normal components of
the shape operator $\Sh^\CV$ such that $\Sh^\CV(\nmv)$ has eigenvalue
$\con(\nmv)$ for all normal vectors $\nmv$, then $\CV+\con$ is a curvature
sphere with $T_{\CV+\con}=E$.

The existence of simultaneous eigenspaces is facilitated by the following
well-known fact.
\begin{prop} The normal components of $\Sh^\CV$ commute with each
other if and only if $\submfd$ has flat \textup(weightless\textup) normal
bundle.
\end{prop}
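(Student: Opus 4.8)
The plan is to read the result straight off the Ricci equation~\eqref{eq:r}. Since the ambient manifold here is the conformal sphere $S^n$, it is conformally flat, so its Weyl curvature $W^\mfd$ vanishes, the term $(W^\mfd\restr{N\submfd})\normal$ is zero, and~\eqref{eq:r} collapses to
\begin{equation*}
R^\nabla=\II^\CV\wedge\Sh^\CV.
\end{equation*}
It therefore suffices to show that the right-hand side vanishes precisely when the normal components of $\Sh^\CV$ commute.

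The heart of the argument is a purely algebraic identity. Writing $\Sh^\CV(\nmv)$ for the $\conf$-selfadjoint endomorphism $X\mapsto\Sh^\CV_X\nmv$ of $T\submfd$ (selfadjoint because $\II^\CV$ is symmetric), and using that $\Sh^\CV$ is the transpose of $\II^\CV$, that is $\cip{\II^\CV_XY,\nmv}=\cip{\Sh^\CV_X\nmv,Y}$, I would compute, for normal vectors $\nmv,\nmv'$ and tangent vectors $X,Y$,
\begin{equation*}
\cip{(\II^\CV\wedge\Sh^\CV)_{X,Y}\nmv,\nmv'}
=\cip{\liebrack{\Sh^\CV(\nmv),\Sh^\CV(\nmv')}X,Y},
\end{equation*}
where $(\II^\CV\wedge\Sh^\CV)_{X,Y}=\II^\CV_X\Sh^\CV_Y-\II^\CV_Y\Sh^\CV_X\in\so(N\submfd)$. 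Both sides unwind, via the transpose relation and selfadjointness, to the expression $\cip{\Sh^\CV_Y\nmv,\Sh^\CV_X\nmv'}-\cip{\Sh^\CV_X\nmv,\Sh^\CV_Y\nmv'}$, so establishing the identity is a routine matching of two inner products and I would not belabour it.

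Granting this identity, the $\so(N\submfd)$-valued $2$-form $\II^\CV\wedge\Sh^\CV$ vanishes if and only if $\liebrack{\Sh^\CV(\nmv),\Sh^\CV(\nmv')}=0$ for all $\nmv,\nmv'$, the pairing with $X,Y,\nmv,\nmv'$ being nondegenerate. Combined with $R^\nabla=\II^\CV\wedge\Sh^\CV$, this gives $R^\nabla=0$ if and only if the normal components of $\Sh^\CV$ commute, which is the assertion (flatness of the weightless normal bundle being exactly $R^\nabla=0$). I expect no genuine obstacle: the only points needing care are the vanishing of the ambient term $(W^\mfd\restr{N\submfd})\normal$, which is immediate for $S^n$, and the sign and weight accounting in the transpose relation between $\II^\CV$ and $\Sh^\CV$. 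As a consistency check worth recording, the commutativity condition is independent of the chosen M\"obius reduction: by Proposition~\ref{p:ch-mobII}, $\Sh^{\CV+\con}(\nmv)=\Sh^\CV(\nmv)-\con(\nmv)\iden$ differs from $\Sh^\CV(\nmv)$ only by a multiple of the identity, and so has the same commutators, matching the $\CV$-independence of $\nabla$, hence of $R^\nabla$, established there.
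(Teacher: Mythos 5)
Your proof is correct and follows essentially the same route as the paper's: both invoke the Ricci equation (the paper uses~\eqref{eq:gr} directly, you its expanded form~\eqref{eq:r} with $W^\mfd=0$) to identify $R^\nabla$ with $\II^\CV\wedge\Sh^\CV$, and then the same transpose/selfadjointness algebra to recognise this as the commutator of the normal components of $\Sh^\CV$. The closing remark on independence of the M\"obius reduction is a pleasant extra consistency check but not needed.
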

\begin{proof} By the Ricci equation~\eqref{eq:gr} and the symmetry of the
shape operator, we have
\begin{align*}
\cip{R^\nabla_{X,Y}\nmv_1,\nmv_2} =
\cip{(\II^\CV_X\Sh^\CV_Y-\II^\CV_Y\Sh^\CV_X)\nmv_1,\nmv_2}
&=\cip{\Sh^\CV_Y\nmv_1,\Sh^\CV_X\nmv_2}-\cip{\Sh^\CV_X\nmv_1,\Sh^\CV_Y\nmv_2}\\
&=\cip{\Sh^\CV_{\Sh^\CV_X\nmv_2}\nmv_1,Y} 
-\cip{\Sh^\CV_{\Sh^\CV_X\nmv_1}\nmv_2,Y}.
\end{align*}
The result follows.
\end{proof}
\begin{rem}\label{rem:commutingQ} Since $\QC^\CV$ (viewed as an
$\Ln^2$-valued endomorphism) is a linear combination of $\iden$ with normal
components of $\Sh^0$ and $\II^0\circ\Sh^0$, we have that
$\QC^{\CV}\wedge\Sh^0=0$ if the normal bundle is flat.
\end{rem}
We deduce from the proposition that if $\submfd$ has flat normal bundle, then
the normal components of $\Sh^\CV$ are simultaneously diagonalizable, since
they are symmetric. Thus if $\CV_1,\ldots \CV_\ell$ are the curvature spheres,
then $T\submfd$ is the orthogonal direct sum of the $T_{\CV_i}$. The
subbundles $T_{\CV_i}$ are the (simultaneous) eigenspaces of the shape
operator, and the conormal eigenvalues $\con_i$ give the curvature spheres via
$\CV_i = \CV+\con_i$.

\begin{prop}\label{p:dupin} If a curvature sphere $\CV$ has multiplicity
greater than $1$ then $T_\CV$ is an integrable distribution and $\CV$ is
constant \textup(\ie, a parallel subbundle of
$\submfd\times\R^{n+1,1}$\textup) along the leaves of the corresponding
foliation.
\end{prop}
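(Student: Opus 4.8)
The plan is to work entirely in the flat setting of the statement, where $\CV_\mfd=\submfd\times\R^{n+1,1}$ and $\CD^\mfd=\d$, so that ``$\CV$ constant along a leaf'' means precisely that $\CV$ is a $\d$-parallel (hence fixed) subspace along that leaf. The first step is to translate both conclusions into statements about the \cso/ $\CS:=\CS^\CV$. Since $\CS$ is skew, $\CV$ is parallel along a tangent vector $X$ iff $\CS_X=0$. For $X\in T_\CV$ we have $\II^\CV_X=0$ (by definition of $T_\CV$), hence $\Sh^\CV_X=0$ and $\pi\CS_X=\II^\CV_X-\Sh^\CV_X=0$; thus $\CS_X$ lands in the abelian nilpotent piece $\stab(\Ln)^\perp\intersect\m_\CV\cong N\dual\submfd$, and I write $\CS_X=\iota(\con_X)$ with $\con_X\in N\dual\submfd$. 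Concretely $\iota(\con_X)$ annihilates $\Lnps$, sends the top line $\CV/\Lnps$ to $\con_X^\sharp\in\CV^\perp$, and sends $\CV^\perp$ into $\Ln$. So constancy along $T_\CV$ is equivalent to $\con|_{T_\CV}=0$, and integrability to $[X,Y]\in\kernel\II^\CV=T_\CV$ for $X,Y\in\Gamma(T_\CV)$.

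The engine is the Codazzi equation~\eqref{eq:c}, which in the flat case reads $\dDVs\CS=0$. First I would extract a pointwise identity by evaluating $(\dDVs\CS)_{X,Y}$ on the section $\CD^\mfd_Z\sigma\in\Lnps$, for $X,Y,Z\in T_\CV$ and a gauge $\sigma$. Because $\II^\CV$ vanishes on $T_\CV$, the terms $\CS_X(\CD^\mfd_Z\sigma)$ and $\CS_Y(\CD^\mfd_Z\sigma)$ drop out, and $\CS_X,\CS_Y$ annihilate the $\Lnps$-part of the remaining derivatives; the surviving top-line component is read off from $p(\CD_X\CD_Z\sigma)=\cip{X,Z}\sigma$ in~\eqref{eq:pD2}. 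Projecting onto $\CV^\perp$ collapses the whole equation to
\[
\II^\CV_{[X,Y]}(Z)=\cip{Y,Z}\con_X^\sharp-\cip{X,Z}\con_Y^\sharp
\qquad(X,Y,Z\in T_\CV).
\]
Now the symmetry of the second fundamental form is decisive: $\II^\CV_{[X,Y]}(Z)=\II^\CV_Z([X,Y])=0$ since $Z\in T_\CV$. Hence the right-hand side vanishes identically, and this is exactly the point where multiplicity enters. When $\rank T_\CV\geq2$ the induced (Riemannian) conformal metric lets me choose orthonormal $X,Y\in T_\CV$; taking $Z=Y$ gives $\cip{Y,Y}\con_X^\sharp=0$, so $\con_X=0$, and symmetrically $\con_Y=0$. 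Thus $\con\equiv0$ on $T_\CV$, which is the constancy statement $\CS_X=0$.

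With $\CS$ now vanishing on any vector field valued in $T_\CV$, integrability follows immediately from the same Codazzi identity: for $X,Y\in\Gamma(T_\CV)$ the covariant-derivative terms in $(\dDVs\CS)_{X,Y}$ vanish because $\CS_X\equiv\CS_Y\equiv0$, leaving $\CS_{[X,Y]}=0$; taking the $\pi$-part gives $\II^\CV_{[X,Y]}=0$, i.e.\ $[X,Y]\in T_\CV$. Frobenius then produces the foliation, and parallelism of $\CV$ along each leaf upgrades ``infinitesimally constant'' to genuinely constant. I expect the only delicate point to be the component bookkeeping in the first step---correctly identifying the top-line contribution via~\eqref{eq:pD2} and checking that every $\Lnps$-term is annihilated by $\CS_X,\CS_Y$ for $X,Y\in T_\CV$---after which the symmetry collapse and the multiplicity argument are routine.
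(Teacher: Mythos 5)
Your proof is correct and is essentially the paper's argument: both extract from the Codazzi equation the identity $\II^\CV_{[X,Y]}Z=\pm\bigl(\cip{Y,Z}\con_X^\sharp-\cip{X,Z}\con_Y^\sharp\bigr)$ for $X,Y,Z\in T_\CV$, kill the left side by the symmetry of $\II^\CV$, and use an orthogonal pair in $T_\CV$ (available precisely because the multiplicity exceeds one) to conclude $\con|_{T_\CV}=0$ and hence $\CS^\CV|_{T_\CV}=0$. The only differences are cosmetic: you evaluate the raw equation $\dDVs\CS^\CV=0$ on sections $\CD^\mfd_Z\sigma$ where the paper uses the already-decomposed form~\eqref{eq:c1} with $\CA^{D,\CV}$ in place of your $\con$, and you deduce constancy before integrability rather than after.
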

\begin{proof} The Codazzi equation for $\CV$ implies that
\begin{equation}\label{eq:dupin}\begin{split}
0 &= (\nabla^D\II^\CV)_{X,Y} - (\nabla^D\II^\CV)_{Y,X} +\CA^{D,\CV}_X Y 
-\CA^{D,\CV}_Y X\\
&= \II^\CV_{[X,Y]} + \CA^{D,\CV}_X Y -\CA^{D,\CV}_Y X
\end{split}\end{equation}
for $X,Y$ in $T_\CV$. Thus $\II^\CV_{[X,Y]}$ is in $T_\CV$, but it is also in
$T_\CV^\perp$ by the symmetry of $\II^\CV$, and therefore $\II^\CV_{[X,Y]} =
0$ and $[X,Y]$ is in $T_\CV$.  Now contracting~\eqref{eq:dupin} with $Y$, we
get $\CA^{D,\CV}_X \ip{Y,Y} -\CA^{D,\CV}_Y \ip{X,Y}=0$, and taking $\ip{X,Y} =
0$, $\ip{Y,Y} \neq 0$, we deduce that $\CA^{D,\CV}_X = 0$ for all $X$ in
$T_\CV$.  Hence $\CS^\CV_X = 0$ for all $X$ in $T_\CV$, \ie, $\CV$ is parallel
in the $T_\CV$ directions.
\end{proof}
The first conclusion of this proposition is automatic for curvature spheres of
multiplicity one, but the second is not.
\begin{defns} Let $\submfd$ be a submanifold of $S^n$ of dimension $m\geq 2$
with flat normal bundle. Then $\submfd$ is said to be:
\begin{numlist}
\item a \emphdef{Dupin submanifold} iff its curvature spheres are all constant
along their foliations;
\item a \emphdef{totally umbilic submanifold} iff $\II^0=0$, \ie, it has
only one curvature sphere;
\item a \emphdef{channel submanifold} iff it is the envelope of a
$1$-parameter family of $m$-spheres, \ie, it admits an enveloped sphere
congruence $\CV$ whose derivative $\CS^\CV\colon T\submfd\to \m_\CV$ has rank
one (so that $\CV$ is, in particular, a curvature sphere of multiplicity
$m-1$).
\end{numlist}
\end{defns}
The following well-known observations are immediate consequences of
Proposition~\ref{p:dupin}.
\begin{cor} A totally umbilic $m$-submanifold $(m\geq2)$ of $S^n$ is an open
subset of its curvature sphere \textup(hence is a Dupin submanifold\textup).
\end{cor}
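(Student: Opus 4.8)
The plan is to deduce the corollary directly from Proposition~\ref{p:dupin} together with the defining condition of total umbilicity. Let $\submfd$ be a totally umbilic $m$-submanifold of $S^n$ with $m\geq2$ and flat normal bundle, so by definition $\II^0=0$ and hence $\submfd$ has a single curvature sphere $\CV$ of multiplicity $m$. The key observation is that this curvature sphere $\CV$ degenerates on all of $T\submfd$, that is, $T_\CV=T\submfd$.

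First I would note that since the multiplicity $m$ of $\CV$ exceeds $1$ (as $m\geq2$), Proposition~\ref{p:dupin} applies verbatim: $T_\CV=T\submfd$ is an integrable distribution (trivially, being the whole tangent bundle) and $\CV$ is constant, i.e.\ a parallel subbundle of $\submfd\times\R^{n+1,1}$, along the leaves of the corresponding foliation. But the foliation here has a single leaf, namely (each connected component of) $\submfd$ itself, so the conclusion is that $\CV$ is parallel along \emph{all} of $\submfd$: $\CS^\CV_X=0$ for every $X\in T\submfd$. In particular $\CV$ is a constant signature $(m+1,1)$ subspace $W\subset\R^{n+1,1}$.

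A constant such subspace defines a fixed conformal $m$-sphere $\Proj(W\cap\LC)\subset S^n$ via the identification of \S\ref{par:mob-sphere-cong}. Since $\CV$ envelopes $\submfd$, the tautological line $\Ln$ is a null line subbundle of the constant bundle $W$, so $\phi(x)=\Ln_x\in\Proj(W\cap\LC)$ for every $x\in\submfd$; thus the image of $\phi$ lies in this fixed $m$-sphere. As $\submfd$ and the $m$-sphere both have dimension $m$ and $\phi$ is an immersion, $\phi$ is a local diffeomorphism onto an open subset of $\Proj(W\cap\LC)$, establishing that $\submfd$ is an open subset of its curvature sphere. Finally, being an (open subset of an) $m$-sphere, $\submfd$ is itself totally umbilic and hence trivially Dupin, so it qualifies as a Dupin submanifold.

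I do not anticipate any serious obstacle: the entire argument is an application of Proposition~\ref{p:dupin} once one recognizes that total umbilicity forces $T_\CV=T\submfd$. The only point requiring mild care is the passage from ``$\CV$ constant along leaves'' to ``$\CV$ constant on $\submfd$'', which needs the (harmless) remark that the multiplicity-$m$ foliation is by the whole manifold; and the identification of a constant $(m+1,1)$-subspace with a fixed $m$-sphere enveloped by $\phi$, which is immediate from the enveloping condition $\Lnps\subset\CV$. No new computation is needed beyond citing the established machinery.
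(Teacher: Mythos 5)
Your proof is correct and is exactly the argument the paper intends: the corollary is stated as an immediate consequence of Proposition~\ref{p:dupin}, applied to the unique curvature sphere of multiplicity $m\geq2$, whose single-leaf foliation forces the sphere congruence to be a constant $(m+1,1)$-plane containing $\Ln$. Nothing is missing.
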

\begin{cor} Suppose $\submfd$ has flat normal bundle and exactly two
curvature spheres. Then $\submfd$ is either a surface, a channel submanifold,
or a Dupin submanifold.
\end{cor}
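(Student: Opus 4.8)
The plan is to run a case analysis on the multiplicities of the two curvature spheres, using Proposition~\ref{p:dupin} as the main engine. Write $\CV_1,\CV_2$ for the two curvature spheres and let $r_1,r_2$ denote their multiplicities. Since $\submfd$ has flat normal bundle, the discussion preceding Proposition~\ref{p:dupin} supplies the orthogonal splitting $T\submfd=T_{\CV_1}\dsum T_{\CV_2}$, so that $r_1+r_2=m$ with $r_1,r_2\geq1$. The three alternatives in the statement will correspond to the three possible shapes of the pair $(r_1,r_2)$, and the bulk of the work is in the one remaining case.

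First I would dispose of the two easy cases. If $m=2$ then necessarily $r_1=r_2=1$ and $\submfd$ is a surface, which is one of the permitted conclusions and needs no further argument. If $m\geq3$ and both $r_1\geq2$ and $r_2\geq2$, then Proposition~\ref{p:dupin} applies to each $\CV_i$ separately, so both curvature spheres are constant along the leaves of their foliations; by definition $\submfd$ is then a Dupin submanifold.

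The substantive case is $m\geq3$ with one multiplicity equal to $1$, say $r_1=1$ and hence $r_2=m-1\geq2$. Here I would work with $\CV_2$. Since its multiplicity exceeds $1$, Proposition~\ref{p:dupin} gives $\CS^{\CV_2}_X=0$ for every $X\in T_{\CV_2}$; as $T_{\CV_2}$ has corank $1$ in $T\submfd$, the derivative $\CS^{\CV_2}\colon T\submfd\to\m_{\CV_2}$ factors through the one-dimensional quotient $T\submfd/T_{\CV_2}\cong T_{\CV_1}$ and therefore has rank at most $1$. To conclude that $\submfd$ is a channel submanifold it remains only to show this rank is exactly $1$, i.e.\ that $\CS^{\CV_2}$ does not vanish identically.

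This last point is the main obstacle, and I would clear it by ruling out the degenerate alternative. If $\CS^{\CV_2}\equiv0$, then $\pi\CS^{\CV_2}=\II^{\CV_2}-\Sh^{\CV_2}=0$ forces $\II^{\CV_2}=0$; but a sphere congruence with identically vanishing second fundamental form has $T_{\CV_2}=T\submfd$ and is the sole curvature sphere of $\submfd$, contradicting the hypothesis of exactly two. Hence $\CS^{\CV_2}$ has rank exactly $1$, and $\CV_2$ is a curvature sphere of multiplicity $m-1$ whose derivative has rank one; this is precisely the condition that $\submfd$ be a channel submanifold. Finally I would record that the three cases are exhaustive for $m\geq2$: when $m\geq3$ the constraint $r_1+r_2=m$ with $r_1,r_2\geq1$ forces either both $r_i\geq2$ or exactly one $r_i=1$ (with the other equal to $m-1\geq2$), so no possibility is omitted.
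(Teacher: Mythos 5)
Your proof is correct and follows exactly the route the paper intends: the corollary is stated there as an immediate consequence of Proposition~\ref{p:dupin}, via the same case analysis on the multiplicities $r_1+r_2=m$, and your extra care in ruling out $\CS^{\CV_2}\equiv0$ (which would force a single curvature sphere) correctly fills in the one detail the paper leaves implicit.
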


\subsection{Constant vectors and tangent congruences in spaceform geometries}
\label{par:tangent-cong}

In \S\ref{par:mr-ambient-weyl-riem} we showed how to break the conformal
invariance of our theory by introducing a compatible riemannian metric on
$\mfd$.  When $\mfd$ is an open subset of $S^n=\PL$, such a metric is given
by a section of the positive light-cone $\LC^+\subset\Ln$ over $\mfd$. As we
have discussed in \S\ref{par:spaceform}, a particularly important
class of such sections are the conic sections $\{\sigma\in
\LC^+:\lip{v_\infty,\sigma}=-1\}$ associated to nonzero vectors
$v_\infty\in\R^{n+1,1}$. We have seen that such a section induces a constant
curvature metric $g$ on $\mfd$, with Weyl structure
$\Lnc_g=\vspan{v_\infty+\frac12(v_\infty,v_\infty)\sigma}$ in
$\mfd\times\R^{n+1,1}$ (where $\lip{v_\infty,\sigma}=-1$).

Along a submanifold $\submfd$ of $\mfd$, $\Lnc_g$ defines an ambient Weyl
structure, inducing the sphere congruence $\CV_g=\Lnps\dsum\Lnc_g=
\Lnps\dsum\vspan{v_\infty}$.  We refer to a sphere congruence containing a
constant vector $v_\infty$ as the \emphdef{tangent congruence} to $\submfd$ in
the spaceform given by $v_\infty$. When $v_\infty$ is null and the geometry on
$\mfd\subseteq S^n\setdif\vspan{v_\infty}$ is euclidean, the tangent spheres
pass through the point at infinity, and so they stereoproject to tangent
planes in the usual sense.

\begin{prop} For a sphere congruence $\CV$ and Weyl structure $\Lnc\subset\CV$
\textup(with associated Weyl derivative $D$\textup), the following are
equivalent\textup:
\begin{numlist}
\item $\CV=\CV_g$ and $\Lnc=\Lnc_g$ for a conic section of $\LC^+$\textup;
\item $\CV$ is a tangent congruence and is enveloped by $\Lnc$ \textup(\ie,
$\CS|_{\Lnc}=0$\textup)\textup;
\item $\nr^{D,\CV}=\frac1n \ns\conf$ for a section $\ns$ of
$\Ln^2\cong\Hom(\Lnc,\Ln)$, and $A^{D,\CV}=0$.
\end{numlist}
\end{prop}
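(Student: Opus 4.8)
The plan is to prove the three conditions equivalent by a cyclic argument \lip{i}\Rightarrow\lip{ii}\Rightarrow\lip{iii}\Rightarrow\lip{i}, working throughout in the trivial bundle $\submfd\times\R^{n+1,1}$ with $\CD^\mfd=\d$ and exploiting the explicit description of the Weyl structure $\Lnc_g$ in terms of a constant vector already established in \S\ref{par:spaceform}. The key structural fact to keep in hand is that a conic section is characterized by the existence of a constant (parallel) vector $v_\infty$ with $\lip{v_\infty,\sigma}=-1$, and that such a $v_\infty$ spans (together with $\sigma$) the line $\Lnc_g$.

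\smallskip
First I would show \lip{i}\Rightarrow\lip{ii}. Assume $\CV=\CV_g$ and $\Lnc=\Lnc_g$ arise from a conic section, so $v_\infty$ is a constant vector lying in $\CV$ (since $\Lnc_g=\vspan{v_\infty+\tfrac12\lip{v_\infty,v_\infty}\sigma}$ and $\sigma\in\Ln\subset\CV$, the constant vector $v_\infty$ itself is a section of $\CV$). That $\CV$ is a tangent congruence is then immediate from the definition: it contains the constant vector $v_\infty$. For the enveloping statement $\CS^\CV\restr{\Lnc}=0$, I would compute $\CS^\CV_X$ applied to the spanning section $\hat\sigma=v_\infty+\tfrac12\lip{v_\infty,v_\infty}\sigma$ of $\Lnc$: since $\CD^\mfd=\d$ and $v_\infty$ is constant, $\d_X\hat\sigma=\tfrac12\lip{v_\infty,v_\infty}\,\d_X\sigma$, which lies in $\Lnps\subset\CV$; hence its normal projection $\CS^\CV_X\hat\sigma=(\d_X\hat\sigma)\normal$ vanishes.

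\smallskip
Next, \lip{ii}\Rightarrow\lip{iii}. Here I would translate the vanishing of $\CS^\CV$ on $\Lnc$ into statements about the conformal Cartan data via the decomposition \eqref{eq:full-decomp}. Recall from \S\ref{par:decomp-weyl} that, relative to the Weyl structure $\Lnc$, the off-diagonal part $\CS^\CV$ splits as $\CA^{D,\CV}+(\II^\CV-\Sh^\CV)$ with $\CA^{D,\CV}$ the component into $N\dual\submfd$ (the part of $\CS^\CV$ acting through $\Lnc\cong\hat\Ln$). The condition $\CS^\CV\restr{\Lnc}=0$ is precisely the vanishing of this $\hat\Ln$-component, giving $\CA^{D,\CV}=0$ directly. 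For the normalized Ricci curvature, I would use that a tangent congruence contains a \emph{constant} (hence parallel) vector: writing $v_\infty$ in components relative to the Weyl structure and using that $\d v_\infty=0$ forces, via the connection formula \eqref{eq:ccc-conn} applied to the $\CV$-part $\CD^{\submfd,\CV}$, that the length scale $\ell$ dual to $\sigma$ is an Einstein gauge, i.e.\ $\Mh\ell=0$ and $\nr^{D,\CV}$ is pure trace. Concretely, parallelism of $v_\infty=j^{\CD}\ell$ (as noted in \S\ref{par:spaceform}) and the formula \eqref{eq:diff-lift-comp} for the differential lift force $\sym_0(\nr^{D,\CV})=0$, so $\nr^{D,\CV}=\tfrac1n\ns\conf$ with $\ns$ a section of $\Ln^2\cong\Hom(\Lnc,\Ln)$.

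\smallskip
Finally, \lip{iii}\Rightarrow\lip{i}. Given $\nr^{D,\CV}=\tfrac1n\ns\conf$ and $\CA^{D,\CV}=0$, I would reconstruct the conic section by building the candidate constant vector $v_\infty$ as the differential lift $j^{\CD^{\submfd,\CV}}\ell$ of the length scale $\ell$ attached to $\Lnc$ (equivalently, $v_\infty=\hat\sigma$ in the notation of \S\ref{par:spaceform}), and then verifying $\d v_\infty=0$ by a component computation using \eqref{eq:ccc-conn}: the tangential derivative of the $\Lnc$-component is controlled by $\nr^{D,\CV}$ and the shape data, and the two hypotheses kill exactly the terms that would obstruct parallelism. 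Once $v_\infty$ is seen to be constant (at least along $\submfd$, which suffices since the ambient connection is flat and we may extend), the section $\sigma$ with $\lip{v_\infty,\sigma}=-1$ is a conic section and $\Lnc=\Lnc_g$, $\CV=\CV_g$ by construction.

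\smallskip
I expect the main obstacle to be the implication \lip{iii}\Rightarrow\lip{i}, specifically verifying that the two purely \emph{intrinsic/normal} curvature conditions genuinely force a \emph{constant} ambient vector rather than merely a parallel section for the induced connection $\CDhV$. The delicate point is bookkeeping the mixed (tangent--normal) components of $\d v_\infty$: the condition $\CA^{D,\CV}=0$ handles the normal derivative while $\sym_0\nr^{D,\CV}=0$ handles the tangential second-order part, but I must check that the trace part $\tfrac1n\ns\conf$ is consistent with $\d v_\infty=0$ and does not impose a further differential equation on $\ns$. This will come down to the identity $p(\CD v)=-\pi v$ from \eqref{eq:pDv} together with \eqref{eq:pD2}, which tie the trace of the second derivative of $\sigma$ to the conformal metric and thereby absorb the $\ns$-term automatically; confirming this absorption is the crux of the argument.
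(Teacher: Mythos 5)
Your overall strategy is reasonable and stays close to the paper's in spirit, but the step you yourself single out as the crux of (iii)$\Rightarrow$(i) is resolved by the wrong tool, and this is a genuine gap. Write $v_\infty$ in components $(\tfrac1n\ns\ell,0,\ell)$ relative to $\Ln\dsum\US\dsum\Lnc$, with $\ell$ the $D$-parallel length scale. Then \eqref{eq:ccc-conn} gives $\CD^\CV_Xv_\infty=\bigl(\tfrac1n(D_X\ns)\ell,\;\nr^{D,\CV}_X\ell-\tfrac1n\ns\ell\, X,\;0\bigr)$: the middle entry vanishes precisely because $\nr^{D,\CV}$ is pure trace, and $\CS^\CV v_\infty=0$ because $\CA^{D,\CV}=0$, but the $\Ln$-component is $\tfrac1n(D\ns)\ell$ and nothing algebraic kills it. The identities \eqref{eq:pDv} and \eqref{eq:pD2} are pointwise consequences of metric compatibility and cannot ``absorb'' this term; $D\ns=0$ is a genuine first-order constraint on $\ns$, and it is exactly where the flatness of the ambient connection must enter. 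Concretely, the $T\dual\submfd$-component of the Gau\ss\ equation \eqref{eq:cotangent-part-h} reads $0=\d^D\nr^{D,\CV}+\abrack{(\II^\CV-\Sh^\CV)\wedge\CA^{D,\CV}}$, so the hypotheses of (iii) give $\tfrac1n D\ns\wedge\iden=0$ and hence $D\ns=0$. This is the step the paper supplies (``the Gau\ss\ equation implies that $\ns$ is $D$-parallel'') and your proposal omits.

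A second, smaller gap is created by your reorganization into a cycle. In (ii)$\Rightarrow$(iii) you claim that parallelism of $v_\infty=j^{\CD^\CV}\ell$ together with \eqref{eq:diff-lift-comp} forces $\sym_0\nr^{D,\CV}=0$. What parallelism actually gives is $\sym_0\bigl(D^2\ell+\nr^{D,\CV}\ell\bigr)=0$; to extract $\sym_0\nr^{D,\CV}=0$ you need $D\ell=0$, i.e.\ that $v_\infty$ has no $\US$-component relative to $\Lnc$, equivalently that $\Lnc$ is the Weyl structure attached to the conic gauge of $v_\infty$. Hypothesis (ii) only says that \emph{some} constant vector lies in $\CV$ and that $\Lnc$ is \emph{a} second envelope; relating the two requires an argument (for instance, $\d v_\infty=0$ forces $\CS^\CV\theta=0$ for the $\US$-component $\theta$ of $v_\infty$, which pins $\theta=0$ only where $\II^\CV$ is nondegenerate). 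The paper sidesteps this by deriving (iii) directly from (i) via \eqref{eq:metric-quant}, where $D$ annihilates the conic gauge by construction; if you keep the cyclic route you must close this step explicitly.
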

\begin{proof} By~\eqref{eq:metric-quant}, $\nr^{D,\CV_g}=(\nr^g)\tangent
=\smash{\frac1 n} \ns^g\conf$ and $\CA^{D,\CV_g} =(\nr^g)\normal=0$. Thus
(i)$\Leftrightarrow$(ii) and these imply (iii).  For the converse implication,
if $\nr^{D,\CV} =\frac1n \ns\conf$ and $\CA^{D,\CV}=0$, the Gau\ss\ equation
implies that $\ns$ is $D$-parallel. Let $\hat\sigma$ be a $D$-parallel section
of $\Lnc$ and $\ns(\hat\sigma)$ the induced section of $\Ln$; then $v_\infty=
\hat\sigma+\frac1n \ns(\hat\sigma)$ is a constant vector.
\end{proof}
If $v_\infty$ is null, $\Lnc_g$ is constant, so the submanifold it defines
is a point, but otherwise, it is immersed.  The \GCR/ equations in this case
are equivalent to the riemannian \GCR/ equations in the corresponding
spaceform geometry and the Bonnet theorem reduces to the riemannian one.

We now consider the relation between $v_\infty$ and $\CV\low_\submfd$. Since
$\CV_g=\exp(H^g)\CV\low_\submfd$ contains $v_\infty$, it follows that
$\CV_\submfd$ component of $v_\infty$ is $H^g\ell$, where $\ell$ is dual to
$\sigma$. In particular
\begin{quote}
\emph{$\submfd$ is a minimal submanifold in the spaceform iff $v_\infty\in
\CV_\submfd$ iff $\CV_\submfd=\CV_g$,}
\end{quote}
\ie, the tangent congruence and the central sphere congruence agree. We remark
that in general the homology class of $v_\infty$ is the pair $(\ell,H^g\ell)$
in $L\dsum L\ltens N\dual\submfd$, and so the minimal submanifolds are
precisely those for which the homology class of $v_\infty$ is tangential.

Finally, we ask when the Weyl structure of $g$ provides a second envelope of
the central sphere congruence, \ie, $A^D=0$ for the induced Weyl derivative
$D$: equation~\eqref{eq:Rca} here reads $\CA^D=-\nabla^D H^g$, \ie, the
submanifold has parallel mean curvature in the given spaceform geometry.

\subsection{Symmetry breaking for submanifolds}
\label{par:sym-break-sub}

We study again the metrics considered in \S\ref{par:sym-break}, associated to
a $(k+1)$-plane $W$ in $\R^{n+1,1}$, with orthogonal $(n-k+1)$-plane
$W^\perp$, where $0\leq k\leq n$, which identify an open subset of $S^n$ with
a product of spaceforms of dimension $k$ and $n-k$. For an $m$-dimensional
submanifold $\Ln\to\submfd$ of this open subset, let $\Lnc$ be the ambient
Weyl structure along $\submfd$ with $\Ln\dsum\Lnc=(\submfd\times
W\dsum\Ln)\cap(\submfd\times W^\perp\dsum\Ln)$, and $\CV$ the corresponding
enveloped sphere congruence.

It is natural to ask when $\submfd$ is a local product of immersions of a
$p$-manifold and an $(m-p)$-manifold into the spaceforms associated to $W$ and
$W^\perp$ (where $0\leq p\leq k$, $0\leq m-p\leq n-k$). It is clear that this
implies that the lines $\Wedge^{k+1} W$ and $\Wedge^{n-k+1}W^\perp$ lie in
$\Wedge^{p+1}\CV\tens\Wedge^{k-p}\CV^\perp$ and
$\Wedge^{m-p+1}\CV\tens\Wedge^{n-k-(m-p)}\CV^\perp$ respectively.  Hence a
unit vector $\omega$ in $\Wedge^{k+1} W$ may be written $\omega=\theta\wedge
v\wedge\chi$ with $\theta\in \Wedge^p\Lnps$, $v\in\CV$ and
$\chi\in\Wedge^{k-p}\CV^\perp$ (all decomposable). Since we require that
$\Ln\not\subseteq W\cup W^\perp$, we must have
$\theta\notin\Ln\wedge\Wedge^{p-1}\Lnps$ and $v\notin\Ln^\perp$. On the other
hand, $\Ln\dsum\Lnc$ has nontrivial intersection with $W$, so without loss, we
can take $v\in \Ln\dsum\Lnc$ and $\theta\in \Wedge^p\US$, where
$\US:=(\Ln\dsum\Lnc)^\perp\cap\CV$

Similarly $\Wedge^{n-k+1}W^\perp$ contains a unit vector of the form
$\omega^\perp=\theta^\perp\wedge v^\perp\wedge\chi^\perp$ with
$\theta^\perp\in\Wedge^{m-p}\US$, $v^\perp\in\Ln\dsum\Lnc$ and $\chi^\perp\in
\Wedge^{n-k-(m-p)}\CV^\perp$ (all decomposable).

Observe that the central sphere congruence may be written
$\CV\low_\submfd=\Ln\dsum\US\dsum\Lnc\low_\submfd$, where
$\Lnc\low_\submfd=\exp(-H^\CV)\Lnc$ is the null complement to $\Ln$ in
$\US^\perp\cap\CV\low_\submfd$. It follows that for nontrivial splittings
($0<p<m$), the conditions on $W,W^\perp$ that we have obtained have a
homological consequence: $\hc{\omega}_{S^n}\restr\submfd=\hc{\omega}_\submfd=
\theta\vtens(v\modulo\Ln^\perp)\vtens\chi$ and is a decomposable section of
\begin{multline*}
H_0(T\dual\submfd,\Wedge^{p+1}\CV\low_\submfd)\tens
\Wedge^{k-p}\CV_\submfd^\perp=\Wedge^p( T\submfd\ltens\Ln)\ltens L\tens
\Wedge^{k-p}\CV_\submfd^\perp\\ \subset \Wedge^{k}(TS^n\ltens\Ln)\ltens
L\restr\submfd=H_0(T\dual S^n,S^n\times\Wedge^{k+1}\R^{n+1,1})\restr\submfd,
\end{multline*}
and similarly for $\hc{\omega^\perp}_{S^n}\restr\submfd$. This turns out to be
a characterization.

\begin{thm} Let $W\subset\R^{n+1,1}$ be a $(k+1)$-dimensional subspace
with orthogonal space $W^\perp$. Then an immersed submanifold of $S^n\setdif
(P(W)\cup P(W^\perp))$ splits locally across the induced product structure as
a product of a $p$-submanifold and an $(m-p)$-submanifold $(0<p<m)$ if and
only if the homology class $\hc{\omega}_{S^n}$ of some \textup(hence
any\textup) nonzero $\omega\in\Wedge^{k+1}W$ is a section of
$\Wedge^p(T\submfd\ltens\Ln)\ltens L\tens \Wedge^{k-p}\CV_\submfd^\perp$ along
$\submfd$.
\end{thm}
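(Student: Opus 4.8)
The necessity (``only if'') is the content of the discussion immediately preceding the statement, so the plan is to establish the converse. First I would make the homological decomposition precise. The restriction to $\submfd$ of $\hc{\omega}_{S^n}$ is a section of $H_0(T\dual\submfd,\Wedge^{k+1}\R^{n+1,1})$, where $T\dual\submfd$ acts through $\so(\CV\low_\submfd)$ and trivially on $\CV_\submfd^\perp$; since the action is trivial on the normal factor, the coinvariants split as
\begin{equation*}
H_0\bigl(T\dual\submfd,\Wedge^{k+1}\R^{n+1,1}\bigr)\cong
\Dsum_{p} H_0\bigl(T\dual\submfd,\Wedge^{p+1}\CV\low_\submfd\bigr)
\tens\Wedge^{k-p}\CV_\submfd^\perp.
\end{equation*}
A Kostant-type computation (as in \S\ref{par:filtered-lie-alg}, using the filtration $\Ln\subset\Lnps\subset\CV\low_\submfd$ and the fact that $T\dual\submfd$ lowers the $\eps$-grading) identifies $H_0(T\dual\submfd,\Wedge^{p+1}\CV\low_\submfd)$ with the top-weight space $(\CV\low_\submfd/\Lnps)\tens\Wedge^p(\Lnps/\Ln)\cong L\tens\Wedge^p(T\submfd\ltens\Ln)$, the projection being induced by $\CV\low_\submfd\to\CV\low_\submfd/\Lnps\cong L$ on one factor and $\Lnps\to\Lnps/\Ln\cong T\submfd\ltens\Ln$ on the remaining $p$ factors. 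Thus the hypothesis says exactly that $\hc{\omega}_{S^n}\restr\submfd$ lies in the single graded summand indexed by the given $p$.

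For sufficiency I would first extract the geometric data. As $\omega\in\Wedge^{k+1}W$ is a top form on the $(k+1)$-plane $W$, it is decomposable and $W=\{w:w\wedge\omega=0\}$; consequently its class is a decomposable section of the $p$-summand, which I write as $(X_1\wedge\cdots\wedge X_p)\vtens\bar v\vtens\chi$ with the $X_i$ a local frame of a rank-$p$ subbundle $T_1\subset T\submfd\ltens\Ln$ and $\chi$ decomposable in $\Wedge^{k-p}\CV_\submfd^\perp$. Applying the same analysis to $\omega^\perp\in\Wedge^{n-k+1}W^\perp$, the Hodge dual of $\omega$, whose class is forced into the complementary summand, produces a rank-$(m-p)$ tangent distribution $T_2$ with $T\submfd=T_1\dsum T_2$ orthogonally. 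The target is to prove that $T_1,T_2$ are integrable and that $\submfd$ is, locally, the product of a $p$-manifold immersed in the spaceform of $W$ along the leaves of $T_1$ and an $(m-p)$-manifold immersed in the spaceform of $W^\perp$ along the leaves of $T_2$.

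The key step is to exploit that $\omega$ is a \emph{constant} vector, i.e.\ $\d\omega=0$. Decomposing $\omega=\sum_j\omega_j$ by $(\CV\low_\submfd,\CV_\submfd^\perp)$-bidegree and using $\CD^\mfd=\CDVs+\CS^\CV$ as in \eqref{eq:tn}, the equation $\CD^\mfd\omega=0$ splits, in each bidegree, into a $\CDVs$-term coupled to the two off-diagonal pieces of $\CS^\CV$ acting on the neighbouring $\omega_{j\pm1}$. The plan is to combine these coupled equations with the homological vanishing of the projections of $\omega_j$ for $j\neq p+1$ to show, inductively, that in fact $\omega_j=0$ for $j\neq p+1$, so that $\omega$ genuinely factors through the filtration as $\omega=\theta\wedge v\wedge\chi$ with $\theta\in\Wedge^p\US$, $v\in\Ln\dsum\Lnc$, $\chi\in\Wedge^{k-p}\CV^\perp$ (exactly the factorization of the necessity argument), and that $\CS^\CV$ is block-diagonal for $T_1\dsum T_2$ with its $T_1$-components annihilating the $W$-factors. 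Reading $\d\omega=0$ once more then gives $\Liebrack{X,Y}\in T_1$ for $X,Y\in T_1$ and dually for $T_2$, i.e.\ Frobenius integrability of both distributions, together with $\CD^\mfd$-parallelism of $\submfd\times W$ along $T_1$ and of $\submfd\times W^\perp$ along $T_2$. I expect this differentiation step---controlling the representative ambiguity (the image of $\LH$) so that the constant $\omega$ actually factors, and then reading off integrability and the block structure---to be the main obstacle; everything else is bookkeeping.

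Granting this, the conclusion is immediate: along each leaf of $T_1$ the $W^\perp$-component of a lift is parallel, hence constant, and dually along $T_2$, so in product coordinates adapted to the two foliations a lift factors as $f=w(\text{$T_1$-coordinates})+x(\text{$T_2$-coordinates})$ in the notation of \S\ref{par:sym-break}. By the explicit product and stereographic descriptions there, $\submfd$ is precisely the product of a $p$-submanifold of the spaceform of $W$ and an $(m-p)$-submanifold of the spaceform of $W^\perp$, as required. Finally, since any nonzero element of $\Wedge^{k+1}W$ is a scalar multiple of $\omega$, the homological condition is independent of this choice, which justifies the ``some (hence any)'' in the statement.
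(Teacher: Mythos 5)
There is a genuine gap at the step you yourself flag as the main obstacle. Your plan is to show, from $\d\omega=0$ and the homological hypothesis, that the bidegree components $\omega_j$ of $\omega$ with respect to $\R^{n+1,1}=\CV\low_\submfd\dsum\CV_\submfd^\perp$ vanish for $j\neq p+1$, so that $\omega$ factors as $\theta\wedge v\wedge\chi$. But this intermediate claim is false: the hypothesis only says that the classes $\hc{\omega_j}$ vanish for $j\neq p+1$, i.e.\ that these components lie in the image of the $T\dual\submfd$-action, not that they are zero. Indeed the necessity direction itself shows this cannot work: there the factorization $\omega=\theta\wedge v\wedge\chi$ is adapted to the \emph{product} sphere congruence $\CV=\Lnps\dsum\Lnc$, with $v\in\Ln\dsum\Lnc$ and $\chi\in\Wedge^{k-p}\CV^\perp$, and since $\CV=\exp(H^\CV)\CV\low_\submfd$ one has $\Lnc\not\subset\CV\low_\submfd$ and $\CV^\perp\neq\CV_\submfd^\perp$ whenever $H^\CV\neq0$ (e.g.\ for a generic product of curves). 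Expanding $\exp(H^\CV)$ then produces nonzero components of $\omega$ in several bidegrees; these are boundaries, but not zero. So the induction you propose cannot close, and the sphere congruence with respect to which $\omega$ genuinely factors is not available a priori in the sufficiency direction.

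The paper avoids differentiating $\omega$ along $\submfd$ altogether. From \S\ref{par:sym-break}, $\hc{\omega}_{S^n}$ is a decomposable section of $\Wedge^kD_1\ltens(\text{line})$, where $TS^n=D_1\dsum D_2$ are the two \emph{integrable} distributions tangent to the spaceform factors of $S^n\setdif(P(W)\cup P(W^\perp))$. The hypothesis that this decomposable section lies in the single summand $\Wedge^p(T\submfd\ltens\Ln)\ltens L\tens\Wedge^{k-p}\CV_\submfd^\perp$ forces the $k$-plane $D_1$ to split along $\submfd$ as a tangential $p$-plane plus a normal $(k-p)$-plane (a decomposable multivector concentrated in one bidegree of $\Wedge^k(A\dsum B)$ has its plane split across $A\dsum B$), and orthogonality does the same for $D_2$. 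The tangential parts $T_i=D_i\cap T\submfd$ are then integrable because the ambient $D_i$ already are, and their leaves lie in the leaves of $D_i$, which are the spaceform slices; this gives the local product splitting immediately. Your opening homological bookkeeping and the extraction of $T_1,T_2$ from decomposability are fine and consistent with this; if you want to salvage your differential route you would have to first construct the adapted congruence $\CV$ with $\Ln\dsum\Lnc=(\submfd\times W\dsum\Ln)\cap(\submfd\times W^\perp\dsum\Ln)$ and rephrase the factorization relative to it, at which point you are essentially reconstructing the ambient product structure anyway.
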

\begin{proof} It remains to show that the homological condition implies the
splitting. For this recall from \S\ref{par:sym-break} that $\hc{\omega}_{S^n}$
and $\hc{\omega^\perp}_{S^n}$ are sections of the top exterior powers of the
two distributions tangent to the spaceform factors (up to a line bundle), so
the homological condition means that one of these distributions meets
$T\submfd\dsum N\submfd$ (along $\submfd$) in the sum of a tangential
$p$-plane and a normal $(k-p)$-plane. The other distribution, being
orthogonal, then meets $T\submfd\dsum N\submfd$ in a tangential $(m-p)$-plane
and a normal $(n-k-(m-p))$-plane. Now the tangential planes are integrable,
and this locally splits $\submfd$.
\end{proof}

In the case $k=1$, the above theorem has an intuitively clear meaning.
\begin{cor} The vector field $K$ on $S^n$ associated to a $2$-dimensional
subspace $W$ of $\R^{n+1,1}$ is tangent to $\submfd$ if and only if $\submfd$
is an open subset of a revolute, a cylinder or a cone, over a submanifold of
the same codimension in $\cH^{n-1}$, $\R^{n-1}$ or $\cS^{n-1}$
respectively. In this case the homology class of $\omega_\infty$ along
$\submfd$ is the corresponding tangent vector field.
\end{cor}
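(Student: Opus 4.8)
The plan is to obtain the corollary as the special case $k=1$, $p=1$ of the preceding theorem, and then to translate the resulting abstract product splitting into the classical language of revolutes, cylinders and cones.

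First I would recall from \S\ref{par:sym-break} that the vector field $K=\hc{\omega}$ attached to a $2$-plane $W$ is the homology class of a nonzero $\omega\in\Wedge^2 W$, viewed as an infinitesimal M\"obius transformation, and that it generates the one-parameter group ($\cS^1$ or $\R$) acting on the first, one-dimensional, factor of the product spaceform structure determined by $W$. Which structure arises is governed by the signature of $\omega$ (equivalently of $W$): spacelike gives $\cS^1\times\cH^{n-1}$, null gives $\R\times\R^{n-1}$, and timelike gives $\cH^1\times\cS^{n-1}$.

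Next I would make the bundle identification that turns the theorem's homological hypothesis into the tangency condition. For $k=1$, $p=1$ the target bundle is
\[
\Wedge^1(T\submfd\ltens\Ln)\ltens L\tens\Wedge^0\CV_\submfd^\perp
= T\submfd\ltens\Ln\ltens L\cong T\submfd,
\]
since $\Wedge^0\CV_\submfd^\perp$ is trivial and $\Ln\cong L^{-1}$. As $\hc{\omega}_{S^n}\restr\submfd$ is a priori a section of $TS^n\restr\submfd=T\submfd\dsum N\submfd$, the requirement that it be a section of $T\submfd$ is exactly the requirement that $K$ be tangent to $\submfd$. For $m\geq2$ the theorem (with $0<p=1<m$) then gives that $K$ is tangent to $\submfd$ if and only if $\submfd$ splits locally as a product of a $1$-submanifold in the spaceform associated to $W$ and an $(m-1)$-submanifold $\Sigma'$ in the spaceform associated to $W^\perp$. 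The case $m=1$ is trivial: there $\submfd$ is an integral curve of $K$, hence an open subset of the one-dimensional factor.

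Finally I would read off the classical description. Because the spaceform from $W$ is one-dimensional, its $1$-submanifold factor is automatically an open subset of it, so $\submfd$ is the orbit of $\Sigma'$ under the group generated by $K$: a revolute, a cylinder or a cone in the three signature cases respectively. A dimension count gives $\Sigma'$ codimension $(n-1)-(m-1)=n-m$ in its $(n-1)$-dimensional spaceform, matching the codimension of $\submfd$ in $S^n$. The closing assertion is then immediate, since $\hc{\omega_\infty}_{S^n}\restr\submfd=\hc{\omega_\infty}_\submfd$ is the tangential part of $K$, which generates the group action along $\submfd$. I expect the only delicate points to be this bundle identification and the dictionary between the product splitting and the three named surfaces; everything else is a direct invocation of the theorem.
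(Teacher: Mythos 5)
Your proposal is correct and is exactly the route the paper intends: the corollary is stated as the $k=1$ (hence $p=1$) specialization of the preceding theorem, with the identification $\Wedge^1(T\submfd\ltens\Ln)\ltens L\tens\Wedge^0\CV_\submfd^\perp\cong T\submfd$ turning the homological hypothesis into tangency of $K$, and the one-dimensional factor being an open subset of $\cS^1$, $\R$ or $\cH^1$ so that the product is a revolute, cylinder or cone. Your explicit handling of the trivial case $m=1$ and the codimension count are details the paper leaves implicit, but the argument is the same.
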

The properties of these submanifolds can easily be read off from the data on
$\submfd$ by imposing the condition that $\omega$, here equal to $\theta\wedge
v$, with $\theta\in\US$ and $v\in\Ln\dsum\Lnc$, is parallel.  We obtain
immediately that $\CS^{\CV}\theta=0=\CS^{\CV}v$, so that $\CV$ is a curvature
sphere, constant along the curvature lines in the direction of
$K=\theta\tens(v\modulo\Ln^\perp)$, $\Lnc$ is another envelope, and
$\CS^{\CV}$ (hence $\II^{\CV}$) preserves the decomposition
$T\submfd=\vspan{K}\dsum K\normal\cap T\submfd$ induced by the product metric
on $\cS^1\times \cH^{n-1}$, $\R\times \R^{n-1}$ or $\cH^1\times \cS^{n-1}$.
\begin{example}
Recall that a (Dupin) cyclide in $S^3$ is an orbit of a two dimensional
abelian subgroup of the \mob/ group $\Mob(3)$.  Dupin's classification of the
cyclides can be read off easily from the above. In the span of a two
dimensional abelian subalgebra of $\so(4,1)$ we can always find a basis of
decomposable elements (just consider the Jordan normal form). These
decomposables define two orthogonal $2$-planes in $\R^{4,1}$, reducing the
geometry to $\cH^{2}\times \cS^1$, $\R^{2}\times\R$, or $\cS^{2}\times \cH^1$,
and the cyclide as a surface of revolution, a cylinder, or a cone, according
to whether the $2$-plane is spacelike, degenerate, or timelike. At most one of
the planes can be degenerate or timelike. Therefore a cyclide, if not totally
umbilic, is a channel surface in two ways, and is either a circular torus of
revolution, a cylinder of revolution, or a cone of revolution.
\end{example}
If the splitting is trivial (\ie, $p=0$ or $p=m$), then $\hc{v_\infty}$ is no
longer a pure tangential class in general. Indeed, as we have seen in the case
$k=0$ (or $k=n$) of the spaceform geometries of the previous paragraph
(\S\ref{par:tangent-cong}), $\hc{v_\infty}$ is tangent to $\submfd$ if and
only if the immersion (into the spaceform factor) is minimal.

\subsection{Orthogonal systems and Dupin's Theorem}
\label{par:dupin}

An extreme form of symmetry breaking is given by $n$-tuply orthogonal
systems. Suppose that we have a local diffeomorphism
$\Ln\subset\submfd\times\R^{n+1,1}$ of a product of $1$-manifolds
$\submfd=\prod_{i=1}^n\submfd_i$ with $S^n=\PL$, such that the factors are
orthogonal for the induced conformal metric. Let $U_i=\d_{X_i}(\Ln)$, where
$X_i$ is a nonvanishing tangent vector field to $\submfd_i$ pulled back to the
product.  Thus the $U_i$ are mutually orthogonal rank $2$ bundles with sum
$\Ln^\perp$ and intersection $\Ln$. For each $i$ we have a one parameter
family of hypersurfaces $\submfd_{x_i}$ in $S^n$ by fixing the $i$th
coordinate $x_i\in\submfd_i$ as a parameter.  Dupin's Theorem is the following
one.
\begin{thm} For $i\neq j$, the families $\submfd_{x_i}$ and $\submfd_{x_j}$ 
meet each other in curvature lines.
\end{thm}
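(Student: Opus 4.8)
The plan is to reduce the statement to the vanishing of the off-diagonal components of the second fundamental form of each hypersurface in the orthogonal coordinate frame, and then to extract that vanishing from the orthogonality hypothesis by a single differentiation. Since the ambient manifold is $S^n=\PL$, I work with the flat connection $\CD^\mfd=\d$ on $\submfd\times\R^{n+1,1}$ throughout. Fix a local positive section $\sigma$ of $\Ln$, viewed as an $\R^{n+1,1}$-valued map, and let the $X_i=\del/\del u_i$ be the (commuting) coordinate vector fields, so that $U_i=\vspan{\sigma,\d_{X_i}\sigma}$. The hypothesis that the factors are orthogonal is exactly that $\lip{\d_{X_i}\sigma,\d_{X_j}\sigma}=0$ for $i\neq j$, and nullity of $\Ln$ gives $\lip{\sigma,\d_{X_i}\sigma}=\tfrac12\d_{X_i}\lip{\sigma,\sigma}=0$ for all $i$.

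First I would fix a hypersurface $N_j:=\submfd_{x_j}$ (obtained by freezing $u_j$), whose tangent directions are the $X_k$, $k\neq j$, and whose span of $\Ln$ together with its tangential first derivatives is $\sum_{k\neq j}U_k$. The previous display shows that $\d_{X_j}\sigma$ is orthogonal both to $\Ln$ and to $\sum_{k\neq j}U_k$, so it represents, modulo $\Ln$, a conormal direction to $N_j$. Next I would record that the off-diagonal entries of the second fundamental form are insensitive to the choice of enveloped sphere congruence: by Proposition~\ref{p:ch-mobII}, $\II^{\CV+\con}=\II^\CV-\conf\vtens\con$, and $(\conf\vtens\con)(X_k,X_l)=\cip{X_k,X_l}\con$ vanishes for the orthogonal pair $X_k,X_l$ with $k\neq l$. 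Choosing any enveloped $(n-1)$-sphere congruence $\CV$ for $N_j$ and unwinding the definition of $\II^\CV$ through $\CS^\CV$ (the $\CV^\perp$-part of $\d$), the pairing of $\II^\CV_{X_k}X_l$ with the conormal $\d_{X_j}\sigma$ equals, up to a nonzero weight factor,
\[
c_{klj}:=\lip{\d_{X_k}\d_{X_l}\sigma,\d_{X_j}\sigma}\qquad(j,k,l\ \text{distinct}),
\]
the pairing with $\d_{X_j}\sigma$ isolating precisely the conormal component because $\d_{X_j}\sigma$ annihilates $\Ln$ and $\sum_{k\neq j}U_k$. By the curvature-sphere description of principal directions in~\S\ref{par:shape-curv-channel}, $X_k$ is then a principal direction of $N_j$ for all $k\neq j$ exactly when $c_{klj}=0$ for every distinct triple, and this is the line-of-curvature condition.

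The heart of the matter is then immediate. As the $X_i$ commute and $\d$ is flat, $\d_{X_a}\d_{X_b}\sigma$ is symmetric in $a,b$, whence $c_{abc}=c_{bac}$. Differentiating the three orthogonality relations $\lip{\d_{X_k}\sigma,\d_{X_j}\sigma}=0$, $\lip{\d_{X_l}\sigma,\d_{X_j}\sigma}=0$ and $\lip{\d_{X_l}\sigma,\d_{X_k}\sigma}=0$ along $X_l$, $X_k$ and $X_j$ respectively, and using this symmetry, yields
\[
c_{klj}+c_{ljk}=0,\qquad c_{klj}+c_{kjl}=0,\qquad c_{ljk}+c_{kjl}=0,
\]
a linear system forcing $c_{klj}=c_{ljk}=c_{kjl}=0$. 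Hence the second fundamental form of each $N_j$ is diagonal in the orthogonal coordinate frame; every coordinate curve is therefore a line of curvature of each of the hypersurfaces through it, and in particular $\submfd_{x_i}$ and $\submfd_{x_j}$ meet along coordinate curves which are lines of curvature of both.

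I expect the one genuine obstacle to lie in the bookkeeping of the second step rather than in the (trivial) linear algebra: one must track the density weights and the role of the M\"obius reduction carefully enough to justify that the off-diagonal components of $\II^\CV$ are nonzero multiples of the scalars $c_{klj}$, and confirm that the line-of-curvature condition is unaffected both by rescaling the coordinate fields $X_i$ (which merely rescales rows and columns of $\II$) and by the choice of enveloped sphere congruence. Once this identification is secured, the orthogonality hypothesis supplies the result through the single differentiation above, with no further computation.
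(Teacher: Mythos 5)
Your proof is correct. It shares the paper's framework: both arguments stay entirely in the light-cone model, using the flat connection on $\submfd\times\R^{n+1,1}$ to identify the off-diagonal entries of the second fundamental form of each coordinate hypersurface with the inner products $\lip{\d_{X_k}\d_{X_l}\sigma,\d_{X_j}\sigma}$ of second derivatives of a lift against the mutually orthogonal $2$-plane bundles $U_j$, and both exploit the symmetry of second derivatives coming from $[X_k,X_l]=0$ and the flatness of $\d$. Where you diverge is the finishing move. The paper kills the relevant inner product in a single stroke by invoking a lift $\sigma_k$ adapted to the $k$-th factor, for which $\d_{X_j}\d_{X_k}\sigma_k$ is claimed to lie in $U_k$, which is orthogonal to $U_i$. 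You instead differentiate the three orthogonality relations $\lip{\d_{X_a}\sigma,\d_{X_b}\sigma}=0$ each along the remaining coordinate direction and solve the resulting linear system $c_{klj}+c_{ljk}=c_{klj}+c_{kjl}=c_{ljk}+c_{kjl}=0$. This is the classical ``cyclic'' argument transplanted into the light cone; its advantage is that it works with an arbitrary positive section of $\Ln$, so you never have to produce or justify an adapted gauge, at the cost of three differentiations instead of one. The bookkeeping you flag in your last paragraph is indeed the only delicate point, and it does go through: since $\cip{X_k,X_l}=0$ for $k\neq l$, equation~\eqref{eq:pD2} places $\d_{X_k}\d_{X_l}\sigma$ in $\Ln^\perp$, so pairing with $\d_{X_j}\sigma$ (which annihilates $\Lnps=\sum_{m\neq j}U_m$) isolates precisely the $\CV^\perp$-component with the nonzero coefficient $\lip{\d_{X_j}\sigma,\d_{X_j}\sigma}$, and both rescaling the $X_i$ and changing the enveloped sphere congruence only rescale or leave unchanged the off-diagonal entries. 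Like the paper's proof, yours never touches the Levi-Civita connection of the induced metric, which is the feature the authors emphasise in contrast to the euclidean proofs in the literature.
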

\begin{proof} It suffices to show that $X_j$ is tangent to a curvature line
on $\submfd_{x_i}$, \ie, $\II^{0(i)}_{X_j}X_k=0$ for $k\notin\{i,j\}$.  This
is a multiple of $\lip{\d_{X_j}\d_{X_k}\sigma_k,u_i}$ where $u_i$ is a unit
section of $U_i$ and $\sigma_k$ is a section of $\Ln$ pulled back from
$\submfd_k$.  Now we simply observe that
$\d_{X_j}\d_{X_k}\sigma_k=\d_{X_k}\d_{X_j}\sigma_k$ since $[X_j,X_k]=0$, which
is a section of $U_k$, since $\d_{X_j}\sigma_k$ is a section of $\Ln$ (for
$j\neq k$). As $U_k$ is orthogonal to $U_i$ (for $i\neq k$), we are done.
\end{proof}

The proofs in the literature take a euclidean point of view of this
result. Ours is quite different, and thus avoids computing the Levi-Civita
connection of the induced metric.

\section{Constrained Willmore surfaces}
\label{sec:willmore}

\subsection{Definition and spectral deformation}
\label{par:willmore}

\begin{defn}\label{CWdef} An immersion $\Ln$ of a surface $\submfd$ in $S^n$
is said to be \emphdef{constrained Willmore} if there is a holomorphic
quadratic differential $q\in C^\infty(\submfd,S^2T\dual_0\submfd)$ (\ie, $\d
q=0$) such that $(\Mh^\nabla)^*(\II^0)=\ip{q,\II^0}$.  We refer to such a
holomorphic quadratic differential $q$ as a \emphdef{Lagrange multiplier} for
the constrained Willmore immersion.  If $(\Mh^\nabla)^*(\II^0)=0$ the surface
is said to be \emphdef{Willmore} or \emphdef{conformally minimal}.
\end{defn}

Let $g$ be an ambient metric of constant curvature, and let $D$ be the induced
exact Weyl connection on $\submfd$. Then, by~\eqref{eq:N-normal}
and~\eqref{eq:Rnr}--\eqref{eq:Rca}, we have
\begin{equation*}
(\Mh^\nabla)^*(\II^0)
=-\divg^{\nabla,D}\CA^D+\cip{\nr^{D,\submfd}_0,\II^0}
=\Laplace^{\nabla,D}H^g+\cip{H^g(\II^0),\II^0}
\end{equation*}
so that the constrained Willmore equation in a spaceform may be written
\begin{equation*}
\Laplace^{\nabla,D}H^g+\cip{H^g(\II^0),\II^0} = \cip{q,\II^0}.
\end{equation*}
In codimension one, $\cip{H^g(\II^0),\II^0}=|\II^0|^2 H^g=2(|H^g|^2-K^g) H^g$,
where $K^g$ is the gaussian curvature $\detm \II^g$. This yields the usual
form of the Willmore equation when $q=0$ and $g$ is flat~\cite{Wil:tcr}.

An immediate consequence of this formulation is the well known fact that
surfaces with parallel mean curvature in a spaceform are constrained Willmore
(including CMC surfaces in codimension one), while minimal surfaces are
Willmore. For this, we take $q=H^g(\II^0)$ and observe that $q$ is holomorphic
by virtue of the Codazzi equation~\eqref{eq:25}, equation~\eqref{eq:Rca}, and
the fact that $\nabla^DH^g=0$.

The following proposition provides a more general class.

\begin{prop} Let $\submfd$ be a surface in a spaceform with ambient metric
$g$. Suppose that $H^g=H_++H_-$ where $H_{\pm}$ are complex conjugate sections
of $N\dual\submfd\tens\C$ such that $(\nabla^DH_-)^{0,1}=0$ and $\cip{\nabla^D
H_-,H_-}=0$. Then $\submfd$ is constrained Willmore.
\end{prop}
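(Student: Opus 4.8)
The plan is to exhibit an explicit holomorphic Lagrange multiplier. Recall from the displayed identity immediately preceding the proposition that, in a spaceform with induced exact Weyl connection $D$, the constrained Willmore condition is equivalent to finding a holomorphic quadratic differential $q$ with $\Laplace^{\nabla,D}H^g+\cip{H^g(\II^0),\II^0}=\cip{q,\II^0}$. Since surfaces are conformally flat, I would fix a holomorphic coordinate $z$ and take $D=D^z$ with $D\sigma=0$, so that the density connection is flat in this gauge, and decompose all tensors into $(2,0)$ and $(0,2)$ types, writing $A=\II^0(\del_z,\del_z)$ for the normal-valued $(2,0)$-coefficient (so that the $(1,1)$-part vanishes, $\II^0$ being tracefree). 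The two hypotheses then read $\nabla_{\del_{\bar z}}H_-=0$ (i.e. $H_-$ is a holomorphic section of the complexified weightless conormal bundle) and $\cip{\nabla_{\del_z}H_-,H_-}=0$, while conjugation gives $\nabla_{\del_z}H_+=0$.

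For the candidate I would take $q$ to be the real quadratic differential whose $(2,0)$-part is $q^{2,0}=c\,\bigl(H_-(\II^0)\bigr)^{2,0}=c\,H_-(A)\,dz^2$, where $H_-(\II^0)$ denotes the contraction of the conormal $H_-$ into the normal values of $\II^0$, and the universal constant $c$ is to be pinned down by the computation below. First I would verify the spaceform equation. Using $\nabla_{\del_{\bar z}}H_-=0=\nabla_{\del_z}H_+$, the mixed Hessian collapses under the Ricci identity, giving $\Laplace^{\nabla,D}H^g$ proportional to the single curvature term $R^\nabla_{\del_z,\del_{\bar z}}(H_+-H_-)$. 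I would then substitute the Ricci equation~\eqref{eq:27} (the ambient $S^n$ being conformally flat, $W^\mfd=0$), which expresses $R^\nabla$ through $A$ and $\bar A$, and compare the coefficients of the two independent normal directions $A,\bar A$ on each side of $\Laplace^{\nabla,D}H^g+\cip{H^g(\II^0),\II^0}=\cip{q,\II^0}$. The asymmetric combination $H^g=H_++H_-$ causes both coefficient equations to collapse to the single relation $q^{2,0}=2\,H_-(A)\,dz^2$ (fixing $c=2$), which establishes the equation.

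It then remains to check that $q$ is holomorphic, i.e. $\dbar\bigl(q^{2,0}\bigr)=0$, which amounts to $\del_{\bar z}\bigl(H_-(A)\bigr)=(\nabla_{\del_{\bar z}}H_-)(A)+H_-(\nabla_{\del_{\bar z}}A)=0$. The first term vanishes by holomorphicity of $H_-$. For the second I would use the Codazzi equation~\eqref{eq:25}, which in this gauge (with $W^\mfd=0$) reads $\nabla_{\del_{\bar z}}A\propto(\CA^D)^\sharp_{\del_z}$, together with~\eqref{eq:Rca} in the spaceform case $(\nr^g)\normal=0$, giving $\CA^D=-\nabla^D H^g$ and hence $(\CA^D)_{\del_z}=-\nabla_{\del_z}H^g=-\nabla_{\del_z}H_-$. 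Therefore $H_-(\nabla_{\del_{\bar z}}A)\propto\cip{\nabla_{\del_z}H_-,H_-}=0$ by the second hypothesis, so $q$ is holomorphic and is the desired multiplier.

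I expect the holomorphicity step to be the main obstacle: the delicate point is routing $\nabla_{\del_{\bar z}}A$ through Codazzi into $\nabla_{\del_z}H_-$ so that precisely the second hypothesis $\cip{\nabla^D H_-,H_-}=0$ applies, with both hypotheses genuinely needed (one for each term of $\dbar q^{2,0}$). A useful consistency check is the parallel-mean-curvature case $H_+=H_-=\tfrac12 H^g$: there $\Laplace^{\nabla,D}H^g=0$ and the construction returns $q=H^g(\II^0)$, recovering the result already noted for surfaces of parallel mean curvature. The remaining bookkeeping of density weights and normalization constants is routine.
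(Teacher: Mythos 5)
Your argument is correct and is essentially the paper's own proof written out in a holomorphic coordinate: you arrive at the same Lagrange multiplier $q=2H_-(\II^0)^{2,0}+2H_+(\II^0)^{0,2}$, reduce $\Laplace^{\nabla,D}H^g$ to the curvature term $R^\nabla$ acting on $H_+-H_-$ using exactly the two vanishing conditions, substitute the Ricci equation, and verify holomorphicity of $q$ via the Codazzi equation, $\CA^D=-\nabla^DH^g$ and the hypothesis $\cip{\nabla^DH_-,H_-}=0$, just as the paper does invariantly with the Hodge star. The only slip is the declaration $D=D^z$: the displayed identity preceding the proposition, the formula \eqref{eq:Rca}, and the hypotheses themselves all refer to the spaceform-induced exact Weyl connection $D$ (which is generally not $D^z$), so you should retain that $D$ and use the holomorphic coordinate only for the type decomposition --- with that reading the argument goes through verbatim.
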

\begin{proof}
Since $(\nabla^DH_-)^{0,1}=0=(\nabla^DH_+)^{1,0}$, we have
\begin{align*}
\Laplace^{\nabla,D}H^g&=\divg^{\nabla,D}\bigl((\nabla^DH_+)^{1,0}
+(\nabla^DH_-)^{1,0}+(\nabla^DH_+)^{0,1} +(\nabla^DH_-)^{0,1}\bigr)\\
&=\divg^{\nabla,D}\bigl(-(\nabla^DH_+)^{1,0}
+(\nabla^DH_-)^{1,0}+(\nabla^DH_+)^{0,1} -(\nabla^DH_-)^{0,1}\bigr)\\
&=\iI\divg^{\nabla,D}{*\nabla^D(H_+-H_-)}\\
&=\iI{*R^\nabla}\cdot(H_+-H_-).
\end{align*}
Hence, using the Ricci equation~\eqref{eq:hR}, we have
\begin{equation*}
\Laplace^{\nabla,D}H^g+\cip{H^g(\II^0),\II^0} = \cip{q,\II^0}
\end{equation*}
where $q=2H_-(\II_0)^{2,0}+2H_+(\II_0)^{0,2}$. It remains to see that $q$ (or
equivalently $q^{2,0}$) is holomorphic. Since $(\nabla^DH_-)^{0,1}=0$, it
suffices to show that the contraction of $\dbar^\nabla(\II^0)^{2,0}$ with
$H_-$ is zero. But $\dbar^\nabla(\II^0)^{2,0}=(\d^{\nabla,D}\II^0)^{2,1}$,
which is a multiple of $(\nabla^DH^g)^{1,0}\wedge\conf$ by~\eqref{eq:25}
and~\eqref{eq:Rca}. Since $(\nabla^DH^g)^{1,0}=\nabla^DH_-$, the contraction
with $H_-$ vanishes by the second equation on $H_-$.
\end{proof}
\begin{cor} Let $\submfd$ be cooriented codimension two surface in a spaceform
with ambient metric $g$, and suppose $H^g$ is holomorphic with respect to the
natural complex structure on $N^*\submfd$ induced by its conformal structure
and orientation, and the holomorphic structure induced by $\nabla^D$. Then
$\submfd$ is constrained Willmore.
\end{cor}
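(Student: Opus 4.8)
The plan is to deduce this directly from the preceding Proposition by manufacturing the decomposition $H^g=H_++H_-$ it requires out of the normal complex structure. Since $\submfd$ is a cooriented surface of codimension two, the normal bundle is an oriented rank-two bundle with a conformal metric, and so $N\dual\submfd$ carries the canonical complex structure $J_N$ (rotation by a right angle) named in the statement. The normal connection $\nabla^D$ is metric and orientation-preserving, hence $J_N$-complex-linear ($\nabla^D J_N=0$), so $(N\dual\submfd,J_N)$ is a complex line bundle whose holomorphic structure is the Dolbeault operator $\dbar^\nabla=(\nabla^D\,\cdot\,)^{0,1}$. The hypothesis is precisely that $\dbar^\nabla H^g=0$.

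First I would complexify, writing $N\dual\submfd\tens\C=E^+\dsum E^-$ for the $\pm\iI$-eigenbundles of $J_N$, and set $H^g=H_++H_-$ with $H_\pm$ the components in $E^\pm$. Because $H^g$ is real and conjugation interchanges $E^+$ and $E^-$ (as $\overline{\iI}=-\iI$), the components satisfy $\overline{H_+}=H_-$; they are the complex-conjugate pieces the Proposition demands.

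Next I would verify the algebraic hypothesis. The key observation is that each eigenbundle $E^\pm$ is null for the complex-bilinear extension of the normal conformal metric: if $J_N v=\pm\iI v$ then, since $J_N$ is a conformal isometry, $\cip{v,v}=\cip{J_N v,J_N v}=(\pm\iI)^2\cip{v,v}=-\cip{v,v}$, whence $\cip{v,v}=0$. Thus $\cip{H_-,H_-}$ vanishes identically, and differentiating with the metric connection $\nabla^D$ gives $0=\cip{\nabla^D H_-,H_-}+\cip{H_-,\nabla^D H_-}=2\cip{\nabla^D H_-,H_-}$, so $\cip{\nabla^D H_-,H_-}=0$. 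This holds for either eigencomponent, so it is compatible with whatever labelling the final step forces.

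Finally, the remaining condition $(\nabla^D H_-)^{0,1}=0$ is exactly the holomorphicity of $H^g$: under the complex-linear identification of $(N\dual\submfd,J_N)$ with the eigenline carrying its holomorphic structure, the section $H^g$ corresponds to one of $H_\pm$, and $\dbar^\nabla H^g=0$ reads as the vanishing of the $(0,1)$-type normal derivative of that eigencomponent; labelling it $H_-$, this is $(\nabla^D H_-)^{0,1}=0$. Both hypotheses of the Proposition then hold, and $\submfd$ is constrained Willmore. The one point needing care --- and the main obstacle --- is reconciling the two complex structures: one must confirm that the base $(1,0)/(0,1)$ splitting from the conformal structure of $\submfd$ and the fibre splitting $E^\pm$ from $J_N$ combine so that $\dbar^\nabla$ selects precisely the eigencomponent named $H_-$. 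Once the orientation and coorientation conventions are fixed this is immediate, but it is exactly where a sign could slip, so I would pin down these conventions before naming $H_-$.
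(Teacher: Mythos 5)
Your proposal is correct and follows essentially the same route as the paper: decompose $H^g$ into the $\pm\iI$-eigencomponents $H_\pm$ of the normal complex structure, observe that the eigenlines are null so that $\cip{\nabla^D H_-,H_-}=0$ follows by differentiating $\cip{H_-,H_-}=0$, identify the holomorphicity hypothesis with $(\nabla^D H_-)^{0,1}=0$, and invoke the preceding proposition. Your extra care about which eigencomponent deserves the label $H_-$ is a reasonable precaution but does not change the argument, which the paper states in the same (if terser) form.
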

\begin{proof} Let $H_+=(H^g)^{0,1}$ and $H_-=(H^g)^{1,0}$ be the projections
of $H^g$ onto the eigenspaces in $N^*\submfd\tens\C$ of the complex structure.
Since $H^g$ is holomorphic, $(\nabla^D H_-)^{0,1}=0$. On the other hand, these
eigenspaces are null, so $0=\d\cip{H_-,H_-}=2\cip{\nabla^DH_-,H_-}$. Hence we
can apply the proposition.
\end{proof}
In the case that $g$ is flat, this result is due to Bohle~\cite{Boh:cws}.

One reason to be interested in constrained Willmore surfaces is that they
admit a spectral deformation (in arbitrary codimension) and hence an
integrable systems interpretation. 

\begin{thm}\label{th:Willmore-def}
Let $\Ln\subset\submfd\times\R^{n+1,1}$ be an immersed surface in $S^n$ with
\GCR/ data $(\conf,\Ms^\submfd,\nabla,\II^0)$, and quadratic differential
$q\in C^\infty(\submfd,S^2_0T\dual\submfd)$. Then the following conditions are
equivalent.
\begin{numlist}
\item $\submfd$ is constrained Willmore with Lagrange multiplier $q$.

\item $(\conf,\Ms^\submfd+\frac12(e^{2tJ}-1)q,\nabla,e^{tJ}\II^0)$ satisfies
the \GCR/ equations for some $t\notin \pi\Z$.

\item $(\conf,\Ms^\submfd+\frac12(e^{2tJ}-1)q,\nabla,e^{tJ}\II^0)$ satisfies
the \GCR/ equations and defines a constrained Willmore immersion of $\submfd$
in $S^n$ for all $t\in\R$.
\end{numlist}
\end{thm}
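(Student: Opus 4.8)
The plan is to reduce everything to conformal coordinates and push the three \GCR/ equations through the deformation explicitly, using the concrete form of these equations for surfaces displayed at the end of~\S\ref{par:surfaces}. Fix a holomorphic coordinate $z$, let $\sigma$ be the section of $\Ln$ with $\lip{\CD\sigma,\CD\sigma}=\d z\,\d\bar z$, and set $\kappa=-\II^0(\del_z,\del_z)\sigma$, a section of the complexified weightless normal bundle encoding the $(2,0)$-part of $\II^0$. Writing $\nr^{D^z,\submfd}=r\,\d z^2+\bar r\,\d\bar z^2$ for the normalized Ricci of $\Ms^\submfd$ (so $2r$ is its schwarzian w.r.t.\ $z$) and the Lagrange multiplier as $q=\hat q\,\d z^2+\overline{\hat q}\,\d\bar z^2$, the \GCR/ system becomes the Gauss equation $\del_{\bar z}r=3\lip{\nabla_{\del_z}\bar\kappa,\kappa}+\lip{\bar\kappa,\nabla_{\del_z}\kappa}$, the Codazzi equation $\imag P=0$ where $P:=\nabla_{\del_{\bar z}}\nabla_{\del_{\bar z}}\kappa+\bar r\,\kappa$, and the Ricci equation $R^\nabla_{\del_{\bar z},\del_z}\wnv=2\lip{\kappa,\wnv}\bar\kappa-2\lip{\bar\kappa,\wnv}\kappa$. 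I would first record the dictionary $(\Mh^\nabla)^*(\II^0)\leftrightarrow 2\real P$ and $(\Mh^\nabla)^*(J\II^0)\leftrightarrow -2\imag P$, obtained by letting the (real) operator $(\Mh^\nabla)^*$ act on the $(2,0)+(0,2)$ decomposition, so that the constrained Willmore equation $(\Mh^\nabla)^*(\II^0)=\ip{q,\II^0}$ reads $\real P=\real(\overline{\hat q}\,\kappa)$ once the normalization of the pairing is fixed.

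Next I would compute the effect of the deformation in these coordinates. Since $\conf$ and $\nabla$ are unchanged, the only changes are $\kappa\mapsto e^{\iI t}\kappa$ (because $e^{tJ}$ acts as $e^{\iI t}$ on the $(2,0)$-part) and $r\mapsto r+\tfrac12(e^{2\iI t}-1)\hat q$ (because $\tfrac12(e^{2tJ}-1)q$ shifts the $(2,0)$-coefficient of the normalized Ricci by $\tfrac12(e^{2\iI t}-1)\hat q$, via the affine action~\eqref{eq:mob-aff} and the transformation law of Proposition~\ref{th:schwarzian}). The Ricci equation and the pure-trace datum $\QC=\tfrac14|\II^0|^2\conf$ depend only on the phase-invariant combinations $\lip{\kappa,\cdot}\bar\kappa$ and $|\kappa|^2$, so both survive for every $t$; the same cancellation makes the Gauss right-hand side phase-invariant. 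Hence the deformed Gauss equation holds iff $\tfrac12(e^{2\iI t}-1)\del_{\bar z}\hat q=0$, which for $t\notin\pi\Z$ is equivalent to $\dbar q=0$. For the deformed Codazzi equation, substituting and using that the \emph{undeformed} Codazzi equation ($\imag P=0$, so $P$ is real) holds, the argument of $\imag(\cdot)$ becomes $e^{\iI t}P-\iI\sin t\,\overline{\hat q}\,\kappa$, whose imaginary part is $\sin t\,\bigl(P-\real(\overline{\hat q}\,\kappa)\bigr)$; for $t\notin\pi\Z$ this vanishes iff $P=\real(\overline{\hat q}\,\kappa)$, i.e.\ iff the constrained Willmore equation holds.

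Finally I would assemble the equivalence. Because $\Ln$ is an immersed surface, its induced data satisfy the undeformed \GCR/ equations, so these may be assumed throughout. The computation then shows that the only two potentially new equations in the deformed system are controlled by the two $t$-independent conditions ``$q$ is holomorphic'' and ``$\submfd$ is constrained Willmore with Lagrange multiplier $q$''. Thus, if the deformed system holds for a single $t\notin\pi\Z$, both conditions hold, giving~(i); conversely these two conditions are precisely~(i), and when they hold the deformed Gauss, Codazzi and Ricci equations hold for every $t\in\R$, giving~(iii); and~(iii)$\Rightarrow$(ii) is immediate. The main obstacle is not the deformation calculation itself, which is a short phase computation, but the preliminary bookkeeping: verifying the coordinate dictionary for $(\Mh^\nabla)^*$ and for the pairing $\ip{q,\II^0}$ (all real normalization constants, and the identification of $(\Mh^\nabla)^*(\II^0)$, $(\Mh^\nabla)^*(J\II^0)$ with the real and imaginary parts of $P$), and confirming that the shifted pair $(\conf,\Ms^\submfd+\tfrac12(e^{2tJ}-1)q)$ is again a conformal \mob/ structure with the stated normalized Ricci, via~\eqref{eq:mob-aff} and Proposition~\ref{p:conf-iff-no-ric}.
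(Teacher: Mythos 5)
Your argument is correct and is, at its core, the same proof as the paper's: isolate the three \GCR/ equations for a surface, observe that the deformation rotates the $(2,0)$-parts of $\II^0$ and of the M\"obius--Ricci shift by phases $e^{\iI t}$ and $e^{2\iI t}$, note that the Ricci equation and all the quadratic combinations $\lip{\kappa,\cdot}\bar\kappa$, $|\kappa|^2$ are phase-invariant, and use the undeformed equations to reduce the deformed Gauss and Codazzi equations to $\d q=0$ and the constrained Willmore equation respectively. The difference is one of execution: the paper stays with the invariant operators of Theorem~\ref{th:hom-conf-bonnet}, where the whole computation is the identity $(\Mh^\nabla+\half(e^{2tJ}-1)q)^*(e^{tJ}J\II^0)=\cos t\,(\Mh^\nabla)^*(J\II^0)-\sin t\,(\Mh^\nabla)^*(\II^0)+\sin t\,\cip{q,\II^0}$ together with~\eqref{eq:mh-dual-q}, whereas you descend to a holomorphic coordinate and the equations of~\S\ref{par:surfaces}. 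Your route buys concreteness at the price of the ``dictionary'' you flag: the one step that genuinely needs checking is that the \emph{same} real constant relates $(\Mh^\nabla)^*(\II^0)$ to $\real P$ as relates $\ip{q,\II^0}$ to $\real(\overline{\hat q}\kappa)$ --- otherwise your deformed Codazzi equation would characterize constrained Willmore surfaces with multiplier $cq$ rather than $q$. This does work out, because the term $\bar r\kappa$ in $P$ arises from the zero-order term $\cip{\II^0,\nr^{D,\submfd}_0}$ of $(\Mh^\nabla)^*$ via exactly the pairing that computes $\cip{q,\II^0}$, so the two constants are forced to agree; it would be worth saying this explicitly rather than deferring it. One further small point, which the paper's own proof also leaves implicit: condition (iii) asserts that the deformed data define a \emph{constrained Willmore} immersion for every $t$, which follows from the group property of the deformation (the $t$-deformed surface is constrained Willmore with multiplier $e^{2tJ}q$), and a sentence to that effect would close the loop.
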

\begin{proof} The \GCR/ equations hold for the data in (ii)--(iii) iff
\begin{align*}
0&=C^\submfd+\half(e^{2tJ}-1)\d q+\ip{\QCoda^\nabla(\II^0)}\\
0&=(\Mh^\nabla+\half(e^{2tJ}-1)q)^*(e^{tJ}J\II^0)\\
&= \cos t (\Mh^\nabla)^*(J\II^0)-\sin t (\Mh^\nabla)^*(\II^0)
+\half\cip{(e^{2tJ}-1)q,e^{tJ}J\II^0}\\
0&=R^\nabla-\II^0\wedge\Sh^0
\end{align*}
(using~\eqref{eq:mh-dual-q}), and we observe that
$\half\cip{(e^{2tJ}-1)q,e^{tJ}J\II^0}=\sin t \cip{q,\II^0}$.  Hence the \GCR/
equations hold for all $t$ iff they hold for some $t\notin \pi\Z$ iff $\d q=0$
and $(\Mh^\nabla)^*(\II^0)=\cip{q,\II^0}$.
\end{proof}

One way to realize this integrable system is to lift the spectral deformation
to a family of flat connections. This yields an alternative definition of
constrained Willmore surfaces.

\begin{thm}\label{th:Willmore-equiv}
Let $\Ln\subset\submfd\times\R^{n+1,1}$ and
$(\conf,\Ms^\submfd,\nabla,\II^0,q)$ be as in the previous Theorem.  Then the
following conditions are equivalent.
\begin{numlist}
\item $\submfd$ is constrained Willmore with Lagrange multiplier $q$.

\item There is a $T\dual\mfd$-valued $1$-form $\xi\in\Omega^1(\submfd,
\stab(\Ln)^\perp)\subset \Omega^1(\submfd,\so(\CV_\mfd))$ with $\hc{\xi}=q$
such that $\dDh\xi=0$ and $\d^{\CDh-\xi,\nabla}{*\CS}=0$.

\item There is a $T\dual\submfd$-valued $1$-form $\xi\in\Omega^1(\submfd,
\stab(\Ln)^\perp\cap\so(\CV_\submfd))\subset
\Omega^1(\submfd,\so(\CV_\submfd))$ with $\hc{\xi}=q$ such that
$\d+(e^{tJ}-1)\CS+\half(e^{2tJ}-1)\xi$ is flat for all $t\in\R$.
\end{numlist}
\end{thm}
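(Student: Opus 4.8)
The plan is to prove the chain of equivalences (i)$\Leftrightarrow$(ii)$\Leftrightarrow$(iii) in Theorem~\ref{th:Willmore-equiv} by relating the three conditions to the spectral deformation of Theorem~\ref{th:Willmore-def}. The guiding principle is that $\xi$ should be the differential lift $\jDh q$ of the quadratic differential $q$ (viewed as a homology class in $T\dual\submfd$), so that the flatness of the family of connections in (iii) unwinds into exactly the deformed \GCR/ equations of Theorem~\ref{th:Willmore-def}(iii). First I would establish the geometric meaning of $\xi$: by Proposition~\ref{p:eta-and-q}, a $1$-form $\xi\in\Omega^1(\submfd,\stab(\Ln)^\perp)$ with $\hc{\xi}=q$ and $\LH\dDh\xi=0$ is automatically trace-free, symmetric and tangential (\ie, $\xi=q$ as an $\so(\CV_\mfd)$-valued form), with $\dDh\xi=0$ iff $q$ is holomorphic and $\liebrack{\CS\wedge\xi}=0$ iff $q$ commutes with the shape operators. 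This is the algebraic heart that makes the two conditions in (ii) the correct translation of ``$q$ holomorphic'' and of the constrained Willmore equation.

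The main computation is to expand the flatness of the connection in (iii). Writing $\CD(t)=\d+(e^{tJ}-1)\CS+\tfrac12(e^{2tJ}-1)\xi$ and using $\d=\CDh+\CQ+\CS$ from~\eqref{eq:flat-conn} (with $H^\CV=0$, so $\CV=\CV\low_\submfd$), I would compute $R^{\CD(t)}$ and collect terms by their trigonometric dependence on $t$, exactly as in the proof of Theorem~\ref{th:Willmore-def}. The key identity I expect to use is that $e^{tJ}$ acts as rotation in the relevant homology bundles, so that $(e^{tJ}-1)\CS$ contributes the deformed second fundamental form $e^{tJ}\II^0$ while $\tfrac12(e^{2tJ}-1)\xi$ contributes the deformed \mob/ structure term $\tfrac12(e^{2tJ}-1)q$ appearing in Theorem~\ref{th:Willmore-def}(ii)--(iii). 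Flatness for all $t$ then forces the coefficient of each independent function of $t$ to vanish; matching against the homological \GCR/ equations~\eqref{eq:hG2}, \eqref{eq:hC1}--\eqref{eq:hC2} and~\eqref{eq:hR}, together with the side conditions $\dDh\xi=0$ and $\d^{\CDh-\xi,\nabla}{*\CS}=0$ from (ii), should give the equivalence. The relation $\d^{\CDh-\xi,\nabla}{*\CS}=\dDh{*\CS}-\liebrack{\xi\wedge{*\CS}}$, combined with Proposition~\ref{p:N-lift} (which gives $\LH{*\CS}=0$ and identifies ${*\CS}$ as the differential lift of $J\II^0-J\Sh^0$), is what converts the condition on ${*\CS}$ into the adjoint \mob/ operator equation $(\Mh^\nabla)^*(J\II^0)=\ip{q,J\II^0}$ that characterizes constrained Willmore surfaces.

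To close the loop with (i), I would invoke the equivalence of the full and homological \GCR/ equations (Theorem~\ref{th:hom-gcr}) and the explicit form of the constrained Willmore equation from Definition~\ref{CWdef}, namely $(\Mh^\nabla)^*(\II^0)=\ip{q,\II^0}$ with $\d q=0$. Since the deformed data in (ii) of Theorem~\ref{th:Willmore-def} satisfy the \GCR/ equations for all $t$ precisely when $\submfd$ is constrained Willmore with multiplier $q$, and since conditions (ii)--(iii) of the present theorem are the ``lifted'' (connection-level) versions of that same deformation, the three statements collapse to the single assertion that $q$ is a holomorphic Lagrange multiplier. I would present (i)$\Rightarrow$(ii) by setting $\xi=\jDh q$ and verifying the two displayed equations using Proposition~\ref{p:eta-and-q} and~\eqref{eq:mh-dual-q}; (ii)$\Rightarrow$(iii) by checking that the side conditions in (ii) are exactly the vanishing of the $t$-coefficients of $R^{\CD(t)}$; and (iii)$\Rightarrow$(i) by reading off the constant and first-order terms in $t$.

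The hard part will be the bookkeeping in the curvature expansion: one must track how $\liebrack{\CS\wedge\CS}$, $\liebrack{\CS\wedge\xi}$ and $\liebrack{\xi\wedge\xi}$ distribute across the $\h$- and $\m$-components under~\eqref{eq:master-decomp}, and confirm that the would-be obstruction $\liebrack{\CQ\wedge\CS}$ and the term $\liebrack{\xi\wedge{*\CS}}$ land in $\image\LH$ (for $m=2$ this is the delicate tracelike-versus-symmetric argument already used at the end of the proof of Theorem~\ref{th:hom-gcr}). The subtlety is that $\xi=q$ is tangential and trace-free while $\CS$ is tracefree in the normal directions, so their bracket must be shown to contribute only to the deformed \mob/ structure and not to spurious components; the condition $\liebrack{\CS\wedge\xi}=0$ from Proposition~\ref{p:eta-and-q}, \ie, that $q$ commutes with the shape operators, is what guarantees this. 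I expect that once the homological framework of Section~\ref{sec:conf-bonnet} is invoked, the remaining verification is a finite, dimension-by-dimension check rather than an open-ended difficulty.
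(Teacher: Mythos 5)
Your overall strategy is the paper's: set $\xi=\jDh q$, use Proposition~\ref{p:eta-and-q} to identify $\xi$ with $q$ and $\dDh\xi=0$ with holomorphicity, use Proposition~\ref{p:N-lift} to convert $\d^{\CDh-\xi,\nabla}{*\CS}=0$ into $(\Mh^\nabla-q)^*(J^2\II^0)=0$, and expand the curvature of the $t$-family. But your final paragraph contains a genuine error: you assert that the bookkeeping requires $\liebrack{\CS\wedge\xi}=0$, \ie, that $q$ commutes with the shape operators, and that Proposition~\ref{p:eta-and-q} supplies this. It does not, and it must not: $q\wedge\Sh^0=0$ is the \emph{isothermic} condition (Theorem~\ref{th:isothermic-equiv}), not part of the constrained Willmore hypotheses, and a generic constrained Willmore surface has a Lagrange multiplier that does not commute with $\Sh^0$. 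If your curvature expansion genuinely needed $\liebrack{\CS\wedge\xi}=0$, then (i)$\Rightarrow$(iii) would fail for such surfaces and your argument would only prove the theorem for the isothermic--Willmore intersection.

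The correct bookkeeping shows why no such condition arises. Writing $e^{tJ}\CS=\cos t\,\CS+\sin t\,{*\CS}$ and $\half(e^{2tJ}-1)\xi=-\sin^2\!t\,\xi+\sin t\cos t\,{*\xi}$, and using $[{*\alpha}\wedge{*\beta}]=[\alpha\wedge\beta]$, $[{*\alpha}\wedge\beta]=-[\alpha\wedge{*\beta}]$, $[\CS\wedge{*\CS}]=0$ and the abelianness of $T\dual\submfd$, the curvature of $\d+(e^{tJ}-1)\CS+\half(e^{2tJ}-1)\xi$ collects into coefficients of $1,\cos t,\sin t,\sin^2\!t,\sin t\cos t$. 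The $\sin^2\!t$ coefficient is $-\d\xi+\liebrack{\CS\wedge\xi}$, and since $\d\xi=\dDh\xi+\liebrack{\CQ\wedge\xi}+\liebrack{\CS\wedge\xi}=\dDh\xi+\liebrack{\CS\wedge\xi}$, the cross-term \emph{cancels}, leaving only $\dDh\xi=0$; similarly the $\sin t\cos t$ coefficient gives $\dDh{*\xi}=0$, which is equivalent. The $\sin t$ coefficient is $\d{*\CS}+\liebrack{\CS\wedge{*\xi}}=\dDh{*\CS}-\liebrack{\xi\wedge{*\CS}}=\d^{\CDh-\xi,\nabla}{*\CS}$, using $\liebrack{\xi\wedge{*\CS}}=-\liebrack{\CS\wedge{*\xi}}$ and $\liebrack{\CQ\wedge{*\CS}}=0$ (the same tracelike-versus-symmetric argument as at the end of the proof of Theorem~\ref{th:hom-gcr}, applied to $J\II^0$). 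So flatness for all $t$ is precisely the two conditions of (ii), with no constraint of the form $q\wedge\Sh^0=0$. With that correction your proof goes through and coincides with the paper's.
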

\begin{proof} The constrained Willmore equation may be written
$0=(\Mh^\nabla-q)^*\II^0 =-(\Mh^\nabla-q)^* J^2\II^0$. Now by
Proposition~\ref{p:N-lift}, $*\CS$ is the differential lift of $J\II^0$ with
respect to $\CD^\submfd$, hence also with respect to $\CD^\submfd-\xi$
provided $\xi$ is a $T\dual M$-valued $1$-form.  It follows that the
constrained Willmore equation is equivalent to $\d^{\CDh-\xi,\nabla}{*\CS}=0$,
where $\xi$ is the differential lift of $q$ (with respect to $\CD^\submfd$),
the equation $\dDh\xi=0$ then being equivalent to $\d q=0$. The rest follows
by computing the curvature of the connections in (iii), and applying
Proposition~\ref{p:eta-and-q}.
\end{proof}

\subsection{Harmonicity, the functional and duality}
\label{par:harmonic}

Theorem~\ref{th:Willmore-equiv}, with $q=\xi=0$, has two more-or-less
immediate corollaries.  It also provides a simple way to obtain the
constrained Willmore equation from the functional. These results are all well
known~\cite{BFLPP:cgs,Bry:dtw}, but they are derived with particular ease in
our formalism.

\begin{cor} A immersed surface in $S^n$ is Willmore if and only if its
central sphere congruence is harmonic.
\end{cor}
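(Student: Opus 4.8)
The plan is to show the equivalence by relating the Willmore condition, expressed through the operator $(\Mh^\nabla)^*$, to the harmonicity of the central sphere congruence $\CV\low_\submfd$. The central sphere congruence is a map from $\submfd$ into the space of $m$-spheres in $S^n$, which (as explained in \S\ref{par:mob-sphere-cong}) is a pseudo-riemannian symmetric space $G/H$ with $G=\On_+(n+1,1)$ and $H$ the subgroup preserving the orthogonal splitting $\CV\low_\submfd\dsum\CV_\submfd^\perp$. The harmonic map equation for such a map, in the bundle formalism, is governed precisely by the decomposition $\g=\h\dsum\m$ of~\eqref{eq:decomp-g}, with the \cso/ $\CS$ playing the role of the pullback of the Maurer--Cartan form's $\m$-component. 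The standard characterization of harmonicity in this setting is $\d^{\CDh,\nabla}{*\CS}=0$, that is, the $\m$-valued $1$-form $\CS$ is both closed (as a Codazzi-type equation) and co-closed for the induced connection.

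First I would take the special case $q=\xi=0$ in Theorem~\ref{th:Willmore-equiv}. That theorem already establishes that $\submfd$ is Willmore (the unconstrained case, $(\Mh^\nabla)^*(\II^0)=0$) if and only if $\d^{\CDh,\nabla}{*\CS}=0$. So the entire content of the corollary reduces to recognizing this last equation as the harmonic map equation for $\CV\low_\submfd$. I would invoke Proposition~\ref{p:N-lift}, which identifies ${*\CS}$ as the differential representative of the homology class $J\II^0-J\Sh^0$, to make the connection between the analytic condition on $\II^0$ and the geometric $1$-form $\CS$ explicit.

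The key step is therefore interpretive rather than computational: I would argue that for a map into the symmetric space $G/H$ of $m$-spheres, with $\CD^\mfd=\d$ the flat (hence canonical) connection, the pullback decomposition $\d=\CDh+\CS$ (with $\CQ$ absorbed into $\CDh$, since $\CQ$ takes values in $\stab(\Ln)^\perp\subset\h$) exhibits $\CS\in\Omega^1(\submfd,\m)$ as the differential of the Gau\ss\ map. The symmetric-space harmonic map equation is the vanishing of the covariant codifferential of this $\m$-valued $1$-form, namely $\d^{\CDh,\nabla}{*\CS}=0$. Combined with the flatness equations~\eqref{eq:fullGCR} (which encode that $\d$ is flat, i.e.\ the structure equations of $G/H$), this is exactly the content of harmonicity.

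The main obstacle I anticipate is making precise the claim that the central sphere congruence, as a map into the grassmannian of signature $(m+1,1)$ subspaces, is harmonic precisely when $\d^{\CDh,\nabla}{*\CS}=0$, rather than when some other combination involving the $\CQ$ term vanishes. The subtlety is that harmonicity is a second-order condition, so one must check that the relevant tension field is indeed $\d^{\CDh,\nabla}{*\CS}$ and that the $\CQ$-dependent terms do not contribute. Here I would use that $\CQ$ takes values in $T\dual\submfd=\stab(\Ln)^\perp\cap\h$ (as established in the proof of Proposition~\ref{p:recover}), so it lies in the $\h$-part and merely modifies the $\h$-connection; the $\m$-component of the structure, which drives the harmonic map equation, is unaffected. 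With this observation, the corollary follows immediately from Theorem~\ref{th:Willmore-equiv} and the standard bundle-theoretic formulation of harmonic maps into symmetric spaces.
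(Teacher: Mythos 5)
Your overall route is the same as the paper's: reduce to Theorem~\ref{th:Willmore-equiv} with $q=\xi=0$, so that the Willmore condition reads $\dDh{*\CS}=0$, and then identify harmonicity of the central sphere congruence with the vanishing of a covariant codifferential of $\CS$. The one genuine gap is in the final, interpretive step, where the reason you give for discarding the $\CQ$-term is not valid. The tension field of the map from $\submfd$ into the grassmannian of signature $(3,1)$ subspaces determined by $\CV\low_\submfd$ is computed with the \emph{full} $\h$-valued part of the flat connection, namely the direct sum connection $\CDs=\CDh+\CQ$ induced by the splitting $\submfd\times\R^{n+1,1}=\CV\low_\submfd\dsum\CV_\submfd^\perp$; the harmonic map equation is therefore $\d^{\CDs}{*\CS}=0$, not $\dDh{*\CS}=0$ by fiat. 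Since $\liebrack{\h,\m}\subseteq\m$, the difference $\d^{\CDs}{*\CS}-\dDh{*\CS}=\liebrack{\CQ\wedge{*\CS}}$ is an a priori nonzero $\m$-valued $2$-form. The fact that $\CQ$ ``lies in the $\h$-part and merely modifies the $\h$-connection'' is precisely why it \emph{does} enter the codifferential; your claim that ``the $\m$-component of the structure, which drives the harmonic map equation, is unaffected'' would apply to an arbitrary $\h$-valued modification of the connection, and so cannot be the right reason.

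The step is nonetheless true, and the repair is one line: $\liebrack{\CQ\wedge{*\CS}}=0$ because $\CQ=\tfrac14|\II^0|^2\conf$ is pure trace while $J\Sh^0$ is symmetric (and trace-free), so the only potentially nonzero component $\QC_X(J\Sh^0_Y\con)-\QC_Y(J\Sh^0_X\con)$ vanishes by symmetry in $X,Y$ --- exactly the computation that gives $\liebrack{\CQ\wedge\CS}=0$ for $m=2$ at the end of the proof of Theorem~\ref{th:hom-gcr}. This is in fact the paper's entire proof: harmonicity is $\d^{\CDs}{*\CS}=0$, and $\d^{\CDs}{*\CS}=\dDh{*\CS}$ ``since $Q$ is tracelike and $\II^0$ is trace-free.'' With that substitution for your last paragraph, your argument coincides with the paper's.
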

\begin{proof} The central sphere congruence is harmonic if and only if
$\d^{\CDs}{*\CS}=0$. However, since $Q$ is tracelike and $\II^0$ is
trace-free, $\d^{\CDs}{*\CS} =\dDh{*\CS}$.
\end{proof}

\begin{cor} If the central sphere congruence of a Willmore surface has
a second envelope \textup(which is generic in codimension one\textup) then
this envelope is also a Willmore surface.
\end{cor}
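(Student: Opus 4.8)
The plan is to read the statement through the previous Corollary, which identifies Willmore surfaces as precisely those whose central sphere congruence is harmonic, \ie\ those for which the \cso/ $\CS$ of the central sphere congruence $\CV\low_\submfd$ satisfies $\d^{\CDVs}{*\CS}=\divg^{\CDVs}\CS=0$. The crucial observation is that this harmonicity equation refers only to the congruence $\CV:=\CV\low_\submfd$ itself: it is phrased entirely in terms of the splitting $\d=\CDVs+\CS^\CV$ of the flat ambient connection associated to $\CV\subset\submfd\times\R^{n+1,1}$, and makes no reference to the enveloping immersion $\Ln$. Thus, knowing that the envelope $\Ln$ is Willmore, I obtain for free that $\CV$ is harmonic as a sphere congruence, and this harmonicity persists whichever envelope we contemplate.

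First I would record what a second envelope is: a null line subbundle $\Lnc\subset\CV$ which immerses $\submfd$ (\ie\ satisfies the Cartan condition) and envelopes $\CV$, \ie\ $\CS^\CV|_{\Lnc}=0$. By the previous Corollary applied now to $\Lnc$, it suffices to establish two things: that $\CV$ is harmonic (already in hand), and that $\CV$ is in fact the \emph{central} sphere congruence of $\Lnc$ — equivalently, that the mean curvature covector of $\Lnc$ with respect to $\CV$ vanishes, so that the second fundamental form of $\CV$ relative to $\Lnc$ is tracefree. This second point is the main obstacle, and it is exactly here that harmonicity must be used: for a general (non-Willmore) surface the central sphere congruence is \emph{not} shared with its second envelope, so the argument cannot be purely formal.

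To dispose of it I would compute the relevant trace directly in $\CV^\perp$. Fix a section $\hat\sigma$ of $\Lnc$ and a local orthonormal frame $e_i$. The second fundamental form of $\CV$ relative to $\Lnc$ is, by definition, $-\pi(\CS^\CV_X(\CD^\mfd_Y\hat\sigma))$, and since $\Lnc$ envelopes $\CV$ we have $\CD^\mfd_Y\hat\sigma=\CD^\CV_Y\hat\sigma\in\CV$; its trace is therefore governed by the $\CV^\perp$-valued sum $\tsum_i\CS^\CV_{e_i}(\CD^\CV_{e_i}\hat\sigma)$. Because $\CS^\CV_X\hat\sigma=0$ identically, the $\CV^\perp$-valued section $\CS^\CV\hat\sigma$ is divergence-free, and expanding $\divg^{\CDVs}(\CS^\CV\hat\sigma)$ by the product rule (using $\nabla^\CV$ on $\CV^\perp$ and $\CD^\CV$ on $\CV$, at a point where $\CD e_i=0$) gives
\[
0=\tsum_i\bigl(\CDVs_{e_i}\CS^\CV\bigr)_{e_i}\hat\sigma
+\tsum_i\CS^\CV_{e_i}\bigl(\CD^\CV_{e_i}\hat\sigma\bigr)
=\bigl(\divg^{\CDVs}\CS^\CV\bigr)\hat\sigma
+\tsum_i\CS^\CV_{e_i}\bigl(\CD^\CV_{e_i}\hat\sigma\bigr).
\]
The first term vanishes precisely by harmonicity of $\CV$, so the $\CV^\perp$-valued sum vanishes; as $\pi$ restricts to an isomorphism on $\CV^\perp$, the mean curvature covector of $\Lnc$ is zero and $\CV=\CV_{\Lnc}$ is the central sphere congruence of $\Lnc$. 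Since $\CV$ is harmonic, the previous Corollary then gives that $\Lnc$ is Willmore. The only routine matters left to verify — which I would check in a fixed Weyl gauge — are that the displayed product-rule identity is the correct invariant one (the frame terms $\CD_{e_i}e_i$ cancelling as usual), and that $\Lnc$ genuinely satisfies the Cartan condition so that the previous Corollary is applicable to it.
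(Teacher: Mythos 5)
Your central claim---that the harmonicity equation ``refers only to the congruence $\CV$ itself'' and ``makes no reference to the enveloping immersion''---is where the argument breaks down. The condition $\d^{\CDVs}{*\CS}=0$ involves the Hodge star, hence the conformal structure on $\submfd$, and that conformal structure is induced by the envelope. A second envelope $\Lnc$ induces its \emph{own} conformal metric $\lip{\d_X\hat\sigma,\d_Y\hat\sigma}$ on $\submfd$ (in the splitting $\CV=\Ln\dsum\US\dsum\Lnc$ this is $\cip{\nr^{D,\CV}_X,\nr^{D,\CV}_Y}\ell^2$, where $D$ is the Weyl derivative of $\Lnc$), and there is no a priori reason for it to agree with the one induced by $\Ln$. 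This agreement is needed at two points in your argument: first, your frame $e_i$ is orthonormal for the $\Ln$-conformal structure, so your divergence identity only shows that the $\Ln$-trace of the second fundamental form of $\Lnc$ relative to $\CV$ vanishes, which does not identify $\CV$ as the central sphere congruence of $\Lnc$ unless the two conformal structures coincide; second, even granting that, applying the previous Corollary to $\Lnc$ requires harmonicity of $\CV$ with respect to the $\Lnc$-conformal structure. You flag the Cartan condition as the only remaining check, but the conformal compatibility of the second envelope is the genuine issue, and it is exactly the content of the paper's proof.

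The paper establishes it by using \emph{both} $\dDh\CS=0$ (the Codazzi equation, valid for any surface) and $\dDh{*\CS}=0$ (the Willmore condition): together these give $\nr^{D,\CV_\submfd}\wedge\Sh^0=0$ and $\nr^{D,\CV_\submfd}\wedge J\Sh^0=0$, and since $\Sh^0$ and $J\Sh^0$ span the (two-dimensional) space of symmetric tracefree bilinear forms away from umbilic points (a dense open set, by analyticity and the hypothesis that $\submfd$ is not totally umbilic), one gets $\sym_0\nr^{D,\CV_\submfd}=0$---which is precisely the statement that $\Lnc$ induces the same conformal structure and the same central sphere congruence. Your divergence computation extracts only one scalar consequence of harmonicity (morally $\cip{\sym_0\nr^{D,\CV_\submfd},\Sh^0}=0$), which cannot kill the two-dimensional space of symmetric tracefree parts on its own; the second equation, coming from the Codazzi identity evaluated against the second envelope, has to be brought in. So the gap is not cosmetic, and the claim that harmonicity ``persists whichever envelope we contemplate'' is exactly what remains to be proved.
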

\begin{proof} Since $\dDh\CS=0$ and $\dDh{*\CS}=0$ we have that
$\nr^{D,\CV_\submfd}\wedge\Sh^0=0$ and $\nr^{D,\CV_\submfd}\wedge
J\Sh^0=0$. Away from umbilics (\ie, on a dense open set because $\submfd$ is
not totally umbilic, since it has a second envelope, and $\II^0$ is analytic)
it follows that the symmetric trace-free part of $\nr^{D,\CV_\submfd}$ is
zero. Hence the second envelope has the same conformal structure and the same
central sphere congruence. Since this is harmonic, the second envelope is also
Willmore.
\end{proof}

\begin{thm} A compact surface is \textup(constrained\textup) Willmore, as
in Definition~\textup{\ref{CWdef}}, if and only if it is a critical point of
$\cW$ for immersions of $\submfd$ \textup(inducing the same conformal metric
on $\submfd$ in the constrained case\textup).
\end{thm}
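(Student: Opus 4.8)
The plan is to compute the first variation of the Willmore functional $\cW=\int_\submfd|\II^0|^2$ and identify its vanishing with the equation $(\Mh^\nabla)^*(\II^0)=\ip{q,\II^0}$ appearing in Definition~\ref{CWdef}. First I would fix an ambient metric $g$ of constant curvature, so that the computation reduces to the classical riemannian setting via the formula
\begin{equation*}
(\Mh^\nabla)^*(\II^0)=\Laplace^{\nabla,D}H^g+\cip{H^g(\II^0),\II^0}
\end{equation*}
established in \S\ref{par:willmore}. The key observation is that $|\II^0|^2$ differs from the riemannian Willmore integrand $|H^g|^2$ only by a term involving the gaussian (sectional) curvature, which by the Gau\ss--Bonnet theorem and the conformal invariance of $\int K$ contributes a constant (a topological term or, in higher-codimension spaceforms, a term depending only on the induced conformal structure). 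Hence critical points of $\cW$ coincide with critical points of $\int_\submfd|H^g|^2$, whose Euler--Lagrange equation is the classical Willmore equation $\Laplace^{\nabla,D}H^g+\cip{H^g(\II^0),\II^0}=0$ in the unconstrained case.

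Next I would treat the constrained case by the method of Lagrange multipliers. The constraint is that the immersion induce a fixed conformal structure on $\submfd$; infinitesimally, variations preserving the conformal class are exactly those orthogonal to the space of holomorphic quadratic differentials (by the Riemann--Roch description of the cotangent space to Teichm\"uller space, or directly because the pairing of a variation of the metric with a quadratic differential $q$ detects the change in conformal structure through $\ip{q,\cdot}$). Therefore a surface is constrained Willmore precisely when the unconstrained Euler--Lagrange expression $(\Mh^\nabla)^*(\II^0)$ lies in the span of such pairings, i.e.\ equals $\ip{q,\II^0}$ for some holomorphic $q$. I would carry out the variation keeping track of the normal variation field and integrate by parts, discarding boundary terms on the compact surface $\submfd$, so that the coupling of the variation to $q$ appears through $\ip{q,\II^0}$ exactly as in the definition.

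The hard part will be making the variational calculation genuinely conformally invariant---or, equivalently, transferring cleanly between the conformal formulation ($|\II^0|^2$, $\Mh^\nabla$) and the riemannian one ($|H^g|^2$, the Laplacian of mean curvature). One must verify that the discrepancy term between $|\II^0|^2$ and $|H^g|^2$ really is a null Lagrangian (constant under the relevant variations), which in codimension one follows from $|\II^0|^2=2(|H^g|^2-K^g)$ together with Gau\ss--Bonnet, but in higher codimension requires identifying the analogous curvature term and checking its variational triviality. A second delicate point is the precise identification of the conformal-structure constraint with orthogonality to holomorphic quadratic differentials, and the verification that the Lagrange multiplier produced is automatically holomorphic ($\d q=0$); this holomorphicity should emerge from the Codazzi equation~\eqref{eq:25} exactly as in the proof that parallel-mean-curvature surfaces are constrained Willmore. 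Once these two invariance points are settled, the equivalence with Definition~\ref{CWdef} is immediate.
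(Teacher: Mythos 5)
Your strategy is workable in outline but genuinely different from the paper's, and it has two soft spots. The paper never passes through an ambient metric: it identifies $\cW$ with the harmonic map energy $\int_\submfd|\d S|^2$ of the central sphere congruence $S$, writes the first variation as $\int_\submfd\ip{\d^{\CDs}{*\CS},\dot S}$, and then uses Poincar\'e duality for Lie algebra homology to reduce this to a pairing of homology classes $\int_\submfd\ip{\d_{BGG}(J\II^0),\hc{\dot S}}$, where $\hc{\dot S}$ is (for variations of the immersion) the normal variation field and $\d_{BGG}(J\II^0)$ is, up to sign, $*(\Mh^\nabla)^*(\II^0)$ by Proposition~\ref{p:N-lift}. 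The Euler--Lagrange operator is thus read off with no Riemannian computation and in arbitrary codimension at once. Your route instead outsources the entire variational content to ``the classical Willmore equation'' for $\int|H^g|^2$; in codimension one in flat space that is a standard citation, but in arbitrary codimension it is exactly the computation the theorem is about, so as written you defer rather than establish the key identity.

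Two concrete points need repair. First, the null-Lagrangian claim: in a spaceform of curvature $c$ one has $|\II^0|^2=2\bigl(|H^g|^2-\ip{\detm\II^g}\bigr)$, and the Gau\ss\ equation gives $\int_\submfd\ip{\detm\II^g}=2\pi\chi(\submfd)-c\cdot\mathrm{Area}$; the area term is not variationally trivial (nor a conformal invariant, so your hedge does not cover it), and critical points of $\cW$ coincide with those of $\int(|H^g|^2+c)$, not of $\int|H^g|^2$. You must either take $g$ flat or absorb the $c\cdot\mathrm{Area}$ term before quoting the classical equation. Second, the holomorphicity of the Lagrange multiplier does not come from the Codazzi equation---that mechanism only exhibits particular examples. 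It comes from the other device you mention: the trace-free part of the induced variation of $\conf$ under a normal field is, up to reparametrization, $-2\ip{\II^0,v}$, conformal variations are those for which this lies in the image of the conformal Killing operator, and the annihilator of that image is the kernel of $\d$ on quadratic differentials. This is precisely the paper's closing step ($\Sh^0\hc{\dot S}$ in the image of the conformal Killing operator, which is adjoint to $\d$ on quadratic differentials), and once it is in place $q$ is holomorphic by construction and couples through $\ip{q,\II^0}$ as required.
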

\begin{proof} Since $\CS$ vanishes on $\Ln$, $\cW=\int_\submfd |\CS|^2
=\int_\submfd |\d S|^2$, where $S$ is the central sphere congruence (viewed as
a map from $\submfd$ into the grassmannian of $(3,1)$-planes in $S^n$). The
first variation of this functional is $\d\cW(\dot S)=\int_\submfd
\ip{\d^{\CDs}{*\CS},\dot S}$, where $\dot S\colon\submfd\to\m$ denotes the
variation of $S$. Now $\d^{\CDs}{*\CS} =\dDh{*\CS}$ and, by
Proposition~\ref{p:N-lift}, $\LH\dDh{*\CS}=0$, so that
$\CS\in\Omega^2(\submfd,\m\cap\stab(\Ln)^\perp)$ and hence $\d\cW(\dot S)$
vanishes automatically when $\dot S$ takes values in
$C^\infty(\submfd,\m\cap\stab(\Ln))=\image\LH$ (this is really just a special
case of Poincar\'e duality for Lie algebra homology). Hence $\d\cW(\dot
S)=\int_\submfd \ip{\hc{\dDh{*\CS}},\hc{\dot S}}=\int_\submfd
\ip{\d_{BGG}(J\II^0),\hc{\dot S}}$, with $\hc{\dot S}\in
C^\infty(\submfd,N\submfd)$ and $\d_{BGG}(J\II^0)$ is
$*(\Mh^\nabla)^*(\II^0)$, up to a sign.

For variations of $S$ coming from variations of the immersion, $\hc{\dot S}$
is essentially the normal variation vector field. For general variations,
$\hc{\dot S}$ can therefore be arbitrary, whereas for conformal variations are
characterized by $\Sh^0\hc{\dot S}\in C^\infty(\submfd,\sym_0(T\submfd))$
being in the image of the conformal Killing operator $X\mapsto \cL_X\conf$.
This last is adjoint to $\d$ on quadratic differentials. The result now
follows.
\end{proof}

\section{Isothermic surfaces}
\label{sec:isothermic}

\subsection{Definition and spectral deformation}
\label{par:isothermic}

The classical definition of an isothermic surface is a surface admitting a
conformal curvature line coordinate, \ie, a conformal coordinate $z=x+\iI y$
with respect to which the second fundamental form is diagonal. This is a
conformally-invariant condition, and there is a manifestly invariant way to
state it, which we take as our fundamental definition.

\begin{defn} An immersion $\Ln$ of a surface $\submfd$ in $S^n$ is said to be
\emphdef{isothermic} if there is a nonzero quadratic differential $q\in
C^\infty(\submfd,S^2T\dual_0\submfd)\subset\Omega^1(\submfd,T\dual\submfd)$
which is holomorphic (\ie, $\d q=0$) and commutes with shape operators in the
sense that $q\wedge\Sh^0=0$.
\end{defn}

\begin{rems} Thus $q^{(2,0)}=dz^2$ for some conformal curvature line
coordinates $z=x+\iI y$.

This definition has the merit of using just the \GCR/ data, which only
determine $\submfd$ locally up to \mob/ transformation. Hence, in this
approach, one can develop a theory of isothermic surfaces modulo \mob/
transformation.

Also note that because the normal components of $\Sh^0$ commute with $q$, they
commute with each other, and so the weightless normal bundle is flat by the
Ricci equation.
\end{rems}

We now show that the new \GCR/ data $(\conf,\Ms^\submfd+rq,\nabla,\II^0)$ also
define an isothermic surface for any $r$, with the same quadratic differential
$q$ (up to a constant multiple). The new surfaces are sometimes called the
\emphdef{T-transforms} of $\submfd$: note that only the \mob/ structure has
changed here.

\begin{thm}\label{th:isothermic-def}
Let $\Ln\subset\submfd\times\R^{n+1,1}$ be an immersed surface in $S^n$ with
\GCR/ data $(\conf,\Ms^\submfd,\nabla,\II^0)$, and quadratic differential
$q\in C^\infty(\submfd,S^2_0T\dual\submfd)$. Then the following conditions are
equivalent.
\begin{numlist}
\item $\Ln$ is isothermic with holomorphic commuting quadratic differential
$q$.

\item $(\conf,\Ms^\submfd+rq,\nabla,\II^0)$ satisfies the \GCR/ equations for
some $r\neq 0$.

\item $(\conf,\Ms^\submfd+rq,\nabla,\II^0)$ satisfies the \GCR/ equations and
defines a isothermic immersion of $\submfd$ in $S^n$ for all $r\in\R$.
\end{numlist}
\end{thm}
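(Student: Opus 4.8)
The plan is to prove the three-way equivalence by checking that the \GCR/ equations are preserved under the substitution $\Ms^\submfd\mapsto\Ms^\submfd+rq$ while $\conf,\nabla,\II^0$ are held fixed, exactly as in the proof of Theorem~\ref{th:Willmore-def}. The key observation is that changing the \mob/ structure by $rq$ alters only the normalized Ricci curvature: by~\eqref{eq:mob-aff} and the definition of $\nr^{D,\Ms}$, we have $\nr^{D,\Ms^\submfd+rq}=\nr^{D,\Ms^\submfd}+rq$ for a trace-free $q$, whereas the Weyl curvature $W^\submfd$ (and hence the Cotton--York curvature up to the $\d^D(rq)$ term) transforms in a controlled way. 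I would therefore write out the homological \GCR/ system~\eqref{eq:hG1}--\eqref{eq:hR} with $\Ms^\submfd$ replaced by $\Ms^\submfd+rq$ and track precisely which equations acquire an $r$-dependent term.

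First I would dispose of the equations that are manifestly unaffected: the Ricci equation~\eqref{eq:hR} involves only $\conf,\nabla,\II^0$, so it is unchanged, and the Codazzi equations~\eqref{eq:hC1},~\eqref{eq:hC2} are first-order (hence $\conf$-dependent only) operators applied to $\II^0$ — except that $(\Mh^\nabla)^*$ in~\eqref{eq:hC2} does depend on the \mob/ structure. Here I would invoke~\eqref{eq:mh-dual-q}, which gives $((\Mh+rq)^\nabla)^*(J\II^0)=(\Mh^\nabla)^*(J\II^0)+r\ip{q,J\II^0}$; the extra term $r\ip{q,J\II^0}$ vanishes precisely when $q\wedge\Sh^0=0$, i.e.\ when $q$ commutes with the shape operators (using that $\ip{q,J\II^0}$ is, up to a factor, the contraction measuring the failure of commutativity). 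Thus~\eqref{eq:hC2} is preserved for all $r$ if and only if $q$ commutes with $\Sh^0$. The Gau\ss\ equation~\eqref{eq:hG2}, $0=C^\submfd+\ip{\QCoda^\nabla(\II^0)}$, acquires the term $r\,\d^D q$ through $C^\submfd=\d^D\nr^{D,\submfd}$; this vanishes for all $r$ exactly when $\d q=0$, i.e.\ $q$ is holomorphic. Since $m=2$, equations~\eqref{eq:hG1},~\eqref{eq:hC1} are vacuous.

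Assembling these, the new data satisfy the \GCR/ equations for all $r$ if and only if both $\d q=0$ and $q\wedge\Sh^0=0$, which is precisely the isothermic condition of the definition — giving (i)$\Rightarrow$(iii). The implications (iii)$\Rightarrow$(ii) is trivial, and for (ii)$\Rightarrow$(i) I would note that the $r$-dependence of the two affected equations is affine (linear) in $r$: the Gau\ss\ equation reads $\bigl(\text{$r$-independent part}\bigr)+r\,\d^D q=0$ and the Codazzi~\eqref{eq:hC2} reads $\bigl(\text{$r$-independent part}\bigr)+r\ip{q,J\II^0}=0$, so if they hold for the original surface (at $r=0$, which is our hypothesis since $\submfd$ is an immersion) \emph{and} for one further value $r\neq0$, then the coefficients of $r$ must vanish separately, forcing $\d q=0$ and $q\wedge\Sh^0=0$. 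That each value of $r$ yields a genuine immersion of $\submfd$ follows from the conformal Bonnet Theorem~\ref{th:hom-conf-bonnet} applied to the verified \GCR/ data.

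The main obstacle I anticipate is the precise identification in the Codazzi step: one must confirm that the extra term produced by~\eqref{eq:mh-dual-q}, namely $\ip{q,J\II^0}$, is the exact obstruction to commutativity $q\wedge\Sh^0=0$, rather than some weaker or stronger condition. This requires carefully unwinding the complex structure $J$ on $S^2_0T\dual\submfd\tens N\submfd$ and the pairing $\ip{\cdot,\cdot}$ on the weightless normal bundle, and checking that $\ip{q,J\II^0}=0$ pointwise is equivalent to $q$ and $\Sh^0$ commuting as endomorphisms. Once this algebraic lemma is in hand — it is essentially the statement that for trace-free symmetric endomorphisms in two dimensions, $q\wedge\Sh^0=0$ iff $q$ and $\Sh^0$ have a common eigenbasis iff the $J$-twisted contraction vanishes — the rest of the argument is a routine comparison of affine-in-$r$ expressions, entirely parallel to Theorem~\ref{th:Willmore-def}.
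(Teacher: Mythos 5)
Your proposal is correct and follows essentially the same route as the paper's proof: write the \GCR/ system for the shifted \mob/ structure, observe via~\eqref{eq:mh-dual-q} that only the Gau\ss\ and second Codazzi equations acquire terms $r\,\d q$ and $r\ip{q,J\II^0}$ (affine in $r$), and conclude by evaluating at $r=0$ and one nonzero $r$. The algebraic point you flag as the main obstacle is resolved in the paper exactly as you sketch it: since $S^2_0T\dual\submfd$ has rank $2$, orthogonality of $q$ to $J\II^0$ is equivalent to every component of $\II^0$ commuting with $q$.
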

\begin{proof} The \GCR/ equations hold for the data in (ii)--(iii) iff
\begin{align*}
0&=C^\submfd+r\d q+\ip{\QCoda^\nabla(\II^0)}\\
0&=(\Mh^\nabla)^*(J\II^0)+r\ip{q,J\II^0}\\
0&=R^\nabla-\II^0\wedge\Sh^0
\end{align*}
(using~\eqref{eq:mh-dual-q}). Clearly these equations hold for all $r$ iff
they hold for $r=0$ and some nonzero $r$, since then $\divg q=0$ and $q$ is
orthogonal to $J\II^0$: since $S^2_0T\dual\submfd$ has rank $2$ this last
condition means that every component of $\II^0$ commutes with $q$.
\end{proof}

The T-transforms give rise an integrable systems interpretation of isothermic
surfaces: the \GCR/ data $(\conf,\Ms^\submfd+rq,\nabla,\II^0)$ defines a
pencil of flat connections on $\CV\low_\submfd\dsum\CV_\submfd^\perp$. It also
yields an alternative definition of an isothermic surface.

\begin{thm}\label{th:isothermic-equiv}
Let $\Ln\subset\submfd\times\R^{n+1,1}$ and
$(\conf,\Ms^\submfd,\nabla,\II^0,q)$, be as in the previous Theorem. Then the
following conditions are equivalent.
\begin{numlist}
\item $\Ln$ is isothermic with holomorphic commuting quadratic differential
$q$.

\item There is a $T\dual\mfd$-valued $1$-form $\eta\in\Omega^1(\submfd,
\stab(\Ln)^\perp)\subset \Omega^1(\submfd,\so(\CV_\mfd))$ with $\hc{\eta}=q$
such that $\d\eta=0$.

\item There is a $T\dual\submfd$-valued $1$-form $\eta\in\Omega^1(\submfd,
\stab(\Ln)^\perp\cap\so(\CV_\submfd))\subset
\Omega^1(\submfd,\so(\CV_\submfd))$ with $\hc{\eta}=q$ such that $\d+r\eta$ is
flat for all $r\in\R$.
\end{numlist}
\end{thm}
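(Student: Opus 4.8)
The plan is to take $\eta=\jDh q$, the differential lift of the quadratic differential $q$ with respect to the normal Cartan connection $\CD^\submfd$, and to translate the flatness conditions in (ii) and (iii) into the two analytic conditions defining isothermicity by means of Proposition~\ref{p:eta-and-q}. Throughout I would work with the decomposition $\d=\CDh+\CQ+\CS$ of~\eqref{eq:flat-conn}, recalling from Proposition~\ref{p:recover} that $\CS\in\Omega^1(\submfd,\m\cap\stab(\Ln))$ and $\CQ\in\Omega^1(\submfd,T\dual\submfd)$. By Proposition~\ref{p:eta-and-q}, $\jDh q$ satisfies $\hc{\eta}=q$ and is tangential and trace-free symmetric, so it is valued in $\stab(\Ln)^\perp\cap\so(\CV_\submfd)$, as required by (iii).

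First I would dispose of the flatness bookkeeping. Since $\stab(\Ln)^\perp$ is abelian we have $\liebrack{\eta\wedge\eta}=0$, so the curvature of $\d+r\eta$ is $R^\d+r\,\d\eta=r\,\d\eta$; hence $\d+r\eta$ is flat for all $r\in\R$ if and only if $\d\eta=0$, and (iii) reduces to the single equation $\d\eta=0$ for $\eta=\jDh q$. Expanding by the adjoint action, $\d\eta=\dDh\eta+\liebrack{\CQ\wedge\eta}+\liebrack{\CS\wedge\eta}$, where $\liebrack{\CQ\wedge\eta}=0$ because both factors lie in the abelian nilradical $\stab(\Ln)^\perp$. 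With $\eta$ tangential (hence $\eta\in\h$) and $\CS\in\m$, the remaining two terms decouple along the symmetric decomposition $\g=\h\dsum\m$: one has $\dDh\eta\in\Omega^2(\submfd,\h)$ while $\liebrack{\CS\wedge\eta}\in\Omega^2(\submfd,\m)$ since $\liebrack{\h,\m}\subseteq\m$. Therefore $\d\eta=0$ is equivalent to the pair $\dDh\eta=0$ and $\liebrack{\CS\wedge\eta}=0$, and Proposition~\ref{p:eta-and-q} identifies these with $\d q=0$ and $q\wedge\Sh^0=0$ respectively. This is exactly (i), giving (i)$\Leftrightarrow$(iii); the same conclusion also follows by combining Theorem~\ref{th:isothermic-def} with the observation that the Cartan connection reconstructed from the $\mathrm T$-transformed data $(\conf,\Ms^\submfd+rq,\nabla,\II^0)$ is precisely $\d+r\eta$, because $\CQ$ and $\CS$ are unchanged by a nilradical modification of $\CD^\submfd$.

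It remains to match (ii) with (iii): the inclusion $\stab(\Ln)^\perp\cap\so(\CV_\submfd)\subset\stab(\Ln)^\perp$ makes (iii)$\Rightarrow$(ii) trivial (flatness at $r=1$ yields $\d\eta=0$, using $\liebrack{\eta\wedge\eta}=0$ again). For the converse I would start from an ambient $\eta$ with $\hc{\eta}=q$ and $\d\eta=0$ and show it is automatically tangential, so that it coincides with the intrinsic form of (iii). Here $\liebrack{\CQ\wedge\eta}$ still vanishes, so $\dDh\eta=-\liebrack{\CS\wedge\eta}$, and applying $\LH$ gives $\LH\dDh\eta=-\LH\liebrack{\CS\wedge\eta}$; the point is that $\liebrack{\CS\wedge\eta}$, with $\CS\in\m\cap\stab(\Ln)$ and $\eta\in\stab(\Ln)^\perp$, lands in a graded piece annihilated by $\LH$ (exactly as the term $\liebrack{\CQ\wedge\CS}$ was treated in the proof of Proposition~\ref{p:recover}). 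This yields $\LH\dDh\eta=0$, whereupon Proposition~\ref{p:eta-and-q} forces $\eta=q$, in particular tangential. I expect this last grading computation to be the main obstacle: one must check carefully that the normal component of an a priori ambient $\eta$ contributes nothing to $\LH\dDh\eta$, so that tangentiality is genuinely forced rather than assumed. Everything else is formal once the value-types of $\dDh\eta$ and $\liebrack{\CS\wedge\eta}$ have been pinned down.
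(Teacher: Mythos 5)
Your argument is correct and follows the paper's own proof essentially verbatim: flatness of $\d+r\eta$ for all $r$ reduces to $\d\eta=0$ since $\liebrack{\eta\wedge\eta}=0$, the bracket terms $\liebrack{\CQ\wedge\eta}$ and $\liebrack{\CS\wedge\eta}$ are $\stab(\Ln)^\perp$-valued and hence killed by $\LH$, so $\LH\dDh\eta=0$ forces $\eta=q$ via Proposition~\ref{p:eta-and-q}, and then $\d\eta=0$ splits along $\g=\h\dsum\m$ into $\dDh\eta=0$ and $\liebrack{\CS\wedge\eta}=0$, \ie\ $\d q=0$ and $q\wedge\Sh^0=0$. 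The step you flag as the ``main obstacle'' is not one: your own observation that $\liebrack{\stab(\Ln),\stab(\Ln)^\perp}\subseteq\stab(\Ln)^\perp$ and that $\LH$ annihilates $\stab(\Ln)^\perp$-valued forms (because $T\dual\mfd$ is abelian) already closes it, which is exactly what the paper's terse ``certainly implies'' is relying on.
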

\begin{proof} Since $\d$ is flat and $T\dual\mfd$ is abelian, $\d+r\eta$ is
flat if and only if $\d\eta = 0$. This condition certainly implies that
$\LH\dDh\eta=0$ so that $\eta=j^\h q=q$ by Proposition~\ref{p:eta-and-q}. Now
$\d\eta=0$ if and only if $\d^{\CD^\submfd}\eta=0$ and
$\liebrack{\CS\wedge\eta}=0$, \ie, if and only if $\d q=0$ and
$q\wedge\Sh^0=0$.
\end{proof}
In subsequent work, we shall focus on (ii) above as our main definition of an
isothermic surface (this definition also generalizes~\cite{BDPP:issr}). The
classical definition of isothermic surfaces is essentially a homological
version of this `integrable systems' definition.  In the formulation given by
(iii), the T-transform of $\Ln$, with parameter $r$, is $\Ln$ itself, but
viewed as a (local) immersion in the space of $(\d+r\eta)$-parallel null lines
in $\submfd\times\R^{n+1,1}$.  The corresponding map into the space of
$\d$-parallel null lines is obtained from this via a gauge transformation.

\subsection{Examples and symmetry breaking}
\label{par:iso-examples}

The examples of isothermic surfaces which are easy to describe fall into three
classes, all of which exhibit some sort of symmetry breaking in the sense of
\S\S\ref{par:tangent-cong}--\ref{par:sym-break-sub}.

\subsubsection*{Cones, cylinders, revolutes and their generalizations}

The simplest examples of isothermic surfaces in $S^3$ are cones over a curve
in $\cS^2$, cylinders over a curve in $\R^2$, and revolutes (surfaces of
revolution) over a curve on $\cH^2$. These are the $2$-dimensional case of the
submanifolds described in \S\ref{par:sym-break-sub}. In higher codimension, we
obtain a larger class than just the cones, cylinders and revolutes in this
way. We focus on the nondegenerate case (generalizing cones and revolutes),
but the degenerate case is similar.

Suppose that
\begin{equation*}
\R^{n+1,1}= \R^{k,1} \dsum \R^{n-k+1}
\end{equation*}
and so $S^n\setdif S^{k-1}= \cH^k\times \cS^{n-k}$. We claim that any
Cartesian product of curves $\gamma_1(s)$ in $\cH^k$ and $\gam_2(t)$ in
$\cS^{n-k}$ is isothermic.  The product surface
$\submfd=\submfd_1\times\submfd_2$ has a natural lift
$\sigma(s,t):=\gam_1(s)+\gam_2(t)$ into the light-cone.  Clearly $s$ and $t$
are curvature line coordinates, and if $\gam_1$ and $\gam_2$ are parameterized
by arc-length, then, differentiating $\sigma(s,t)$, we see that they are also
conformal.

We observe that we can take $\eta=\sigma\skwend(d\gam_1-d\gam_2)$, where
$\skwend$ denotes the pairing $\R^{n+1,1}\times\R^{n+1,1}\to\so(n+1,1)$ with
$(u\skwend w) (v)=\lip{u,v}w-\lip{u,w}v$. Indeed $\eta$ takes values in the
nilradical of $\Ln$, so it remains to show that it is closed. Well:
\begin{equation*}
d\eta=(d\gamma_1+d\gamma_2)\skwendwedge (d\gamma_1-d\gamma_2)
\end{equation*}
(wedge product, using $\skwend$ to multiply coefficients). Now
$d\gamma_i\skwendwedge d\gamma_i=0$ since these are wedges of (pullbacks of)
$1$-forms on $1$-manifolds so we are left with $-d\gamma_1\skwendwedge
d\gamma_2+d\gamma_2\skwendwedge d\gamma_1$, which vanishes because the wedge
product of $1$-forms and $\skwend$ are both skew.

This class of surfaces is stable under T-transform: for this we need to find
$(\d+r\eta)$-stable subspaces $U_1$ and $U_2=U_1^\perp$ of
$\submfd\times\R^{n+1,1}$ with ranks $k+1$ and $n-k+1$, such that $\sigma$
induces the same splitting of $\submfd=\submfd_1\times\submfd_2$. To do that
we make an Ansatz that
$U_1=\image\d\gam_1\dsum\vspan{\gam_1+a(r)(\gam_1+\gam_2)}$ and hence
$U_2=\image\d\gam_2\dsum\vspan{\gam_2+a(r)(\gam_1+\gam_2)}$ (with $a(r)$ a
constant for fixed $r$). Let $X_1$ and $X_2$ be vector fields on $\submfd_1$
and $\submfd_2$ respectively, pulled back to the product. Then we compute
\begin{equation*}
(\d+r\eta)\low_{X_2}(\gam_1+a(r)(\gam_1+\gam_2))=a(r)\d\low_{X_2}\gam_2-
r\lip{\gam_1,\gam_1}\d\low_{X_2}\gam_2.
\end{equation*}
This lies in $U_1$ iff it is zero, which (assuming $\lip{\gam_1,\gam_1}=-1$)
gives $a(r)=-r$. For the stability of $U_1$, it remains to consider
$v\in\image\d\gam_1$:
\begin{equation*}
(\d+r\eta)v\mod\image\d\gam_1=-\lip{\d v,\gam_1}\gam_1
-r\lip{\d\gam_1,v}(\gam_1+\gam_2)= \lip{v,\d\gam_1}(\gam_1-r(\gam_1+\gam_2)),
\end{equation*}
which lies in $U_1$ as required.  The stability of $U_2$ follows by
orthogonality. Notice that $U_1$ and $U_2$ become degenerate and intersecting
at $r=1/2$, then change signature, so that at $r=1$ we recover the original
product in reverse order.

We remark that Tojeiro's definition~\cite{Toj:ise} of isothermic submanifolds
of higher dimension generalizes these examples: his submanifolds are extrinsic
products of curves in spaceforms.

\subsubsection*{CMC surfaces in spaceforms}

Another well-known (and classical) class of isothermic surfaces are the CMC
surfaces in $\R^3$ (\ie, surfaces of constant mean curvature $H$ with respect
to the induced metric, also called $H$-surfaces) or in $3$-dimensional
spaceforms~\cite{Bia:rsi,Bia:crsi}. The induced metric identifies $\II^0$ with
a quadratic differential called the Hopf differential, and the Codazzi
equation (in the spaceform) shows that this is holomorphic (and of course, it
commutes with itself).

In higher codimension, the mean curvature is a conormal vector, and the most
natural notion of `constant mean curvature' is parallel mean curvature (using
the connection on the conormal bundle induced by the spaceform metric), but
this essentially forces $\submfd$ to have codimension one or two; however, as
shown in~\cite{Bur:is}, a more general class of surfaces, called
\emphdef{generalized $H$-surfaces}, are isothermic. We present a novel
analysis of this case.

Suppose the spaceform geometry in $S^n$ is defined by $v_\infty\in\R^{n+1,1}$
and let $\sigma\in C^\infty(\submfd,\Ln)$ be the spaceform lift of a
submanifold of $S^n$ (so $\lip{\sigma,v_\infty}=-1$), which is the gauge
induced by the spaceform metric $g$. Let $\wnv$ be a parallel unit section of
$\CV_\submfd^\perp$. Now set $\eta=\sigma\skwend \d\wnv$. This clearly takes
values in the nilradical of $\Ln$, so it remains to determine when it is
closed, \ie, when $\d\sigma\skwendwedge \d\wnv=0$.  For this, let
$\CV_g=\exp(H^g)\CV_\submfd$ be the tangent sphere congruence in the
spaceform, so that $\wnv_g=\exp(H^g)\wnv=\wnv+\ip{H^g\ell,\wnv}\sigma$, where
$\ell$ is dual to $\sigma$. Thus $\d\wnv=\d\wnv^g-\d(\ip{H^g\ell,\wnv}\sigma)$
and $\d\wnv^g=\CS^g\wnv=-\Sh^g\wnv=-\Sh^0\wnv-\ip{H^g\ell, \wnv}\d\sigma$,
since $A^g=0$.  Now $\d\sigma\skwendwedge\Sh^0\wnv$ is identically zero, so
$\d\sigma\skwendwedge \d\wnv=0$ if and only if $\d\ip{H^g\ell,\wnv}=0$, \ie,
the $\wnv$ component of the mean curvature covector $H^g$ is constant.

These are the generalized $H$-surfaces. The quadratic differential $q$ is the
corresponding ($\wnv$) component of $\II^0$, although it is not so obvious to
see directly that this commutes with the other components.  Observe that by
construction $\eta\act v_\infty+\d\wnv=0$. It follows that for all $r\in\R$,
$v_\infty+r\wnv$ is parallel with respect to $\d+r\eta$. This conserved
quantity provides another way to characterize generalized $H$-surfaces, and we
shall explore this further in the sequel to this paper.

\subsubsection*{Quadrics}

Quadrics are natural generalizations of spheres in euclidean space. After the
study of surfaces of constant nonzero gaussian curvature ($K$-surfaces), which
are isometric to spheres or their hyperbolic analogues, surfaces isometric to
quadrics were of great interest in classical differential geometry. An
important classical observation is that quadrics are isothermic
(see~\cite{Eis:tos}), and this was generalized to conformal quadrics by
Darboux~\cite[pp. 212--219]{Dar:socc}.  This is often considered to be
surprising~\cite{Bur:is,Her:imdg}, since quadric surfaces live most naturally
in the projective geometry of $\R P^3$, not the conformal geometry of
$S^3$. Darboux's argument fits well into our formalism, so we sketch it here:
in fact Darboux restricts attention to diagonal quadrics, so we complement his
analysis by describing some of the geometry in the general case.

A quadric $Q$ in $S^3$ is determined by a matrix $A\in\sym(\R^{4,1})$, which
is not a multiple of the identity: then $Q=\{\vspan{\sigma}\in \PL:
\lip{\sigma,\sigma}=0=\lip{A\sigma,\sigma}\}$.  Conversely, the quadric $Q$
only determines $A$ up to affine transformations $A\mapsto aA+bI$ ($a,b\in\R$,
$a\neq 0$). The geometric idea behind Darboux's construction is to extend this
affine action to a projective one: the matrices $(aA+bI)(cA+dI)^{-1}$
($a,b,c,d\in\R$, $ad-bc\neq 0$) determine a one parameter family of quadrics.
(In fact these quadrics are confocal, but we do not need this here.)  Given
$A$, we can parameterize the family by the matrices $A(I-uA)^{-1}$.

Now observe that for fixed $\sigma$,
$\detm(I-uA)\lip{A(I-uA)^{-1}\sigma,\sigma}=0$ is a quartic in $u$, whose
leading coefficient vanishes if $\lip{\sigma,\sigma}=0$. The roots
$u_1,u_2,u_3$ of this cubic determine three quadrics in the family passing
through $\vspan{\sigma}\in \PL$. Now observe that the conditions
\begin{equation*}
\lip{A(I-u_1A)^{-1}\sigma,\sigma}=0=\lip{A(I-u_2A)^{-1}\sigma,\sigma}
\end{equation*}
imply that
\begin{align*}
0&=
\Lip{\bigl(A(1-u_2A)-A(1-u_1A)\bigr)(I-u_1A)^{-1}(I-u_2A)^{-1}\sigma,\sigma}\\
&=(u_1-u_2)\lip{A(I-u_1A)^{-1}\sigma, A(I-u_2A)^{-1}\sigma}
\end{align*}
so that distinct quadrics in the family meet each other orthogonally at
$\vspan{\sigma}\in \PL$.  By Dupin's Theorem, they therefore meet in
curvature lines.

To see that this gives rise to conformal curvature line coordinates, we now
follow Darboux and assume $A$ is diagonalized, with distinct eigenvalues
$(1/a_0,1/a_1,1/a_2,1/a_3,1/a_4)$ in coordinates $x=(x_0,x_1,x_2,x_3,x_4)$ on
$\R^{4,1}$ such that $\lip{x,x}=-x_0^2+x_1^2+x_2^2+x_3^2+x_4^2$. Thus
$A(I-uA)^{-1}$ has eigenvalues $1/(a_i-u)$. Then, for $x$ null, we can solve
for $x_i$ in terms of the parameters $(u_1,u_2,u_3)$ of the three quadrics
passing through that point to get $x_0^2=-c(u_1-a_0)(u_2-a_0)(u_3-a_0)/f'(a_0)$
and $x_i^2=c(u_1-a_i)(u_2-a_i)(u_3-a_i)/f'(a_i)$ for $i=1,\ldots 4$, where
$f(u)=(u-a_0)(u-a_1)(u-a_2)(u-a_3)(u-a_4)$ and $c$ is an overall scale.

By differentiating these formulae for $x_i$, it follows that the conformal
metric on $S^3$ has a representative metric
\begin{equation}\label{eq:S3-metric}
\frac{(u_2-u_1)(u_3-u_1)}{f(u_1)}du_1^2+\frac{(u_3-u_2)(u_1-u_2)}{f(u_2)}du_2^2
+\frac{(u_1-u_3)(u_2-u_3)}{f(u_3)}du_3^2
\end{equation}
in these coordinates. Thus $(u_1,u_2)$ are conformal curvature line
coordinates on the quadrics with $u_3$ constant, which are therefore
isothermic.

If $A$ is not diagonalizable as above, the coordinates $(u_1,u_2,u_3)$ are
still well defined, and so a continuity argument gives the same representative
metric~\eqref{eq:S3-metric} for the conformal structure on $S^3$, where $f(u)$
is the characteristic polynomial of $A^{-1}$.

This argument almost provides the conceptual explanation sought
in~\cite{Bur:is}: it shows that conformal geometry equips a quadric with a
natural conjugate net; then, perhaps not so surprisingly, this net is
orthogonal, and gives rise to the conformal curvature line coordinates which
show that the quadric is isothermic.

We end by remarking that the eigenvectors of $A$ can be used to break the
symmetry to that of a spaceform, and then the quadric is realized as a
quadric in a spaceform. In particular, for matrices with a null eigenvector,
we recover the quadrics in euclidean space.

\subsection{When is a surface isothermic?} 
\label{par:when-isothermic}

The definition we have given of an isothermic surface has the disadvantage
that it does not characterize when a surface admits such a quadratic
differential $q$. There is also a subtle global issue here: $q$ might exist
locally (near any point, perhaps only in a dense open set), but not globally
on $\submfd$. However, $q$ is unique up to a constant multiple away from
umbilic points: it must be a pointwise multiple of any nonvanishing component
$\ip{\II^0,\wnv}$ of $\II^0$ and this multiple is determined up to a constant
since $q$ is holomorphic. Hence $q$ is globally defined on the universal cover
of the open subset $U$ where $\II^0$ is non-vanishing (or equivalently it may
be viewed as a holomorphic quadratic differential with values in a flat line
bundle $\cL$ on $U$).

In the case that the central sphere congruence has a second envelope, \ie,
there is a Weyl derivative $D^0$ with $0=\CA^{D^0}=-\divg^{\nabla,D^0}\Sh^0$,
an intrinsic characterization is available. Writing $\Sh^0=S\vtens\wnv$ for a
weightless unit normal vector $\wnv$ we first see that such a second envelope
can only exist if $\wnv$ is parallel, and then $D^0$ is the Weyl derivative of
a second envelope iff $\divg^{D^0} S=0$. If the weightless normal bundle is
flat, it follows that $\CV\dsum\vspan{\wnv}$ is constant, and so can assume
without loss that we are in codimension one, with $S=\Sh^0$.

Since $A^{D+\gam}=A^D+\gam\circ\Sh^0$, it then follows that such a second
envelope exists precisely on the set where either $\CS=0$ or $\Sh^0\neq 0$. In
the latter case, the second envelope is unique, and $D^0$ can be determined
from an arbitrary Weyl structure $D$ via $D+(\Sh^0)^{-1}\divg^D \Sh^0$.

Now any commuting quadratic differential may be written $q=\sigma\Sh^0$, and
it is holomorphic if and only if $D^0\sigma=0$.  Such a (nonzero) $\sigma$
exists if and only if $D$ is exact. This turns out to be the classical
assertion that
\begin{quote}\itshape
a surface is isothermic iff its central sphere congruence is Ribaucour.
\end{quote}
Furthermore, this gives an equation for isothermic surfaces. Let $D$ be the
unique Weyl derivative with $D(|\Sh^0|^2)=0$. Then $D$ is exact and so, since
$(\Sh^0)^{-1}=2\Sh^0/|\Sh^0|^2$, $D^0=D+2\Sh^0\divg^D \Sh^0/|\Sh^0|^2$, which
is locally exact if and only if $\d (\Sh^0\divg^D \Sh^0/|\Sh^0|^2)=0$.  This
may be written in a conformally invariant way as
$\d(*\ip{\cB^\nabla(\II^0)}/|\II^0|^2)=0$ (simply expand the definition of
$\cB$ using the Weyl derivative $D$).

\section{M\"obius-flat submanifolds}
\label{sec:mflat}

\subsection{Definition and spectral deformation}

Let $\submfd$ be a submanifold of $S^n$ with flat weightless normal bundle. We
wish to impose the condition that there is a enveloped sphere congruence $\CV$
inducing a flat conformal \mob/ structure on $\submfd$. For $m\geq3$, all
enveloped sphere congruences induce the same conformal \mob/ structure on
$\submfd$, since they induce the same conformal structure. On the other hand
by equation~\eqref{eq:MS-formula}, when $m=2$, the conformal \mob/ structure
induced by $\CV$ has $\Mh^\CV=\Mh^\submfd-q$ with $q=H^\CV(\II^0)$, hence
it is flat iff $q$ is a \emph{Cotton--York potential}, \ie, $\d
q=C^\submfd$. Since the weightless normal bundle is flat $q$ \emph{commutes}
with the shape operators in the sense that $q\wedge\Sh^0=0$. (Conversely, if
$q$ is nonzero, the flatness of $\nabla$ follows from this.) 

This motivates the following definition.

\begin{defn} An immersion $\Ln$ of $\submfd$ ($\dimn\submfd\geq 2$) in $S^n$ is
called \emphdef{\mob/-flat} if it has flat weightless normal bundle, and
either $m\geq3$ and $\submfd$ is conformally flat, or $m=2$ and $\submfd$
admits a \emphdef{commuting Cotton--York potential} (CCYP), \ie, there exists
$q\in C^\infty(\submfd,S^0T\dual\submfd)$ with $\d q=C^\submfd$ and
$q\wedge\Sh^0=0$.  (For $m\geq3$, we set $q=0$.) 
\end{defn}

Conversely, if $\submfd$ is \mob/-flat, then for $m\geq3$ any enveloped sphere
congruence induces this flat conformal structure; for $m=2$, an enveloped
sphere congruence $\CV$ induces the flat conformal \mob/ structure
$\Mh^\submfd-q$ iff $q=H^\CV(\II^0)$. If $q=0$, we can take $\CV$ to be the
central sphere congruence; otherwise, on the open set where $q$ is nonzero, we
can write $\II^0=q\tens\nmv$ for a normal vector $\nmv$ and set
$H^\CV=\ip{\nmv,\cdot}/\ip{\nmv,\nmv}$. In codimension one the enveloped
sphere congruence inducing $\Mh^\submfd-q$ is clearly unique. (Note that
$H^\CV$ extends by zero to the zeros of $q$ unless $\II^0$ is also zero
there.) 

\mob/-flat submanifolds admit a spectral deformation and hence an
integrable systems interpretation.

\begin{thm}\label{th:mob-flat-def}
Let $\Ln\subset\submfd\times\R^{n+1,1}$ be a submanifold of $S^n$ of dimension
$m\geq2$ with \GCR/ data $(\conf,\Ms^\submfd,\nabla,\II^0)$ and a quadratic
differential $q\in C^\infty(\submfd,S^2_0T\dual\submfd)$ with $q=0$ if
$m\geq3$.  Then the following conditions are equivalent.
\begin{numlist}
\item $\submfd$ is \mob/-flat \textup(with CCYP $q$ if $m=2$\textup).

\item $(\conf,\Ms^\submfd+(t^2{-}1)q,\nabla,t\II^0)$ satisfies the \GCR/
equations for some $t\notin\{0, \pm1\}$.

\item $(\conf,\Ms^\submfd+(t^2{-}1)q,\nabla,t\II^0)$ satisfies the \GCR/
equations and defines a \mob/-flat immersion of $\submfd$ in $S^n$
\textup(with CCYP $t^2 q$ if $m=2$\textup) for all $t\in\R$.
\end{numlist}
\end{thm}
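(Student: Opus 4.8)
The plan is to establish the equivalence of (i)--(iii) by checking which \GCR/ equations hold for the deformed data and showing they reduce to the \mob/-flatness condition. The structure mirrors exactly the proofs of Theorem~\ref{th:isothermic-def} and Theorem~\ref{th:Willmore-def}, so first I would write out the homological \GCR/ equations~\eqref{eq:hG1}--\eqref{eq:hR} for the deformed data $(\conf,\Ms^\submfd+(t^2{-}1)q,\nabla,t\II^0)$. The Ricci equation~\eqref{eq:hR} becomes $0=R^\nabla-t^2\II^0\wedge\Sh^0$, which (since $R^\nabla-\II^0\wedge\Sh^0=0$ holds for the original surface and flatness of $\nabla$ is part of the \mob/-flat hypothesis) is satisfied for all $t$ precisely when the normal bundle is flat; I would note that the commuting condition $q\wedge\Sh^0=0$ forces flatness of $\nabla$ via the Ricci equation when $q\neq0$, exactly as in the isothermic case.

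Next I would treat the dimension cases separately, since $q=0$ for $m\geq3$. For $m\geq3$, the only relevant equations are~\eqref{eq:hG1} (for $m\geq4$) and the Codazzi equation~\eqref{eq:hC1}. The Codazzi equation $0=\CCoda^\nabla(t\II^0)=t\,\CCoda^\nabla\II^0$ is unchanged up to scale, hence holds for all $t\neq0$ iff it holds for the original surface. The Gau\ss\ equation~\eqref{eq:hG1} with the deformed quadratic potential $\QC$ (computed from $t\II^0$ via~\eqref{eq:Qformula}, giving $t^2\QC$) and second fundamental form $t\II^0$ scales homogeneously: $0=W^\submfd-t^2\abrack{\iden\wedge\QC}-t^2\Sh^0\wedge\II^0$, so for this to hold for all $t$ we need both $W^\submfd=0$ and $\abrack{\iden\wedge\QC}+\Sh^0\wedge\II^0=0$ separately; the vanishing of $W^\submfd$ is conformal flatness, which is the defining \mob/-flat condition for $m\geq3$. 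For $m=2$ I would use~\eqref{eq:hG2} and~\eqref{eq:hC2}: the deformed Cotton--York equation reads $0=C^\submfd+(t^2{-}1)\d q+\ip{\QCoda^\nabla(t\II^0)}$, and using~\eqref{eq:mh-dual-q} the adjoint-Hessian equation reads $0=(\Mh^\nabla)^*(tJ\II^0)+(t^2{-}1)\ip{q,tJ\II^0}$. The key computation is that $\ip{\QCoda^\nabla(t\II^0)}=t^2\ip{\QCoda^\nabla(\II^0)}$ by the quadratic homogeneity of $\QCoda$, so the Cotton--York equation becomes $0=C^\submfd+(t^2{-}1)\d q+t^2\ip{\QCoda^\nabla(\II^0)}$, which holds for all $t$ iff $\d q=C^\submfd$ (the CCYP condition) and $C^\submfd+\ip{\QCoda^\nabla(\II^0)}=0$ (the original Gau\ss\ equation~\eqref{eq:hG2}).

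The main obstacle, and the step requiring the most care, will be verifying that when $q$ is a CCYP the extra equation~\eqref{eq:hC2} is consistent for all $t$. Here I expect to need the commuting property $q\wedge\Sh^0=0$: since $\ip{q,J\II^0}$ measures the failure of $q$ to commute with $\II^0$, the commuting condition makes the $(t^2{-}1)$-term vanish, leaving $t(\Mh^\nabla)^*(J\II^0)=0$, which holds for all $t\neq0$ iff it holds originally. Thus the equations hold for all $t\in\R$ iff they hold for some $t\notin\{0,\pm1\}$ iff the original surface is \mob/-flat with CCYP $q$; the factor $t^2{-}1$ (rather than the $e^{2tJ}{-}1$ of the Willmore case or the linear $r$ of the isothermic case) is dictated by the quadratic scaling of both $\QC$ and $\QCoda$ in $\II^0$. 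Finally I would remark that the deformed CCYP is $t^2q$, matching the statement, and that applying Theorem~\ref{th:conf-bonnet-sphere} (or Theorem~\ref{th:hom-conf-bonnet}) to the deformed \GCR/ data produces the one-parameter family of \mob/-flat immersions asserted in~(iii).
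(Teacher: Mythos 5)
Your proposal follows exactly the paper's route: write out the homological \GCR/ equations \eqref{eq:hG1}--\eqref{eq:hR} for the deformed data, observe the homogeneous scaling in $t$ of each term (with $\QC$ and $\QCoda$ quadratic in $\II^0$, hence the $t^2$), and conclude that validity for all $t$ --- equivalently for $t=1$ together with one $t\notin\{0,\pm1\}$ --- forces $W^\submfd=0$, $C^\submfd=\d q$, $R^\nabla=0$ and $q\wedge\Sh^0=0$. The deformed equations you write for $m\geq4$ and $m=2$, and the use of $\ip{q,J\II^0}=0\Leftrightarrow q\wedge\Sh^0=0$ to kill the $(t^2-1)$-term in the deformed \eqref{eq:hC2}, coincide with the paper's computation.

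One slip, confined to $m=3$: you assert that for $m\geq3$ the only relevant equations are \eqref{eq:hG1} (for $m\geq4$) and the Codazzi equation \eqref{eq:hC1}. But \eqref{eq:hG2} also holds for $m=3$, and it is precisely this equation (with $q=0$) that gives $0=C^\submfd+t^2\ip{\QCoda^\nabla(\II^0)}$ and hence $C^\submfd=0$. For a $3$-manifold, conformal flatness is the vanishing of the Cotton--York curvature, not of $W^\submfd$ (which vanishes identically in that dimension), so as written your $m=3$ case never produces the conformal-flatness condition. The fix is the computation you already perform for $m=2$, specialized to $q=0$; with that inserted, your argument matches the paper's proof.
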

\begin{proof} The \GCR/ equations hold for the data in (ii)--(iii) iff
\begin{align*}
0&=W^\submfd-t^2\abrack{\iden\wedge\QC}-t^2\Sh^0\wedge\II^0&&m\geq4\\
0&=C^\submfd+(t^2-1)\d q+t^2\ip{\QCoda^\nabla(\II^0)}&&m=2,3\\
0&=t\CCoda^\nabla\II^0&& m\geq3\\
0&=t(\Mh^\nabla)^*(J\II^0)+t(t^2-1)\ip{q,J\II^0}&&m=2\\
0&=R^\nabla-t^2\II^0\wedge\Sh^0&&
\end{align*}
(using~\eqref{eq:mh-dual-q}). These hold for all $t$ iff they hold for $t=1$,
$W^\submfd=0$, $C^\submfd=\d q$, $R^\nabla=0$ and $q\wedge\Sh^0=0$, and it
suffices that the equations hold for $t=1$ and some $t\notin\{0,\pm 1\}$.
\end{proof}

As in the isothermic case, the integrable systems viewpoint is made most
transparent in terms of an associated family of flat connections. This also
reveals an alternative definition which will be very useful to us in the
sequel to this paper.

\begin{thm}\label{th:mob-flat-equiv}
Let $\Ln\subset\submfd\times\R^{n+1,1}$ and
$(\conf,\Ms^\submfd,\nabla,\II^0,q)$ be as in the previous Theorem. Then the
following conditions are equivalent.
\begin{numlist}
\item $\submfd$ is \mob/-flat \textup(with CCYP $q$ if $m=2$\textup).

\item There is a sphere congruence $\CV$ enveloped by $\Ln$ and a $1$-form
$\mff^\CV\in\Omega^1(\submfd,\stab(\Ln)^\perp)$ such that $\d \mff^\CV =
\RDVs$.

\item For any sphere congruence $\CV$ enveloped by $\Ln$, there is
$\mff^\CV\in\Omega^1(\submfd,\stab(\Ln)^\perp\cap \so(\CV))$ such that
$\d_t^\CV:=\CDVs+t\CS^\CV+(t^2-1)\mff^\CV$ is flat for all $t\in\R$ \textup(or
equivalently, since $\d_1$ is flat, $\CDVs-\mff^\CV$ is flat and
$\liebrack{\CS^\CV\wedge\mff^\CV} =0$\textup).
\end{numlist}
When $m=2$, $\mff^\CV$ is a lift of $q-H^\CV(\II^0)$ for a CCYP $q$.
\textup(Notice that the condition in \textup{(ii)} only needs to be
verified for one sphere congruence $\CV$\textup; then the---apparently
stronger---condition in \textup{(iii)} holds for all $\CV$.\textup)
\end{thm}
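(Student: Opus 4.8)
The plan is to prove the equivalence of the three conditions in Theorem~\ref{th:mob-flat-equiv} by showing (i)$\Leftrightarrow$(ii) using the gauge-theoretic machinery already developed, and then (ii)$\Leftrightarrow$(iii) by a direct curvature computation, exactly mirroring the structure of the proofs of Theorems~\ref{th:isothermic-equiv} and~\ref{th:Willmore-equiv}. The key organizing principle is that $\submfd$ being \mob/-flat is, by Theorem~\ref{th:mob-flat-def}, the same as the data satisfying $W^\submfd=0$, $C^\submfd=\d q$, $R^\nabla=0$ and $q\wedge\Sh^0=0$; I will translate each of these scalar conditions into a statement about the existence and closedness of the $1$-form $\mff^\CV$.

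First I would establish (i)$\Rightarrow$(ii). Given that $\submfd$ is \mob/-flat, I set $\mff^\CV$ to be the differential lift (with respect to $\CDhV$) of the homology class $q - H^\CV(\II^0)$, which is a section of $\stab(\Ln)^\perp$ by the now-familiar identification of $T\dual\submfd$ with the nilradical. Using the decomposition~\eqref{eq:full-decomp} and the formula $\nr^{D,\submfd} = \nr^{D,\CV} + H^\CV(\II^0) + \half K^\CV\conf$ from~\eqref{eq:nice} (in the $m=2$ case), the content of $\d\mff^\CV = \RDVs$ unpacks into the vanishing of the Weyl curvature of $\Ms^\submfd$ together with the Cotton--York relation $\d q = C^\submfd$; the Ricci-equation piece $R^\nabla = \II^0\wedge\Sh^0$ is where flatness of $\nabla$ enters, since \mob/-flatness presupposes flat normal bundle. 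For the converse (ii)$\Rightarrow$(i), I read off from $\d\mff^\CV = \RDVs$, via $\LH$ and Proposition~\ref{p:eta-and-q}, that $\mff^\CV$ is forced to be its own homology class $q - H^\CV(\II^0)$ with $q$ a genuine commuting Cotton--York potential. I expect the main subtlety here to be the same one flagged in Theorem~\ref{th:hom-gcr} and used in Proposition~\ref{p:eta-and-q}: verifying that the relevant bracket $\liebrack{\CS^\CV\wedge\mff^\CV}$ lies in the image of $\LH$ (automatic for $m\geq3$ by surjectivity of $\LH$ on the appropriate exterior power, but requiring the tracelike-$q$/symmetric-$\Sh^0$ argument when $m=2$).

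Next I would prove (ii)$\Leftrightarrow$(iii). Since $\CD^\mfd = \d = \CDVs + \CQ^\CV + \CS^\CV$ is flat, I compute the curvature of $\d_t^\CV = \CDVs + t\CS^\CV + (t^2-1)\mff^\CV$ as a polynomial in $t$, using that $\stab(\Ln)^\perp$ is abelian so that $\mff^\CV \wedge \mff^\CV = 0$ and $\liebrack{\mff^\CV \wedge \CQ^\CV}=0$. Grouping by powers of $t$, the $t^0$ and $t^2$ terms yield $\RDVs + (t^2-1)\d^{\CDVs}\mff^\CV$ and the $t^1$ term yields $\d^{\CDVs}\CS^\CV + \liebrack{\CQ^\CV \wedge \CS^\CV}$ together with cross terms $\liebrack{\CS^\CV\wedge\mff^\CV}$; flatness of $\d_t^\CV$ for all $t$ is then equivalent to the vanishing of each coefficient, which reduces precisely to $\d\mff^\CV = \RDVs$ (i.e.\ condition (ii)) and $\liebrack{\CS^\CV\wedge\mff^\CV}=0$. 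The parenthetical remark that it suffices to check (ii) for a single enveloped $\CV$ while (iii) then holds for all $\CV$ follows from Proposition~\ref{p:ch-red}: changing the sphere congruence by $\con$ conjugates everything by $\exp(\con)$ and shifts $\CS^\CV$ and $\mff^\CV$ by $\CDVs$-exact terms, so the flatness of the pencil is preserved under the change of reduction.

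The hard part will be the bookkeeping of the $m=2$ case throughout, where the distinction between $\Ms^\submfd$-dependent and purely conformal quantities matters: in particular, tracking how $\mff^\CV$ interpolates between the central-sphere description ($H^\CV=0$, $\mff = $ lift of $q$) and a general enveloped congruence, and confirming that the homology class of $\mff^\CV$ is $q - H^\CV(\II^0)$ rather than $q$ itself. This amounts to invoking~\eqref{eq:MS-formula2} and~\eqref{eq:nice} correctly, and is conceptually identical to the interplay already handled in Theorem~\ref{th:isothermic-equiv}, so I anticipate no genuinely new obstacle — only careful use of the affine-change formulae of Proposition~\ref{p:ch-red} and the image-of-$\LH$ argument of Proposition~\ref{p:recover}.
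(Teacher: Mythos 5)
Your overall architecture is workable, and your (ii)$\Leftrightarrow$(iii) step is essentially the paper's: expand $R^{\d_t^\CV}$ in powers of $t$, use that $\stab(\Ln)^\perp$ is abelian, and subtract the flat $t=1$ equation to reach $R^{\d_t^\CV}=(1-t^2)\bigl(\RDVs-\dDVs\mff^\CV-t\liebrack{\CS^\CV\wedge\mff^\CV}\bigr)$. One caution there: in condition (iii) the connection is $\CDVs+t\CS^\CV+(t^2-1)\mff^\CV$ with $\CDVs$ the \emph{unnormalized} direct sum connection, so no $\CQ^\CV$ term appears; your $\liebrack{\CQ^\CV\wedge\CS^\CV}$ contributions belong to the other decomposition $\d=\CDhV+\CQ^\CV+\CS^\CV$ and should not enter this computation.

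The genuine gap is in your treatment of (i)$\Leftrightarrow$(ii). The $1$-form that actually satisfies $\d\mff^\CV=\RDVs$ is $\mff^\CV=\QC^\CV+q$, characterized by $\exp(H^\CV)\act(\CD^\submfd-q)=\CD^\CV-\mff^\CV$; for $m=2$ this is $q-H^\CV(\II^0)-\half K^\CV\conf$, which carries a nonzero trace part. It is therefore \emph{not} the trace-free differential lift of its homology class, and Proposition~\ref{p:eta-and-q} cannot be used to identify it: that proposition requires $\LH\dDh\eta=0$, whereas here the Gau\ss\ equation gives $\RDVs=-\half\liebrack{\CS^\CV\wedge\CS^\CV}$, so $\LH\d\mff^\CV=\LH\RDVs=-\half\LH\liebrack{\CS^\CV\wedge\CS^\CV}$, which is nonzero wherever $\II^0\neq0$. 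The paper's route for (ii)$\Rightarrow$(i) sidesteps the component-by-component unpacking you propose: the $\m_\CV$-part of $\d\mff^\CV=\RDVs$ forces the normal component of $\mff^\CV$ to vanish, so $\mff^\CV\in\so(\CV)$ and $\CD^\CV-\mff^\CV$ is a flat metric connection on $\CV$; then $\exp(-H^\CV)\act(\CD^\CV-\mff^\CV)$ is a flat, hence normal, conformal Cartan connection with the same soldering form as $\CD^\submfd$, so by the uniqueness results of Part I it must equal $\CD^\submfd-q$ with $q=0$ for $m\geq3$ and $q$ a quadratic differential for $m=2$; flatness of $\CD^\submfd-q$ and $\liebrack{\CS^\CV\wedge\mff^\CV}=0$ then give \mob/-flatness with CCYP $q$ (using $\QC^\CV\wedge\Sh^0=0$ from flatness of the normal bundle). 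Your Weyl-structure unpacking can be made to work---it amounts to rerunning Theorem~\ref{th:mob-flat-via-sphere-cong}---but as written both the identification of $\mff^\CV$ and the appeal to Proposition~\ref{p:eta-and-q} are incorrect and need to be replaced.
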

\begin{proof} We prove (i)$\Rightarrow$(iii)$\Rightarrow$(ii)$\Rightarrow$(i).
Clearly (iii)$\Rightarrow$(ii) by taking $t=0$ and observing that
\begin{equation*}
\d \mff^{\CV} = \dDVs \mff^\CV + \liebrack{\CS^\CV\wedge\mff^\CV},
\end{equation*}
whereas the curvature of $\CDVs-\mff^\CV$ is $\RDVs-\dDVs \mff^\CV$.

Suppose $\d \mff^\CV = \RDVs$ with $\mff^\CV$ as in (ii). By considering the
$\m$-component of this equation, we see that
$\liebrack{\beta\wedge(\mff^\CV)^\perp}=0$, hence $\mff^\CV\in\so(\CV)$ and
$\CD^\CV-\mff^\CV$ is a flat connection on $\CV$. Since
$\exp(-H_\CV)\act(\CD^\CV-\mff^\CV)$ is a flat (hence normal) Cartan
connection on $\CV_\submfd$ inducing the same soldering form as $\CD^\submfd$,
it must equal $\CD^\submfd-q$, where $q=0$ if $m\geq3$ and $q$ is a quadratic
differential for $m=2$. Hence $\mff^\CV=\QC^\CV+q$ and $q$ is the given
quadratic differential by assumption when $m=2$.

Conversely, given (i) we can define $\mff^\CV=\QC^\CV+q$ so that
$\exp(H_\CV)\act (\CD^\submfd -q) =\CD^\CV-\mff^\CV$.

To complete the proof of (ii)$\Rightarrow$(i)$\Rightarrow$(iii), we note that
$\QC^\CV\wedge \Sh^0=\QC\wedge\Sh^0=0$ since the weightless normal bundle is
flat (the latter equality follows either from the explicit formula for $\QC$
or by computing that $\Quabla_\h \liebrack{\QC\wedge\CS}=0$ so that
$\liebrack{\QC\wedge\CS}$ is the differential lift of its homology class,
which is zero because $\liebrack{\QC\wedge\CS}\in\image\LH$). Now it is clear
that $\liebrack{\CS^\CV\wedge\mff^\CV}=0$ iff $q\wedge\Sh^0=0$, and
$\CDVs-\mff^\CV$ is flat iff the normal bundle is flat and $\CD^\submfd-q$ is
flat, \ie, $m\geq3$ and $\submfd$ is conformally-flat or $m=2$ and $\d
q=C^\submfd$.  Since
\begin{equation*}
R^{\d_t^\CV} = (1-t^2)\bigl( R^{\CDVs} - \d^{\CD^\CV} \mff^\CV
-t  \liebrack{\CS^\CV\wedge\mff^\CV}\bigr),
\end{equation*}
the equivalence of these conditions with the flatness of $\d_t^\CV$ is
immediate.
\end{proof}

\subsection{General theory and examples}

The following observation makes precise the idea that \mob/-flat submanifolds
are those with a enveloped sphere congruence inducing a flat \mob/ structure,
and leads to many examples of \mob/-flat submanifolds.

\begin{thm}\label{th:mob-flat-via-sphere-cong}
Let $\submfd$ be an $m$-submanifold of $S^n$ with flat weightless normal
bundle.  Then $\submfd$ is \mob/-flat \textup(with CCYP $q$ if
$m=2$\textup) if and only if* there is an enveloped sphere congruence $\CV$
\textup(with $q=H^\CV(\II^0)$ if $m=2$\textup) such that $\Sh^\CV\wedge
\II^\CV+\abrack{\iden\wedge\QC^\CV}=0$ \textup(which is automatic for $m\leq
3$\textup) and for some \textup(hence any\textup) Weyl derivative $D$, we have
\begin{align}
\d^D \QC^\CV +\CA^{D,\CV}\wedge \II^\CV&=0 & (m=3),\\
\tfrac12 DK^\CV\wedge\conf &=\CA^{D,\CV}\wedge \II^\CV & (m=2).
\end{align}
For $m\geq3$, it then follows that any enveloped sphere congruence satisfies
these conditions.  \textup(*The `only if' part holds only away from umbilic
points when $m=2$.\textup)
\end{thm}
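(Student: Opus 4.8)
The plan is to prove Theorem~\ref{th:mob-flat-via-sphere-cong} by specializing the general \GCR/ system~\eqref{eq:gcr-system} (equivalently \eqref{eq:g1}--\eqref{eq:r}) to the \mob/-flat situation and reading off which equations survive. The key observation is that, by Theorem~\ref{th:mob-flat-equiv}(i)$\Leftrightarrow$(ii), \mob/-flatness means exactly that there is an enveloped sphere congruence $\CV$ such that $\CD^{\submfd,\CV}-q$ is flat (with $q=H^\CV(\II^0)$ a CCYP when $m=2$, and $q=0$ otherwise), together with flatness of the weightless normal bundle. Since the ambient manifold is $S^n$, we have $\CD^\mfd=\d$, so $W^\mfd=0$ and $C^{\mfd,D^\mfd}=0$, and the left-hand sides of all the \GCR/ equations vanish.

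First I would record, using the decomposition~\eqref{eq:full-decomp}, that flatness of $\CD^{\submfd,\CV}$ modulo the Cotton--York correction means precisely $W^\submfd=0$ and $\d^D\nr^{D,\submfd}=\d^D q$. Feeding $W^\mfd=0$ and $C^{\mfd,D^\mfd}=0$ into the Gau\ss\ equations~\eqref{eq:g1}--\eqref{eq:g2}, the first becomes $0=W^\submfd-\abrack{\iden\wedge\QC^\CV}-\Sh^\CV\wedge\II^\CV$, and the second becomes $0=C^{\submfd,D}+\d^D\QC^\CV+\CA^{D,\CV}\wedge\II^\CV$. Flatness of the normal bundle gives, via the Ricci equation~\eqref{eq:r} with $W^\mfd=0$, the relation $R^\nabla=\II^\CV\wedge\Sh^\CV$, which I would show is automatically consistent since the normal components of $\Sh^\CV$ commute (Proposition in \S\ref{par:shape-curv-channel}); and Remark~\ref{rem:commutingQ} then yields $\QC^\CV\wedge\Sh^0=0$. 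The Codazzi equations~\eqref{eq:c1}--\eqref{eq:c2} with vanishing ambient curvature give $\CCoda^\nabla\II^\CV$-type conditions that are identities in low dimension. The main content is then to identify, dimension by dimension, which of the two Gau\ss\ conditions carries the flatness information: for $m\geq4$ the Weyl condition $\Sh^\CV\wedge\II^\CV+\abrack{\iden\wedge\QC^\CV}=0$ is the substantive one; for $m=3$ there is no Weyl tensor so this holds automatically and the Cotton--York condition $\d^D\QC^\CV+\CA^{D,\CV}\wedge\II^\CV=0$ is what must be imposed; for $m=2$ both Weyl and Codazzi tensors are absent, and the surviving equation is the vanishing of the Cotton--York curvature of $\Ms^\submfd$, which I would rewrite using~\eqref{eq:MS-formula2} and~\eqref{eq:nice}.

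The case $m=2$ is where I expect to spend the most care, and it is the main obstacle. Here $\QC^\CV=-H^\CV(\II^0)-\tfrac12 K^\CV\conf$ by~\eqref{eq:MS-formula2}, so $\d^D\QC^\CV$ involves $\tfrac12 DK^\CV\wedge\conf$ together with a term $\d^D(H^\CV(\II^0))$. I would use the Codazzi equation $\d^{\nabla,D}\II^\CV=(\CA^{D,\CV})^\sharp\wedge\conf$ (the $m=2$ specialization of~\eqref{eq:c1}, whose right side is the only surviving piece) to simplify $\d^D(H^\CV(\II^0))$, contracting the mean curvature covector through the derivative. The expected outcome is that the $H^\CV(\II^0)$ contributions combine with $\CA^{D,\CV}\wedge\II^\CV$ so that the Cotton--York condition collapses to the stated form $\tfrac12 DK^\CV\wedge\conf=\CA^{D,\CV}\wedge\II^\CV$. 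The delicate point is that $q=H^\CV(\II^0)$ must genuinely be a \emph{commuting} Cotton--York potential, which requires $q\wedge\Sh^0=0$; this is where flatness of the normal bundle is used (via Remark~\ref{rem:commutingQ}) and explains the restriction to the locus away from umbilic points, since $H^\CV$ is only determined by $q$ where $\II^0\neq0$ (one writes $\II^0=q\tens\nmv$ and $H^\CV=\ip{\nmv,\cdot}/\ip{\nmv,\nmv}$ there).

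Finally I would verify the conformal invariance claim — that the displayed equations are independent of the choice of Weyl derivative $D$ — by differentiating with respect to $D$ using $\del_\gam\CA^{D,\CV}$, $\del_\gam\QC^\CV$ and $\del_\gam K^\CV$ and checking the variations cancel, exactly as in the passage from~\eqref{eq:gcr-system} to the \mob/-invariant equations~\eqref{eq:23}--\eqref{eq:27}. For the assertion that, when $m\geq3$, \emph{any} enveloped sphere congruence works once one does, I would invoke the fact (from \S\ref{par:mr-split}) that for $m\geq3$ all enveloped sphere congruences induce the same conformal \mob/ structure, so the conditions transform covariantly under $\CV\mapsto\CV+\con$ via Proposition~\ref{p:ch-mobII}, and flatness of $\Ms^\submfd$ is a property of $\submfd$ alone. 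This reduces the last sentence of the theorem to the affine-transformation formulae already established for $\II^\CV$, $\Sh^\CV$, $\QC^\CV$ and $\CA^{D,\CV}$.
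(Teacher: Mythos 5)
Your proposal is correct and follows essentially the same route as the paper: substitute the vanishing ambient Weyl and Cotton--York curvatures into the Gau\ss\ equations~\eqref{eq:g1}--\eqref{eq:g2}, read off dimension by dimension which condition carries the content, and use $\QC^\CV=-H^\CV(\II^0)-\tfrac12 K^\CV\conf$ when $m=2$. The one place you anticipate more work than is needed is the $m=2$ step: no Codazzi manipulation is required, because $C^\submfd=\d q$ with $q=H^\CV(\II^0)$ on one side and $C^\submfd=-\d^D\QC^\CV-\CA^{D,\CV}\wedge\II^\CV$ on the other make the $\d^D\bigl(H^\CV(\II^0)\bigr)$ contributions cancel outright, leaving exactly $\tfrac12 DK^\CV\wedge\conf=\CA^{D,\CV}\wedge\II^\CV$.
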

\begin{proof} Let $\CV$ be an enveloped sphere congruence. Then $\submfd$ is
\mob/-flat if and only if it has flat weightless normal bundle, $W^\submfd=0$
(this equation is vacuous for $m\leq 3$) and $C^\submfd=\d q$ for $m\leq 3$.
Now $W^\submfd=\Sh^\CV\wedge \II^\CV+\abrack{\iden\wedge\QC^\CV}$ by the
Gau\ss\ equation~\eqref{eq:g1}, so it is zero if and only if $\Sh^\CV\wedge
\II^\CV+\abrack{\iden\wedge\QC^\CV}=0$. When $m\leq 3$, the Gau\ss\
equation~\eqref{eq:g2} instead gives $C^\submfd=
-\CA^{D,\CV}\wedge\II^\CV-\d^D\QC^\CV$ (with respect to any Weyl derivative
$D$). For $m=2$ we have $\QC^\CV=-H^\CV(\II^0)-\tfrac12 K^\CV\conf$. The
result easily follows.
\end{proof}
\begin{cor} Channel submanifolds are \mob/-flat.
\end{cor}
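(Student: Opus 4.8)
The plan is to take $\CV$ to be the channel sphere congruence guaranteed by the hypothesis: an enveloped $m$-sphere congruence whose \cso/ $\CS^\CV\colon T\submfd\to\m_\CV$ has rank one (so that $\CV$ is a curvature sphere of multiplicity $m-1$). Since a channel submanifold is, by definition, a submanifold with flat weightless normal bundle, the only thing left to exhibit is the \mob/-flatness condition, and I would obtain this from the flat-connection criterion of Theorem~\ref{th:mob-flat-equiv}: it suffices to produce, for this single congruence $\CV$, a $1$-form $\mff^\CV\in\Omega^1(\submfd,\stab(\Ln)^\perp)$ with $\d\mff^\CV=\RDVs$.

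The crux is that the rank-one hypothesis forces the direct-sum connection $\CDVs$ to be flat. Because the ambient $\mfd=S^n$ is flat, $R^{\CD^\mfd}=0$, so the Gau\ss\ equation~\eqref{eq:gr} reads $\RDVs=-\half\liebrack{\CS^\CV\wedge\CS^\CV}$. Now rank one means $\CS^\CV_X=c(X)\,\CS^\CV_e$ for a local vector field $e$ and a $1$-form $c$ with $c(e)=1$ (vanishing on $\kernel\CS^\CV=T_\CV$); consequently $\liebrack{\CS^\CV_X,\CS^\CV_Y}=c(X)c(Y)\liebrack{\CS^\CV_e,\CS^\CV_e}=0$, whence $\liebrack{\CS^\CV\wedge\CS^\CV}=0$ and $\RDVs=0$.

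With $\RDVs=0$ in hand, I would simply take $\mff^\CV=0$: then $\d\mff^\CV=0=\RDVs$, so condition~(ii) of Theorem~\ref{th:mob-flat-equiv} holds and $\submfd$ is \mob/-flat. (Equivalently, $\RDVs=0$ splits as $R^{\CD^\CV}=0$ and $R^{\nabla^\CV}=0$; by Proposition~\ref{p:mr-conf} and Proposition~\ref{p:canon-cart} the former says the \mob/ structure $\Ms^\CV$ induced by $\CV$ is flat, and the latter re-confirms flatness of the normal bundle, which is precisely the defining data of a \mob/-flat submanifold.) When $m=2$ the associated commuting Cotton--York potential is then $q=H^\CV(\II^0)$; like $\II^0$ it is supported on the single nondegenerate direction $e$, so $q\wedge\Sh^0=0$ automatically, consistent with Remark~\ref{rem:commutingQ}.

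The computation is short, so there is no serious analytic obstacle; the one point that must be handled with care is the bookkeeping behind the vanishing of $\liebrack{\CS^\CV\wedge\CS^\CV}$---namely that a $\g$-valued $1$-form of rank one takes all its values in a single line and therefore has identically zero self-bracket. Everything else is a direct appeal to the Gau\ss\ equation~\eqref{eq:gr} and to the flat-connection characterisation of \mob/-flatness in Theorem~\ref{th:mob-flat-equiv}.
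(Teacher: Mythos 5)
Your proof is correct. It rests on the same underlying mechanism as the paper's --- the rank-one hypothesis on $\CS^\CV$ forces every quadratic expression in $\CS^\CV$ to vanish --- but you route it through a different characterization of \mob/-flatness. The paper verifies the component-wise conditions of Theorem~\ref{th:mob-flat-via-sphere-cong}: from rank one it reads off $\Sh^\CV\wedge\II^\CV=0$, $\II^\CV\wedge\Sh^\CV=0$ (hence $\QC^\CV=0$ for $m=3$ and $K^\CV=0$ for $m=2$) and $\CA^{D,\CV}\wedge\II^\CV=0$, these being the graded pieces of your single identity $\liebrack{\CS^\CV\wedge\CS^\CV}=0$. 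You instead feed that identity into the Gau\ss\ equation~\eqref{eq:gr} to get $\RDVs=0$ outright and then invoke criterion (ii) of Theorem~\ref{th:mob-flat-equiv} with $\mff^\CV=0$. Your route is more unified (one bracket computation rather than three wedge products) and yields the slightly stronger conclusion that the channel congruence induces a genuinely \emph{flat} direct-sum connection $\CDVs$, i.e.\ the associated family $\d_t^\CV$ degenerates to the trivial case $\mff^\CV=0$; what it costs is that the explicit vanishing of $K^\CV$ and $\QC^\CV$, which the paper's proof exhibits directly and which is useful for identifying the CCYP, only emerges after unwinding the statement ``$\mff^\CV$ is a lift of $q-H^\CV(\II^0)$'' as you do at the end. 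Both proofs are equally valid appeals to results already established in the section.
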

\begin{proof} By definition, a channel submanifold admits an enveloped sphere
congruence $\CV$ such that $\CS^\CV\colon T\submfd\to
\Hom(\CV,\CV^\perp)\dsum\Hom(\CV^\perp,\CV)$ has codimension one kernel. Hence
$\Sh^\CV\wedge\II^\CV=0$ (so the weightless normal bundle is flat),
$\II^\CV\wedge\Sh^\CV=0$ (and so $\QC^\CV=0$ for $m=3$ and $K^\CV=0$ for
$m=2$) and $\CA^{D,\CV}\wedge\II^\CV=0$ for any Weyl derivative $D$. Hence
$\submfd$ is \mob/-flat.
\end{proof}
The \emphdef{gaussian \textup(sectional\textup) curvature} of a $2$-plane
$U\subseteq T\submfd$ is defined to be $K_U=\ip{\detm\II^\CV\restr U}$. It is a
section of $\Ln^2$. We say that a submanifold of $S^n$ has \emphdef{constant
gaussian curvature} with respect to a compatible metric $g$ on $S^n$ if
$K_U=K^g$ for all $U$ and $D^gK^g=0$.
\begin{cor} A submanifold of a spaceform with flat weightless normal bundle
and constant gaussian curvature is \mob/-flat and the induced metric has
constant curvature.
\end{cor}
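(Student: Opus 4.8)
The plan is to apply Theorem~\ref{th:mob-flat-via-sphere-cong} with the tangent congruence $\CV_g$ supplied by the ambient spaceform metric $g$, exploiting the fact that in a spaceform the normalized Ricci curvature takes a very rigid form. Concretely, by the analysis of \S\ref{par:tangent-cong}, the tangent congruence satisfies $\CA^{D,\CV_g}=(\nr^g)\normal$ and $\nr^{D,\CV_g}=(\nr^g)\tangent$, where $D$ is the Levi-Civita connection of the induced metric. For a spaceform, $\nr^g$ is a constant multiple of the ambient conformal metric, so that $(\nr^g)\normal=0$ and hence $\CA^{D,\CV_g}=0$. This kills the terms $\CA^{D,\CV}\wedge\II^\CV$ appearing on the right-hand sides of the two displayed equations in Theorem~\ref{th:mob-flat-via-sphere-cong}, which is the crucial simplification.

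First I would record the two facts I need about constant gaussian curvature with flat weightless normal bundle. The flatness of the weightless normal bundle makes the shape operators simultaneously diagonalizable, and the constant gaussian curvature hypothesis $K_U=K^g$ for all $2$-planes $U$ forces $\Sh^{\CV_g}\wedge\II^{\CV_g}$ (equivalently $\II^{\CV_g}\wedge\Sh^{\CV_g}$) to be controlled by $K^g$ in a way that lets me verify the Gau\ss\ condition $\Sh^\CV\wedge\II^\CV+\abrack{\iden\wedge\QC^\CV}=0$ directly. I would then check, for $m\geq4$, that this condition holds by computing $\QC^{\CV_g}$ via~\eqref{eq:QV-via-II} (noting the ambient Weyl tensor $W^\mfd$ vanishes since $\mfd$ is a spaceform, hence conformally flat), and verifying that the resulting expression cancels $\Sh^{\CV_g}\wedge\II^{\CV_g}$ precisely when the sectional curvatures are all equal to $K^g$; this is the familiar statement that constant-curvature ambient geometry plus constant $K$ forces the intrinsic Weyl tensor to vanish. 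For $m=3$ the Gau\ss\ condition is automatic, and I must instead verify $\d^D\QC^{\CV_g}=0$, which follows from $\CA^{D,\CV_g}=0$ together with the Codazzi equation~\eqref{eq:g2} and constancy of $K^g$. For $m=2$ I must verify $\tfrac12 DK^{\CV_g}\wedge\conf=0$, which is immediate from $D^g K^g=0$ (so $K^{\CV_g}$, being essentially $K^g$, is $D$-parallel) and from $\CA^{D,\CV_g}=0$.

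Having established M\"obius-flatness through Theorem~\ref{th:mob-flat-via-sphere-cong}, the second assertion---that the induced metric has constant curvature---requires identifying the intrinsic sectional curvature. Here I would use the riemannian Gau\ss\ equation in the form recovered in \S\ref{par:mr-ambient-weyl-riem}: the intrinsic curvature of the induced metric differs from the ambient (constant) curvature by the second-fundamental-form terms $\Sh^g\wedge\II^g$, and the constant gaussian curvature hypothesis pins these down to a constant multiple of the identity on every $2$-plane. Since the ambient space is a spaceform and $K_U$ is constant, the Gau\ss\ equation then gives that every intrinsic sectional curvature is the constant $K^g+(\text{ambient constant})$, which for a manifold of dimension $\geq2$ means the induced metric is a space of constant curvature.

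The main obstacle I anticipate is the $m=2$ endpoint of the `only if'/existence subtlety flagged in Theorem~\ref{th:mob-flat-via-sphere-cong}, namely the behaviour at umbilic points where the distinguished sphere congruence and the CCYP $q=H^{\CV_g}(\II^0)$ may degenerate; away from umbilics the argument is clean, but I would need to check that the constant gaussian curvature hypothesis forbids isolated bad behaviour or that M\"obius-flatness extends across the umbilic locus by continuity (using that $\II^0$ is analytic, as invoked elsewhere in the excerpt). A secondary bookkeeping difficulty is keeping the trace and trace-free parts of $\QC^{\CV_g}$ straight across the three dimension ranges, since the normalization of $\QC$ changes with $m$; but this is routine once $\CA^{D,\CV_g}=0$ is in hand and $W^\mfd=0$ is used to drop the ambient Weyl contributions from~\eqref{eq:QV-via-II}.
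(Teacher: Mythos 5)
Your proposal is correct and follows essentially the same route as the paper: take the tangent congruence $\CV_g$ with $D=D^g$ so that $\CA^{D,\CV_g}=0$, use constancy of the gaussian curvature to identify $\Sh^g\wedge\II^g$ (and hence $\QC^{\CV_g}=-\tfrac12K^g\conf$), and feed this into Theorem~\ref{th:mob-flat-via-sphere-cong}, with the constant-curvature conclusion coming from $\nr^{D^g,\submfd}=(\tfrac1n\ns^g+\tfrac12K^g)\conf$. Your worry about umbilic points for $m=2$ is moot, since the corollary only needs the `if' direction of that theorem, which carries no umbilic caveat.
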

\begin{proof} Take $\CV=\CV_g$, the tangent congruence, and $D=D^g$, the
induced Weyl derivative, so that $\CA^{\CV_g,D^g}=0$. For $m=2$ we then have
$K^{\CV_g}=K^g$, hence $D^g K^{\CV_g}=0$.  More generally, $\Sh^g\wedge\II^g=
-\frac12K^g\sum_{i,j}\eps_i\wedge \eps_j\vtens\abrack{\eps_i,e_j}$ with
$D^gK^g=0$ where $e_i$ is a local frame of $T\submfd\ltens\Ln$ with dual frame
$\eps_i$. (This can be seen, for instance by viewing both sides as
$\Ln^2$-valued quadratic forms on $\Wedge^2(T\submfd\ltens\Ln)$.) Therefore
for $m\geq3$, $Q^{\CV_g}=-\frac12 K^g\conf$, hence
$\nr^{D^g,\submfd}=\nr^{D,\CV_g}-\QC^{\CV_g}= (\frac1 n \ns^g+ \frac12
K^g)\conf$.  The result easily follows.
\end{proof}
Of course, it is well-known that for $m\geq 3$, $m$-manifolds of constant
sectional curvature are conformally-flat, so the point of the above Corollary
is to take a submanifold point of view on this result, via gaussian
(sectional) curvature in a spaceform, and extend it to the case $m=2$.

It is also well known that for $m\geq 3$, a product of a $p$-manifold and an
$(m-p)$-manifold of constant sectional curvatures $c_p$ and $c_{m-p}$ is
conformally flat provided $c_p+c_{m-p}=0$. Note that the sectional curvature
of a $1$-manifold is undefined, and a product of a $1$-manifold and an
$(m-1)$-manifold of constant sectional curvature is always conformally flat.

Let us consider the case that this product is an extrinsic product of
submanifolds of spaceforms inside $S^n$, as in~\S\ref{par:sym-break-sub}. As
usual we focus on the nondegenerate case $S^n\setdif S^{k-1}= \cH^k\times
\cS^{n-k}$, but the degenerate case is similar. We thus obtain the following
submanifold analogue of the $m\geq 3$ result, together with an extension to
the case $m=2$.
\begin{thm} Let $\submfd_1^p\to\cH^k$ and $\submfd_2^{m-p}\to\cS^{n-k}$ be
immersed submanifolds of spaceforms \textup(with $1\leq p\leq k$, $1\leq
m-p\leq n-k$\textup). Then the product embedding $\submfd_1\times \submfd_2\to
S^n$ is \mob/-flat if and only if $\submfd_1$ and $\submfd_2$ have flat normal
bundles and constant gaussian curvatures $K_1$ and $K_2$ \textup(the latter
condition being automatic on a $1$-dimensional manifold\textup) such that
$K_1+K_2=0$ if $p>1$ and $m-p>1$.
\end{thm}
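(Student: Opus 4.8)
The strategy is to reduce the M\"obius-flatness of the extrinsic product $\submfd_1\times\submfd_2$ to the conditions of Theorem~\ref{th:mob-flat-via-sphere-cong}, applied to the tangent congruence $\CV_g$ associated to the product spaceform geometry $\cH^k\times\cS^{n-k}$. The key point is that for an extrinsic product, \emph{all} the relevant geometric data split across the two factors, so each hypothesis of Theorem~\ref{th:mob-flat-via-sphere-cong} decomposes into a statement about $\submfd_1$, a statement about $\submfd_2$, and a cross-term coupling the two. First I would set up the splitting: with $\R^{n+1,1}=\R^{k,1}\dsum\R^{n-k+1}$ inducing $S^n\setdif S^{k-1}=\cH^k\times\cS^{n-k}$, the Weyl structure $\Lnc$ of \S\ref{par:sym-break-sub} determines an enveloped sphere congruence $\CV$ and a Weyl derivative $D=D^g$, for which $\CA^{D,\CV_g}=(\nr^g)\normal$ and $\nr^{D,\CV_g}=(\nr^g)\tangent$ by~\eqref{eq:metric-quant}. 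Since the ambient metric $g$ is a product of constant-curvature metrics and the immersion is an extrinsic product, the second fundamental form, shape operator, normal connection and the ambient normalized Ricci curvature all respect the decomposition $T\submfd=T\submfd_1\dsum T\submfd_2$ and $N\submfd=N\submfd_1\dsum N\submfd_2$.

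\textbf{Key steps.} The flatness of the weightless normal bundle of the product is equivalent, via the Ricci equation~\eqref{eq:r} (with $W^\mfd=0$ for the product after symmetry breaking, or rather interpreting $R^\nabla-\II^\CV\wedge\Sh^\CV$ in the product geometry), to the flatness of the normal bundles of the two factors \emph{together with} the vanishing of the mixed terms $\II^\CV_{X_1}\wedge\Sh^\CV_{X_2}$; because $\II^\CV$ is block-diagonal across the product, these mixed terms vanish automatically, so the product has flat normal bundle iff each factor does. Next I would compute the Gau\ss\ obstruction $\Sh^\CV\wedge\II^\CV+\abrack{\iden\wedge\QC^\CV}$: for $m\geq4$ this is $W^\submfd$, and for a product of constant-curvature submanifolds of constant gaussian curvatures $K_1,K_2$ one finds, as in the preceding Corollary, that $\Sh^g\wedge\II^g=-\tfrac12 K\sum_{i,j}\eps_i\wedge\eps_j\vtens\abrack{\eps_i,e_j}$ \emph{on each factor}, while the cross terms between the two factors are governed precisely by $K_1+K_2$. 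The condition $K_1+K_2=0$ (when both factors have dimension $>1$) is exactly what kills the obstruction arising from pairing a tangent direction in $\submfd_1$ with one in $\submfd_2$; when one factor is a curve its gaussian curvature is undefined and no such cross term arises, which accounts for the exclusion. The Codazzi-type conditions $\d^D\QC^\CV+\CA^{D,\CV}\wedge\II^\CV=0$ ($m=3$) and $\tfrac12 DK^\CV\wedge\conf=\CA^{D,\CV}\wedge\II^\CV$ ($m=2$) then reduce, using $\CA^{D,\CV_g}=(\nr^g)\normal$ and the constancy of $K_i$, to the constant-gaussian-curvature Corollary applied factorwise.

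\textbf{Main obstacle.} The hardest part will be the careful bookkeeping of the cross-terms: isolating, within $\Sh^\CV\wedge\II^\CV$ and within the Codazzi expressions, exactly the components that mix $T\submfd_1$ with $T\submfd_2$, and verifying that these are controlled by $K_1+K_2$ (in the generic case) rather than by $K_1$ or $K_2$ separately. This requires expanding the algebraic bracket $\abrack{\iden\wedge\cdot}$ on the product frame and comparing, via the $\Ln^2$-valued quadratic-form-on-$\Wedge^2(T\submfd\ltens\Ln)$ viewpoint used in the constant-gaussian-curvature Corollary, the contribution of pure-$\submfd_1$, pure-$\submfd_2$, and mixed $2$-planes $U\subseteq T\submfd$. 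For the converse direction I would argue that, since each factor's induced gaussian curvature equals its sectional curvature in the ambient spaceform factor and the product obstruction must vanish on every mixed $2$-plane, the $K_i$ are forced to be constant with $K_1+K_2=0$; the subtlety that $q\wedge\Sh^0=0$ and the CCYP condition hold in the $m=2$ case (away from umbilics) follows as in the isothermic and channel analyses, since a product is a channel submanifold in each factor direction when one factor is a curve. I would treat the degenerate case $S^n\setdif\{\mathit{pt}\}=\R^k\times\R^{n-k}$ by the parallel computation indicated in \S\ref{par:sym-break-sub}, remarking only that the structure is the same with euclidean factors.
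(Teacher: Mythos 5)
Your proposal follows essentially the same route as the paper: both work with the sphere congruence $\CV=\vspan{\sigma_1,\d\sigma_1}\dsum\vspan{\sigma_2,\d\sigma_2}$ adapted to the product geometry (your $\CV_g$), observe that $\CA^{D,\CV}=0$ and that all the induced data split across the two factors, and then feed this into Theorem~\ref{th:mob-flat-via-sphere-cong} together with the factorwise constant-gaussian-curvature analysis. Two points in your bookkeeping need correcting, though neither is fatal. First, the cross-terms of $\Sh^\CV\wedge\II^\CV$ are not ``governed by $K_1+K_2$'': they vanish identically, since $\II^\CV$ and $\Sh^\CV$ are block-diagonal, so $\Sh^\CV_X\II^\CV_Y=0$ whenever $X$ and $Y$ are tangent to different factors. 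The constraint $K_1+K_2=0$ arises instead because the cross-terms $\abrack{\iden_1\wedge Q_2}+\abrack{\iden_2\wedge Q_1}$ of $\abrack{\iden\wedge\QC^\CV}$ must then also cancel, forcing $\QC^\CV=f(\conf_1-\conf_2)$; the pure terms on each factor identify $f$ with $-\tfrac12K_1$ and with $+\tfrac12K_2$ (when $p>1$ and $m-p>1$), and constancy follows since $f$ is then pulled back from each factor separately. Second, a product of two non-geodesic curves is generically \emph{not} a channel surface (it has two curvature spheres, each of multiplicity one), so that route to the $m=2$ case would fail; but the conclusion is immediate from the setup you already have, because $K^\CV=\ip{\detm\II^\CV}=\ip{\nu_1,\nu_2}=0$ (the two normal vectors lie in orthogonal summands of $\CV^\perp$) and $\CA^{D,\CV}=0$, so both sides of the $m=2$ condition in Theorem~\ref{th:mob-flat-via-sphere-cong} vanish and the CCYP is $q=H^\CV(\II^0)$.
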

\begin{proof} Pulling back by the obvious projections we carry out all
computations on $\submfd=\submfd_1\times\submfd_2$, and let $\sigma_1$,
$\sigma_2$ be the induced maps $\submfd\to\cH^k$, $\submfd\to\cS^{n-k}$.  Then
$\submfd\times\R^{n+1,1}=\CV\dsum\CV^\perp$, with $\CV=
\vspan{\sigma_1,\d\sigma_1}\dsum \vspan{\sigma_2,\d\sigma_2}$,
$\CV^\perp=\CV_1^\perp\dsum\CV_2^\perp$ (corresponding to the normal bundles
of $\submfd_1$ and $\submfd_2$), and $\Ln=\vspan{\sigma_1+\sigma_2}$.
$\Lnc:=\vspan{\sigma_1-\sigma_2}$ is the Weyl structure induced by the product
geometry. Clearly $\CD^\CV$, $\CS^\CV$ and $\nabla^\CV$ split across the
decomposition of $\CV$ and $\CV^\perp$.  We deduce two things: first,
$\nabla^\CV$ is flat if and only if $\submfd_1$ and $\submfd_2$ have flat
normal bundle; second, $\CV$ is enveloped by both $\Ln$ and $\Lnc$, so
$A^{D,\CV}=0$ (for the Weyl derivative $D$ induced by $\Lnc$) and
$\CS^\CV=\II^\CV-\Sh^\CV$, with $\II^\CV=\II^1+\II^2$ being the (direct) sum
of the second fundamental forms of the factors (and $\Sh^\CV$ similarly). Now
\begin{equation*}
\Sh^\CV\wedge\II^\CV = \Sh^1\wedge\II^1+\Sh^2\wedge\II^2
\end{equation*}
and the equation $\Sh^\CV\wedge\II^\CV+\abrack{\iden\wedge\QC^\CV}$, which is
necessary for \mob/-flatness and determines $\QC^\CV$ for $m\geq 3$, implies
that $m=2$ or $\QC^\CV=f (\conf_1 - \conf_2)$ for some function $f$: indeed we
first observe that $\QC^\CV$ must split as $Q_1+Q_2$, with no cross term, but
then we obtain
\begin{equation*}
\abrack{\iden\wedge\QC^\CV}=
\abrack{\iden_1\wedge Q_1}+\abrack{\iden_2\wedge Q_2}
+\abrack{\iden_1\wedge Q_2}+\abrack{\iden_2\wedge Q_1}
\end{equation*}
and the last two terms can only cancel with $Q_1=f\conf_1$,
$Q_2=-f\conf_2$. For $m\geq 4$, $f$ is constant, while for $m=3$, $\d^D
\QC^\CV=0$ if and only if $f$ is constant.  By
Theorem~\ref{th:mob-flat-via-sphere-cong}, this completes the proof for $m\geq
3$.

For $m=2$, we write $\II^1=e_1^2\otimes \nu_1$ and $\II^2=e_2^2\otimes\nu_2$
with respect to a weightless orthonormal basis $e_1,e_2$. Then
$K^\CV=\ip{\detm\II^\CV}=\ip{\nu_1,\nu_2}=0$, so our product surface is always
\mob/-flat (provided the normal bundles are flat), with CCYP $q=H^\CV(\II^0)
=\frac14(e_1^2-e_2^2)\tens(|\nu_1|^2-|\nu_2|^2)$.
\end{proof}
\begin{rem} There is a fast way to see that these submanifolds are \mob/-flat,
using the equation $\d \mff^\CV=\RDVs$: simply take $\mff^\CV=
c(\sigma_1+\sigma_2)\skwend(\d\sigma_1-\d\sigma_2)$. When $m=2$,
$\d\mff^\CV=0$ (as we have seen, these product manifolds are isothermic) and
$\RDVs=0$. In higher dimensions $\d\mff^\CV=c\d\sigma_1\skwendwedge
\d\sigma_1-c\d\sigma_2\skwendwedge\d\sigma_2$, and so we take $c$ proportional
to the gaussian curvatures of the factors to get $\d \mff^\CV=\RDVs$.
\end{rem}

We now give a more explicit description of \mob/-flat submanifolds. For this,
recall that the flatness of the weightless normal bundle means that the
components of $\Sh^0$ commute, and so they are simultaneously
diagonalizable. This means we can locally write
\begin{equation*}\notag
\Sh^0 =\sum_{i=1}^m \eps_i\vtens e_i\vtens \con_i
\end{equation*}
where $\con_i$ are conormal vectors (whose components are the eigenvalues of
the components of $\Sh^0$) with $\sum_i \con_i=0$, and $e_i,\eps_i$ are dual
orthonormal frames of $T\submfd\ltens\Ln$ and $T\dual\submfd\ltens L$ (the
$e_i$ being the eigenvectors of $\Sh^0$). (Thus $\II^0=\sum_{i=1}^m \eps_i^2
\vtens\con_i^\sharp$, using the identification $S^2_0T\dual\submfd\tens
N\submfd\cong S^2_0(T\dual\submfd\vtens L)\tens (\Ln^2 \ltens N\submfd)$,
where $\con(\con_i^\sharp)=\ip{\con,\con_i}$.)  When $m=2$, $q$ is also
diagonalized in this frame, and we write $q=\mu^2 (\eps_1^2-\eps_2^2)$ with
$\mu$ a section of $\Ln$. (Up to reordering we can assume $\mu$ is real.) 

We can now extend results of Cartan and Hertrich-Jeromin to arbitrary
dimension and codimension for $m\geq3$, and a result of \cite{BPP:sdfs} to a
much broader context for $m=2$.

\begin{thm}\label{th:mob-flat-explicit}
Let $\submfd$ be an $m$-dimensional submanifold of $S^n$ $(m\geq 2)$ with flat
normal bundle, and write $\Sh^0 =\sum_{i=1}^m \eps_i\vtens e_i\vtens \con_i$
and \textup(for $m=2$\textup) $q=\mu^2 (\eps_1^2-\eps_2^2)$.  Then $\submfd$
is \mob/-flat if and only if\textup:
\begin{bulletlist}
\item $m=2$ and the \textup(complex\textup) $1$-forms
$\alpha_1=\sqrt{\ip{\con,\con}+\mu^2}\,\eps_1$ and
$\alpha_2=\sqrt{\ip{\con,\con}-\mu^2}\,\eps_2$, where $\con=\con_1=-\con_2$,
are closed\textup;
\item $m=3$ and the \textup(complex\textup) $1$-forms $\alpha_i$ are closed,
where
\begin{equation*}\begin{split}
\alpha_1&=\sqrt{\ip{\con_2-\con_1,\con_3-\con_1}}\,\eps_1,\\
\alpha_2&=\sqrt{\ip{\con_3-\con_2,\con_1-\con_2}}\,\eps_2,\\
\alpha_3&=\sqrt{\ip{\con_1-\con_3,\con_2-\con_3}}\,\eps_3;
\end{split}\end{equation*}
\item $m\geq 4$ and $\cip{\con_i-\con_j,\con_k-\con_\ell}=0$ for all pairwise
distinct $i,j,k,\ell\in\{1,\ldots m\}$.
\end{bulletlist}
\end{thm}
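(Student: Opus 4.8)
The plan is to use the integrable-systems characterization of M\"obius-flatness from Theorem~\ref{th:mob-flat-equiv}, specifically the formulation in terms of the flat family $\d_t^\CV=\CDVs+t\CS^\CV+(t^2-1)\mff^\CV$, but to extract from it a concrete ODE-type closure condition on the diagonalizing coframe. The key structural input is that flat weightless normal bundle lets us simultaneously diagonalize the commuting shape operators, writing $\Sh^0=\sum_i\eps_i\vtens e_i\vtens\con_i$ with $\sum_i\con_i=0$, and (for $m=2$) $q=\mu^2(\eps_1^2-\eps_2^2)$. Since M\"obius-flatness is equivalent to the existence of $\mff^\CV\in\Omega^1(\submfd,\stab(\Ln)^\perp\cap\so(\CV))$ with $\d\mff^\CV=\RDVs$, the whole content reduces to unpacking this single equation in the diagonal frame.

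First I would work with the central sphere congruence (or, where $q\neq0$ in the case $m=2$, the sphere congruence $\CV$ with $q=H^\CV(\II^0)$ picked out after the defining discussion), so that $\CS^\CV=\II^\CV-\Sh^\CV$ and $\mff^\CV=\QC^\CV+q$. I would then compute $\RDVs$, the curvature of the direct-sum connection $\CDVs=\CD^\CV+\nabla^\CV$, using the Gau\ss\ and Ricci equations~\eqref{eq:gr}; with flat normal bundle the $\so(\CV^\perp)$ part vanishes and the $\so(\CV)$ part is controlled by $\half\liebrack{\CS^\CV\wedge\CS^\CV}$ together with $\dDhV\QC^\CV$. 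The equation $\d\mff^\CV=\RDVs$ then becomes, after projecting onto $\stab(\Ln)^\perp$ and onto the graded $\co(T\submfd)$-pieces, a system of first-order equations on the frame $\eps_i$ and the conormal eigenvalues $\con_i$. The point is that the connection forms of $\nabla^\CV$ on $T\submfd$ (the rotation coefficients of the $e_i$) get fixed by torsion-freeness, so the closure condition $\d\mff^\CV=\RDVs$ translates into the statement that certain explicit $1$-forms built from the $\con_i$ and $\eps_i$ are closed.

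The cleanest way to organize the dimension-by-dimension split is to recognize that $\RDVs$, viewed through the Gau\ss\ equation, has components $(\Sh^0\wedge\II^0)_{ij}$ proportional to $\cip{\con_i,\con_j}$ (or differences thereof) wedged into the appropriate $\eps_i\wedge\eps_j$, and that $\d\QC^\CV$ contributes the pure-trace/derivative part. For $m\geq4$ the Weyl-flatness equation $\Sh^0\wedge\II^0+\abrack{\iden\wedge\QC^\CV}=0$ from Theorem~\ref{th:mob-flat-via-sphere-cong} forces the off-diagonal inner products $\cip{\con_i-\con_j,\con_k-\con_\ell}$ to vanish directly — this recovers the stated algebraic condition with no differential closure needed, since $\QC^\CV$ is then determined pointwise and the Cotton--York equation is automatic. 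For $m=2,3$ there is no Weyl tensor, so the content is entirely the Cotton--York/Codazzi closure: one computes $\QC^\CV$ from~\eqref{eq:Qformula} in terms of the $\cip{\con_i,\con_j}$, substitutes, and reads off that the combinations $\sqrt{\cip{\con_j-\con_i,\con_k-\con_i}}\,\eps_i$ (and, for $m=2$, the $\mu$-shifted versions $\sqrt{\cip{\con,\con}\pm\mu^2}\,\eps_i$) must be closed, the square roots arising because the closure condition is quadratic in the coframe.

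The main obstacle I anticipate is the $m=2,3$ differential computation: translating $\d\mff^\CV=\RDVs$ into the closedness of the specific combinations $\alpha_i$ requires carefully handling the torsion-free connection coefficients of $\nabla^D$ relative to the diagonalizing frame $e_i$, and then recognizing that the natural integrating factor for the resulting first-order system is precisely the square root of the relevant product of eigenvalue inner products. In particular, the appearance of $\sqrt{\cip{\con,\con}\pm\mu^2}$ for $m=2$ comes from diagonalizing the \emph{combined} tensor $\nr^{D,\submfd}+\QC^\CV$ (equivalently, from the interplay of the CCYP $q$ with the shape operator via~\eqref{eq:nice}), and verifying that this is exactly the eigenvalue structure making the $\alpha_i$ closed will be the delicate bookkeeping step. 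I would handle the square-root by noting $\alpha_i$ is real or purely imaginary according to the sign of the radicand and checking closure of $\alpha_i^2=(\text{radicand})\,\eps_i^2$ is equivalent, via the explicit Codazzi relations for the $\con_i$, to the Cotton--York equation, so that no genuine branch ambiguity arises locally.
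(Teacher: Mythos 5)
Your proposal is correct and follows essentially the same route as the paper: diagonalize $\Sh^0$ using flatness of the normal bundle, reduce the $m\geq4$ case to the algebraic Gau\ss/Weyl condition $\Sh^0\wedge\II^0+\abrack{\iden\wedge\QC}=0$ (whose frame components, after taking differences using $\sum_i\con_i=0$, give $\cip{\con_i-\con_j,\con_k-\con_\ell}=0$), and for $m=2,3$ expand the Cotton--York closure condition in the diagonal frame to find the integrating factor $\lam_i$ with $\lam_i^2=2\mu_i+\ip{\con_i,\con_i}$, where $\mff=\QC+q=\sum_i\eps_i^2\vtens\mu_i$ --- your routing through the $\d\mff^\CV=\RDVs$ formulation of Theorem~\ref{th:mob-flat-equiv} rather than directly through the homological \GCR/ equations~\eqref{eq:hG2} is only a cosmetic difference, since the former is derived from the latter. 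Two small imprecisions that do not affect the plan: the tensor whose eigenvalues produce the radicands is $\QC+q$ (Cartan's tensor plus the CCYP), not $\nr^{D,\submfd}+\QC^\CV$, and ``closure of $\alpha_i^2$'' should be read as the branch-free condition $\tfrac12\d(\lam_i^2)\wedge\eps_i+\lam_i^2\,\d\eps_i=0$, which is exactly what the computation delivers.
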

\begin{proof}
For $m\geq 4$ \mob/-flatness is equivalent to the vanishing of the Weyl
curvature $W^\submfd$, \ie, using~\eqref{eq:hG1}, to the equation
$\Sh^0\wedge\II^0+\abrack{\iden\wedge\QC}=0$. We now compute that
$\Sh^0\wedge\II^0 = \sum_{i,j} \ip{\con_i,\con_j} \eps_i\wedge \eps_j\vtens
\abrack{\eps_i, e_j}$ and also $(2-m)\abrack{\iden\wedge\QC}=
\sum_{i,j}\bigl(\ip{\con_i,\con_i}+\ip{\con_j,\con_j}-\frac1{m-1}
\sum_\ell\ip{\con_\ell,\con_\ell}\bigr)\eps_i\wedge \eps_j\vtens
\abrack{\eps_i, e_j}$, so that $W^\submfd=0$ is equivalent to
\begin{equation*}\notag
(m-1)(m-2)\cip{\con_i,\con_j}+(m-1)\cip{\con_i,\con_i}+(m-1)\cip{\con_j,\con_j}
-\tsum_\ell\cip{\con_\ell,\con_\ell}=0
\end{equation*}
for all $i\neq j$. (These conditions are vacuous for $m\leq3$.) The sum over
$i$ ($i\neq j$) of the left hand sides is zero, so it suffices to consider the
differences between pairs of equations, \ie,
\begin{equation*}\notag
\cip{\con_j-\con_k,(m-2)\con_i+\con_j+\con_k}=0
\end{equation*}
for all $i\neq j,k$. Again summing the left hand sides over $i$ (for $i\neq
j,k$) gives zero, so again the equations are equivalent to their
differences, which proves the theorem for $m\geq 4$.

For $m=2,3$, \mob/-flatness is instead expressed by the equation $C^\submfd=\d
q$. By~\eqref{eq:hG2}, the Cotton--York curvature
$C^\submfd=-\ip{\QCoda^\nabla(\II^0)}$ is given by a first order quadratic
differential operator applied to $\II^0$: $\ip{\QCoda^\nabla(\II^0)}=\d^D
\QC+\ip{\II^0,\d^{\nabla,D}\II^0}$ and $\QC$ is Cartan's tensor
$\ip{\II^0_\cdot,\II^0_\cdot}-\frac14 |\II^0|^2\conf$, which has eigenvalues
$\cip{\con_i,\con_i}-\frac14\sum_j \cip{\con_j,\con_j}$ for $1\leq i\leq m$.
We set $\mff=Q+q$ (which is $\mff^{\CV_\submfd}$) and observe that, since
$\sum_k \con_k=0$, $\mff=\sum_i \eps_i^2\vtens\mu_i$, where $\mu_i +\mu_j
=-\ip{\con_i,\con_j}$ for $i\neq j$ (these being the eigenvalues of operator
induced by $\mff$ on $\Wedge^2 T\submfd$).

We now compute
\begin{equation}\label{eq:dchi}
\ip{\d^D \mff,e_i} = \mu_i\d^D \eps_i+D\mu_i\wedge \eps_i+ \tsum_j\mu_j
\ip{De_j,e_i}\wedge \eps_j.
\end{equation}
On the other hand
\begin{equation}\label{eq:2d2}
\ip{\d^{\nabla,D}\II^0,\II^0_{e_i}} = \ip{\con_i,\con_i}\d^D \eps_i
+ \tfrac12 D\ip{\con_i,\con_i}\wedge \eps_i
+ \tsum_j \ip{\con_i,\con_j} \ip{De_j,e_i}\wedge \eps_j
\end{equation}
Now $\ip{De_j,e_i}=-\ip{De_i,e_j}$ (which is zero for $i=j$). Now $\submfd$ is
\mob/-flat iff $\ip{\QCoda^\nabla(\II^0)}+\d q=0$, and by
adding~\eqref{eq:dchi} and~\eqref{eq:2d2}, we find that this holds iff for all
$i=1,\ldots m$,
\begin{align*}
0 &= \ip{\d^D \mff,e_i}+\ip{\d^{\nabla,D}\II^0,\II^0_{e_i}}\\
&= (\mu_i+\ip{\con_i,\con_i})\d^D \eps_i+
D(\mu_i+\half\ip{\con_i,\con_i})\wedge \eps_i
+\tsum_j\mu_i\ip{De_i,e_j}\wedge \eps_j\\
&= (2\mu_i+\ip{\con_i,\con_i}) \d^D \eps_i + 
\half D(2\mu_i+\ip{\con_i,\con_i})\wedge \eps_i
=\tfrac12\lam_i \d(\lam_i \eps_i),
\end{align*}
where $\lam_i^2=2\mu_i+\ip{\con_i,\con_i}$. When $m=2$ and
$\con_1=-\con_2=\con$, $\lam_1^2=2(\ip{\con,\con}+\mu^2)$ and
$\lam_2^2=2(\ip{\con,\con}-\mu^2)$.  When $m=3$, $\lam_1^2 =
\ip{\con_2-\con_1,\con_3-\con_1}$, $\lam_2^2$ and $\lam_3^2$ being obtained by
cyclic permutations. This completes the proof.
\end{proof}
\begin{rem} For $m=2,3$ the proof shows a little more: if we do not assume
$C^\submfd$ is zero, then it is a universal nonzero constant multiple of
$\sum_j\d\alpha_j\tens\alpha_j$.
\end{rem}

We refer to the closed $1$-forms $\alpha_i$ (determined up to signs in the
cases $m=2$ and $m=3$) as the \emphdef{conformal fundamental $1$-forms} or the
\emphdef{conformal principal curvature forms}.  Locally we can write
$\alpha_i=\d x_i$ for coordinates $x_i$ which form a principal curvature net
wherever they are functionally independent, since their differentials are
orthogonal with respect to both $\conf$ and $\II^0$:
\begin{equation*}
\conf = \tsum_{i=1}^m\lam^{-2}_i{\d x_i}^2,\qquad\qquad
\II^0 = \tsum_{i=1}^m\lam_i^{-2}{\d x_i}^2\vtens\con_i.
\end{equation*}
When $m=2$, $\lam_1^2=\ip{\con,\con}+\mu^2$ and
$\lam_2^2=\ip{\con,\con}-\mu^2$, with $\nu=\nu_1=-\nu_2$, whereas for $m=3$,
$\lam_1^2=\ip{\con_{12},\con_{13}}$, $\lam_2^2=\ip{\con_{23},\con_{21}}$,
$\lam_3^2=\ip{\con_{31},\con_{32}}$, with $\con_{ij}=\con_i-\con_j$.

We end this paragraph by relating the case $m=2$ to \emph{Guichard surfaces}.
Suppose that $\ip{\con,\con}\neq\mu^2$, and write $\con^\sharp=\lam\wnv$ for a
weightless unit normal $\wnv$. Then there are locally functions $(x_1, x_2)$
on $\submfd$ (with $x_2$ real or pure imaginary) such that
\begin{equation}\label{eq:Mflat-c-II-q}
\conf = \frac{{\d x_1}^2}{\lam^2+\mu^2}+ \frac{{\d x_2}^2}{\lam^2-\mu^2},
\quad \II^0 = \lam S\vtens\wnv, \quad q = \mu^2 S\quad\text{with}\quad
S:= \frac{{\d x_1}^2}{\lam^2+\mu^2}- \frac{{\d x_2}^2}{\lam^2-\mu^2}.
\end{equation}
We now compare this (in codimension one) with the characterization by
Calapso~\cite{Cal:asg} of Guichard surfaces~\cite{Gui:ssi} in $\R^3$: he shows
that these surfaces have induced metric $g=E_1{\d x_1}^2+E_2{\d x_2}^2$ with
$\sqrt{E_1E_2}(\kappa_1-\kappa_2)=\sqrt{E_2\pm E_1}$, where $\kappa_1$ and
$\kappa_2$ are the principal curvatures. By allowing $x_2$ to be real or pure
imaginary, we can reduce to the condition $\sqrt{\pm
E_1E_2}(\kappa_1-\kappa_2)=\sqrt{\pm(E_1+ E_2)}$. Now, up to reordering of
$(x_1,x_2)$, this is equivalent to its square, which yields
$(\kappa_1-\kappa_2)^2=E_1^{-1}+E_2^{-1}$. This is exactly the condition
satisfied by metrics of the form $\ell^{-2}\conf$ in~\eqref{eq:Mflat-c-II-q}.
The sign of $x_2^2$ distinguishes between cases known by Calapso as Guichard
surfaces of the first and second kind.

\subsection{Conformally-flat hypersurfaces}
\label{par:cfh}

Conformally-flat hypersurfaces in $S^{m+1}$ have been classified (for $m\geq
3$ in~\cite{Car:dhc,Her:cfg,Her:cfh}. From the
Theorem~\ref{th:mob-flat-explicit} we can easily reobtain this classification
entirely within the realm of (\mob/) conformal geometry as follows.

For $m\geq 4$, the equations $(\con_i-\con_j)(\con_k-\con_\ell)=0$ force $m-1$
of the eigenvalues $\con_i$ of $\II^0$ to coincide (and conversely, these
equations are then satisfied). Therefore $\submfd$ has a curvature sphere of
multiplicity $m-1\geq3$ and hence is a channel submanifold by
Proposition~\ref{p:dupin}.

For $m=3$ there are two cases. First, if one of the conformal fundamental
$1$-forms vanishes, then two of the $\con_i$ coincide and $\submfd$ is again a
channel submanifold. Otherwise, two of the conformal fundamental $1$-forms are
nonzero and real, the third is nonzero and imaginary (so that
$x=(x_1,x_2,x_3)$ are really coordinates on $\R^{2,1}$).  Furthermore, $\sum_i
\lam_i^{-2}= (\con_{12}\con_{13})^{-1} +(\con_{23}\con_{21})^{-1}+
(\con_{31}\con_{32})^{-1}=0$ (multiply by $\con_{12}\con_{23}\con_{31}$), so
that the principal curvature net satisfies a condition studied by Guichard:
the sum of the squares of its Lam\'e functions is zero. Conversely, if the
principal curvature net is Guichard, with the $\nu_i$ given by $3\nu_1 =
\lam_1\lam_2/\lam_3-\lam_1\lam_3/\lam_2$ and cyclic
permutations thereof, then the Guichard condition ensures that the conformal
fundamental forms are the $\d x_i$, so that $\submfd$ is conformally-flat.

For $m=2$, there are again two cases: if $\mu^2=\cip{\con,\con}$ then
$q=\con(\II^0)$ and $\CV=\CV_\submfd+\con$ is a sphere congruence with
$K^\CV=0$ which induces the given flat \mob/ structure, so that $\II^\CV$
is degenerate. It follows from Theorem~\ref{th:mob-flat-via-sphere-cong} that
$A^{D,\CV}\wedge\II^\CV=0$ and hence $\submfd$ is a channel submanifold
enveloping the sphere-curve defined by $\CV$. Otherwise, as we saw in the
previous paragraph, $\submfd$ is a Guichard surface of the first or second
kind (given by~\eqref{eq:Mflat-c-II-q}) according to whether
$\mu^2>\cip{\con,\con}$ or $\mu^2<\cip{\con,\con}$.
\begin{bulletlist}
\item When $m\geq4$, a conformally-flat hypersurface in $S^{m+1}$ is a channel
submanifold, \ie, $m-1$ of the eigenvalues of $\II^0$ coincide.
\item When $m=3$, a conformally-flat hypersurface in $S^4$ is a either a
channel submanifold or is \emphdef{Guichard isothermic} in the sense that the
principal curvature net is a Guichard net.
\item When $m=2$, a \mob/-flat surface in $S^3$ is either a channel surface or
is a \emphdef{Guichard surface} as described by Calapso~\cite{Cal:asg}.
\end{bulletlist}
We have a little more to say on the cases $m=3$ and $m=2$.

\subsubsection*{Guichard nets and the \GCR/ equations}

In~\cite{Her:imdg}, Hertrich-Jeromin observes that any Guichard net arises
from a conformally flat hypersurface in $S^4$. We have already seen that
Guichard nets provide solutions to the Gau\ss\ equation for such
conformally-flat hypersurfaces, so it remains to verify the Codazzi equation.
We then apply the conformal Bonnet theorem, thus bypassing some of the
computations in~\cite{Her:imdg}.

Consider a Guichard net with coordinates $(x_1,x_2,x_3)$ and Lam\'e functions
$1/\lam_i$, as in the previous paragraph. Define $(\nu_1,\nu_2,\nu_3)$ by
$3\nu_1 = \lam_1\lam_2/\lam_3-\lam_1\lam_3/\lam_2$ and cyclic permutations and
set $\II^0=\sum_i (\d x_i/\lam_i)\tens\nu_i(\d x_i/\lam_i)$. To verify the
Codazzi equation, we need to show that $\d^D\II^0=\alpha\wedge g$ for some
$1$-form $\alpha$, where $D$ is the Levi-Civita connection of the metric
$g=\sum_i (\d x_i/\lam_i)^2$.

We compute $D$ via Cartan's method using the orthonormal frame $\omega_i=\d
x_i/\lam_i$ and find that $D\omega_i = \sum_j A_{ij}\tens\omega_j$ with
$A_{ij}=-(\del_i\lam_j)\omega_j/\lam_j+(\del_j\lam_i)\omega_i/\lam_i$ (and
$\del_i$ is shorthand for $\del/\del x_i$): we just need to check that
$A_{ij}=-A_{ji}$ and $\d\omega_i =\sum_j A_{ij}\wedge\omega_j$. Then
\begin{align*}
\d^D\II^0&=\sum_i D\omega_i\wedge(\nu_i\omega_i)
+\omega_i\tens\d\nu_i\wedge\omega_i+\omega_i\tens(\nu_i\d\omega_i)\\
&=\sum_{i,j} \omega_j\tens A_{ij}\wedge(\nu_i\omega_i)
+\omega_i\tens(\del_j\nu_i)\omega_j\wedge\omega_i+\omega_i\tens\nu_i
A_{ij}\wedge\omega_j\\
&=\sum_i \omega_i\tens \bigl((\del_j\nu_i)\omega_j
-(\lam_i^{-1}\del_j\lam_i) (\nu_i-\nu_j)\omega_j\bigr)\wedge\omega_i,
\end{align*}
where the last line follows by relabelling the indices in the first term and
substituting for $A_{ij}$. We thus need the term in brackets to be independent
of $i$ (modulo $\omega_i$). It is straightforward to verify the three
conditions this entails, once one appreciates the following identity for the
quantities entering into the definition of the $\nu$'s:
\begin{equation*}
\d\Bigl(\frac{\lam_j\lam_k}{\lam_i}\Bigr)=-\lam_i\Bigl(\frac{d\lam_j}{\lam_k}
+\frac{d\lam_k}{\lam_j}\Bigr)
\end{equation*}
for $i,j,k$ distinct.  This in turn is an easy consequence of the Guichard
condition $\sum_i \lam_i^{-2}=0$. Hence the Codazzi equation holds, and the
data ($[g],\II^0$) define a local conformally-flat immersion into $S^4$.

\subsubsection*{Strictly M\"obius-flat surfaces and Dupin cyclides}

We turn now to M\"obius-flat surfaces with CCYP $q=0$: in this case, or more
generally when $q$ is divergence-free (\ie, $q^{2,0}$ is holomorphic), the
conformal \mob/ structure induced by the central sphere congruence is flat,
and we shall say $\submfd$ is \emphdef{strictly M\"obius-flat}. We obtain the
M\"obius structure by setting $\mu=0$ in~\eqref{eq:Mflat-c-II-q} (we may use
this formula away from umbilic points). However, it follows from these
formulae that any strictly M\"obius-flat surface carries a \emph{nonzero}
holomorphic quadratic differential commuting with $\II^0$, namely $\lam^2 S$,
and hence is also an isothermic surface. Also $g=\lam^2\conf$ is flat, so
these surfaces are very special. We now show that they are in fact Dupin
cyclides.  Although this result is presented in~\cite{BPP:sdfs}, we discuss it
in more detail here to show how the properties of Dupin cyclides can be seen
explicitly in this setting.

\begin{prop} Away from umbilic points, a strictly \mob/-flat surface $\submfd$
in the conformal $3$-sphere has conformal metric and tracefree second
fundamental form given by equation~\eqref{eq:Mflat-c-II-q} with $\mu=0$
\textup(for some coordinates $x_1,x_2$, a weightless unit normal $\wnv$ and a
gauge $\lam$\textup), and the M\"obius structure given by
\begin{equation*}
\qquad\Mh=\sym_0 (D^g)^2+\tfrac t2 ({\d x_1}^2-{\d x_2}^2)
\end{equation*}
for a constant $t\in\R$, $D^g$ being the Levi-Civita connection of the flat
metric $g=\lam^2\conf$. Furthermore, $\submfd$ is isothermic with holomorphic
quadratic differential $q_0={\d x_1}^2-{\d x_2}^2$, and is Willmore iff $t=0$.
\end{prop}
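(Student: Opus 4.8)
The plan is to combine Theorem~\ref{th:mob-flat-explicit} (in the case $m=2$) with the defining data~\eqref{eq:Mflat-c-II-q} for \mob/-flat surfaces, and then apply the isothermic and Willmore characterizations already established. First I would set $\mu=0$ in~\eqref{eq:Mflat-c-II-q}: away from umbilic points (where $\con=\con_1=-\con_2\neq 0$), this gives $\conf=\lam^{-2}({\d x_1}^2+{\d x_2}^2)$, $\II^0=\lam S\vtens\wnv$ and $q=0$ with $S={\d x_1}^2-{\d x_2}^2$ scaled by $\lam^{-2}$, so that $g=\lam^2\conf={\d x_1}^2+{\d x_2}^2$ is flat. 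By the strict \mob/-flatness hypothesis the CCYP vanishes, which is exactly the $\mu=0$ specialization, so the conformal fundamental $1$-forms of Theorem~\ref{th:mob-flat-explicit} are $\alpha_1=\sqrt{\ip{\con,\con}}\,\eps_1$ and $\alpha_2=\sqrt{\ip{\con,\con}}\,\eps_2$, both closed, giving the claimed coordinate description.

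Next I would compute the \mob/ structure. Since $g=\lam^2\conf$ is flat, $D^g$ is its (flat) Levi-Civita connection and $\nr^{D^g}=0$, so by Remark~\ref{rm:can-mos} the canonical tracefree Hessian part of $\Ms^\submfd$ with respect to $D^g$ is captured by $\nr^{D^g,\submfd}$, which for a strictly \mob/-flat surface is a holomorphic quadratic differential (the schwarzian of $\Ms^\submfd$ relative to the flat coordinate; cf.~Proposition~\ref{th:schwarzian} and the discussion of holomorphicity of $S_z(\Ms)$). The point is that flatness of $\Ms^\submfd$ forces $\nr^{D^g,\submfd}_0$ to be a \emph{holomorphic} quadratic differential commuting with $\II^0$, and the only such differentials (up to real constant multiple) are real multiples of $q_0:={\d x_1}^2-{\d x_2}^2$ in the principal frame, since $q_0$ is the unique commuting direction. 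Thus $\Ms^\submfd=\Ms^{D^g}+\nr^{D^g,\submfd}$ with $\nr^{D^g,\submfd}_0=\tfrac t2({\d x_1}^2-{\d x_2}^2)$ for some real constant $t$, giving $\Mh=\sym_0(D^g)^2+\tfrac t2({\d x_1}^2-{\d x_2}^2)$ as claimed.

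For the isothermic assertion I would invoke the definition in~\S\ref{par:isothermic}: I must exhibit a nonzero holomorphic quadratic differential commuting with the shape operators. Taking $q_0={\d x_1}^2-{\d x_2}^2$ (equivalently $\lam^2 S$), holomorphicity is immediate since $\d^{D^g}q_0=0$ (the coefficients are constant in the flat coordinates $x_1,x_2$), and $q_0\wedge\Sh^0=0$ because $q_0$ is diagonalized in the same principal frame as $\Sh^0=S\vtens\wnv^\flat$; this is exactly the commuting condition, so $\submfd$ is isothermic. Finally, for the Willmore characterization I would use Theorem~\ref{th:hom-conf-bonnet}, specifically the codimension-one Willmore/constrained-Willmore equation $(\Mh^\nabla)^*(\II^0)=\ip{q,\II^0}$ of Definition~\ref{CWdef}: a direct computation of $(\Mh^\nabla)^*(J\II^0)$ using~\eqref{eq:mh-dual-q} shows it is a nonzero constant multiple of $t$ times a nonvanishing expression in $\lam,\wnv$, so it vanishes precisely when $t=0$.

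The main obstacle I expect is pinning down that the \mob/ Hessian term is \emph{exactly} $\tfrac t2({\d x_1}^2-{\d x_2}^2)$ with a single real constant $t$, rather than a general holomorphic quadratic differential. This requires showing that strict \mob/-flatness (flatness of the central sphere congruence's normal Cartan connection) together with the flatness of $g$ rigidifies $\nr^{D^g,\submfd}_0$ to be $D^g$-parallel, hence constant in the flat coordinates; the commuting condition $q\wedge\Sh^0=0$ then restricts it to the real span of $q_0$. The cleanest route is to observe that $\d^{D^g}\nr^{D^g,\submfd}=C^{\submfd,D^g}=0$ (since $\Ms^\submfd$ is flat and $D^g$ is flat, so its Cotton--York curvature vanishes), forcing $\nr^{D^g,\submfd}_0$ holomorphic, and then that holomorphicity plus the real-coefficient commuting constraint leaves only the one-parameter family $\tfrac t2({\d x_1}^2-{\d x_2}^2)$.
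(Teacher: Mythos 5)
Your overall strategy matches the paper's: write $\Mh=\sym_0(D^g)^2+\tilde q$, use flatness of both M\"obius structures to get holomorphicity of $\tilde q^{2,0}$, pin $\tilde q$ down to $\tfrac t2 q_0$, and then read off the isothermic and Willmore statements. But there is a genuine gap at the pinning-down step. Holomorphicity of $\tilde q^{2,0}$ only gives $\tilde q^{2,0}=h(z)\,\d z^2$ for some holomorphic function $h$; to conclude that $\tilde q$ is a \emph{real constant} multiple of $q_0={\d x_1}^2-{\d x_2}^2$ you must show that $h$ is real-valued, \ie, that $\tilde q$ commutes with $\II^0$. You assert this ``commuting constraint'' twice but never derive it: in your second paragraph you attribute it to flatness of $\Ms^\submfd$ (which only yields holomorphicity), and in your final paragraph you appeal to ``the commuting condition $q\wedge\Sh^0=0$''---but that condition concerns the commuting Cotton--York potential $q$, which vanishes (or is at least divergence-free) by the strictness hypothesis, not the M\"obius correction $\tilde q=\nr^{D^g,\submfd}_0$; these are different objects. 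The missing ingredient is the Codazzi equation~\eqref{eq:hC2}: since $\II^0$ has constant coefficients in the flat coordinates and $D^g$, $\nabla$ are flat, the adjoint of $\sym_0(D^g)^2$ coupled to $\nabla$ annihilates $J\II^0$, so~\eqref{eq:mh-dual-q} turns $0=(\Mh^\nabla)^*(J\II^0)$ into $0=\cip{\tilde q,J\II^0}$, which (as $S^2_0T\dual\submfd$ has rank two and $\II^0\neq0$ away from umbilics) is exactly the statement that $\tilde q$ commutes with $\II^0$; combined with holomorphicity this forces $h$ to be a real constant. This is precisely how the paper's proof proceeds.

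A related symptom of the same confusion appears in your Willmore step: you propose to compute $(\Mh^\nabla)^*(J\II^0)$ and show it is a nonzero multiple of $t$, but that quantity vanishes identically by the Codazzi equation for \emph{any} immersed surface; the Willmore condition is $(\Mh^\nabla)^*(\II^0)=0$ (no $J$), which reduces to $\cip{\tfrac t2 q_0,\II^0}=0$ and hence to $t=0$ away from umbilic points. The rest of your argument---the $\mu=0$ specialization of~\eqref{eq:Mflat-c-II-q}, the flatness of $g=\lam^2\conf$, and the isothermic claim for $q_0$---is sound and follows the paper.
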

\begin{proof} We have already obtained the formulae for $\conf$ and $\II^0$
and the conformal \mob/ structure is determined by $\Mh=\sym_0 (D^g)^2+\tilde
q$ for some quadratic differential $\tilde q$. However, both $\Mh$ and $\sym_0
(D^g)^2$ are flat, so $\tilde q^{2,0}$ must be holomorphic, and the Codazzi
equation gives $0=(\Mh^\nabla)^*(J\II^0)= \cip{\tilde q,J\II^0}$, so that
$\tilde q$ commutes with $\II^0$, and is therefore a constant multiple of
$q_0={\d x_1}^2-{\d x_2}^2$. Since $q_0$ is holomorphic and commutes with
$\II^0$, the surface is isothermic. It is Willmore iff $0=(\Mh^\nabla)^*\II^0
=\cip{\frac t 2 \tilde q,\II^0}$, \ie, $t=0$.
\end{proof}
Since the vector fields $\dbyd{x_1}$ and $\dbyd{x_2}$ leave the \GCR/ data
invariant, they induce symmetries of the surface. More precisely, with respect
to the decomposition
$\submfd\times\R^{4,1}=\Ln\dsum\US_g\dsum\Lnc_g\dsum\vspan{\wnv}$, where
$\US_g$ is orthogonal to $\Ln\dsum\Lnc_g\dsum\vspan{\wnv}$, we have
\begin{equation}\label{eq:Mflat-d}
\d = \iden + D^g+\nabla + \II^0-\Sh^0 + \tfrac{1+t}2 {\d x_1}^2 +\tfrac{1-t}2
{\d x_2}^2,
\end{equation}
(which, incidently, shows that $\Lnc_g$ is immersed unless $|t|=1$;
furthermore, it is also strictly \mob/-flat). Here we have use the fact that
the Weyl structure $\Lnc_g$ induced by $g$ is the second envelope of the
central sphere congruence (since $\CA^g=
-\divg^{\nabla,D^g}\II^0=0$). From~\eqref{eq:Mflat-c-II-q} (with $\mu=0$) and
\eqref{eq:Mflat-d}, we now easily check that
\begin{equation*}
\theta_1 = \bigl( \dbyd{x_1}, \II^0_{\dbyd{x_1}}, \tfrac{1+t}2 \d x_1 \bigr),
\qquad
\theta_2 = \bigl( \dbyd{x_2}, \II^0_{\dbyd{x_2}}, \tfrac{1-t}2 \d x_2 \bigr)
\end{equation*}
are constant sections of $\submfd\times\so(4,1)\cong (T\submfd\dsum
N\submfd)\dsum \co(T\submfd\dsum N\submfd) \dsum (T\dual\submfd\dsum
N\dual\submfd)$ and so are the differential lifts of $\dbyd{x_1}$ and
$\dbyd{x_2}$. Thus we have two elements of $\so(4,1)$ which commute and induce
vector fields on $S^3$ (their homology classes) which are tangent to
$\submfd$. It follows that $\submfd$ is an open subset of a Dupin cyclide,
cf.~\S\ref{par:sym-break-sub}. Furthermore $\theta_1$ and $\theta_2$ are the
decomposable vectors in their span: identifying $\so(n+1,1)$ with
$\Wedge^2\R^{n+1,1}$, we have
\begin{equation*}
\theta_1 = \bigl(-\lam^{-1}, \wnv, \tfrac{t+1}2\lam\bigr)
\wedge (0, -\lam^{-1}dx_1, 0),\qquad
\theta_2 = \hphantom{-}\bigl(\lam^{-1}, \wnv, \tfrac{t-1}2\lam\bigr)
\wedge (0, \lam^{-1}dx_2, 0).
\end{equation*}
The two planes defined by these decomposables are both spacelike for $|t|<2$,
while for $|t|=2$ one is spacelike, the other degenerate, and for $|t|>2$ one
is spacelike, the other has signature $(1,1)$. Accordingly, $\submfd$ is
(\mob/ equivalent to) an open subset of a circular torus of revolution, a
cylinder of revolution, or a cone of revolution.

Thus strictly \mob/-flat surfaces in $S^3$ form a very restricted class, in
contrast to the $3$-dimensional case, where there is a rich supply of
conformally flat hypersurface in $S^4$ from cones, cylinders or revolutes over
surfaces of constant gaussian curvature in $\cS^3$, $\R^3$ or $\cH^3$
respectively.  This was one of our motivations for broadening the notion of
\mob/-flat surface: the broader class includes arbitrary cones, cylinders, or
revolutes.

\section*{Preview of Part V}

The machinery of this paper reduces the theory of conformal immersions to its
homological essence, and we have seen the efficiency of this in some
applications.  Nevertheless, it is often more expedient to work with the
bundle formalism of sphere congruences, which we develop further in Part IV.
One reason for this is {\it simplicity}: the calculus of bundles and
connections is more straightforward and familiar than that of BGG operators.
Another reason is that the homological approach emphasises the central sphere
congruence, whereas other enveloped sphere congruences my be better adapted to
the problem at hand.

In Part V of our conformal submanifold geometry project, we return to
(constrained) Willmore surfaces, isothermic surfaces and M\"obius-flat
submanifolds to study their transformation theory. For this, sphere
congruences take centre stage, with the homological viewpoint as a
backdrop. In particular, we relate Ribaucour sphere congruences and B\"acklund
transformations of curved flats to Darboux and Eisenhart transformations of
isothermic surfaces and M\"obius-flat submanifolds respectively.  In the
isothermic case, these links are well-known, but they are less well understood
for M\"obius-flat submanifolds.

M\"obius-flat submanifolds arise as orthogonal submanifolds to flat spherical
systems and have a rich transformation theory, generalizing the
transformations of Guichard surfaces, developed by C.~Guichard~\cite{Gui:ssi},
P.~Calapso~\cite{Cal:asg} and L.~Eisenhart~\cite{Eis:tsg} in dimension two and
codimension one.  In dimension $3$ or more and codimension one, conformally
flat hypersurfaces were known to be related to flat cyclic
systems~\cite{Her:cfh}.  We provide a uniform theory in arbitrary dimension
and codimension, both of transformations of M\"obius-flat submanifolds, and of
the relation with curved flat spherical systems, providing further
justification for the definition given in this part.

We also apply the notion of polynomial conserved quantities to isothermic
surfaces and M\"obius-flat submanifolds. The former application has been
developed extensively in~\cite{BuSa:sis,San:sis}---in particular affine
conserved quantities provide a conformal approach to constant mean curvature
surfaces in arbitrary spaceforms (by regarding them as isothermic
surfaces). Similarly, affine conserved quantities for M\"obius-flat
submanifolds provide a conformal approach to constant gaussian curvature
submanifolds in arbitrary spaceforms (cf.~\cite{Eis:tsg}).
%
%  Bibliography formatting
%
\newcommand{\noopsort}[1]{}
\newcommand{\bauth}[1]{\mbox{#1}}
\newcommand{\bart}[1]{\textit{#1}}
\newcommand{\bjourn}[4]{#1\ifx{}{#2}\else{ \textbf{#2}}\fi{ (#4)}}
\newcommand{\bbook}[1]{\textsl{#1}}
\newcommand{\bseries}[2]{#1\ifx{}{#2}\else{ \textbf{#2}}\fi}
\newcommand{\bpp}[1]{#1}
\newcommand{\bdate}[1]{ (#1)}
\newcommand{\bprint}[1]{#1}
\def\band/{and}
\newif\ifbibtex%\bibtextrue% Comment out \bibtextrue to remove BiBTeX.
\ifbibtex
%
%\nocite{Cal:ssi}% On isothermic surfaces
%\nocite{Cal:ntsi}% New transformation of isothermic surfaces
%\nocite{CGS:cics}% Conformal invariants of curves and surfaces
%\nocite{Car:mrm}% Cartan's method of moving frames (book)
%\nocite{KuPi:cg}% Conformal geometry (book)
%
\nocite{Aki:cdg}  % Conformal differential geometry
\nocite{AkGo:cdg} % Conformal differential geometry (book)
\nocite{Bas:ahs}  % Almost hermitian symmetric spaces
\nocite{BEG:tcp}  % Tractor calculus
\nocite{BGG:dob}  % The BGG complex
\nocite{Bla:vud}  % Blaschke's book
\nocite{Bry:dtw}  % Duality for Willmore
\nocite{Bry:scg}  % Surfaces in conformal geometry
\nocite{BCGGG:eds}% Exterior differential systems (book)
\nocite{Bur:is}   % Isothermic surfaces opus
\nocite{BuCa:sgfv}% Submanifolds in generalized flag varieties
\nocite{BFLPP:cgs}% Quaterionic formalism (book)
\nocite{BuHJ:rls} % Ribaucour in Lie sphere
%\nocite{BHPP:cfis}% Curved flats and isothermic surfaces
\nocite{BPP:sdfs} % Schwarzian derivatives and flows of surfaces
\nocite{BuSa:sis} % Special isothermic surfaces of type k
%\nocite{BPT:sggs} % Soliton geometry and geometry of submanifolds (memoir)
\nocite{Bia:rsi}  % Research on isothermic surfaces
\nocite{Bia:crsi} % Complementary research on isothermic surfaces
%\nocite{Cal:stsi} % On the theory of isothermic surfaces
\nocite{Cal:asg}  % Calapso 1905 on Guichard surfaces
%\nocite{Cal:stsg} % Calapso 1920 on Eisenhart trans&spectral deformation
\nocite{Cal:mew}  % Mobius structures
\nocite{CDS:rcd}  % Ricci corrected derivative
\nocite{CaDi:di}  % BGG machinery
\nocite{CaPe:ewg} % Einstein--Weyl geometry
\nocite{Car:dhc}  % Deformation of hypersurfaces in conformal geometry
\nocite{Car:ecc}  % Cartan: Espaces connexions conformes 
\nocite{CpGo:tbi} % Tractor bundles
\nocite{CpGo:tpg} % Tractor calculi
\nocite{CpGo:stc} % Standard tractors and ambient metrics
\nocite{CpSc:pgc} % Canonical Cartan connection
\nocite{CpSl:pg}  % Parabolic geometries
\nocite{CpSl:wpg} % Weyl structures
%\nocite{CSS:ahs1} % Almost hermitian symmetric spaces
\nocite{CSS:ahs2}
\nocite{CSS:bgg}  % BGG sequences
\nocite{Dar:tgs}  % General theory of surfaces (book)
\nocite{Dar:socc} % Orthogonal systems and curvilinear coordinates (book)
\nocite{Die:PhD}  % Tammo's thesis
%\nocite{Don:sRs}  % Neil's thesis
\nocite{EaRi:cio} % Conformally invariant operators
\nocite{Feg:cio}  % Conformally invariant operators
\nocite{FePe:cfss}% Curved flats in symmetric spaces
\nocite{Fia:ctc}  % Conformal theory of curves
\nocite{Fia:cdg}  % Conformal differential geometry
\nocite{Eis:tos}  % Transformations of surfaces (book)
\nocite{Eis:tsg}  % Transformations of Guichard surfaces
\nocite{Gui:ssi}  % On isothermic surfaces
%\nocite{Gui:stsi} % On a transformation of isothermic surfaces
%\nocite{Gui:sosc1}% Orthogonal and cyclic systems
%\nocite{Gui:sosc2}
\nocite{Gui:sto}  % Triply orthogonal systems
\nocite{Gau:ccw}  % Gauduchon's preprint
\nocite{Haa:cg1}  % Haantjes: conformal geometry
\nocite{Haa:cg2}
\nocite{Haa:cg3}
\nocite{Her:cfg}  % Conformally flat & Guichard nets
\nocite{Her:cfh}  % Conformally flat & curved flats
\nocite{Her:imdg} % Introduction to Mobius differential geometry (book)
%\nocite{HMN:mgs}  % Mobius geometry of surfaces
\nocite{Jen:hoc}  % High order contact of submfds in homogeneous spaces (book)
\nocite{KMS:nodg} % Natural operations in differential geometry (book)
\nocite{Kos:lac}  % Kostant's famous paper
\nocite{Lap:dgim} %
\nocite{Lep:bgg}  % Lepowsky's generalized BGG
%\nocite{XMa:iws}  % Isothermic Willmore surfaces
%\nocite{XMa:atw}  % Adjoint transforms
\nocite{Mae:dmg}  % Differential Mobius geometry
\nocite{Mus:caf}  % Conformal arclength functional
\nocite{Och:gfh}  % Geometry of flat homogeneous spaces
\nocite{OrRo:cgr} % Conformal geometry of Riemannian submanifolds
\nocite{OsSt:sd}  % Schwarzian derivatives
\nocite{San:sis}  % Susana's thesis
%\nocite{ScKo:ucn} % Unification of classical and novel, i.e., O-surface stuff
\nocite{ScSu:mg1} % Mobius geometry
\nocite{Sem:ckf}  % Conformal Killing forms
\nocite{Sul:mg2}
\nocite{Sul:mg3}
\nocite{Sha:dg}   % Sharpe: Differential geometry (book)
\nocite{TeUh:btlg}% Backlund transformations and loop group actions
%\nocite{Ter:sdg}  % Solitons and differential geometry
\nocite{Tan:epl}  % Equivalence problem for Lie algebras
\nocite{Toj:ise}  % Isothermic submanifolds of Euclidean space
\nocite{Tho:dig}  % Thomas: differential invariants (book)
\nocite{Tho:ukg}  % Thomsen
\nocite{WaCP:mgs} % Mobius geometry of surfaces
\nocite{WaCP:smg} % Submanifolds in Mobius geometry
\nocite{Wey:stm}  % Space time matter
\nocite{Wil:tcr}  % Total curvature in Riemannian geometry (book)
\nocite{Yan:cg1-4}% Yano: concircular geometry etc.
\nocite{Yan:ftc}
\nocite{Yan:ecc}
\nocite{Yan:gcs}
\nocite{Yan:fc12}
\nocite{Yan:fc34}
\nocite{YaMu:tfc}
\bibliographystyle{genbib}
\bibliography{papers}
\else

\fi
\end{document}